\numberwithin{equation}{section}
\newcommand{\NAME}{ScreeNOT\,}
\newcommand{\SIname}{{\em \NAME}: Exact MSE-Optimal Singular Value
Thresholding in Correlated Noise}
\newcommand{\shortname}{{\em \NAME}: Exact MSE-Optimal Singular Value Thresholding}
\newcommand{\SIshortname}{{\em \NAME}}
\newcommand{\arxivnumber}{2009.12297}
\theoremstyle{theorem} \newtheorem{thm}{Theorem}
\theoremstyle{theorem} 
\theoremstyle{theorem} \newtheorem{defn}{Definition}
\theoremstyle{theorem} \newtheorem{lemma}{Lemma}
\theoremstyle{theorem} \newtheorem{prop}{Proposition}
\newtheorem*{theorem*}{Theorem}
\newtheorem*{corollary*}{Corollary}
\newcommand{\R}{\ensuremath{\mathbb{R}}}
\newcommand{\C}{\ensuremath{\mathbb{C}}}
\newcommand{\E}{\ensuremath{\mathbb{E}}}
\newcommand{\V}[1]{\ensuremath{\mathbf{#1}}}
\newcommand{\aslim}{\stackrel{a.s.}{\longrightarrow}}
\newcommand{\dlim}{\stackrel{d}{\longrightarrow}}
\newcommand{\iid}{\stackrel{\text{iid}}{\sim}}
\newcommand{\Pd}[3]{\ifthenelse{\equal{#3}{1}}{\frac{\partial #1}{\partial #2}}{\frac{\partial^{#3} #1}{\partial #2^{#3}}}}
\newcommand{\Ind}[1]{\mathds{1}_{\left\{{#1}\right\}}}
\newcommand{\prob}{\mathbb{P}}
\newcommand{\T}{\top}
\newcommand{\rank}{\mathrm{rank}}
\newcommand{\m}{\mathcal}
  \newcommand{\Cc}{\mathcal{C}}
  \newcommand{\Yc}{\mathcal{Y}}
  \newcommand{\optThresh}{T_{\gamma}}
\newcommand{\optIntervU}{\overline{\Theta}(\V{x})}
\newcommand{\optIntervL}{\underline{\Theta}(\V{x})}
\newcommand{\bulkEdge}{\Zplus}
\newcommand{\bulkedge}{\zplus}
\newcommand{\BBP}{\Xplus}
\newcommand{\bbp}{\xplus}
\newcommand{\ASE}{\mathrm{ASE}}
\newcommand{\SEn}{\mathrm{SE}_n}
\newcommand{\FZ}{F_Z}
\newcommand{\FZn}{F_{Z_n}}
\newcommand{\fZ}{f_Z}
\newcommand{\FYn}{F_{Y_n}}
\newcommand{\FY}{F_{Y}}
\newcommand{\Xplus}{{\cal X}_+}
\newcommand{\Zplus}{{\cal Z}_+}
\newcommand{\xplus}{{x}_+}
\newcommand{\zplus}{{z}_+}
\newcommand{\FS}{F_S}
\newcommand{\FSn}{F_{S_n}}
\newcommand{\UpperEdgeS}{{\lambda}_+}
\newcommand{\LowerEdgeS}{{\lambda}_-}
\newcommand{\TCritBare}{\Psi}
\newcommand{\TCrit}{\Psi_\gamma}
\newcommand{\TCritGamma}{\Psi_\gamma}
\newcommand{\TCritPoverN}{\Psi_{p/n}}
\newcommand{\thresh}{\theta}
\newcommand{\threshopt}{\theta_{\mbox{opt}}}
\begin{document}

\begin{frontmatter}

\title{ \SIname }  
\runtitle{\shortname}

\begin{aug}
  \author[A]{\fnms{David}~\snm{Donoho}\ead[label=e1]{donoho@stanford.edu}},
  \author[B]{\fnms{Matan}~\snm{Gavish}\ead[label=e2]{gavish@cs.huji.ac.il}}
  \and
  \author[A]{\fnms{Elad}~\snm{Romanov}\ead[label=e3]{eromanov@stanford.edu}}
  \address[A]{Department of Statistics, Stanford University\printead[presep={,\ }]{e1,e3}}
  
  \address[B]{School of Computer Science and Engineering, Hebrew University of Jerusalem\printead[presep={,\ }]{e2}}
  \end{aug}







\begin{abstract}
  \noindent  We derive a formula for  optimal hard thresholding of the singular value decomposition in the presence of correlated additive noise; although it nominally involves unobservables, we show how to apply it even where the noise covariance structure is not a-priori known or is not independently estimable.  The proposed method, which we call {\bf \NAME}, is a mathematically solid alternative to Cattell's ever-popular but vague Scree Plot heuristic from 1966.  \NAME has a surprising oracle property: it typically achieves {\it exactly}, in large finite samples, the lowest possible MSE for matrix recovery, on each given problem instance -- i.e. the specific threshold it selects gives exactly the smallest achievable MSE loss among all possible threshold choices for {\it that} noisy dataset and {\it that} unknown underlying true low rank model.  The method is computationally efficient and robust against perturbations of the underlying covariance structure.  Our results depend on the assumption that the singular values of the noise have a limiting empirical distribution of compact support; this property, which is standard in random matrix theory, is satisfied by many models exhibiting either cross-row correlation structure or cross-column correlation structure, and also by many situations with more general, inter-element correlation structure. Simulations demonstrate the effectiveness of the method even at moderate matrix sizes. The paper is supplemented by ready-to-use software packages implementing the proposed algorithm: package \texttt{ScreeNOT} in \texttt{Python} (via {\tt PyPI}) and \texttt{R} (via {\tt CRAN}).

\end{abstract}

\begin{keyword}[class=AMS]
\kwd[Primary ]{62C20}
\kwd{62H25}
\kwd[; secondary ]{90C25}
\kwd{90C22}
\end{keyword}

\begin{keyword}
  \kwd{Singular Value Thresholding}
  \kwd{Optimal Threshold}
  \kwd{Scree Plot} 
  \kwd{Low-rank Matrix Denoising}
  \kwd{High-Dimensional Asymptotics}
\end{keyword}

\end{frontmatter}

\section{Introduction}

Across a wide variety of scientific and technical fields, practitioners have found many valuable applications of  {\it singular value thresholding} (SVT).  This procedure starts from the singular value decomposition (SVD), which represents the data matrix $Y$ as 
\begin{equation} \label{eq:SVDdef} Y=\sum_{i=1}^{\min(n,p)} y_i \cdot \V{u}_i \V{v}_i^\T, \end{equation} 
using the  empirical singular values $\{y_i\}_{i=1}^{\min(n,p)}$, and the empirical left- and right- singular vectors of $Y$, denoted here $\V{u}_i$ and  $\V{v}_i$. 

In such applications, it is generally
claimed that the small singular values
represent `noise' and the large singular values
`signal';  practitioners attempt to
separate signal from noise by 
setting a threshold $\theta$ (say), and 
using, in place of $Y$, the partial
reconstruction containing only would-be signal components:
\begin{eqnarray} \label{eq:SVTdef} 
  \hat{X}_\thresh = \sum_{i} y_i\,\Ind{i: y_i> \thresh} \cdot \V{u}_i\V{v}_{i}^\T \,. 
\end{eqnarray}

How do practitioners determine the threshold $\thresh$?  Often, by eye. They
plot the ordered singular values and spot `elbows'.  Sometimes, they give this a
scholarly veneer by saying they are using the `scree-plot method'; they might
even formally cite the originator of this folk-tradition \cite{Cattell1966},
which still gets more than 1000 citations yearly. According to the method
prescribed in that paper, the practitioner plots the values $\{y_i\}$ and uses
her {\em eyes} to distinguish between `signal' and `noise' singular values of
$Y$. 

How {\it should} they determine the threshold?
Relevant theory and methodology literature 
spans multiple disciplines over multiple decades; we mention only
a few entry points, including:
\cite{Wold1978,Jackson1993,Lagerlund1997,Edfors1998,Alter2000,Achlioptas2001,
Azar2001,Jolliffe2005,Price2006,Hoff2006,Bickel2008,Owen2009,Perry2009,
Chatterjee2010,Donoho2013b,owenDobriban}. 
Progress has been made in our understanding of the underlying problem, and many valuable
quantitative approaches have been 
developed \-- to which we here add one more.
Our contribution relies on recent advances 
in random matrix theory which
point, we think convincingly,
to the method introduced here. 
This method typically offers the exact optimal loss 
available on each specific,
finite dataset $Y$.

Our task formalization supposes that: (a) there is an 
underlying matrix $X$ of fixed rank $r$ \-- though $X$ and
even its rank $r$ are unknown to us; (b) only a potentially loose upper bound on the signal rank $r$ is known;
(c)  the data matrix $Y$ has the signal+noise
form $Y = X + Z$, where $Z$ is a noise matrix 
with a general covariance structure -- also unknown to us;
(d)  we use hard thresholding of singular values, 
exactly as in (\ref{eq:SVTdef}) above\footnote{And not some variant, 
such as soft thresholding or a more general shrinkage.};
(e) we adopt
squared error {\it loss}\footnote{$\| X \|_F^2=
  \sum_{i,j}X_{i,j}^2$ denotes the squared Frobenius norm.}:
\begin{equation} \label{eq:SEdef}
\mathrm{SE}[X|\thresh] =
||\hat{X}_\thresh-X||_F^2.
\end{equation}
As goal, we literally aim to choose a 
loss-minimizing value 
$\threshopt = \threshopt(Y|X)$
solving:
\begin{equation} \label{eq:oracle}
\mathrm{SE}[X| \threshopt] = \min_\thresh \mathrm{SE}[X| \thresh].
\end{equation}
Aiming for $\threshopt(Y|X)$ may seem overambitious,
as we know only the data matrix $Y$,
and not $X$. Wait and see.

Essentially this problem was studied previously 
by two of the authors in the special case of white noise.
 \cite{Donoho2013b} supposed that
the underlying noise $Z$ matrix has i.i.d Gaussian 
zero-mean entries and the problem
is scaled so that the columns of $Z$ have unit Euclidean squared norm in expectation,
and considered a sequence of increasingly large problems.  
In the square case, when $Y$ has as many
rows as columns: the authors found results\footnote{That is, the authors of  \cite{Donoho2013b} adopted a slightly different viewpoint involving asymptotic MSE, and showed that $4/\sqrt{3}$ is optimal, whereas we consider here exact finite sample MSE loss, and show that with eventually overwhelming probability, $4/\sqrt{3}$ is exactly optimal {\it on each typical realization}.} which,
in light of our results below, say
that, with eventually overwhelming probability,
we have $\threshopt = 4/\sqrt{3}$.
Their analysis relied on
then-recent advances in the `Johnstone spiked model'
of random matrix theory \cite{Johnstone2001};   they proposed 
a method
for white noise with unknown variance, 
where the threshold formula became
$\threshopt \approx 4/\sqrt{3} \cdot \frac{y_{\mathrm{med}}}{\sqrt{n \cdot .6528 }}$,
where $y_{\mathrm{med}}$ denotes the median empirical singular value of $Y$.\footnote{$\sqrt{.6528}$ is approximately the median of the standard quarter-circle law; see the original paper.} 

Understanding the white noise case cannot be the end of the story.
Practitioners ordinarily don't know that their
noise is white, and in fact realistic noise models
can include correlations between columns, rows, or even 
general row-column combinations.
Fortunately, a broad range
of noise models can be studied using 
appropriate advances that have been made in 
random matrix theory. In this broader context, as we show,
a more general formula for the optimal threshold 
can be given, which of course reduces to $4/\sqrt{3}$ in
the above `square-matrix in white noise' case, but which is inevitably quite a bit more sophisticated in general. 

Section 2 below describes {\bf \NAME}, our proposed deployment of this formula on
actual data.  The acronym NOT stands for {\em Noise-adaptive Optimal
Thresholding}; `adaptive' refers to the algorithm's optimality across a wide
range of unknown noise covariances.  The prefix `Scree' reminds us that, still
today, in many cases, the alternative would simply be `eyeballing' the Scree
Plot \cite{Cattell1966}.  Cattell and his many followers clearly believed that
{\it something}, some visible feature, in the Scree Plot -- namely, in the
collection of data singular values  $\{y_i\}$ -- could tell us where the noise
stopped and the signal began. But what exactly? In a very concrete sense, the
\NAME algorithm shows that the information needed to separate signal from noise
truly {\it is} there in the distribution of empirical singular values, where Cattell
and his followers all hoped it would be. 
However, 
the \NAME algorithm and the approach we develop here quantitatively
identify this information 
as a specific {\it functional of the CDF of singular values}.

The method, once implemented, surprised us by the
{\it finite-sample} optimality it exhibited;
in simulations at reasonable problem sizes it
typically achieves the exact minimal loss (\ref{eq:oracle})
for the given dataset, even though the method is not entitled to know the 
underlying low-rank model  $X$ or specifics of the noise model on $Z$;
we initially expected a weaker and more `asymptotic' optimality property, perhaps similar to the one shown in \cite{Donoho2013b}.
Our analysis below proves typicality of such exact optimality in finite samples.
This strong optimality is partly due to the penetrating nature of 
random matrix theory; but also to the very specific task: 
minimizing squared error loss (\ref{eq:SEdef}) of 
singular value thresholding (\ref{eq:SVTdef}).

\paragraph{Underlying Analysis}

Hoping to make the paper helpful to prospective users of the proposed method,
we have  made the Introduction and also Section 2 mostly independent of the
analysis to come; however, we now very briefly offer
mathematically-oriented  readers some insight about 
the approach being followed in later sections and the tools being developed there.

At heart, this paper concerns the asymptotic analysis of a 
sequence of matrix recovery problems
where the problem sizes $n$ and $p$  grow to $\infty$
in a proportional fashion. 
We assume that the matrix $X$ has $r$ nonzero singular values $x_1, \dots x_r$
which are fixed independently of $n$ and $p$.
About the sequence of random noise matrices $Z = Z_{n,p}$,  
we assume that the sequence of empirical cumulative distribution functions (CDF's) of
noise singular values converges to a compactly supported distribution $\FZ$
with certain qualitative restrictions at boundary of the support.

Using results of Benaych-Georges and Nadakuditi \cite{benaych2012singular},
we obtain an expression for an asymptotically optimal 
hard threshold, as a {\em functional} $T(\cdot)$ of the limiting CDF of noise singular values $\FZ$.
The functional is continuous and even differentiable in certain senses.

Admittedly, the limiting CDF of noise singular values $\FZ$ is
not observable to the statistician, as we only observe
a sample of the signal+noise singular values mixed together. 
Performing a kind of amputation and prosthetic extension
on the CDF $\FY$ of singular values of $Y$, which we do observe,
we construct a modified empirical CDF $\hat{F}_n$ which consistently estimates the
limiting CDF of noise-only singular values. Applying the
hard threshold selection functional to this modified empirical
CDF $\hat{F}_n$ gives our proposed method, in the form $\hat{\thresh} = T(\hat{F}_n)$. 
As we show in Section 2, there is a quite explicit and
computationally tractable algorithm for  computing $T(\hat{F}_n)$,
which we label {\bf \NAME}. 

Owing to the continuity
of the hard threshold functional $T(\cdot)$, and the consistency of the constructed CDF,
the resulting method is a consistent estimator of the underlying 
asymptotically optimal threshold $T(\FZ)$.
We also prove a finite-sample optimality of the method. 
 Specifically, the \NAME algorithm is shown to be exactly optimal for squared error loss
  with high probability, in large-enough finite samples,
  under very general model assumptions. 
  For generic configurations of signal singular values $(x_i)_{i=1}^r$,
  there is, in large finite samples, an {\em optimal interval} of thresholds, 
  all achieving the optimal MSE at that realization;
the consistency of the optimal threshold estimator implies
  that eventually for large-enough $n$, with overwhelming probability,
   the proposed method achieves the exact optimal MSE loss.

\paragraph{Contributions}

The approach we develop selects an optimal threshold for singular values, and thus selects the ``signal'' singular values, based on the principle of minimizing SE. 
Indeed, minimizing squared error loss is a ubiquitous goal in 
statistical theory, and we are not
the first to consider it as a goal for threshold selection
in the context of singular values.
In addition to our own just-cited
work \cite{Donoho2013b}, prior citable work
on squared-error-loss includes Perry
\cite{Perry2009},
Shabalin and Nobel \cite{Shabalin2013}
and Nadakuditi \cite{Nadakuditi}, although
much of this concerns singular value shrinkage 
rather than thresholding\footnote{Singular value shrinkage is considerably more involved as it changes the data singular values rather than select them. The best-possible relative improvement of shrinkage over  thresholding was studied in \cite{Donoho2013b} in the white noise case. }.
While our approach is implemented for SE loss, we note that it could in principle be used to develop optimal thresholding rules for other loss criteria, such as the operator norm.


We especially point to \cite{owenDobriban}; in this
work Dobriban and Owen 
mainly study Parallel Analysis \cite{franklin1995parallel} ---
simulation-based significance 
testing for large singular values;
they develop tools from Random Matrix Theory 
to derandomize Parallel Analysis\footnote{Potential users of ScreeNOT should note that in many scientific projects the goal
is determining the number of statistically significant factors,
without particular regard to the quality of squared-error approximation;
it is possible that for such a goal
Parallel Analysis or its derandomized version \cite{owenDobriban}
is preferred.}.
Beyond this, they also mention in a final section that their tools
could be adapted to produce a threshold selector minimizing
the asymptotic mean-squared error of the resulting approximation;
and their equation (6) provides a way to characterize 
such a functional.

In this paper, we make explicit (in Equation \ref{eq:optFunc} below) 
the functional $T$  
for threshold selection with minimal asymptotic SE,
and show that it is well-defined;
we offer (in Section \ref{ssec-Internals}) 
an explicit construction of a modified CDF 
$\hat{F}_n$ to plug in to $T$ \-- the proposal involves
singular value ``ablation and prosthesis''; we
develop a theoretical machinery, involving continuity
properties of $T$ and convergence properties of $\hat{F}_n$,
and use the machinery to prove that $T(\hat{F}_n)$
achieves not just asymptotic optimal loss (Theorem 1),
but also (Theorem 2) that in large finite samples 
it achieves the exact minimal loss with overwhelming probability. 

{\bf Outline.} This paper is organized as follows. In Section
\ref{sec:practical} we offer a practical, succinct description of the \NAME
algorithm,
for the convenience of prospective users. 
In Section \ref{sec:setup}
we introduce the signal+noise model used and survey relevant results from random
matrix theory. In Section \ref{sec:results} we state our main results regarding
the optimality and stability properties of \NAME, both in finite matrix size and
asymptotically as the matrix size grows to infinity. In Section
\ref{sec:numerics} we demonstrate the mathematical results in various
simulations and numerical examples; for space considerations only a handful of
figures are shown, with most simulation results deferred to the supplementary
article 
 and available in the code supplement \cite{SDR}. The results
are proved in Section \ref{sec:proofs}, with some proofs referred to 
the supplementary article.

{\bf Reproducibility advisory.} Implementation of the proposed algorithm, scripts generating all figures in this paper, and many additional simulations have been permanently deposited and are available at the code supplement 
\cite{SDR}.

{\bf Code packages.} Ready-to-use code packages, implementing the \NAME procedure in various language are available. 
In {\tt Python}: package \texttt{ScreeNOT} is available through {\tt PyPI}; 
in {\tt R}: package {\tt ScreeNOT} is available though {\tt CRAN}; and {\tt Matlab} source code.  For details, see the following {\tt GitHub} repository: \url{https://github.com/eladromanov/ScreeNOT}. In addition, the source code has been permanently deposited and is available at the code supplement 
\cite{SDR}. 

\section{The \NAME ~Procedure: User-level description}
\label{sec:practical}

In this section we give a brief self-contained description of our proposed
procedure. 

\subsection{Procedure API}

\NAME selects a hard threshold for singular
values, which can in finite samples give the optimal
MSE approximation of a low rank matrix
from a noisy version; the noise may be correlated,
and the threshold will adapt to that appropriately.

\subsubsection{Inputs}
The user provides these inputs to \NAME:
\begin{description}
\item  {\bf y:} \qquad the singular values $y_{1} \dots, y_{\min(n,p)}$
of the data matrix $Y$;
\item  {$n$, $p$:} \quad size parameters of the data matrix $Y$.
\item  {$k$:}  \qquad upper bound on the rank $r$ of the underlying unknown 
signal matrix $X$ which is to be recovered. This upper bound may be very loose.
\end{description}

\subsubsection{Outputs}
\NAME returns $\hat{\theta} = \hat{\theta}(Y)$, the value to be used in
singular value thresholding. 

To use the threshold, the user should  reconstruct 
an approximation to  the underlying signal matrix $X$ 
using the empirical singular values $y_i$ and the empirical singular vectors 
$\V{u_i}$ and $\V{v}_i$ as follows:
\[
  \hat{X} = \sum_i y_i \Ind{ y_i > \hat{\theta} } \cdot \V{u}_i \V{v}_i^\T\,.
\]
In this reconstruction, the singular values smaller than $\hat{\theta}$ are judged to
be noise and the corresponding singular decomposition components
are ignored.

\subsection{Example in a stylized application}
\label{subsec:example}
We next construct a synthetic-data example,
in which we know the ground truth for demonstration purposes.
The synthetic data $Y = X +Z$ 
and the invocation of \NAME are based on these 
ingredients.
 
 \begin{description}
\item {Signal $X$:} The underlying signal matrix, unbeknownst to the
hypothetical user, has rank 10, with singular values $(x_{10},\ldots,x_1)=(1.0, 1.15, 1.3, \ldots, 2.35)$.
\item {Noise $Z$:}  The underlying noise, unbeknownst to the 
hypothetical user,  follows an
$AR(1)$ process in the row index, within each column.
The $AR(1)$ process has parameter $\rho=0.4$, and additionally each entry is divided by $\sqrt{n}$, so to have {variance\footnote{That is, the columns of $Z$ are independent and distributed as $\V{z}/\sqrt{n}$, where the random vector $\V{z}$ has entries: 
$
z_1=\varepsilon_1$ and $z_i=\rho \cdot z_{i-1} + \sqrt{1-\rho^2}\cdot\varepsilon_i$ for $2\le i \le p\,,
$
where ${\varepsilon}_1,\ldots,{\varepsilon}_p \iid \m{N}(0,1)$. }
$1/n$.}
\item {Problem Size: $n=p=1000$}
\item { Rank bound: $k=15$.} The user specifies a
bound of $k=15$ on the possible rank of the signal.
\end{description}

Figure~\ref{fig:IntroFig1} shows a so-called {\it Scree Plot} \cite{Cattell1966} of the first 30 empirical singular values $y_i$. For this particular instance, it is verified (by exhaustive search) that the minimal loss is attained by retaining the first three principal components of $Y$; in other words, thresholding at any point $\theta \in (y_3,y_4)$ is optimal. The threshold $\hat{\theta}$ returned by the \NAME~procedure is indicated by the green horizontal line, and, indeed, it falls inside the optimal interval. The would-be ``elbow'' in the scree-plot,  determined subjectively by the authors, is indicated by the grey (lower) horizontal line; it corresponds to retaining the top $6$ principal components of the data matrix. This rule attains strictly sub-optimal SE: roughly $31.3$, as opposed by $26.4$ attained by ScreeNOT.

\begin{figure}[h]
    \centering
    \includegraphics[width=0.55\textwidth]{{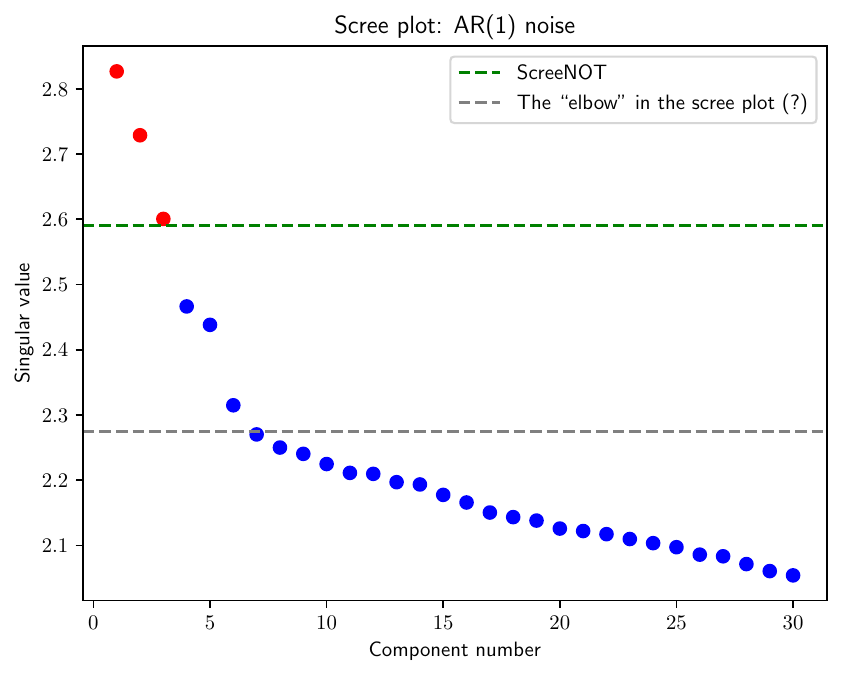}}
    \caption{\small
        Scree Plot for the stylized example of Section \ref{subsec:example}. Horizontal axis: singular value index, where the singular values $y_i$ of the data matrix $Y$ are sorted in decreasing order. Vertical axis: singular values $y_i$. Dashed green (upper) line: the optimal threshold calculated by \NAME. (Color online.)
    }
    \label{fig:IntroFig1}
    \end{figure}

\subsection{Internals of the Procedure}
\label{ssec-Internals}
\noindent We briefly describe the computational task performed by \NAME. 

\begin{itemize}
    
\item[{\bf Step 1.}] Sort the singular values in non-increasing order: $y_1\ge \ldots \ge y_p$. 

\item[{\bf Step 2.}] Compute the ``pseudo singular values'':
\[
\tilde{y}_{i} = y_{k+1} + \frac{1-\left( \frac{i-1}{k} \right)^{2/3}}{2^{2/3}-1}\left(y_{k+1}-y_{2k+1}\right) \quad\textrm{ for }i=1,\ldots,k \,,
\]
and set $\tilde{y}_i=y_i$ for $i=k+1,\ldots,p$.\footnote{We assume $2k+1<p$. Our proposed estimator is expected to perform  poorly when $k$ is large compared to $p$.}

\item[{\bf Step 3.}] Define the four scalar functions $\varphi,\tilde{\varphi},\varphi',\tilde{\varphi}'$ by
\begin{alignat*}{2}
    &\varphi(y) = \frac1p \sum_{i=1}^n \frac{y}{y^2-\tilde{y}_i^2}\,,\quad &&\varphi'(y) = -\frac1p \sum_{i=1}^n \frac{y^2+\tilde{y}_i^2}{(y^2-\tilde{y}_i^2)^2} \,, 
\end{alignat*}
and
\begin{alignat*}{2}
    &\tilde{\varphi}(y) = \gamma\varphi(y) + \frac{1-\gamma}{y}\,,\quad &&\tilde{\varphi}'(y) = \gamma\varphi'(y) - \frac{1-\gamma}{y^2} \,.
\end{alignat*}
 Now define 
\[
\TCritBare(y) = y \cdot \left( \frac{\varphi'(y)}{\varphi(y)} + \frac{\tilde{\varphi}'(y)}{\tilde{\varphi}(y)} \right) \,.
\]

\item[{\bf Step 4.}]
Assuming that $\tilde{y}_1,\ldots,\tilde{y}_p$ are not all zero, the function $y\mapsto \TCritBare(y)$ can be shown to be continuous and strictly increasing for $y>\tilde{y}_1$. Moreover, 
$\lim_{y\searrow \tilde{y}_1}\TCritBare=-\infty$ and $\TCritBare(\infty)=-2$. The computed hard threshold
 is the unique value $\hat{\thresh}$  satisfying 
\begin{eqnarray} \label{eq:master}
\TCritBare(\hat{\thresh})=-4 \,.
\end{eqnarray}
This equation is then solved numerically, for example by binary search.\footnote{We remark that the computational cost of this search is not considerable. E.g., binary search requires a number of iterations which is only {logarithmic} in the required precision. }

\item[{\bf Step 5.}] The algorithm returns the value $\hat{\thresh}$.
\end{itemize}

Evidently, the procedure as stated costs $O(n \log(n))$ flops; the dominant cost is sorting the singular values; ordinarily of course,
sorting is performed anyway as part of a standard SVD.
In that situation, the additional computational effort is $O(n)$,
which is unimportant compared to the cost of the underlying SVD.

\subsection{How the procedure works on the stylized application}

Figure~\ref{fig:IntroFig2and3}(a) shows a plot of $\TCritBare(\thresh)$ as a function of $\thresh$.
The horizontal blue line indicates the desired level $-4$. The vertical green
line indicates the crossing point, $\hat{\thresh}$, which is the value
returned by \NAME.

Figure~\ref{fig:IntroFig2and3}(b) shows a plot of the loss $\mathrm{SE}[\thresh|X]$ versus  $\thresh$. The red horizontal line shows
the optimum achievable loss. The green vertical
line shows the threshold selected by the 
procedure. It intersects the loss curve within the optimal level
and the achieved loss is therefore optimal.

\begin{figure}[H]
     \centering
     \begin{subfigure}[b]{0.45\textwidth}
         \centering
         \includegraphics[width=\textwidth]{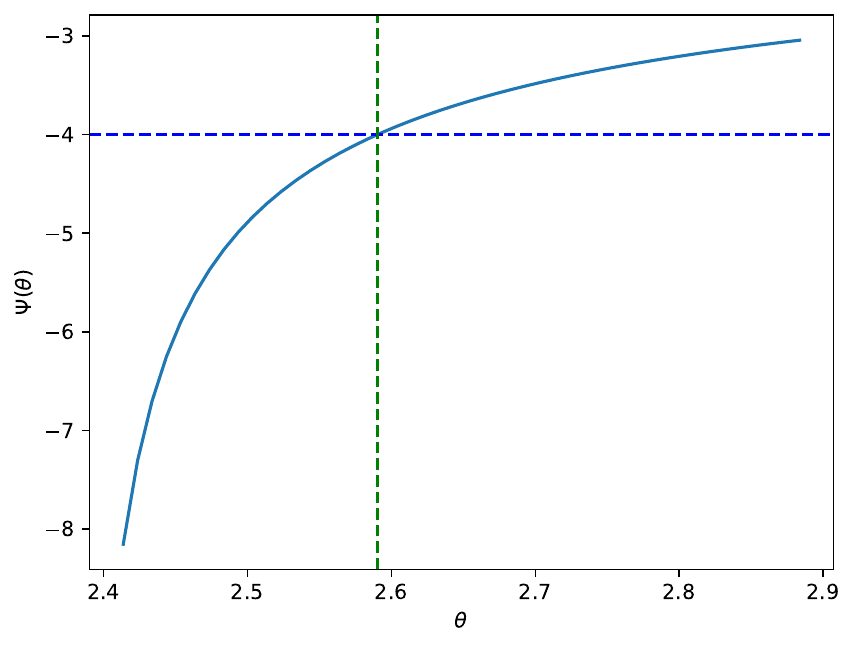}
         \caption{Solving the master equation \eqref{eq:master}}
         \label{fig:IntroFig2}
     \end{subfigure}
     ~
     \begin{subfigure}[b]{0.45\textwidth}
         \centering
         \includegraphics[width=\textwidth]{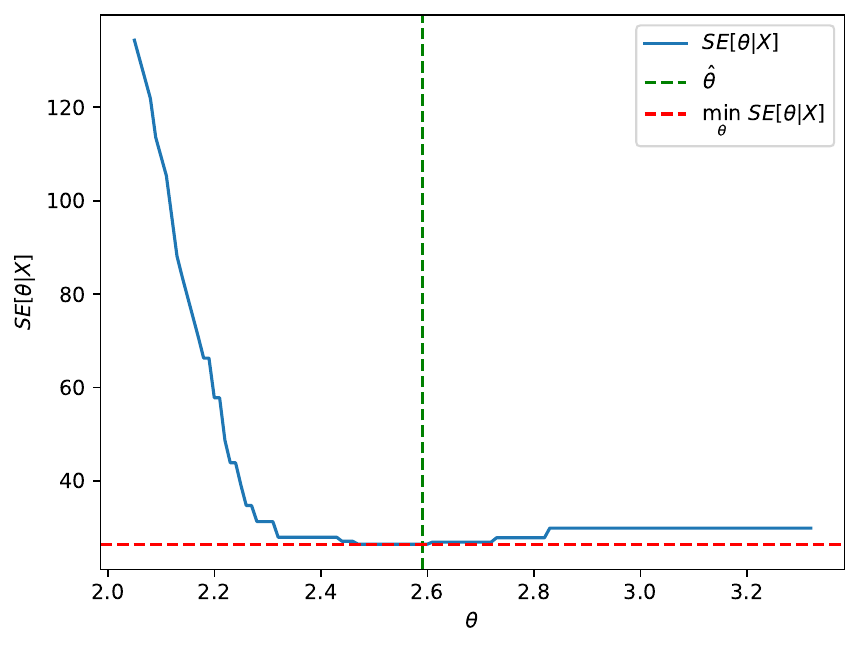}
         \caption{MSE loss $SE[\theta|X]$ as a function of $\theta$}
         \label{fig:IntroFig3}
     \end{subfigure}
     \hfill
        \caption{\small 
        Calculation of the optimal threshold $\hat{\theta}$ on the stylized example of Section \ref{subsec:example}.
        {\it (a) Left panel:}
    Horizontal axis: candidate thresholds $\theta$. Vertical axis: The function $\Psi(\theta)$. The \NAME algorithm solves 
    the equation $\Psi(\theta) = -4$ for $\theta$. Vertical green line shows the solution, denoted by $\hat{\theta}$. This is the value returned by \NAME.
    {\it (b) Right panel:}
    The MSE loss function $SE[\theta|X]$ for the stylized example of Section
    \ref{subsec:example}, plotted over candidate thresholds $\theta$. There is
    an interval of values $\theta$ all achieving the lowest possible loss; The
    threshold $\hat{\theta}$ returned by \NAME (shown  by the vertical green line) is located inside this optimal interval. (Color online.)
        }
        \label{fig:IntroFig2and3}
\end{figure}

\section{Setup and background from random matrix theory}
\label{sec:setup}

The rest of this paper is dedicated to formal analysis of the \NAME algorithm. To that end, we now define a precise signal+noise model and set up the necessary notation.

\paragraph{An asymptotic model for low-rank matrices observed in additive noise}
To recap, let $X_n$ be an unknown $n$-by-$p$ matrix, to be estimated. We observe a noisy measurement of $X_n$, $Y_n = X_n + Z_n$, where $Z_n$ is a noise matrix, which is statistically independent of $X_n$.
Our analysis employs
an asymptotic framework originating in Random Matrix Theory, and considers a sequence of such problems
$n,p\to \infty$, with the following generative assumptions.
\begin{enumerate}
    
    \item {\bf Limiting shape}: the dimensions $n,p$ tend to infinity together at a fixed ratio $p/n\to\gamma$. More concretely, fix $\gamma\in (0,1]$ and set $p=p_n=\lceil \gamma n \rceil$. Denoting $\gamma_n=p_n/n$, of course, $\gamma \le \gamma_n < \gamma + \frac1n$ and $\gamma_n \to \gamma$ as $n\to\infty$.
    
    \item {\bf Fixed signal rank and singular values}: The matrix $X_n$ has fixed rank $r=\rank(X_n)$ and fixed singular values. Specifically, let $r$ be constant, and fix $r$ positive and distinct numbers $x_1>\ldots>x_r>0$.
    $X_n$ is the matrix 
    \[
    X_n = \sum_{i=1}^r x_i \V{a}_{i,n} \V{b}_{i,n}^\T \,,
    \]
    where $\V{a}_{i,n}\in \R^n$ (resp. $\V{b}_{i,n}\in \R^p$) for $i=1,\ldots,r$ are sequences of left (resp. right) singular vectors of $X_n$, obeying a generative assumption as described next. We let $\V{x}=(x_1,\ldots,x_r)$ denote the  vector of singular values and we refer to either the matrix $X$ or just $\V{x}$ as the {\em signal}\,\footnote{
    	The assumption that the $x_i$-s are all distinct is standard in the literature on singular value shrinkage in the spiked model. When there are multiplicities, the SVD of $X_n$ is 
    not uniquely defined
    and, consequently, some framing constructs
    we use in this paper become inapplicable.
    The reframings adapted to such degenerate situations
    are beyond our scope.
    At any rate, the distinctness condition is generic in the space of matrices.	
    }.
    
    \item {\bf Incoherent signal singular vectors.} The vectors  $\V{a}_{1,n},\ldots,\V{a}_{r,n}$ (resp.  $\V{b}_{i,n}$) 
    constitute a random, uniformly distributed orthonormal $r$-frame in $\R^n$ (resp. in $\R^p$).\footnote{In other words, $\V{a}_{1,n},\ldots,\V{a}_{r,n}$ are sampled from the $O(n)$-invariant distribution on the Stiefel manifold $V_{r}(\R^n)$. Equivalently, one can assume that $\V{a}_{i,n}$ and $\V{b}_{i,n}$ are any arbitrary sequences of orthonormal $r$-frames, and the distribution of $Z_n$ is invariant to multiplication by $O(n)$ to the left and by $O(p)$ to the right.} 
    
    \item {\bf Compactly supported, limiting bulk distribution of noise singular values.} Each matrix $Z_n$ is statistically independent of $X_n$. Let $z_{1,n},\ldots,z_{p,n}$ denote its singular values, with empirical CDF $\FZn$,  $\FZn(z) = p^{-1}\sum_{i=1}^p \Ind{z_{i,n}\le z}$. There is a {\it  limiting empirical CDF} (LECDF) $\FZ$ such that $\FZn \to \FZ$ a.s. at continuity points.

Moreover, we assume that $\FZ$ is compactly supported\footnote{
This assumption might seem unnatural
	to many statisticians when they first encounter it; but note that
     if $Z_n$ is a standard Gaussian white noise, then even though the distribution of matrix entries
     is not compactly supported,  the limiting bulk distribution of singular values is compactly supported, 
     in $[(1-\sqrt{\gamma}),(1+\sqrt{\gamma})]$. 
     }
     and denote the upper edge of the support (sometimes called the \emph{noise bulk edge}) by 
    \[
   \bulkEdge\equiv \bulkEdge(\FZ) = \sup \left\{ z\,:\,\FZ(z)<1 \right\} \,.
    \]
    We also assume that $\FZ$ is nontrivial ($d\FZ$ is not a single atom at $z=0$), in other words, $\bulkEdge(\FZ)>0$. 
    Note that neither the distribution $\FZ$ nor its bulk edge $\bulkEdge(\FZ)$ are assumed to be known to the statistician.
    
    \item {\bf No outliers straying from the bulk.} Asymptotically, no singular values of $Z_n$ can be found above the bulk edge:
    \[
    z_{1,n} = \|Z_n\| \aslim \bulkEdge(\FZ) \,.
    \]
    
    \item \label{assum:dense}
    {\bf Thickness of the bulk edge.} The following condition holds:
    \[
    \lim_{y\to \bulkEdge(\FZ)} \int (y-z)^{-2}d\FZ(z) = \infty \,,
    \]
    where the limit is taken from the right. That is, $\FZ$ puts ``sufficient'' mass near the upper edge of its support. 
  Under this condition, when the signal singular values $x_i$ are sufficiently small,
    the amount of ``information'' one can obtain about the corresponding singular vectors $\V{a}_{i,n}\V{b}_{i,n}^\T$ from the leading singular vectors of $Y_n$ also vanishes.   
    
    This assumption is by no means esoteric. For example, suppose that $\FZ$ has a continuous density $\fZ$ in a neighborhood of $z_+ = \bulkEdge(\FZ)$, where it behaves like $\fZ(z)\sim C(z-z_+)^{\alpha}$ as $z\to z_+$; here $\alpha>0$ is some exponent. Then this condition holds whenever $\alpha\le 1$. In Section~\ref{sec:correlated-noise}, we mention a broad class of noise matrices $Z_n$ for which this property holds with $\alpha=1/2$.
    
\end{enumerate}

\paragraph{Class of estimators and performance measure} Our goal is to estimate $X_n$. We consider the family of singular value hard-thresholding estimators: $\hat{X}_\thresh = \hat{X}_\thresh(Y_n)$, where
\begin{equation}
    \hat{X}_\thresh = \sum_{i=1}^p y_{i,n}\Ind{y_{i,n}>\thresh}\cdot \V{u}_{i,n}\V{v}_{i,n}^\T \,,
\end{equation}
where $Y_n = \sum_{i=1}^p y_{i,n}\V{u}_{i,n}\V{v}_{i,n}^\T$ is an SVD. 
To ensure that the vectors $\{\V{u}_{i,n},\V{v}_{i,n}\}$ are well-defined, even when there are multiplicities in the spectrum, the right singular vectors $\V{v}_{i,n}$ corresponding to a degenerate singular value of $Y_n$ are chosen to be a random (Haar distributed) orthonormal basis for the corresponding (right) singular subspace. (Accordingly, the corresponding $\V{u}_{i,n}$-s constitute a random orthonormal basis for the corresponding left singular subspace.)
We measure the error with respect to Frobenius norm (squared error)\footnote{Recall that for a matrix $A$, $\|A\|_F^2 = \sum_{i,j}|A_{i,j}|^2$.}, where we denote:
\begin{equation}
    \SEn[\V{x}|\thresh] = \left\|X_n - \hat{X}_\thresh(Y_n)\right\|^2_F \,.
\end{equation}
Our task is to choose $\thresh$, so as to make $\SEn[\V{x}|\thresh]$ as small as possible, in an appropriate sense (note that $\SEn[\V{x}|\thresh]$ is a random variable - we \emph{do not} take the expectation of $X_n$ and $Y_n$). The best possible performance is given by the {\it Oracle Loss} 
\begin{equation}
    \SEn^*[\V{x}] = \min_{\thresh\geq 0} \SEn[\V{x}|\thresh] \,,
\end{equation}
which is the best loss one can achieve over the family of singular value hard-threshold estimators, 
\emph{even knowing the true signal $X_n$}. 
Our goal in this paper is to develop a threshold selector that, ``typically for large $n$'', attains
the oracle loss $\SEn^*[\V{x}]$.
Note that the oracle loss $\SEn^*[\V{x}]$ is also a random variable, and it is not a priori clear how to estimate it.  
An important observation is that the (random) function $\thresh\mapsto \SEn[\V{x}|\thresh]$ is piecewise constant, with finitely many jumps (specifically, these are at the singular values of $Y_n$: $y_{1,n},\ldots,y_{p,n}$). In particular, the minimum of $\SEn[\V{x}|\thresh]$ is attained not strictly at a point, but on an \emph{interval} (or a union of intervals).

\subsection{Background from random matrix theory}

\paragraph{The Spiked Model} 
Our perspective on the matrix denoising problem extends the one proposed by Perry \cite{Perry2009} and
Shabalin and Nobel \cite{Shabalin2013}. In the model they proposed, which was
inspired by Johnstone's Spiked Covariance model \cite{Johnstone2001}, one works under the same model $Y_n=X_n+Z_n$ as described above, but specifically assumes that the
noise matrix $Z_n$ is column-normalized and white, namely, that its entries are
properly scaled $i.i.d$ random variables.  This model's close sibling,
the Spiked Model
for high-dimensional covariance, has been extensively studied in the probability
and statistics literature, to such an extent that we cannot point to all of the
existing literature here. Seminal works such as
\cite{Bai2008,Baik2006,Paul2007} and others have shown that the randomness in
the Spiked Model can be neatly described in terms of the so-called
BBP phase transition,
similar to the one discovered in \cite{Baik2005}; and of the displacement of the
sample eigenvalues relative to the population eigenvalues; and of the rotation
of the sample eigenvectors relative to the populations eigenvectors. 

In the matrix denoising setup we consider here, the model described by 
our assumptions above has been studied in \cite{benaych2012singular},
and the same three underlying phenomena were identified and quantified:
\begin{enumerate}
    \item {\bf BBP phase transition:} 
    Let $z_+ = \bulkEdge(\FZ)$ denote the noise bulk edge.
    There is a functional $\BBP(F_Z,\gamma)$ that depends on
        the LECDF $\FZ$ and the asymptotic shape 
        $\gamma$ that defines an important threshold phenomenon
        in the behavior of limiting empirical singular values.
      Setting $\bbp=\BBP(F_Z,\gamma)$,  then for any $i=1,\ldots,r$ where  $x_i \le x_+$,
        \begin{eqnarray} \label{bbp:eq}
            y_{i,n} \aslim z_+, \,\,\qquad n\to\infty.
        \end{eqnarray}
In short, {\it sufficiently small signal singular values $x_i$ do not produce
outliers beyond the noise bulk edge}. 
As we are about to see, the situation for $x_i > x_+$ is
quite different.
The split between $x_i \gtrless x_+$ is sometimes called the Baik-Ben
Arous-P\'ech\'e (BBP) phase transition,
after the original example of this type \cite{Baik2005}.
    \item {\bf Limiting location of outlier singular values:}
        The limiting value of $y_{i,n}$ is {\it not} its underlying population 
        counterpart $x_i$.
        There is instead a functional $\Yc(x; \FZ,\gamma)$, depending on 
        $\FZ$ and $\gamma$, describing this limiting behavior.
        The function of $x$ obtained by fixing
        $\FZ$, and $\gamma$ \--
        $ \Yc(x) \equiv \Yc(x; \FZ,\gamma)$ \-- explains how the asymptotic limit varies 
        with theoretical singular value $x$. For any $i=1,\ldots,r$    where $x_i\ge x_+ \equiv \BBP$,
        \begin{eqnarray} \label{displacement:eq}
          y_{i,n} \aslim y_{i,\infty} = \Yc(x_i), \,\,\qquad n\to\infty.
        \end{eqnarray}
The function $x\mapsto \m{Y}(x)$ is strictly increasing and one-to-one between $[x_+,\infty)$ and $[z_+,\infty)$. 
    \item {\bf No limiting cross-correlation of non-corresponding principal subspaces: }
    For $i \neq j$, the empirical dyad $\V{u}_{n,i}\V{v}_{n,i}^\T$ ultimately decorrelates from  
        each of the non-corresponding population dyads $\V{a}_{n,j}\V{b}_{n,j}^\T$.
        For any $i,j=1,\ldots,r$ such that $i\ne j$,
        \begin{equation}\label{decoupling:eq}
            \langle \V{a}_{n,i}\,,\,\V{u}_{n,j}\rangle \cdot
        \langle \V{b}_{n,i}\,,\,\V{v}_{n,j}\rangle \aslim 0 , \,\,\qquad n\to\infty \,.
        \end{equation}
        
    \item {\bf Limiting cross-correlation of corresponding principal subspaces:}
        Suppose the signal singular values $(x_i)_{i=1}^r$ are distinct.
        The empirical dyad $\V{u}_{n,i}\V{v}_{n,i}^\T$ {\it does}  correlate with 
        its theoretical counterpart $\V{a}_{n,i}\V{b}_{n,i}^\T$, but not perfectly. 
         The limit is described by a functional $\Cc(x; \FZ,\gamma)$   depending on $x$, 
         $\FZ$ and $\gamma$. Fixing once again $\FZ$ and $\gamma$, we get 
        a function of $x$, $\Cc(x) \equiv \Cc(x; \FZ,\gamma)$,  such that,
        with $\bbp = \BBP(\FZ,\gamma)$,
        \begin{eqnarray} \label{rotation:eq}
        \langle \V{a}_{n,i}\,,\,\V{u}_{n,i}\rangle \cdot
        \langle \V{b}_{n,i}\,,\,\V{v}_{n,i}\rangle 
        \aslim
        \begin{cases}
        \Cc(x_i) & x_i> x_+ \\
        0 & x_i \le  x_+ 
        \end{cases} \,.
        \end{eqnarray}
\end{enumerate}

We now give formulas for $\BBP$ and the mappings $\m{Y}(\cdot)$ and $\m{C}(\cdot)$, as computed in \cite{benaych2012singular}. For a CDF $H$, let
\begin{equation}\label{eq:varphi}
\varphi(y;H) = \int \frac{y}{y^2-z^2}dH(z)\,,
\end{equation}
which defines a smooth function on $y>\bulkEdge(H)$. Its derivative is
\begin{equation}\label{eq:varphi_d}
\varphi'(y;H) = - \int \frac{y^2+z^2}{(y^2-z^2)^2}dH(z) \,.
\end{equation}
Also define
\begin{equation}\label{eq:varphi_tilde}
\tilde{\varphi}_{\gamma}(y;H) = \gamma \varphi(y;H) + \frac{(1-\gamma)}{y}\,,\quad \tilde{\varphi}'_\gamma(y;H) = \gamma\varphi'(y;H) - \frac{1-\gamma}{y^2} \,.
\end{equation}
Note that $\tilde{\varphi}_\gamma(y;H)$ is simply $\varphi(y;\tilde{H}_\gamma)$, where $\tilde{H}_\gamma(z) = \gamma H(z) + (1-\gamma)\Ind{z\ge 0}$. This so-called {\it companion 
CDF} $\tilde{H}_\gamma$ describes the same distribution of nonzero singular values as $H$,
diluted by `zero padding' and has the following interpretation: if $Z_n$ is a sequence of $n$-by-$p$ matrices with a limiting singular value distribution $H$, then $Z_n^\T$ has a limiting singular value distribution $\tilde{H}_\gamma$\footnote{Practitioners will recognize that computer software often offers two options for SVD outputs, a 'fat' output with zero padding and a 'thin' output with those superfluous zeros stripped away. If $H$ denotes the LECDF of the 'thin' output singular values,
then $\tilde{H}$ is the corresponding LECDF of the 'fat' outputs.}. Let
\begin{equation}
    \label{eq:D}
\begin{split}
    &\m{D}_\gamma(y;H) \equiv \varphi(y;H)\cdot \tilde{\varphi}_\gamma(y;H)\,, \\ &\m{D}'_\gamma(y;H) \equiv \varphi'(y;H)\cdot \tilde{\varphi}_\gamma(y;H) + \varphi(y;H)\cdot \tilde{\varphi}'_\gamma(y;H) \,.
\end{split}
\end{equation}
To ease the notation in coming paragraphs, we put for short $\m{D}_\gamma(y) = \m{D}_\gamma(y;\FZ,\gamma)$,
and similarly for $\varphi(y)$, $\tilde{\varphi}_\gamma(y)$.
Let $z_+ =\bulkEdge(\FZ)$ denote the bulk edge.
The BBP phase transition location  $x_+ = \BBP(\FZ,\gamma)$ is given by 
\begin{equation}\label{eq:BBP}
   x_+ = \lim_{y\to z_+} \left( \m{D}_\gamma(y) \right)^{-1/2} \,,
\end{equation}
equivalently, $1/x_+^2 = \lim_{y\to z_+} \m{D}_\gamma(y) $. 
It is easy to verify that $\varphi(y)$, $\tilde{\varphi}_\gamma(y)$ and $\m{D}_\gamma(y)$ are non-negative, strictly decreasing functions of $y > z_+$, each tending to $0$ as $y\to\infty$. Thus, $\m{D}_\gamma(\cdot)$ maps the interval $(z_+,\infty)$ bijectively into $(\bbp,0)$; denote by $\m{D}_\gamma^{-1}(\cdot) \equiv \m{D}_\gamma^{-1}(\cdot; \FZ) $ the inverse mapping.

We finally can give formulas for the 
fundamental phenomenological limits described earlier.
The limiting empirical signal singular value $y_{i,\infty} = \m{Y}(x_i) \equiv \m{Y}(x_i; \FZ,\gamma)$ obeys
\begin{equation}
    \label{eq:Yc}
    \m{Y}(x) = \m{D}^{-1}_\gamma\left( \frac{1}{x^2}\right) \,, \quad\textrm{ for } x>\bbp\,,
\end{equation}
equivalently, $\m{D}_\gamma(\m{Y}(x))=1/x^2$. 
The asymptotic cosine $\m{C}(x) \equiv \m{C}(x;\FZ,\gamma)$ is given by 
\begin{equation}
    \label{eq:Cc}
    \m{C}(x) = -\frac{2}{x^3}\cdot \frac{1}{\m{D}'_\gamma\left(\m{Y}(x)\right)} \,,\quad\textrm{ for }x>\bbp \,.
\end{equation}
One may readily verify that $\m{C}(x)\ge 0$ for all $x>x_+$.\footnote{
Note that in \cite{benaych2012singular}, Eq. (\ref{rotation:eq}) is only stated as $|\langle \V{a}_{n,i}\,,\,\V{u}_{n,i}\rangle \cdot
        \langle \V{b}_{n,i}\,,\,\V{v}_{n,i}\rangle|
        \aslim \m{C}(x_i)$ (assuming $x_i>x_+$), with the absolute value. One may readily verify that the limiting cross-correlation must, in fact, be non-negative: Start with
        \[
        y_{i,n} = \V{u}_{i,n}^\T Y_n \V{v}_{i,n} = \V{u}_{i,n}^\T X_n \V{v}_{i,n} + \V{u}_{i,n}^\T Z_n \V{v}_{i,n} \le \V{u}_{i,n}^\T X_n \V{v}_{i,n} + \|Z_n\| \,.
        \]
        By Eq. (\ref{decoupling:eq}), $\V{u}_{i,n}^\T X_n \V{v}_{i,n} \sim x_i \langle \V{a}_{n,i}\,,\,\V{u}_{n,i}\rangle \cdot
        \langle \V{b}_{n,i}\,,\,\V{v}_{n,i}\rangle$, while $\|Z_n\|\to z_+$, $y_{i,n}\to y_{i,\infty}\ge z_+$. The conclusion follows.
        } 

We sometimes adopt the implicit parameterization of $\m{C}(x)$ in terms of $y=\m{Y}(x)$:
\begin{equation}
    \label{eq:Cc_y}
    \m{C}(x) = -2 \cdot \frac{\left( \m{D}_\gamma(y) \right)^{3/2}}{\m{D}'_\gamma(y)}\,,\quad\textrm{ where }y=\m{Y}(x)\,\textrm{ and } x>\bbp \,.
\end{equation}

\paragraph{Existence of a BBP phase transition} 
Recall that $\bbp = \BBP(\FZ,\gamma)$ gives the threshold such that whenever $x_i \le x_+$, one \emph{does not} observe an outlier singular value away from the bulk of $Y$. Not all noise distributions display this phase transition phenomenon, i.e. they may not exhibit $\bbp>0$: indeed,
by Eq. (\ref{eq:BBP}), $\BBP>0$ if and only if $\lim_{y\to\bulkEdge(\FZ)}\m{D}_\gamma(y;\FZ)<\infty$, equivalently, $\lim_{y \to \bulkEdge(\FZ)} \int (y-z)^{-1} d\FZ(z)<\infty$. This condition entails that 
near its own bulk edge, $\FZ$ is not ``thick''. 
For example, when $\FZ$ has a density in a neighborhood of $\bulkedge=\bulkEdge(\FZ)$ that behaves as $\fZ(z)\sim C(z-\bulkedge)^\alpha$, this condition is satisfied whenever $\alpha>0$. For example, the family of noise distributions described in Section~\ref{sec:correlated-noise} is of this type (with $\alpha=1/2$); they all display a BBP phase transition. Moreover, Assumption~\ref{assum:dense} gives $\lim_{y\to\bulkedge} \m{D}'_\gamma(y;\FZ)=-\infty$. From Eq. (\ref{eq:Cc_y}), this means that if $\bbp \equiv\BBP(\FZ,\gamma)>0$, then $\m{C}(x)=0$ as $x\to\bbp$ from the right. Curiously, when $\bbp=0$, this does not have to be the case. For instance, when $d\FZ = \delta_1$ and $\gamma=1$, an easy computation shows $\bbp=0$ and $\m{C}(x)=\frac{y^3}{y(y^2+1)}$, where $y=\m{Y}(x)$ and $\bulkEdge(\FZ)=1$. We see that $\lim_{x\to\bbp}\m{C}(x)=1/2$: this means that an \emph{arbitrarily small} signal already creates a very strong bias in the direction of the principal singular vectors of $Y_n$.

\paragraph{Notation} Throughout the paper, we use the notation 
\[
y_{i,\infty} = \begin{cases}
\m{Y}(x_i)\quad&\textrm{ when }x_i > \BBP\,,\\
\bulkEdge(\FZ)\quad&\textrm{ when } x_i \le \BBP \,.
\end{cases}
\]
By the results of \cite{benaych2012singular}, the singular values of $Y_n$, $y_{1,n}\ge \ldots \ge y_{p,n}$, satisfy $y_{i,n} \aslim y_{i,\infty}$ for any \emph{fixed} index $i$ (for $i>r$ this is an easy consequence of the interlacing inequality for singular values).

\subsection{Noise matrices with correlated columns}
\label{sec:correlated-noise}

We conclude this section by mentioning an important family of noise matrices satisfying our assumptions, namely, noise matrices with independent rows, having cross-column correlations. We consider noise matrices of the form $Z_n = W_n S_n^{1/2}$, where $(W_n)$ and $(S_n)$ are sequences of matrices obeying:
\begin{itemize}
    \item $W_n$ is an $n$-by-$p$ matrix with i.i.d elements. Specifically, let $W$ denote a random variable with moments
    \[
    \E(W) = 0,\quad \E(W^2)=1,\quad \E(W^4)<\infty \,.
    \]
    The entries of $W_n$ are i.i.d, with law $(W_n)_{ij} \overset{d}{=} n^{-1/2}W$, that is, scaled to variance $1/n$. Finiteness of the fourth moment of $W$ is essential; see \cite{bai1998no}.
    
    \item $(S_n)$ is a sequence of non-random $p$-by-$p$ matrices. Let $\lambda_1(S_n)\ge \ldots \ge \lambda_p(S_n)$ be the eigenvalues of $S_n$, and denote by $\FSn(\lambda) = p^{-1}\sum_{i=1}\Ind{\lambda_i(S_n)\le \lambda}$ the empirical CDF of its eigenvalues. We assume that the sequence  $(\FSn)$ converges to a compactly supported 
    LECDF $\FS$. 
    Moreover, denoting the upper and lower edges of the support by 
    \[
    \UpperEdgeS(\FS) = \sup\{\lambda\,:\,\FS(\lambda)<1\}\,,\quad  \LowerEdgeS(\FS) = \inf\{\lambda\,:\,\FS(\lambda)>0\}\,,
    \]
    we assume that $\lambda_1(S_n)\to \UpperEdgeS(\FS)$ and $\lambda_p(S_n)\to \LowerEdgeS(\FS)$. 
\end{itemize}

We refer to a random matrix ensemble of the form above as a {\bf noise matrix with correlated columns}. They appear, for example, in the following scenario: We observe $n$ i.i.d $p$-dimensional samples $\V{y}_i = \V{x}_i + \V{z}_i$, where $\V{x}_i$ are instances of a signal vector, assumed to be supported in an $r$-dimensional subspace, and $\V{z}_i=S_n^{1/2}\V{w}_i$ is a vector of correlated noise, with covariance $\mathrm{Cov}(\V{w}_i)=S_n$. Let $Y_n$ be the $n$-by-$p$ matrix, whose rows are $n^{-1/2}\V{y}_i^\T$ (define $X_n$, $W_n$ and $Z_n$ similarly). Then $Y_n = X_n + Z_n = X_n + W_n S_n^{1/2}$, where  $\rank(X_n) \le r$, by assumption. For any estimator $\hat{X}=\hat{X}(Y_n)$, let $\hat{\V{x}}_1,\ldots,\hat{\V{x}}_n$ be the rows of the matrix $n\cdot \hat{X}$. Then the Frobenius loss is just the average $L^2$ loss in estimating the signal samples ${\V{x}}_i$ by the vectors $\hat{\V{x}}_i$: $\|X_n-\hat{X}(Y_n)\|_F^2=n^{-1}\sum_{i=1}^n \|\V{x}_i-\hat{\V{x}}_i\|_2^2$. 

Much is known about the singular values of $Z_n$:
\begin{enumerate}
    \item {\bf Limiting singular value distribution:} $\FZn$ converges weakly almost surely to a compactly supported law $\FZ$. This limiting law is defined in terms of its Stietljes transform\footnote{$m(y)$ is in fact the Stieltjes transform of the limiting eigenvalue distribution of $Z^\T Z$: \[m(y)=\int (z-y)^{-1}dF_{Z^\T Z}(z) = \int (z^2-y)^{-1}d\FZ(z) \,.\]}, $m(y)=\int (z^2-y)^{-1}d\FZ(z)$; $m(y)$ is the unique Stieltjes transform
     satisfying 
    \[
    m(y) = \int \frac{1}{t \left(1-\gamma-\gamma y m(y)\right) - y} d\FS(t)\,,\quad\textrm{ for all } y\in \C\setminus \R \,.
    \]
    
    \item {\bf Extreme singular values:} The largest and smallest singular values of $Z_n$ converge almost surely to the upper and lower edges of the support of the limiting law\footnote{{Below, $\m{Z}_{-}(\FZn)$ denotes the smallest singular value of $Z_n$. Recall that we assume $p\le n$.}} $\FZ$:
    \[
    \bulkEdge(\FZn) \aslim \bulkEdge(\FZ)\,,\quad \m{Z}_{-}(\FZn)\aslim \m{Z}_{-}(\FZ) \,.
    \]
    
    \item {\bf Behavior at the edge of the bulk:} On $\R\setminus\{0\}$, the limiting law $\FZ$ is absolutely continuous with respect to Lebesgue measure. Denoting by $\fZ$ the corresponding density, we have {$\fZ(z)\sim C\cdot\sqrt{\left|z-\bulkEdge(\FZ)\right|}$ as $z\nearrow \bulkEdge(\FZ)$.} This is the same behavior as a Mar\v{c}enko-Pastur law, corresponding to $S_n=I$. 
    This edge behavior will motivate one of our strategies for estimating $\FZn$ from the observed singular values $\FYn$ ({\em imputation}, see Section~\ref{sec:main:algorithm}); this is an important step in the \NAME~algorithm.
    Also, note that in particular, the limiting noise CDF $\FZ$ satisfies Assumption~\ref{assum:dense}. 
    
    \item {\bf CLT for linear spectral statistics:} Denote
    \[
    \underline{y} = (1-\sqrt{\gamma})^2\cdot \LowerEdgeS(\FS)\,,\quad \overline{y} = (1+\sqrt{\gamma})^2\cdot \UpperEdgeS(\FS) \,.
    \]
    Note that $\underline{y}^{1/2} \le \mathcal{Z}_{-}(\FZ) \le \bulkEdge(\FZ) \le \overline{y}^{1/2}$. Let $g$ be analytic on an open domain in $\C$ containing the closed interval $[\underline{y},\overline{y}]$. Set
    \[
    \Phi_n[g] = \int g(z^2) \left( d\FZ-d\FZn \right)(z) \,,
    \]
    which is a random variable.\footnote{A random variable of the form $\int h(z)\FZn(z)=p^{-1}\sum_{i=1}^n h(z_{i,n})$ is called a linear spectral statistic.} Then the sequence $p\cdot \Phi_n[g]$ is tight. If, moreover, $\E(W^4)=3$, then $p\cdot \Phi_n[g]$ converges in law to a Gaussian random variable. 
\end{enumerate}
For properties (1) and (2), we refer to \cite{bai1998no} and the references therein (see also the book \cite{bai2010spectral}). Property (3) is proved in \cite{silverstein1995analysis}. Property (4) is proved in \cite{bai2004}.

\begin{figure}[h]
    \centering
    \includegraphics[width=0.32\textwidth]{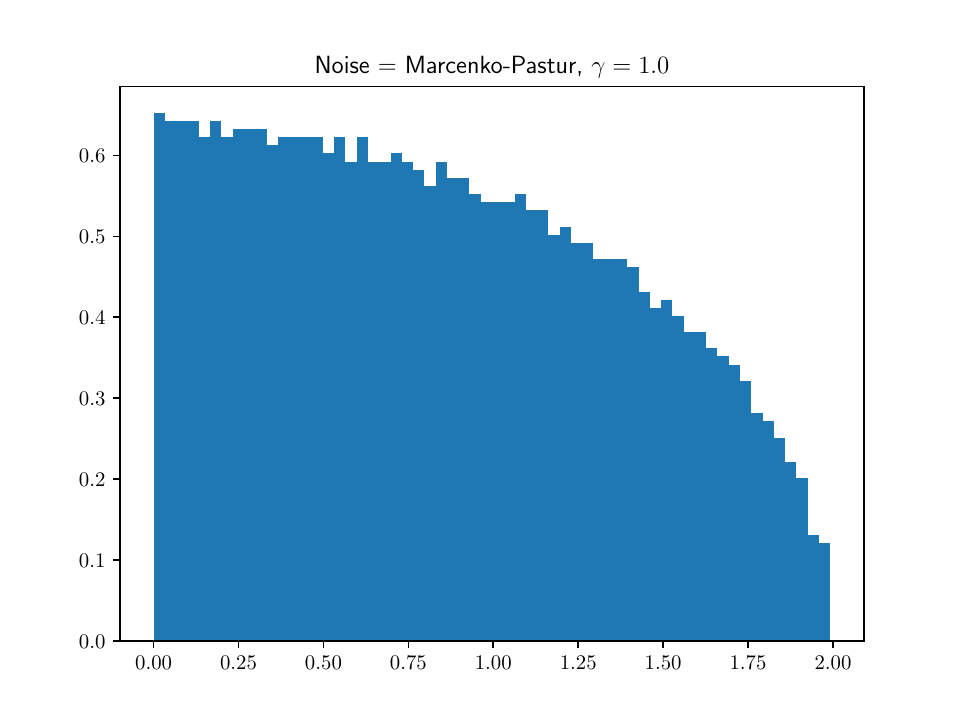}
    \includegraphics[width=0.32\textwidth]{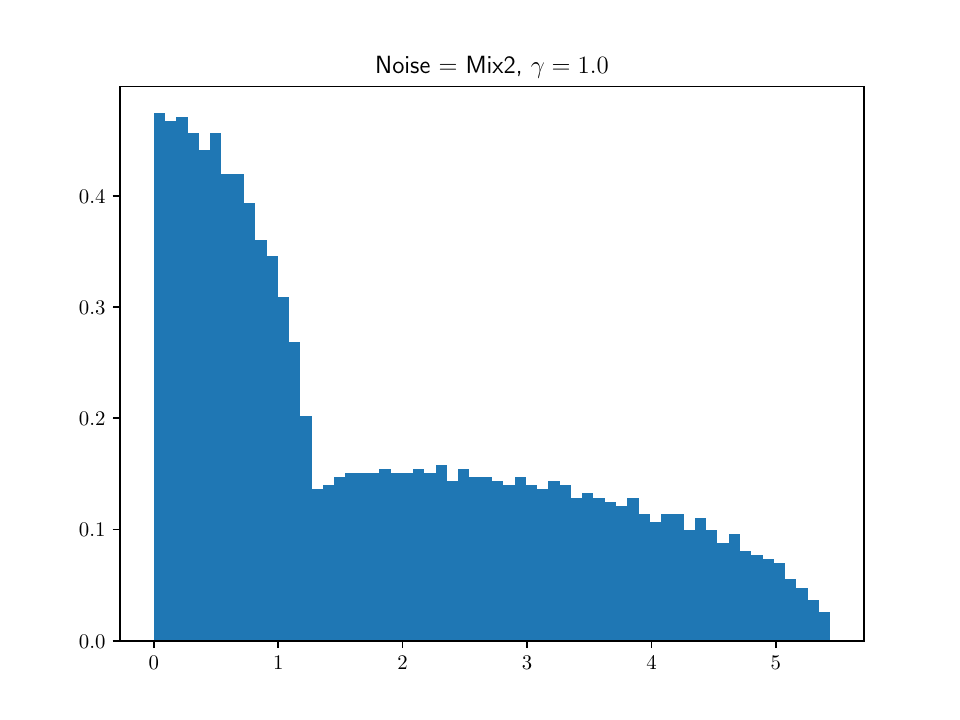}
    \includegraphics[width=0.32\textwidth]{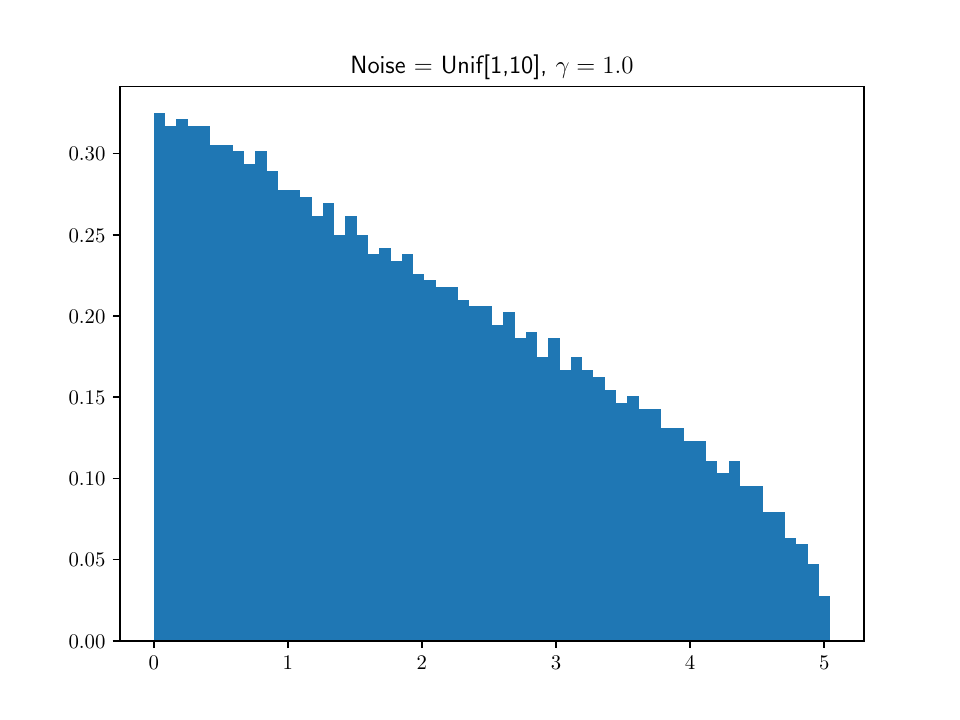}
    \newline
    \includegraphics[width=0.32\textwidth]{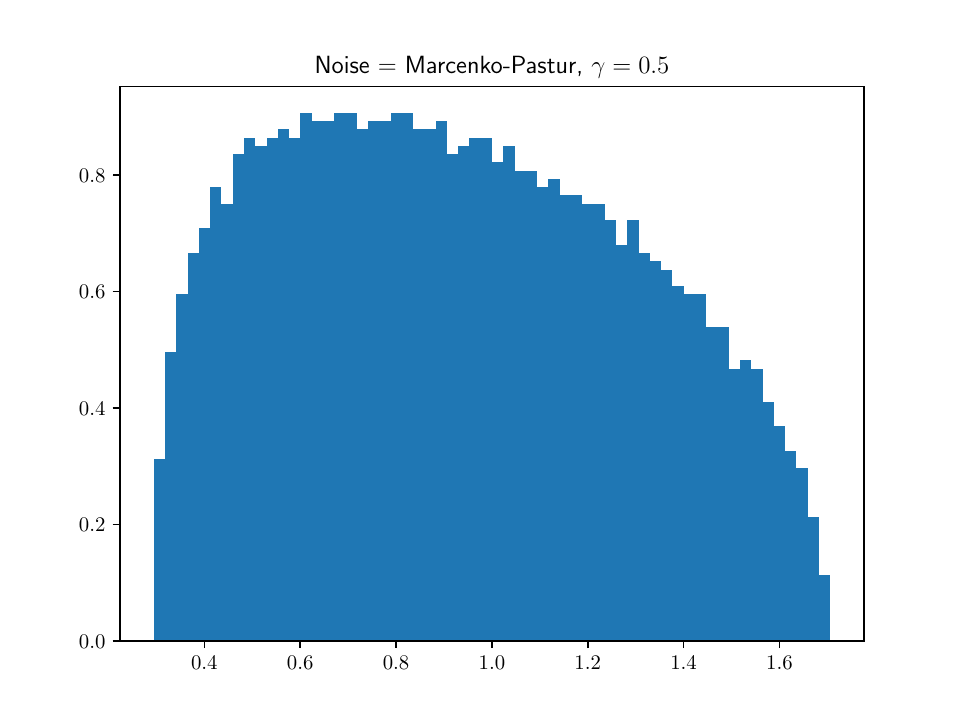}
    \includegraphics[width=0.32\textwidth]{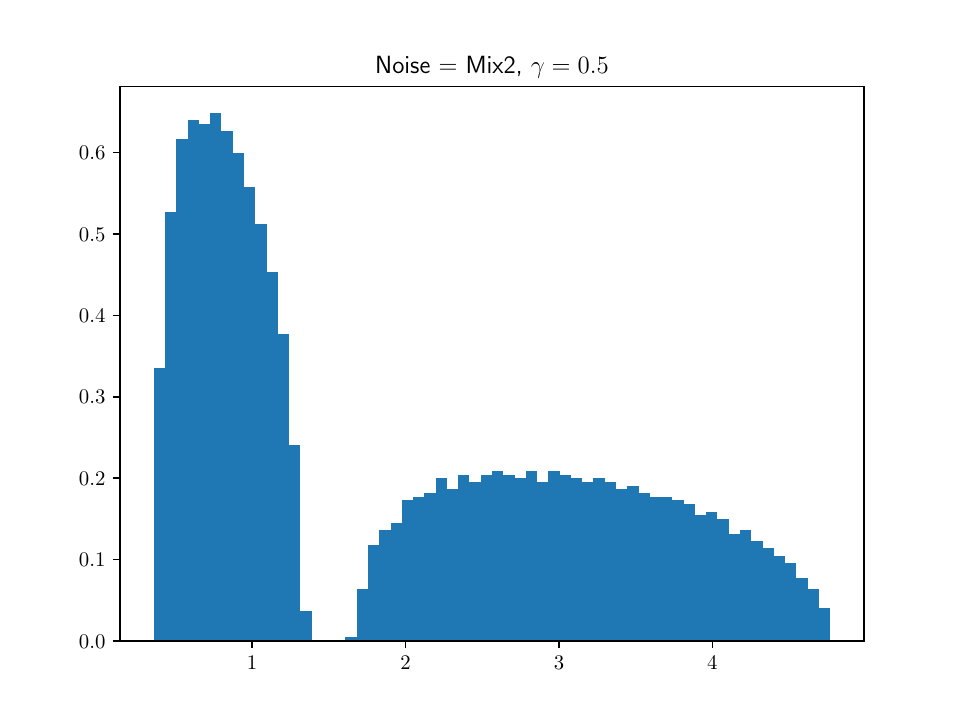}
    \includegraphics[width=0.32\textwidth]{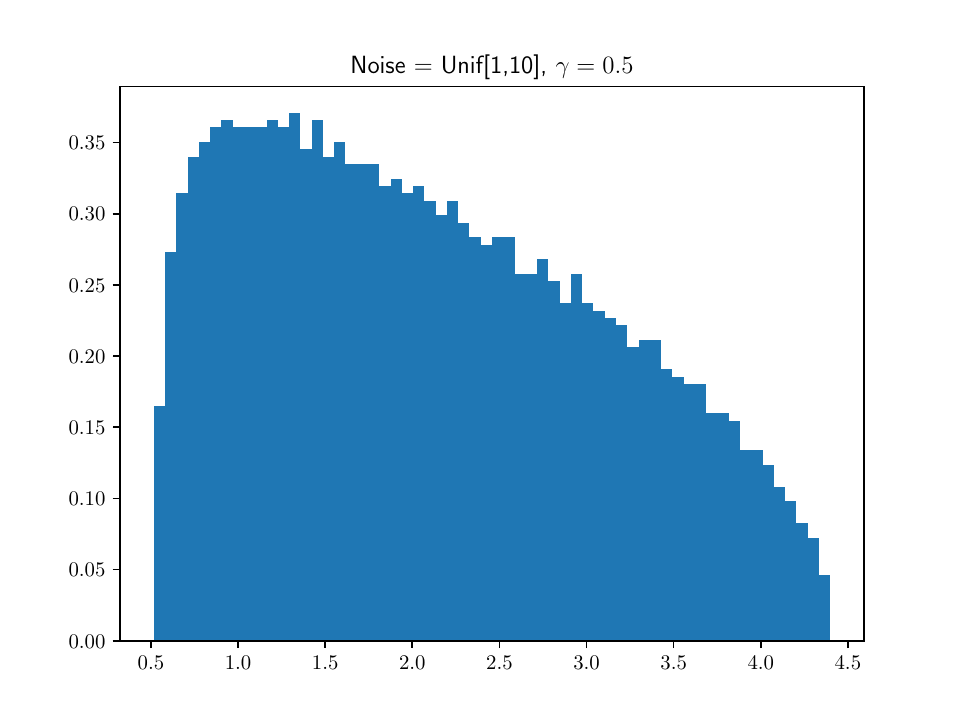}
    \caption{Several empirical noise singular value distributions that come from the model in Section~\ref{sec:correlated-noise}. 
    Left to right: covariance eigenvalue distribution: (i) $d\FS=\delta_1$ (giving a Mar\v{c}enko-Pastur bulk); (ii) An equal mix of two atoms, $d\FS = \frac12 \delta_1 + \frac12 \delta_{10}$; (iii) $\FS$ uniform on $[1,10]$. Top: shape $\gamma=1$; bottom: $\gamma=0.5$. Each plot is the histogram of singular values from a single random $n\times p$ matrix, with $p=3000$ and $n=p/\gamma$. 
    }
    \label{fig:Figure1}
\end{figure}

As a final remark, we mention that when the covariance matrix $S_n$ is invertible and known (or can be consistently estimated with respect to the operator norm), estimating $X_n$ using the leading singular vectors of $Y_n$ is sub-optimal. Instead, it is better to first ``whiten'' the noise, that is, compute $Y_n^w = Y_n S_n^{-1/2}=X_n S_n^{-1/2} + W_n$. Letting $\V{u}^w_{i,n}$ and $\V{v}^w_{i,n}$ be respectively the left and right singular vectors of $Y_n$, we can ``recolor'' the right singular vectors, $\V{v}^c_{i,n}=S_n^{1/2}\V{v}_{i,n}^w/\|S_n^{1/2}\V{v}_{i,n}^w\|$. Under a uniform prior on the signal singular vectors, as we assume in this paper, and when $W_n$ is i.i.d Gaussian, the correlations between the signal singular vectors and empirical singular vectors can be shown to be \emph{stronger} in the whiten-then-recolor scheme, see \cite{leeb2018optimal,hong2018asymptotic} for details:
\[
\lim_{n\to\infty}\langle \V{a}_{i,n}, \V{u}^w_{i,n} \rangle \langle \V{b}_{i,n},\V{v}_{i,n}^c \rangle \ge \lim_{n\to\infty} \langle \V{a}_{i,n}, \V{u}_{i,n} \rangle \langle \V{b}_{i,n},\V{v}_{i,n} \rangle \,.
\]

\section{Results}
\label{sec:results}

\paragraph{Outline} We start by developing a theory for optimal hard thresholding, under the assumption that the noise singular value distribution $\FZ$ is known. We show that there is an asymptotically uniquely admissible hard threshold $\optThresh(\FZ)$, which is given as a certain functional $T_\gamma$  of the asymptotic aspect ratio $\gamma$ and the limiting noise CDF $\FZ$. Relying on the fact that $\SEn[\V{x}|\theta]$ can only take a finite number of values as $\theta$ varies, we show that thresholding at $\optThresh(\FZ)$ has rather strong optimality properties: it in fact attains oracle loss, at finite $n$, with probability increasing to $1$ as $n\to\infty$. 
We then move on to the practical setting of interest, in which $\FZ$ is unknown. We propose a method for consistently estimating $\optThresh(\FZ)$ from the observed data $Y_n$. We do this by applying the optimal threshold functional $T_{p/n}(\cdot)$ on a judiciously transformed version of $\FYn$, the empirical singular value distribution of $Y_n$. The continuity of the functional with respect to the CDF and the shape parameter then implies that the resulting quantity is a consistent estimator for $\optThresh(\FZ)$; the optimality properties of the adaptive algorithm then follow from the previously developed theory.  
Unless otherwise stated, we always operate under assumptions (1)-(6) of Section~\ref{sec:setup}.

\subsection{A theory for optimal singular value thresholding}

Consider the  function $\thresh\mapsto \ASE[\V{x}|\thresh]$ defined for $\thresh > 0$ by
\begin{equation}\label{eq:ASEt}
	\ASE[\V{x}|\thresh] = \sum_{i=1}^r R(x_i|\thresh),\quad\textrm{where} \quad R(x_i|\thresh) = \Ind{y_{i,\infty}\le \thresh}\cdot R_0(x_i) + \Ind{y_{i,\infty}> \thresh}\cdot R_1(x_i) \,,
\end{equation}
and
\begin{equation}\label{eq:R0_R1}
	R_0(x)=x^2,\quad R_1(x)=
	\begin{cases}
	x^2 + \m{Y}(x)^2 -2x\m{Y}(x)\m{C}(x) \quad&\textrm{if }x>\bbp,\\
	x^2 + \bulkedge^2 \quad&\textrm{if }x\le \bbp.
	\end{cases} 
\end{equation} 
Recall that $y_{i,\infty}=\bulkedge$ $(\equiv\bulkEdge(\FZ))$ 
when $x_i\le \bbp $ $(= \BBP(\FZ,\gamma))$ and $y_{i,\infty}=\m{Y}(x_i)$ when $x>\bbp$.
Define also
\begin{equation}\label{eq:ASEopt}
	\ASE^*[\V{x}] = \sum_{i=1}^r R^*(x_i),\quad \textrm{where }\quad R^*(x_i)=
    \min\{R_0(x_i),R_1(x_i)\} .
\end{equation}
Clearly, $R^*(x)\le R(x|\theta)$ for any $x$ and $\thresh$, which means $\ASE^*[\V{x}]\le \ASE[\V{x}|\thresh]$. 

It is easy to verify that for almost every $\thresh>\bulkEdge(\FZ)$, the loss of thresholding at the fixed point $\thresh$ converges: $\lim_{n\to\infty}\SEn[\V{x}|\thresh]=\ASE[\V{x}|\thresh]$ almost surely; see Lemma~\ref{lem:as-limit} for a precise statement. We start by finding the threshold that attains minimum asymptotic loss. 

\begin{defn}[{\bf Optimal threshold functional}]
    For a compactly supported CDF $H$ and $\gamma\in (0,1]$, let
    \begin{equation}
        \label{eq:F}
       \TCrit(y;H) = y\cdot \frac{\m{D}_\gamma'(y;H)}{\m{D}_\gamma(y;H)} = y \cdot \left( \frac{\varphi'(y;H)}{\varphi(y;H)} + \frac{\tilde{\varphi}_\gamma'(y;H)}{\tilde{\varphi}_\gamma(y;H)} \right) \,;
    \end{equation}
    this is well-defined for $y>\bulkEdge(H)$. Define the functional of $H$
        \begin{equation}
        \label{eq:optFunc}
        \optThresh(H) = \inf \left\{ y\,:\,y>\bulkEdge(H)\textrm{ and }\TCrit(y;H)\ge -4 \right\}\,. 
    \end{equation}
    We call this the {\bf optimal threshold functional}. 
\end{defn}

\begin{lemma}\label{lem:main:F-props}
The following holds:
\begin{enumerate}
    \item For any $H$ and $\gamma$, $y\mapsto\TCrit(y;H)$ is negative and increasing, with $\TCritGamma(\infty)=-2$. 
    \item Assume that $H$ is compactly supported and satisfies $\lim_{y\to\bulkEdge(H)}\int (y-z)^{-2}dH(z)=\infty$  (note that, by assumption, $H=\FZ$ satisfies this). Then $\optThresh(H)$ is the unique number $>\bulkEdge(H)$ satisfying $\TCritGamma( \optThresh(H); H) = -4$. 
    \item Thresholding at $\thresh^*=\optThresh(\FZ)$  minimizes the asymptotic loss:
    \[
    \ASE[\V{x}|\thresh^*] = \min_{\theta}\ASE[\V{x}|\thresh] = \ASE^*[\V{x}] \,.
    \]
    Moreover, $\thresh^*$ is the unique threshold for which the above holds {\bf universally}, for all signals $\V{x}$.   
\end{enumerate}
\end{lemma}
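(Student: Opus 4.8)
\emph{Part 1.} Negativity is immediate from facts already recorded: $\varphi(y;H)$ and $\tilde{\varphi}_\gamma(y;H)$ are positive and strictly decreasing on $y>\bulkEdge(H)$, so $\m{D}_\gamma(y;H)>0$ while $\m{D}'_\gamma(y;H)<0$, whence $\TCrit(y;H)=y\,\m{D}'_\gamma/\m{D}_\gamma<0$. The substantive claim is monotonicity, and I would prove it by noticing that $\TCrit(y;H)=\frac{d}{d(\log y)}\log\m{D}_\gamma(y;H)$, so that it is equivalent to saying $\log\m{D}_\gamma(y;H)$ is \emph{convex in $\log y$}. Since $\m{D}_\gamma=\varphi\cdot\tilde{\varphi}_\gamma$ and $\tilde{\varphi}_\gamma(\cdot;H)=\varphi(\cdot;\tilde{H}_\gamma)$, it is enough to establish this for $\log\varphi(y;\nu)$ with $\nu$ an arbitrary compactly supported law on $[0,\infty)$. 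The device is the substitution $w=y^2$: writing $\mu$ for the push-forward of $\nu$ under $z\mapsto z^2$ (again compactly supported on $[0,\infty)$), one has $\varphi(y;\nu)=\sqrt{w}\int(w-\zeta)^{-1}d\mu(\zeta)$, hence $\log\varphi=\tfrac12\log w+\log\!\int(w-\zeta)^{-1}d\mu(\zeta)$; in the variable $s=\log w$ the first summand is affine, and each $s\mapsto(e^{s}-\zeta)^{-1}$ with $\zeta\ge 0$ is log-convex, so H\"older's inequality makes $s\mapsto\int(e^{s}-\zeta)^{-1}d\mu(\zeta)$ log-convex, proving the claim. Strictness follows similarly --- the relevant H\"older inequality is strict unless $\mu$ is a single atom, and that remaining case (with $\mu=\delta_a$, $a>0$; the degenerate $\mu=\delta_0$ is excluded since $d\FZ$ is nontrivial) is handled by a direct second-derivative computation --- so $\TCrit(\cdot;H)$ is in fact strictly increasing for nontrivial $H$. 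Finally, $\TCritGamma(\infty)=-2$ follows from the expansions $\varphi(y;H)=y^{-1}+O(y^{-3})$ and $\tilde{\varphi}_\gamma(y;H)=y^{-1}+O(y^{-3})$, which give $y\varphi'/\varphi\to -1$ and $y\tilde{\varphi}'_\gamma/\tilde{\varphi}_\gamma\to -1$.

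\emph{Part 2.} Given Part 1, it remains only to show $\TCrit(y;H)\to -\infty$ as $y\searrow z_+:=\bulkEdge(H)$. For $0\le z\le z_+<y$ I would use the elementary bounds $\frac{y}{y^2-z^2}\le(y-z)^{-1}$ and $\frac{y^2+z^2}{(y^2-z^2)^2}\ge\tfrac14(y-z)^{-2}$, which give $\varphi(y;H)\le\int(y-z)^{-1}dH(z)$ and $-\varphi'(y;H)\ge\tfrac14\int(y-z)^{-2}dH(z)$; combining with Cauchy--Schwarz, $\int(y-z)^{-1}dH\le\big(\int(y-z)^{-2}dH\big)^{1/2}$, yields $-\varphi'(y;H)/\varphi(y;H)\ge\tfrac14\big(\int(y-z)^{-2}dH(z)\big)^{1/2}$, which tends to $\infty$ as $y\searrow z_+$ precisely by the standing thickness hypothesis; hence $\TCrit(y;H)\le y\,\varphi'(y;H)/\varphi(y;H)\to -\infty$. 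Together with Part 1 this makes $\TCrit(\cdot;H)$ a continuous, strictly increasing bijection of $(z_+,\infty)$ onto $(-\infty,-2)$, so the intermediate value theorem gives a unique $y^{*}>z_+$ with $\TCrit(y^{*};H)=-4$; since $\{y>z_+:\TCrit(y;H)\ge -4\}$ is then the half-line $[y^{*},\infty)$, its infimum equals $y^{*}$, i.e. $\optThresh(H)=y^{*}$.

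\emph{Part 3.} I would start from $R^{*}(x)\le R(x|\thresh)$ for all $x,\thresh$ (hence $\ASE^{*}[\V{x}]\le\ASE[\V{x}|\thresh]$), which reduces the claim to determining for which $\thresh$ one has $R(x_i|\thresh)=R^{*}(x_i)$ for every $i$ and \emph{every} signal $\V{x}$. Set $\thresh^{*}=\optThresh(\FZ)$, which exceeds $\bulkEdge(\FZ)$ by Part 2. For a component with $x_i\le\bbp$ one has $y_{i,\infty}=\bulkEdge(\FZ)<\thresh^{*}$, so $R(x_i|\thresh^{*})=R_0(x_i)=R^{*}(x_i)$ automatically (the knife-edge $x_i=\bbp$ is included because $\m{C}(\bbp)=0$ forces $R_1(\bbp)\ge R_0(\bbp)$). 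For $x_i>\bbp$, set $y_i=\m{Y}(x_i)$; then thresholding at $\thresh$ keeps the $i$-th component iff $y_i>\thresh$, whereas the $R^{*}$-optimal choice is to keep iff $R_1(x_i)\le R_0(x_i)$, i.e. iff $\Delta(x_i)\le 0$ where $\Delta(x):=\m{Y}(x)-2x\m{C}(x)$ (using $R_1-R_0=\m{Y}\cdot\Delta$). The one computation to carry out: substituting $x^{2}=1/\m{D}_\gamma(y)$ with $y=\m{Y}(x)$ into \eqref{eq:Cc_y} gives
\[
\Delta(x)\cdot\frac{\m{D}'_\gamma(y)}{\m{D}_\gamma(y)}\;=\;y\,\frac{\m{D}'_\gamma(y)}{\m{D}_\gamma(y)}+4\;=\;\TCrit(y;\FZ)+4\,,\qquad y=\m{Y}(x)\,.
\]
Because $\m{D}'_\gamma/\m{D}_\gamma<0$ and $\TCrit(\cdot;\FZ)$ is strictly increasing with $\TCrit(\thresh^{*};\FZ)=-4$, this yields $\Delta(x)<0\iff\m{Y}(x)>\thresh^{*}$ and $\Delta(x)>0\iff\m{Y}(x)<\thresh^{*}$. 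Hence at $\thresh=\thresh^{*}$ the keep/discard decision on every component coincides with the $R^{*}$-optimal one, so $\ASE[\V{x}|\thresh^{*}]=\ASE^{*}[\V{x}]=\min_\thresh\ASE[\V{x}|\thresh]$ for every $\V{x}$. For uniqueness I would argue by contradiction: given $\thresh\ne\thresh^{*}$, pick $\tau$ strictly between $\thresh$ and $\thresh^{*}$ with $\tau>\bulkEdge(\FZ)$, and take $\V{x}$ to be the rank-one signal whose one nonzero singular value $x_1>\bbp$ satisfies $\m{Y}(x_1)=\tau$ (possible since $\m{Y}$ maps $(\bbp,\infty)$ bijectively onto $(\bulkEdge(\FZ),\infty)$); thresholding at $\thresh$ keeps $x_1$ iff $\tau>\thresh$, the optimal action keeps $x_1$ iff $\tau>\thresh^{*}$, and these disagree because $\tau$ lies strictly between $\thresh$ and $\thresh^{*}$, so $R(x_1|\thresh)>R^{*}(x_1)$ and $\ASE[\V{x}|\thresh]>\ASE^{*}[\V{x}]$.

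The only genuinely non-routine point is the strict monotonicity in Part 1 --- equivalently, the log-convexity of $\log\m{D}_\gamma$ as a function of $\log y$. Everything downstream depends on it: the uniqueness of the root of $\TCrit(\cdot;H)=-4$ in Part 2, and the fact that $\Delta$ changes sign exactly once in Part 3, both follow from that monotonicity combined with the single identity $\Delta\cdot\m{D}'_\gamma/\m{D}_\gamma=\TCrit+4$.
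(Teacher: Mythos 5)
Your proposal is correct and follows essentially the same route as the paper: the paper's Lemma~\ref{lem:crossing} establishes monotonicity of $\TCritGamma$ by writing $y\varphi'/\varphi$ as a derivative of $\log\varphi$ in the variable $\log y$ and invoking log-convexity of the integrand (your $s=\log y^2$ substitution is the same up to a factor of $2$), and Part~3 is the same sign analysis, since your identity $\Delta\cdot\m{D}'_\gamma/\m{D}_\gamma=\TCrit+4$ is just a rearrangement of the paper's formula $R_1-R_0=y^2\bigl(1+4/\TCritGamma(y)\bigr)$. Your filled-in Cauchy--Schwarz bound in Part~2 and your explicit handling of the strictness of log-convexity (via Hölder's equality cases, excluding the degenerate atom at $0$) are both correct and somewhat more careful than the terse supplementary argument the paper gives.
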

Lemma~\ref{lem:main:F-props} is proved in Section~\ref{sec:proofs:fixed-threshold}.

Note that $\thresh\mapsto \ASE[\V{x}|\thresh]$ is piecewise constant, with jumps at $y_{1,\infty},\ldots,y_{r,\infty}$. This means that its minimum is actually attained on an \emph{interval}:
\begin{defn}[The asymptotic optimal interval]
	Let
	\begin{equation}
		\optIntervL = \max\left\{ y_{i,\infty} \,:\, y_{i,\infty}< \optThresh(\FZ)\right\},\quad 
		\optIntervU = \min\left\{ y_{i,\infty} \,:\, y_{i,\infty}> \optThresh(\FZ)\right\} \,.
	\end{equation}
	Note that since $\optThresh(\FZ)> \bulkedge = \bulkEdge(\FZ,\gamma) $ and $y_{r+1,\infty}=\bulkedge$, we always have, by definition, $\optIntervL\ge \bulkedge$. Moreover, if $y_{1,\infty}\le \optThresh(\FZ)$ then we define $\optIntervU=\infty$.  
\end{defn} 

\begin{lemma}\label{lem:main:opt-interval}
	\begin{enumerate}
		\item Throughout the interval $\thresh\in (\optIntervL,\optIntervU)$,
		$\ASE[\V{x}|\thresh]$ is constant.  Moreover, it attains its minimum there;
		if $\theta_0 \in (\optIntervL,\optIntervU)$, then
		\[
		\ASE[\V{x}| \theta_0]=\min_{\thresh\ge 0}\ASE[\V{x}|\thresh]=\ASE^*[\V{x}] \,.
		\]
		\item Any $\theta_1>\bulkEdge(\FZ)$ {\bf outside} $[\optIntervL,\optIntervU]$ has 
		\[
		\ASE[\V{x}|\theta_1] > \ASE^*[\V{x}] \,.
		\]
		\item {\bf Unique asymptotic admissibility:} $\optThresh(\FZ)$ is in the interior of the asymptotic optimal interval. In fact, it is the only threshold which has optimal asymptotic loss simultaneously for all signals $\V{x}$:
		\[
		\bigcap_{\V{x}\textrm{ signal}} (\optIntervL,\optIntervU) = \{\optThresh(\FZ)\} \,.
		\]
	\end{enumerate}
\end{lemma}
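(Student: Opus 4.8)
The whole lemma reduces to one algebraic identity, so the plan is to establish that first. Fix a spike $x>\bbp$ and write $y=\m{Y}(x)$; by \eqref{eq:Yc} we have $\m{D}_\gamma(y)=1/x^2$ (so $x=\m{D}_\gamma(y)^{-1/2}$), and by \eqref{eq:Cc_y} we have $\m{C}(x)=-2\,\m{D}_\gamma(y)^{3/2}/\m{D}_\gamma'(y)$. Substituting these into the definition \eqref{eq:R0_R1} of $R_0,R_1$ and using $\TCrit(y;\FZ)=y\,\m{D}_\gamma'(y)/\m{D}_\gamma(y)$ from \eqref{eq:F} gives
\[
R_1(x)-R_0(x)=y^2-2xy\,\m{C}(x)=y^2\left(1+\frac{4}{\TCrit(y;\FZ)}\right)=y^2\cdot\frac{\TCrit(y;\FZ)+4}{\TCrit(y;\FZ)}\,;
\]
this is the same computation that underlies Lemma~\ref{lem:main:F-props}(3). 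Since $\TCrit(\cdot;\FZ)$ is negative and increasing with its unique value $-4$ attained at $\optThresh(\FZ)$ (Lemma~\ref{lem:main:F-props}(1)--(2)), the sign of $R_1(x)-R_0(x)$ is opposite to that of $\TCrit(y;\FZ)+4$, so for $x>\bbp$: $R_1(x)<R_0(x)$ exactly when $\m{Y}(x)>\optThresh(\FZ)$, $R_1(x)>R_0(x)$ exactly when $\m{Y}(x)<\optThresh(\FZ)$, and $R_0(x)=R_1(x)$ when $\m{Y}(x)=\optThresh(\FZ)$. Equivalently, $R^*(x)=R_1(x)$ if $\m{Y}(x)\ge\optThresh(\FZ)$ and $R^*(x)=R_0(x)$ if $\m{Y}(x)\le\optThresh(\FZ)$; while for $x<\bbp$ we simply have $y_{i,\infty}=\bulkedge<\optThresh(\FZ)$ and $R^*(x)=R_0(x)$. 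I expect this identity --- in particular tracking all the signs correctly (the minus in $\m{C}$, the sign of $\m{D}_\gamma'$, the negativity of $\TCrit$) --- to be the only genuinely delicate point; everything else is bookkeeping with the piecewise-constant function $\thresh\mapsto\ASE[\V{x}|\thresh]$.

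Next I would record that $\optIntervL<\optThresh(\FZ)<\optIntervU$ straight from the definitions: every $y_{j,\infty}$ entering the maximum that defines $\optIntervL$ is $<\optThresh(\FZ)$, and the fallback value $\bulkedge$ also satisfies $\bulkedge<\optThresh(\FZ)$ since $\optThresh(\FZ)>\bulkEdge(\FZ)$ by Lemma~\ref{lem:main:F-props}(2), so $\optIntervL<\optThresh(\FZ)$, and symmetrically $\optThresh(\FZ)<\optIntervU$; this already gives the opening assertion of item~3. For item~1, fix $\thresh\in(\optIntervL,\optIntervU)$. By definition of $\optIntervL,\optIntervU$, the only $y_{i,\infty}$ that can lie in the open interval $(\optIntervL,\optIntervU)$ is $\optThresh(\FZ)$ itself, in case it coincides with some $y_{i,\infty}$; for every other $i$, $\thresh$ keeps or drops component $i$ exactly as $\optThresh(\FZ)$ does, so the $i$-th summand of $\ASE[\V{x}|\thresh]$ equals $R^*(x_i)$ by the dichotomy above, and in the coincidence case it also equals $R^*(x_i)$ because there $R_0(x_i)=R_1(x_i)$. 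Summing, $\ASE[\V{x}|\thresh]=\ASE^*[\V{x}]$; and since $\ASE^*[\V{x}]\le\ASE[\V{x}|\thresh']$ for every $\thresh'$ (termwise, $R^*\le R(\cdot|\thresh')$), this is the minimum, which proves item~1.

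For item~2, let $\thresh_1>\bulkEdge(\FZ)$ lie outside $[\optIntervL,\optIntervU]$. If $\thresh_1<\optIntervL$ then $\optIntervL>\bulkedge$, hence $\optIntervL=\m{Y}(x_{i_0})$ for some index $i_0$ with $\m{Y}(x_{i_0})<\optThresh(\FZ)$; thresholding at $\thresh_1$ retains component $i_0$, contributing $R_1(x_{i_0})$, which by the identity strictly exceeds $R_0(x_{i_0})=R^*(x_{i_0})$. If instead $\thresh_1>\optIntervU$ then $\optIntervU<\infty$, hence $\optIntervU=\m{Y}(x_{i_1})$ for some $i_1$ with $\m{Y}(x_{i_1})>\optThresh(\FZ)$; thresholding at $\thresh_1$ drops component $i_1$, contributing $R_0(x_{i_1})$, which by the identity strictly exceeds $R_1(x_{i_1})=R^*(x_{i_1})$. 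In either case one summand strictly exceeds its $R^*$-value and all others are at least theirs, so $\ASE[\V{x}|\thresh_1]>\ASE^*[\V{x}]$.

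Finally, for item~3 the inclusion $\{\optThresh(\FZ)\}\subseteq\bigcap_{\V{x}}(\optIntervL,\optIntervU)$ is immediate from the opening assertion of item~3 proved above. For the reverse, given $\thresh_0\ne\optThresh(\FZ)$ I would exhibit a signal whose asymptotic optimal interval misses $\thresh_0$. If $\thresh_0\le\bulkEdge(\FZ)$ this is automatic, since $\optIntervL\ge\bulkedge$ for every signal. If $\thresh_0>\bulkEdge(\FZ)$, pick $y^*$ strictly between $\thresh_0$ and $\optThresh(\FZ)$ (nonempty as the two differ, and $y^*>\bulkedge$ because both endpoints exceed $\bulkedge$), let $x^*>\bbp$ be the unique value with $\m{Y}(x^*)=y^*$, and consider the rank-one signal $\V{x}=(x^*)$, so $y_{1,\infty}=y^*$. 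Its optimal interval is $(\bulkedge,y^*)$ when $y^*>\optThresh(\FZ)$ and $(y^*,\infty)$ when $y^*<\optThresh(\FZ)$; in both cases $\thresh_0$ lies strictly on the far side of $y^*$ from $\optThresh(\FZ)$, hence outside the interval. Therefore $\bigcap_{\V{x}}(\optIntervL,\optIntervU)=\{\optThresh(\FZ)\}$, completing item~3 and the lemma.
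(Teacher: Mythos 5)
Your proof is correct and, at its core, follows the same route as the paper: establish the sign identity
\[
R_1(x)-R_0(x)=y^2\Bigl(1+\frac{4}{\TCrit(y;\FZ)}\Bigr),\qquad y=\m{Y}(x),
\]
(which is exactly the computation in the proof of Lemma~\ref{lem:main:F-props}), then read off the dichotomy $R_1\lessgtr R_0$ according to $y\gtrless\optThresh(\FZ)$, and finally handle the three parts by bookkeeping on the piecewise-constant function $\thresh\mapsto\ASE[\V{x}|\thresh]$. The paper's own proof is much terser --- it simply points back to Lemma~\ref{lem:main:F-props} for items 1 and 2 (with the ``coincidence'' observation that $R_0=R_1=R^*$ when $\m{Y}(x)=\optThresh(\FZ)$) and declares item 3 ``follows from the definitions''; your write-up makes all of these steps explicit, including the rank-one construction of a signal whose optimal interval excludes a given $\thresh_0\ne\optThresh(\FZ)$, which the paper leaves implicit and actually carries out only in the final paragraph of the proof of Lemma~\ref{lem:main:F-props}. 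One small presentational gain of your version: the paper's proof of part (2) contains what appears to be a misreference (to Lemma~\ref{lem:main:continuity} rather than Lemma~\ref{lem:main:F-props}); your self-contained case analysis ($\thresh_1<\optIntervL$ vs.\ $\thresh_1>\optIntervU$, each forcing one summand strictly above its $R^*$-value) avoids any such ambiguity.
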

Lemma~\ref{lem:main:opt-interval} is proved in Section~\ref{sec:proofs:fixed-threshold}.

\paragraph{The Scree Plot heuristic: a quantitatively interpretation} 
Under our signal model, one could think of a ``natural'' quantification of Cattell's Scree Plot heuristic. Roughly, it hopes to threshold the data 
singular values slightly above the pure-noise bulk edge. 
If this hope is fulfilled, the excess ASE incurred,
compared to the minimal attainable ASE, is
\[
\lim_{\delta\to 0} \ASE[\V{x}|\bulkedge+\delta]-\ASE^*[\V{x}] = \sum_{i\,:\,y_{i,\infty} \in (\bulkedge,T_\gamma(\FZ)) } \left( R_1(x_i)-R_0(x_i) \right)\,.
\]
The excess ASE is proportional to the number of 
barely/moderately emergent
signal singular values, namely, such that $y_{i,\infty}>\bulkedge$ (so that they can be observed as outliers in the spectrum of $Y_n$) but $y_{i,\infty}<T_\gamma(F_Z)$ (meaning that the corresponding empirical singular vectors are too ``noisy'' so to be useful in estimating $X_n$). 
Clearly, in the worst-case scenario, the signal $\V{x}$ consists entirely of barely emergent singular values, so that the excess ASE is proportional to $r=\rank(X_n)$. For a concrete example, consider $\V{x}$  that consists of $r$ distinct singular values, located {\it just slightly } above $\bbp$; in that case, the excess ASE is $r\cdot (R_1(\bbp)-R_0(\bbp))=r \cdot \bulkedge^2$.

It is clear at this point that thresholding at any point in the interior of the asymptotic optimal interval achieves the best asymptotic loss, among all other fixed hard thresholds. Our main result states that, remarkably, one {\bf cannot} come up with a consistently better thresholding strategy, even if given access to the true unknown signal $X_n$:

\begin{thm}\label{thm:lim-oracle-risk}
	\begin{enumerate}
		\item
		Almost surely,
		\[
		\lim_{n\to\infty} \SEn^*[\V{x}] = \ASE^*[\V{x}]\,.
		\]
		
		\item  Let $\thresh \in (\optIntervL,\optIntervU)$ be in the interior of the asymptotic optimal interval, and $\thresh_n$ be any sequence of thresholds (possibly depending on $Y_n$) such that  $\thresh_n \aslim \thresh$. Then
		\[
		\SEn[\V{x}|\thresh_n] \aslim \ASE^*[\V{x}] \,.
		\] 
	\end{enumerate}
	
\end{thm}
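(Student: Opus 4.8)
The plan is to prove part 2 first, deduce the easy (upper-bound) half of part 1 from it, and then supply a matching lower bound; the only non-routine ingredient is a random-matrix input about bulk singular vectors.

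\emph{Part 2.} Since $\thresh\in(\optIntervL,\optIntervU)$, an open interval containing $\optThresh(\FZ)$ and, by construction, no point $y_{i,\infty}$, fix $\epsilon>0$ with $[\thresh-\epsilon,\thresh+\epsilon]\subset(\optIntervL,\optIntervU)$; a.s.\ $\thresh_n\in[\thresh-\epsilon,\thresh+\epsilon]$ for all large $n$. Using $y_{i,n}\aslim y_{i,\infty}$ for each fixed $i\le r$ together with $y_{r+1,n}\le\|Z_n\|\aslim\bulkEdge(\FZ)\le\optIntervL$, one gets that a.s., for all large $n$, the retained set $\{i:y_{i,n}>\thresh_n\}$ equals the deterministic set $I^*:=\{i\le r:y_{i,\infty}>\optThresh(\FZ)\}$. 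On that event, $\SEn[\V{x}|\thresh_n]=\bigl\|X_n-\sum_{i\in I^*}y_{i,n}\V{u}_{i,n}\V{v}_{i,n}^\T\bigr\|_F^2$; expanding the Frobenius norm and inserting the \cite{Benaych-Georges2012} limits $y_{i,n}\aslim\m{Y}(x_i)$, $\langle\V{a}_{i,n},\V{u}_{i,n}\rangle\langle\V{b}_{i,n},\V{v}_{i,n}\rangle\aslim\m{C}(x_i)$, and the vanishing of the non-corresponding products $\langle\V{a}_{i,n},\V{u}_{j,n}\rangle\langle\V{b}_{i,n},\V{v}_{j,n}\rangle$ ($i\ne j$) gives $\SEn[\V{x}|\thresh_n]\aslim\sum_{i\notin I^*}R_0(x_i)+\sum_{i\in I^*}R_1(x_i)=\ASE[\V{x}|\optThresh(\FZ)]=\ASE^*[\V{x}]$, the last two equalities following from Lemma~\ref{lem:main:F-props} and Lemma~\ref{lem:main:opt-interval}. (This is just a data-dependent upgrade of Lemma~\ref{lem:as-limit}, the point being that the active set stabilizes.) Specializing to $\thresh_n\equiv\thresh_0$ constant yields $\SEn^*[\V{x}]\le\SEn[\V{x}|\thresh_0]\aslim\ASE^*[\V{x}]$, so $\limsup_n\SEn^*[\V{x}]\le\ASE^*[\V{x}]$ a.s.

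\emph{Part 1, lower bound.} The minimum defining $\SEn^*[\V{x}]$ is attained at a truncated SVD $[Y_n]_k$, so $\SEn^*[\V{x}]\ge\min_{0\le k\le p}L_n(k)$ with $L_n(k)=\|X_n-[Y_n]_k\|_F^2$. Orthogonality of the SVD components gives, for $k>r$, $L_n(k)=L_n(r)+\sum_{r<i\le k}\bigl(y_{i,n}^2-2y_{i,n}c_{i,n}\bigr)$ where $c_{i,n}:=\langle X_n,\V{u}_{i,n}\V{v}_{i,n}^\T\rangle$; hence for every $k$,
\[ L_n(k)\ \ge\ \min_{0\le j\le r}L_n(j)\ -\ \sum_{i>r}\bigl(2y_{i,n}c_{i,n}-y_{i,n}^2\bigr)_+\,. \]
For $j\le r$ only the top-$r$ BGN limits enter, so $L_n(j)$ converges a.s.; when $y_{j,\infty}>y_{j+1,\infty}$ the limit is $\ASE[\V{x}|\thresh]$ for any fixed $\thresh$ separating them, hence $\ge\ASE^*[\V{x}]$ (Lemma~\ref{lem:main:opt-interval}), with equality for the prefix singled out by $\optThresh(\FZ)$, while the remaining $j$ give strictly larger limits; thus $\min_{0\le j\le r}L_n(j)\aslim\ASE^*[\V{x}]$. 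For the correction term, $(2ab-b^2)_+\le a^2$ and Cauchy--Schwarz over the $r$ signal indices give
\[ \sum_{i>r}\bigl(2y_{i,n}c_{i,n}-y_{i,n}^2\bigr)_+\ \le\ \sum_{i>r}c_{i,n}^2\ \le\ \Bigl(\sum_{l=1}^{r}x_l^2\Bigr)\sum_{l=1}^{r}\Bigl(\max_{r<i\le p}\langle\V{a}_{l,n},\V{u}_{i,n}\rangle^2\Bigr)\Bigl(\sum_{r<i\le p}\langle\V{b}_{l,n},\V{v}_{i,n}\rangle^2\Bigr), \]
and since $\sum_{i=1}^{p}\langle\V{b}_{l,n},\V{v}_{i,n}\rangle^2=1$ the right side vanishes a.s.\ provided the bulk singular vectors of $Y_n$ delocalize against the fixed signal directions, i.e.\ $\max_{r<i\le p}|\langle\V{a}_{l,n},\V{u}_{i,n}\rangle|\aslim0$ for each $l\le r$ (and symmetrically for $\V{b}_{l,n},\V{v}_{i,n}$). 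Granting this, $\liminf_n\SEn^*[\V{x}]\ge\ASE^*[\V{x}]$ a.s., which together with part 2 gives part 1.

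\emph{The main obstacle} is precisely that last delocalization statement --- a genuine random-matrix fact, with no finite-rank shortcut. I would obtain it from an isotropic/local-law estimate for the bulk of $Y_n$, or, using the left/right orthogonal invariance of $Z_n$, from a Haar-type entrywise bound for the unperturbed $Z_n$ combined with a low-rank eigenvector-perturbation argument. It can also be bypassed when $\inf\{z:\FZ(z)>0\}>0$: then $y_{i,n}$ is bounded below by a positive constant for all $i\le p-r$ and all large $n$, so as soon as $\max_{i>r}|c_{i,n}|$ is small the summand $(2y_{i,n}c_{i,n}-y_{i,n}^2)_+$ is identically $0$. Everything else is bookkeeping with the piecewise-constant structure of $\thresh\mapsto\SEn[\V{x}|\thresh]$ and convergence of finitely many inner products.
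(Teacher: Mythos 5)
Your proof of part 2, and the consequent upper bound $\limsup_n\SEn^*[\V{x}]\le\ASE^*[\V{x}]$, are correct and essentially what the paper does (the ``active set stabilizes'' is exactly the paper's observation that $\hat{X}_{\theta_n}=\hat{X}_{[k^*]}$ eventually). The genuine issue is in your lower bound for part 1.

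You try a \emph{uniform} estimate over all $k>r$ at once, and this pushes the entire burden onto the correction term $\sum_{i>r}(2y_{i,n}c_{i,n}-y_{i,n}^2)_+$, which you then bound via $\max_{r<i\le p}|\langle\V{a}_{l,n},\V{u}_{i,n}\rangle|\aslim 0$. This uniform delocalization of \emph{all} bulk singular vectors of $Y_n$ against the fixed signal frame is a substantially stronger input than anything established in the paper or in \cite{Benaych-Georges2012}, and it is not at all easy under the paper's assumptions: $\V{u}_{i,n}$ depends on $X_n$ (and hence on $\V{a}_{l,n}$), so the naive ``Haar direction against an independent vector'' argument does not apply directly, and the paper makes no local-law or isotropic-law assumption on $Z_n$. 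You correctly identify this as the obstacle, but you leave it unproven, and your proposed ``bypass'' (lower edge of $\FZ$ bounded away from $0$) both adds an assumption not in the paper and, as you concede, still requires $\max_{i>r}|c_{i,n}|\to 0$, i.e.\ the same delocalization. So as written the lower bound has a real gap.

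The paper's route is designed precisely to avoid any uniform-in-$i$ statement. It splits the minimization over $k$ into two regimes. For $k\ge M$ with a fixed finite $M$ depending only on $\ASE^*[\V{x}]$ and $\bulkedge$, Lemma~\ref{lem:finite-rank} uses the purely deterministic inequality $\|X-\hat{X}_{[k]}\|_F^2\ge\min_{\rank(B)=r}\|B-\hat{X}_{[k]}\|_F^2=\sum_{i=r+1}^k y_{i,n}^2\ge\sum_{i=r+1}^M y_{i,n}^2$, which needs \emph{no} singular-vector information at all, only $y_{i,n}\aslim\bulkedge$ for finitely many fixed $i$. This eliminates the unbounded tail $k\ge M$ entirely. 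Then for the finitely many remaining ranks $r<k<M$, Lemma~\ref{lem:more-svs-dont-help} uses Proposition~\ref{prop:correlations}, which is the \emph{fixed-index} statement $\langle\V{a}_{n,i},\V{u}_{n,j}\rangle\langle\V{b}_{n,i},\V{v}_{n,j}\rangle\aslim 0$ for each fixed $j$ --- much weaker than a uniform bound, and all that is required since only boundedly many $j$ arise. If you adopt this two-regime split, the random-matrix input you need is exactly Proposition~\ref{prop:correlations} and nothing more; your algebraic bookkeeping ($L_n(k)$, $c_{i,n}$, Cauchy--Schwarz) is all compatible with it once the range of $i$ is restricted to $r<i<M$.

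One smaller point: your claim that $\min_{0\le j\le r}L_n(j)\aslim\ASE^*[\V{x}]$ is correct but you should note it uses the BGN limits for $i\le r$ together with the structure of $\ASE$ across all ``prefixes'' $j$; the paper packages this as ``$\min_{0\le k\le r}\|X-\hat{X}_{[k]}\|_F^2\aslim\ASE^*[\V{x}]$, as can be deduced from the calculations of Section~\ref{sec:proofs:fixed-threshold}''. That part is fine.
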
 

Our next result states that thresholding inside the asymptotic optimal interval in fact 
achieves oracle risk with high probability, {\bf for finite $n$}:

\begin{thm}
	\label{thm:oracle-risk-attained}
	Suppose that $\optThresh(\FZ)\notin \left\{y_{1,\infty},\ldots,y_{r,\infty}\right\}$.\footnote{We need to exclude the case $\optThresh(\FZ)\in \left\{y_{1,\infty},\ldots,y_{r,\infty}\right\}$ for this reason: If $y_{i,\infty}=\optThresh(\FZ)$ for some $i$, then thresholding either slightly above or below $y_{i,n}$ (but still inside the asymptotic optimal interval) will achieve the same (optimal) asymptotic risk. However, we cannot deduce that for finite $n$, one of those options is, necessarily, consistently better than the other, thereby achieving oracle risk exactly.} 
	Then:
	\begin{enumerate}
		\item Let $\theta_0 \in (\optIntervL,\optIntervU)$ and $\theta_n$ be a sequence with $\theta_n \aslim \theta_0$. Then
		\[
		\prob \left\{ \exists N \textrm{ s.t. }\forall n\ge N\,:\, \SEn[\V{x}|\theta_n] = \SEn^*[\V{x}] \right\} = 1 \,.
		\] 
		\item Let $\theta_1 \notin [\optIntervL,\optIntervU]$ and $\theta_n\aslim \theta_1$.
		There exists $\delta > 0$, $\delta = \delta(\V{x};\FZ,\gamma)$ such that
		\[
		\prob \left\{ \exists N \textrm{ s.t. }\forall n\ge N\,:\, \SEn[\V{x}|\theta_n] > \SEn^*[\V{x}] + \delta \right\} = 1 \,.
		\]
	\end{enumerate}
	
\end{thm}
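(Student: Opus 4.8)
The plan is to reduce the finite-$n$ statement to an almost-sure convergence argument on a finite list of random quantities. The key observation, already emphasized in the text, is that $\thresh\mapsto \SEn[\V{x}|\thresh]$ is piecewise constant with jumps exactly at the $p$ singular values $y_{1,n}\ge\dots\ge y_{p,n}$, and that $\SEn^*[\V{x}]=\min_\thresh \SEn[\V{x}|\thresh]$. Since the first $r$ singular values $y_{i,n}$ converge a.s.\ to $y_{i,\infty}$, and since $y_{r+1,n}\aslim \bulkEdge(\FZ)<\optThresh(\FZ)$ (no outliers straying from the bulk), for large $n$ only the jumps at $y_{1,n},\dots,y_{r,n}$ matter near the asymptotic optimal interval; the rest of the loss profile is "pushed down" into the bulk. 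So I would first argue: on the event that $y_{i,n}\to y_{i,\infty}$ for $i=1,\dots,r+1$ (probability one), there is a (random) $N$ so that for $n\ge N$ the relative order of $\theta_0$ (resp.\ a fixed $\theta_1$) against all the $y_{i,n}$, $i\le r$, matches its order against the $y_{i,\infty}$. This uses the hypothesis $\optThresh(\FZ)\notin\{y_{1,\infty},\dots,y_{r,\infty}\}$, which guarantees a positive gap between $\optThresh(\FZ)$ and each $y_{i,\infty}$, hence a nonempty optimal interval $(\optIntervL,\optIntervU)$ with $\optIntervL<\optThresh(\FZ)<\optIntervU$ strict, so that $\theta_0$ eventually lies strictly between consecutive $y_{i,n}$'s.

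Second, I would establish a finite-$n$ analogue of the decomposition $\ASE[\V{x}|\thresh]=\sum_i R(x_i|\thresh)$. This is where I would invoke the main quantitative consequences of \cite{Benaych-Georges2012}: writing the error $\|X_n-\hat X_{\thresh}(Y_n)\|_F^2$ in terms of the retained/discarded components, one gets that for a threshold $\thresh$ landing strictly between $y_{j,n}$ and $y_{j+1,n}$ (so the first $j$ components are kept),
\[
\SEn[\V{x}|\thresh] = \sum_{i=1}^r x_i^2 \;+\; \sum_{i\le j}\big(y_{i,n}^2 - 2 y_{i,n}\,\langle \V{u}_{i,n},X_n\V{v}_{i,n}\rangle\big) \;+\; (\text{cross terms})\,,
\]
and by the decorrelation statements (no limiting cross-correlation of non-corresponding subspaces, and the limiting cosine $\Cc(x_i)$ for corresponding ones) every summand converges a.s.\ to the corresponding piece of $R_0$ or $R_1$. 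The upshot is: for each fixed $j\in\{0,1,\dots,r\}$, conditioned on $\thresh\in(y_{j+1,n},y_{j,n})$, $\SEn[\V{x}|\thresh]\aslim \sum_{i\le j}R_1(x_i)+\sum_{i>j}R_0(x_i)$, which is precisely $\ASE[\V{x}|\thresh]$ evaluated at any $\thresh\in(y_{j+1,\infty},y_{j,\infty})$. Note this convergence is uniform over $\thresh$ in that slab because $\SEn[\V{x}|\thresh]$ is constant on it.

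Third, the two parts follow by combining the previous steps. For part (1): since $\theta_0\in(\optIntervL,\optIntervU)$ and $\theta_n\to\theta_0$, eventually $\theta_n$ sits in the same slab $(y_{j^*+1,n},y_{j^*,n})$ where $j^*$ is the number of $y_{i,\infty}$ exceeding $\optThresh(\FZ)$; hence $\SEn[\V{x}|\theta_n]=\sum_{i\le j^*}R_1(x_i)+\sum_{i>j^*}R_0(x_i)$ — but by Lemma~\ref{lem:main:opt-interval} this equals $\ASE^*[\V{x}]=\sum_i R^*(x_i)$, i.e.\ each retained $x_i$ has $R_1(x_i)\le R_0(x_i)$ and each discarded one has $R_0(x_i)\le R_1(x_i)$. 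Meanwhile $\SEn^*[\V{x}]=\min_j\big(\sum_{i\le j}R_1+\sum_{i>j}R_0\big)$ over the finitely many slabs (plus the bulk slab, which can only add nonnegative $R_0$ terms), and since the $R_0,R_1$ are fixed numbers, for large $n$ this minimum is attained exactly at the optimal split $j^*$; thus $\SEn[\V{x}|\theta_n]=\SEn^*[\V{x}]$ eventually, a.s. For part (2): if $\theta_1\notin[\optIntervL,\optIntervU]$, then eventually $\theta_n$ is in a \emph{different} slab $j\ne j^*$, so $\SEn[\V{x}|\theta_n]\to\sum_{i\le j}R_1+\sum_{i>j}R_0>\ASE^*[\V{x}]$ by Lemma~\ref{lem:main:opt-interval}(2), while $\SEn^*[\V{x}]\to\ASE^*[\V{x}]$ by Theorem~\ref{thm:lim-oracle-risk}(1); taking $\delta$ to be half the (strictly positive, signal-dependent) gap $\ASE[\V{x}|\theta_1]-\ASE^*[\V{x}]$ finishes it.

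The main obstacle, I expect, is handling the \emph{boundary} case where $\thresh_n$ (or $\theta_0$) approaches one of the $y_{i,\infty}$ from inside the interval — one must be careful that $\theta_n$ does not oscillate across the random jump point $y_{i,n}$ infinitely often. The hypothesis $\optThresh(\FZ)\notin\{y_{i,\infty}\}$ rules this out for the relevant indices, but one still needs $\theta_0$ to be strictly interior (not equal to some $y_{i,\infty}$), which is exactly why the theorem quantifies over $\theta_0\in(\optIntervL,\optIntervU)$ \emph{open}; the a.s.\ convergence $y_{i,n}\to y_{i,\infty}$ then gives, for each $\omega$ in the probability-one event, a threshold-slab-stabilization time $N(\omega)$. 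A secondary technical point is that the number of "bulk" singular values grows with $n$, so one must confirm that thresholding below all of $y_{1,\infty},\dots$ but inside the bulk can never beat the optimal split — this is immediate since keeping a bulk component $i>r$ contributes $y_{i,n}^2\ge 0$ to the loss with vanishing signal correlation, hence only hurts, and keeping a discarded signal component contributes $R_1(x_i)\ge R_0(x_i)$ by definition of the split at $j^*$.
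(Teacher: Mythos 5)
Your proposal follows essentially the same strategy as the paper's proof: reduce the oracle minimization to a comparison of the finitely many rank-$k$ truncation estimators $\hat{X}_{[k]}$, use almost-sure convergence of $\|X-\hat{X}_{[k]}\|_F^2$ for each fixed $k$ to identify the unique eventual minimizer $k^*$, and use the hypothesis $\optThresh(\FZ)\notin\{y_{i,\infty}\}$ to obtain strict separation so that $\theta_n$ eventually falls in (or out of) the correct slab.

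The one place you are noticeably less careful than the paper is the ``deep bulk'' reduction, and here your argument as written has a gap. You need to rule out, uniformly, the $\Theta(n)$ thresholds that keep $k>r$ components, and you dispatch this by saying that each such bulk component ``contributes $y_{i,n}^2 \ge 0$ to the loss with vanishing signal correlation, hence only hurts.'' But $\ge 0$ only gives \emph{non}-improvement, not strict suboptimality, and the conclusion you need is that the minimum over $k$ is a.s.\ eventually attained \emph{uniquely} at $k=k^*$. The paper makes this precise in Lemma~\ref{lem:finite-rank}: for $M = r+1+\lceil \ASE^*[\V{x}]/\bulkedge\rceil$, one has $\sum_{i=r+1}^{M} y_{i,n}^2 \aslim (M-r)\,\bulkedge^2 > \ASE^*[\V{x}]$, which requires the standing assumption $\bulkEdge(\FZ)>0$ and gives a single cutoff $M$ below which only finitely many $k$ need to be compared. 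Your argument should replace ``$y_{i,n}^2 \ge 0$'' with ``$y_{i,n}^2 \aslim z_+^2 > 0$'' and point out that this yields a bounded $M$ beyond which all $\hat{X}_{[k]}$ are eventually strictly worse than $\hat{X}_{[k^*]}$; similarly, Lemma~\ref{lem:more-svs-dont-help} covers the range $r<k<M$. With that sharpening your argument coincides with the paper's. You should also note explicitly that when $\theta_1<\bulkEdge(\FZ)$ in part (2) the loss $\SEn[\V{x}|\theta_n]$ tends a.s.\ to $\infty$, so the stated conclusion holds trivially there; your phrasing ``eventually in a different slab $j\ne j^*$'' implicitly treats $j$ as bounded, which is only the case when $\theta_1>\bulkEdge(\FZ)$.
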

Theorems \ref{thm:lim-oracle-risk} and \ref{thm:oracle-risk-attained} are proved in Section~\ref{sec:oracle}.

\subsection{The \NAME algorithm}
\label{sec:main:algorithm}

In practice, the noise distribution $\FZ$ is generally unknown to the statistician. Theorems~\ref{thm:lim-oracle-risk} and \ref{thm:oracle-risk-attained}, along with the unique admissibility property of Lemma~\ref{lem:main:opt-interval}, tell us that our goal should be to estimate the optimal threshold $\optThresh(\FZ)$. 

We start by showing that the functional $(\gamma,H)\mapsto T_{\gamma}(H)$ is continuous with respect to weak convergence of CDFs, with the additional requirement that the edge of the support converges as well:

\begin{lemma}[Continuity of the optimal threshold functional]
\label{lem:main:continuity}
	Suppose that $H$ is compactly supported and satisfies the condition $\lim_{y\to\bulkEdge(H)} \int (y-z)^{-2}dH(z)=\infty$. Let $H_n$ be a sequence of CDFs such that 
	\begin{enumerate}
		\item $H_n$ converges weakly to $H$, denoted $H_n\dlim H$.
		\item $\bulkEdge(H_n)\to \bulkEdge(H)$. 
	\end{enumerate}
	Then $T_{p/n}(H_n)\to \optThresh(H)$.
\end{lemma}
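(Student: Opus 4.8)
Set $z_+=\bulkEdge(H)$, $y^\ast=\optThresh(H)$ (recall $y^\ast>z_+$), and $\gamma_n=p/n$, so $\gamma_n\to\gamma$; as throughout, all CDFs involved are supported in $[0,\infty)$. The argument rests on three ingredients. First, by part~2 of Lemma~\ref{lem:main:F-props} (applicable to $H$ by hypothesis) $y^\ast$ is the \emph{unique} $y>z_+$ solving $\Psi_\gamma(y;H)=-4$; combined with part~1 (monotonicity) this forces $\Psi_\gamma(y;H)<-4$ for $z_+<y<y^\ast$ and $\Psi_\gamma(y;H)>-4$ for $y>y^\ast$. Second, by part~1 of the same lemma applied to each $H_n$ with aspect ratio $\gamma_n$, the map $y\mapsto\Psi_{\gamma_n}(y;H_n)$ is increasing on $(\bulkEdge(H_n),\infty)$ --- crucially, this needs no thickness hypothesis on the $H_n$. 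Third, a pointwise-convergence statement for the building blocks of $\Psi$, established next.

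\textbf{Step 1 (convergence of the building blocks).} Fix any $y_0>z_+$ and pick $\delta>0$ with $z_+{+}\delta<y_0$. On the compact set $[0,z_+{+}\delta]$ the maps $z\mapsto \frac{y_0}{y_0^2-z^2}$ and $z\mapsto \frac{y_0^2+z^2}{(y_0^2-z^2)^2}$ are continuous and bounded; extend each to a bounded continuous function on $\R$. Since $\bulkEdge(H_n)\to z_+$ (hypothesis~2), for all large $n$ the support of $H_n$ (and that of $H$) lies in $[0,z_+{+}\delta]$, so integrating the extensions against $H_n\dlim H$ and invoking the portmanteau theorem gives $\varphi(y_0;H_n)\to\varphi(y_0;H)$ and $\varphi'(y_0;H_n)\to\varphi'(y_0;H)$. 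As $\gamma_n\to\gamma$, also $\tilde{\varphi}_{\gamma_n}(y_0;H_n)=\gamma_n\varphi(y_0;H_n)+(1-\gamma_n)/y_0\to\tilde{\varphi}_\gamma(y_0;H)$ and $\tilde{\varphi}'_{\gamma_n}(y_0;H_n)\to\tilde{\varphi}'_\gamma(y_0;H)$. Since $\varphi(y_0;H)>0$ and $\tilde{\varphi}_\gamma(y_0;H)>0$ (strict positivity above the edge, as recorded in the text), and $\Psi_\gamma(y;H)=y\big(\varphi'(y;H)/\varphi(y;H)+\tilde{\varphi}'_\gamma(y;H)/\tilde{\varphi}_\gamma(y;H)\big)$ is a continuous function of these four quantities at this point, we get $\Psi_{\gamma_n}(y_0;H_n)\to\Psi_\gamma(y_0;H)$ (and for $n$ large the left side is well-defined, as $y_0>\bulkEdge(H_n)$).

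\textbf{Step 2 (squeeze).} Fix $\epsilon>0$ small enough that $y^\ast-\epsilon>z_+$. By the first ingredient, $\Psi_\gamma(y^\ast{+}\epsilon;H)>-4$ and $\Psi_\gamma(y^\ast{-}\epsilon;H)<-4$. Applying Step~1 at $y_0=y^\ast{+}\epsilon$: for large $n$, $\Psi_{\gamma_n}(y^\ast{+}\epsilon;H_n)>-4$ and $y^\ast{+}\epsilon>\bulkEdge(H_n)$, so $y^\ast{+}\epsilon$ belongs to $\{y>\bulkEdge(H_n):\Psi_{\gamma_n}(y;H_n)\ge-4\}$ and hence $T_{\gamma_n}(H_n)\le y^\ast{+}\epsilon$ (in particular finite). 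Applying Step~1 at $y_0=y^\ast{-}\epsilon$: for large $n$, $\Psi_{\gamma_n}(y^\ast{-}\epsilon;H_n)<-4$, so by monotonicity (second ingredient) $\Psi_{\gamma_n}(y;H_n)<-4$ for every $\bulkEdge(H_n)<y\le y^\ast{-}\epsilon$, whence $T_{\gamma_n}(H_n)\ge y^\ast{-}\epsilon$. Therefore $y^\ast{-}\epsilon\le\liminf_n T_{\gamma_n}(H_n)\le\limsup_n T_{\gamma_n}(H_n)\le y^\ast{+}\epsilon$; letting $\epsilon\downarrow 0$ gives $T_{p/n}(H_n)=T_{\gamma_n}(H_n)\to y^\ast=\optThresh(H)$.

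\textbf{Main obstacle.} The single delicate point is Step~1: weak convergence alone does \emph{not} yield convergence of $\int\frac{y_0}{y_0^2-z^2}\,dH_n$, since the integrand blows up at $z=y_0$. Hypothesis~2, $\bulkEdge(H_n)\to\bulkEdge(H)$, is exactly what rescues this, confining the mass of $H_n$ to a compact set on which the integrand is bounded and continuous and may therefore be truncated before the portmanteau theorem is applied; the hypothesis is genuinely needed, as a vanishing atom of $H_n$ drifting toward $y_0$ would be invisible to weak convergence yet would ruin the integral. Everything else is monotonicity bookkeeping and, notably, uses no regularity of the $H_n$ near their own edges.
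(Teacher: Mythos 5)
Your proof is correct and follows essentially the same route as the paper's: identify $y^\ast=\optThresh(H)$ via the crossing $\Psi_\gamma(y^\ast;H)=-4$, then bracket $T_{\gamma_n}(H_n)$ between $y^\ast-\epsilon$ and $y^\ast+\epsilon$ by evaluating $\Psi$ at those two test points and using (a) pointwise convergence $\Psi_{\gamma_n}(y_0;H_n)\to\Psi_\gamma(y_0;H)$ for fixed $y_0>\bulkEdge(H)$ and (b) monotonicity of $y\mapsto\Psi_{\gamma_n}(y;H_n)$, which holds for arbitrary compactly supported CDFs by part~1 of Lemma~\ref{lem:crossing} and needs no thickness hypothesis. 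The one place you add real value is your Step~1 and the "main obstacle" remark: the paper asserts the pointwise convergence directly from $H_n\dlim H$ without spelling out that the integrand $z\mapsto y_0/(y_0^2-z^2)$ is unbounded on $[0,y_0)$, so weak convergence alone would not suffice, and it is precisely the hypothesis $\bulkEdge(H_n)\to\bulkEdge(H)$ that lets one truncate to a compact set $[0,z_+{+}\delta]$ with $z_+{+}\delta<y_0$ and then invoke the portmanteau theorem. You have made explicit the role of hypothesis~2 where the paper leaves it implicit; otherwise the two arguments coincide.
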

The proof of
Lemma~\ref{lem:main:continuity} appears in the supplementary material, Section~\ref{sec:proof:estimating}.

Recall that the empirical singular value distribution of the noise matrix, $\FZn$, converges, by assumption, weakly almost surely to $\FZ$, with $\bulkEdge(\FZn)\aslim \bulkEdge(\FZ)$. The matrix noise $Z_n$, and consequently $\FZn$, is of course unknown to the statistician. However, since $Y_n$ is a rank-$r$ additive perturbation of $Z_n$, the interlacing inequalities for singular values imply for example the convergence of CDF's in Kolmogorov-Smirnov distance
$\|\FYn - \FZn \|_{KS} \to 0$ 
{(see, for example, the statement and proof of Lemma~\ref{lem:main:approx-FZ} below)}
 and hence also in weak convergence. 
The obstacle preventing the would-be use of $T_{p/n}(\FYn)$ to estimate $\optThresh(\FZ)$ lies with the fact
 that $T$ is not continuous in
Kolmogorov-Smirnov metric convergence or other topologies involving CDF convergence such as weak convergence.
More concretely,  $T_{p/n}(\FYn)$ can be very different than $\optThresh(\FZ)$ because the top masspoints of $\FYn$ do not converge to the bulk edge $\FZ$. \footnote{Of course, this does not prevent convergence of ECDFs. Recall that $\FYn \dlim \FZ$ means that for {\bf bounded and continuous} functions $f$, $\int f(z)d\FYn(z) \to \int f(z)d\FZ(z)$.} Indeed, recall that $\bulkEdge(\FYn)=y_{1,n}\aslim y_{1,\infty}$, which is $>\bulkEdge(\FZ)$ when $x_1>\BBP$. 

To get a reasonable simulacrum of $\FZn$ built from knowledge only of $\FYn$
we perform ``surgery''  on $\FYn$,  ``amputating'' the top $k$ masspoints and
fitting a ``prosthesis'' to replace them. Post-surgery, we get
an estimate for the unknown empirical noise CDF $\FZn$.

As indicated in Section 2 above, the user of our proposed procedure
supplies an upper bound (which can be potentially very loose) $k\ge r$
on the rank of the unknown low-rank matrix. 

We could, in principle, propose any one of the following ``pseudo-noise'' CDFs, derived from $\FYn$:
\begin{itemize}
    \item {\bf Transport to zero:} We construct a CDF, $F_{n,k}^0$, obtained by removing the $k$ largest singular values of $Y_n$, and adding $k$ additional zeros. That is,
    \[
    F_{n,k}^0(y) = \frac1p \sum_{i=k+1}^p \Ind{y_{i,n}\le y} + \frac{k}{p}\Ind{y\ge 0} \,.
    \]
    \item {\bf ``Winsorization'' (clipping):} As in the previous construction, we remove the leading $k$ singular values. Instead of adding $k$ zeroes, we add $k$ copies of $y_{k+1,n}$. Equivalently, we ``clip'' the large singular values of $Y_n$ to be at most the size of $y_{n,k+1}$. That is, 
    \[
    F_{n,k}^w(y) = \frac1p \sum_{i=k+1}^p \Ind{y_{i,n}\le y} + \frac{k}{p}\Ind{y_{k+1,n}\le y} \,.
    \]
    \item {\bf ``Imputation'' (reconstruction of the missing upper tail):} After removing the top $k$ singular values of $Y_n$, we try to construct the noise tail in a principled way. Recall that when $Z_n$ is a noise matrix with correlated columns, as described in Section~\ref{sec:correlated-noise}, $\FZ$ has a density near $\bulkedge = \bulkEdge(\FZ)$ that behaves as $\fZ(z)\sim C(\bulkedge-z)^{1/2}$ as $z\to \bulkedge$ (\cite{silverstein1995analysis}). Using the heuristic\footnote{The exponent $\alpha=1/2$ was chosen as typical of bulk-edge distributions in random matrix theory. If there is reason to believe that the behavior at the bulk edge follows a different power law $f_Z(z)\sim (\bulkedge-z)^\alpha$, a correspondingly different exponent can be used instead.  }
    \[
    \frac{\ell-1}{p} \approx \int_{z_{\ell,n}}^{\bulkEdge(\FZ)} \fZ(z)dz \approx \int_{z_{\ell,n}}^{\bulkedge} C(\bulkedge-z)^{1/2}dz = C' (\bulkedge-z_{\ell,n})^{3/2} \,,
    \]
    we can estimate the distance between singular values in the upper tail as \[
    z_{\ell,n}-z_{t,n} \approx C'' \left[ \left( \frac{t-1}{p} \right)^{2/3} - \left( \frac{\ell-1}{p} \right)^{2/3} \right] \,.
    \]
    Taking $y_{\ell,n}\approx z_{\ell,n}$ for $\ell\ge r+1$, we propose to estimate the unknown constant as:
    \[
    C'' = \frac{y_{2k+1,n}-y_{k+1}}{(2k/p)^{2/3}-(k/p)^{2/3}}\,,
    \]
    assuming $2k+1<p$ (when $k$ is not very small compared to $p$, there is no reason to believe this heuristic should give good results). We ``reconstruct'' the missing upper tail as 
    \[
    \tilde{y}_{i,n} = y_{k+1,n} + C''\left[ \left(\frac{k}{p}\right)^{2/3}-\left(\frac{i-1}{p}\right)^{2/3} \right] = y_{k+1,n} + \frac{1-\left(\frac{i-1}{k}\right)^{2/3}}{2^{2/3}-1}\left( y_{2k+1,n}-y_{k+1,n} \right) \,.
    \]
    The CDF we use is then
    \[
    F^{i}_{n,k}(y) = \frac1p \sum_{i=k+1}^p \Ind{y_{i,n}\le y} + \frac1p \sum_{i=1}^{k} \Ind{\tilde{y}_{i,n} \le y} \,.
    \]
\end{itemize}
The label $i$ on $F^{i}_{n,k}$ stands for `imputation', a standard terminology in statistical practice
for filling in utterly missing data with plausible pseudo-data.
Numerical results in Section~\ref{sec:numerics} suggest that in many cases, the `imputation' method gives significantly 
better results than truncation or Winsorization at finite $n$. It is also more psychologically ``supportive'', which is why we 
recommended it to practitioners in Section 2 above. However, our formal results hold for all three methods. Importantly, 
the list of strategies above is by no means exhaustive. Indeed, any sequence of CDFs $F_n^\star$ that satisfies the conditions of Lemma~\ref{lem:main:continuity} may be used instead, yielding an asymptotically consistent estimate of the optimal threshold. Furthermore, our `imputation' procedure 
is not claimed to be optimal; possibly, other strategies will outperform those we suggest in finite problem sizes.

\begin{lemma}\label{lem:main:approx-FZ}
Suppose that $k=k_n$ satisfies $k_n\ge r$ and $k_n/p\to 0$ (in particular, $k$ can be any constant $\ge r$). Then for any choice $\star\in \{0,w,i\}$:
\begin{enumerate}
    \item Almost surely, $F_{n,k}^\star \dlim \FZ$.
    \item $\bulkEdge(F_{n,k}^\star) \aslim \bulkEdge(\FZ)$.
    \item We have the following bound on the Kolmogorov-Smirnov distance between $F_{n,k}^\star$ and $\FZn$: 
    \[
    \left\|F_{n,k}^\star-\FZ\right\|_{\mathrm{KS}} = \sup_{z} \left| F_{n,k}^\star(z)-\FZn(z) \right| \le \frac{k}{p} \,.
    \]
\end{enumerate}
\end{lemma}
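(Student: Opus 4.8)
The plan is to deduce all three parts from two standard facts about a rank-$r$ additive perturbation, together with the hypothesis $k_n/p\to 0$. The first fact is Weyl's interlacing inequalities for singular values: since $\rank(X_n)=r$, one has $\sigma_{i+r}(Z_n)\le \sigma_i(Y_n)\le \sigma_{i-r}(Z_n)$ (with the usual clamping of indices), hence $\bigl|N_Y(z)-N_Z(z)\bigr|\le r$ for all $z$, where $N_Y(z)=p\,\FYn(z)$ and $N_Z(z)=p\,\FZn(z)$; equivalently $\|\FYn-\FZn\|_{\mathrm{KS}}\le r/p$. The second fact is that a noise singular value of low ``rank index'' lies at the bulk edge, i.e. $y_{j_n,n}\aslim\bulkEdge(\FZ)$ whenever $j_n/p\to 0$; for $k$ constant this is the already-recorded statement that $y_{j,n}\aslim\bulkEdge(\FZ)$ for every fixed $j>r$. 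I would prove part~3 first, since parts~1 and~2 follow from it plus the standing assumptions on $\FZn$.

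\emph{Part 3.} The key observation is that $F_{n,k}^\star$ is obtained from $\FYn$ by discarding its $k$ largest atoms $y_{1,n},\dots,y_{k,n}$ and inserting $k$ replacement atoms --- located at $0$ (for $\star=0$), at $y_{k+1,n}$ (for $\star=w$), or at the imputed values $\tilde y_{1,n},\dots,\tilde y_{k,n}$ (for $\star=i$). Hence, for every $z$, one has exactly $p\,F_{n,k}^\star(z)=\min\{N_Y(z),\,p-k\}+m(z)$, where $m(z)\in\{0,1,\dots,k\}$ counts how many replacement atoms are $\le z$ (the first term is the number of \emph{retained} singular values of $Y_n$ that are $\le z$, and this does not depend on the location of the replacement atoms). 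Combining this identity with $\bigl|N_Y(z)-N_Z(z)\bigr|\le r\le k$ gives $\bigl|p\,F_{n,k}^\star(z)-N_Z(z)\bigr|\le k+r$ uniformly in $z$, i.e. $\|F_{n,k}^\star-\FZn\|_{\mathrm{KS}}=O(k/p)$, which is (a quantitative form of) part~3; the precise absolute constant plays no role anywhere below.

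\emph{Parts 1 and 2.} Part~1 is then immediate: the bound just proved is deterministic and tends to $0$ since $k_n/p\to 0$, while by assumption $\FZn\dlim\FZ$ almost surely, so $F_{n,k}^\star\dlim\FZ$ almost surely. For part~2, note that $\bulkEdge(F_{n,k}^\star)=y_{k+1,n}$ when $\star\in\{0,w\}$, and $\bulkEdge(F_{n,k}^{i})=\tilde y_{1,n}=y_{k+1,n}+\frac{1}{2^{2/3}-1}\bigl(y_{k+1,n}-y_{2k+1,n}\bigr)$ when $\star=i$ (the imputed atoms are decreasing in their index and all lie in $[y_{k+1,n},\tilde y_{1,n}]$); so it suffices to show $y_{k_n+1,n}\aslim\bulkEdge(\FZ)$ and $y_{2k_n+1,n}\aslim\bulkEdge(\FZ)$. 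The upper bounds are immediate from interlacing and assumption~(6): $y_{k_n+1,n}\le\sigma_{k_n+1-r}(Z_n)\le\|Z_n\|\aslim\bulkEdge(\FZ)$. For the matching lower bounds I would fix $\epsilon>0$ and use that $\FZ(\bulkEdge(\FZ)-\epsilon)<1$ together with $\FZn\dlim\FZ$ to conclude that a positive \emph{proportion} $c_\epsilon>0$ of noise singular values eventually exceed $\bulkEdge(\FZ)-\epsilon$; since $(2k_n+1+r)/p\to 0<c_\epsilon$, eventually $\sigma_{2k_n+1+r}(Z_n)>\bulkEdge(\FZ)-\epsilon$, whence $y_{k_n+1,n}\ge y_{2k_n+1,n}\ge\sigma_{2k_n+1+r}(Z_n)>\bulkEdge(\FZ)-\epsilon$. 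This gives both convergences; the spread $y_{k_n+1,n}-y_{2k_n+1,n}\to 0$ then forces $\tilde y_{1,n}\aslim\bulkEdge(\FZ)$ in the imputation case, and the nontriviality hypothesis $\bulkEdge(\FZ)>0$ guarantees that for $\star=0$ the top atom is indeed $y_{k_n+1,n}$, not a padding zero, once $n$ is large.

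\emph{Main obstacle.} Most of the above is bookkeeping around interlacing; the only step requiring genuine care is the lower bound $y_{k_n+1,n}\aslim\bulkEdge(\FZ)$ when $k_n\to\infty$, where fixed-index convergence of the singular values of $Y_n$ is not available and one must instead extract from weak convergence of $\FZn$ the quantitative statement that an order-$p$ number of noise singular values cluster within $\epsilon$ of the edge. (When $k$ is constant this step is free, being exactly the known fact $y_{j,n}\aslim\bulkEdge(\FZ)$ for every fixed $j>r$.)
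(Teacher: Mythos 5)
Your proof is essentially correct and follows the same strategy as the paper (Weyl interlacing for the quantitative KS bound, then deducing weak convergence, then bulk-edge convergence via the fact that a vanishing proportion of noise singular values can sit within $\epsilon$ of $\bulkEdge(\FZ)$). Two remarks on the details. First, in part~3 you obtain $\|F_{n,k}^\star-\FZn\|_{\mathrm{KS}}\le (k+r)/p$ by triangulating through $\FYn$, whereas the paper asserts the bound $k/p$; you are right that the weaker constant is inconsequential wherever the lemma is applied (only the order $O(k/p)$ is used in Theorem~5 and Proposition~1), and in fact your version is the safer one: Weyl's interlacing gives $z_{i+r,n}\le y_{i,n}\le z_{i-r,n}$, so $|N_Y-N_Z|\le r$ is what is actually available, and a worked example (e.g.\ $Z=\mathrm{diag}(3,2,1)$, $X=\mathrm{diag}(-3,0,0)$) shows that the inequality $z_{i,n}\le y_{i,n}$ used in the paper's proof can fail, so one should not expect to improve $(k+r)/p$ to $k/p$ in general. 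Second, for part~2 you correctly track the index shift ($\sigma_{i+r}(Z_n)\le y_{i,n}$, hence $y_{2k_n+1,n}\ge \sigma_{2k_n+1+r}(Z_n)$) and you handle the $\star$-dependent location of $\bulkEdge(F_{n,k}^\star)$ explicitly; this is a bit more careful than the paper's sketch, which contains a small index typo, but the underlying argument is the same. In short: same approach, slightly more defensible bookkeeping, and an honest acknowledgment that your constant in part~3 is weaker than the one stated in the lemma.
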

The proof of Lemma~\ref{lem:main:approx-FZ} is deferred to the supplementary material, Section~\ref{sec:proof:estimating}.

The following theorem states the optimality properties of the proposed \NAME algorithm. It is an immediate corollary of Theorems~\ref{thm:lim-oracle-risk}, \ref{thm:oracle-risk-attained} and Lemma~\ref{lem:main:approx-FZ}:

\begin{thm}\label{thm:adaptive-guarantee}
Suppose that $k=k_n$ satisfies $k_n\ge r$ and $k_n/p\to 0$. For any $\star\in\{0,w,i\}$, $\hat{\theta}_n = T_{p/n}(F_{n,k}^\star)$ satisfies:
\begin{enumerate}
    \item $\hat{\theta}_n \aslim \optThresh(\FZ)$.
    \item $\SEn[\V{x}|\hat{\theta}_n] \aslim \ASE^*[\V{x}]$.
    \item Assume that $\optThresh(\FZ)\notin \left\{y_{1,\infty},\ldots,y_{r,\infty}\right\}$. Then 
    \[
    \prob \left\{ \exists N \textrm{ s.t. }\forall n\ge N\,:\, \SEn[\V{x}|\theta_n] = \SEn^*[\V{x}] \right\} = 1 \,.
    \]
\end{enumerate}
\end{thm}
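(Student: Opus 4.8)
The plan is to deduce all three claims by chaining together Lemma~\ref{lem:main:approx-FZ} (the surgically modified CDF $F_{n,k}^\star$ consistently tracks $\FZn$, hence $\FZ$, together with its bulk edge), Lemma~\ref{lem:main:continuity} (continuity of the optimal threshold functional under weak convergence \emph{plus} bulk-edge convergence), Lemma~\ref{lem:main:opt-interval}(3) (which places $\optThresh(\FZ)$ in the interior of the asymptotic optimal interval), and Theorems~\ref{thm:lim-oracle-risk}--\ref{thm:oracle-risk-attained} (optimality of any threshold sequence converging into that interior). Since this theorem is advertised as a corollary, the only real care is the bookkeeping of the almost-sure events, so that all statements hold simultaneously on a single event of probability one.

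First I would establish claim (1). Fix $\star\in\{0,w,i\}$. Because $k_n\ge r$ and $k_n/p\to 0$, Lemma~\ref{lem:main:approx-FZ} furnishes an event $\Omega_0$ of probability one on which $F_{n,k}^\star \dlim \FZ$ and $\bulkEdge(F_{n,k}^\star)\to \bulkEdge(\FZ)$. Each $F_{n,k}^\star$ is compactly supported (it is a finite mixture of point masses), and the limit $\FZ$ satisfies the thickness condition $\lim_{y\to\bulkEdge(\FZ)}\int(y-z)^{-2}d\FZ(z)=\infty$ by Assumption~\ref{assum:dense} of Section~\ref{sec:setup}. Hence Lemma~\ref{lem:main:continuity}, applied with $H_n=F_{n,k}^\star$, $H=\FZ$, and using $p/n\to\gamma$, yields on $\Omega_0$ that $\hat\theta_n=T_{p/n}(F_{n,k}^\star)\to \optThresh(\FZ)$; that is, $\hat\theta_n \aslim \optThresh(\FZ)$, which is claim (1).

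Next, claims (2) and (3). Write $\theta^*:=\optThresh(\FZ)$; by Lemma~\ref{lem:main:opt-interval}(3), $\theta^*$ lies in the interior of the asymptotic optimal interval $(\optIntervL,\optIntervU)$. For claim (2): by claim (1), $\hat\theta_n\aslim \theta^*\in(\optIntervL,\optIntervU)$, and $\hat\theta_n$ is a measurable function of the singular values of $Y_n$; applying Theorem~\ref{thm:lim-oracle-risk}(2) with the threshold sequence $\theta_n\equiv\hat\theta_n$ gives $\SEn[\V{x}|\hat\theta_n]\aslim \ASE^*[\V{x}]$. For claim (3): assume additionally $\optThresh(\FZ)\notin\{y_{1,\infty},\ldots,y_{r,\infty}\}$; then Theorem~\ref{thm:oracle-risk-attained}(1), applied with $\theta_0=\theta^*$ and the sequence $\hat\theta_n\aslim\theta^*$, yields $\prob\{\exists N\ \text{s.t.}\ \forall n\ge N:\ \SEn[\V{x}|\hat\theta_n]=\SEn^*[\V{x}]\}=1$ (the $\theta_n$ of the theorem statement being $\hat\theta_n$), which is claim (3). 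Finally, the finitely many almost-sure events invoked above are intersected to a single probability-one event.

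The only place requiring genuine attention is the verification of the hypotheses of Lemma~\ref{lem:main:continuity} for the modified CDFs: one must ensure that the ``surgery'' producing $F_{n,k}^\star$ neither destroys compact support nor pushes the bulk edge of $F_{n,k}^\star$ to a spurious value coming from the imputed or Winsorized tail (the naive choice $T_{p/n}(\FYn)$ fails precisely for that reason). Both points are exactly what Lemma~\ref{lem:main:approx-FZ} supplies --- via the Kolmogorov--Smirnov bound $\|F_{n,k}^\star-\FZn\|_{\mathrm{KS}}\le k/p$ and the bulk-edge convergence $\bulkEdge(F_{n,k}^\star)\aslim\bulkEdge(\FZ)$ --- so once that lemma is in hand, the present theorem follows by the routine assembly described above.
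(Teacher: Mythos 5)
Your proof follows exactly the route the paper intends: the paper presents Theorem~\ref{thm:adaptive-guarantee} as ``an immediate corollary of Theorems~\ref{thm:lim-oracle-risk}, \ref{thm:oracle-risk-attained} and Lemma~\ref{lem:main:approx-FZ}'' and supplies no further detail, and your proposal simply fills in the obvious assembly (Lemma~\ref{lem:main:approx-FZ} gives the almost-sure convergence $F_{n,k}^\star\dlim\FZ$ together with $\bulkEdge(F_{n,k}^\star)\aslim\bulkEdge(\FZ)$; Lemma~\ref{lem:main:continuity} then yields $\hat\theta_n\aslim\optThresh(\FZ)$ pathwise on that event; Lemma~\ref{lem:main:opt-interval}(3) places $\optThresh(\FZ)$ in the interior of the asymptotic optimal interval; and Theorems~\ref{thm:lim-oracle-risk}(2) and \ref{thm:oracle-risk-attained}(1) with $\theta_n=\hat\theta_n$ then give parts (2) and (3), respectively). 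The bookkeeping about intersecting finitely many probability-one events and the verification that the surgically modified CDF satisfies the hypotheses of Lemma~\ref{lem:main:continuity} are both correct and complete; this matches the paper's argument.
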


Regarding the assumption in item 3 above, we note the following.
\begin{lemma}\label{lem:main:generic}
The condition $\optThresh(\FZ)\notin \left\{y_{1,\infty},\ldots,y_{r,\infty}\right\}$
is {\bf generic}, i.e., in the space of possible singular value $r$-vectors $\V{x}$,  this 
condition holds on an open dense set.
\end{lemma}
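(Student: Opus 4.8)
The key realization is that $\FZ$ and $\gamma$ are held fixed throughout, so $\theta^\ast := \optThresh(\FZ)$ is a single fixed real number, independent of the signal $\V{x}$, and the whole claim is about the geometry of the map $\V{x}\mapsto (y_{1,\infty},\ldots,y_{r,\infty})$. The plan is therefore: (i) pin down $\theta^\ast$ and show $\theta^\ast>\bulkedge$ strictly; (ii) identify the ``bad'' set of signals exactly, as a finite union of hyperplanes; (iii) conclude by a soft point-set-topology argument.

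First I would invoke Lemma~\ref{lem:main:F-props}(2): since $\FZ$ satisfies the standing thickness hypothesis (Assumption~\ref{assum:dense}), $\theta^\ast=\optThresh(\FZ)$ is the \emph{unique} number $>\bulkedge=\bulkEdge(\FZ)$ with $\TCrit(\theta^\ast;\FZ)=-4$; in particular $\theta^\ast>\bulkedge$, strictly. Next, recall that $y_{i,\infty}=\m{Y}(x_i)$ when $x_i>\bbp$ and $y_{i,\infty}=\bulkedge$ when $x_i\le\bbp$. Because $\theta^\ast>\bulkedge$, the equality $y_{i,\infty}=\theta^\ast$ can only occur in the first regime, i.e. only if $x_i>\bbp$ and $\m{Y}(x_i)=\theta^\ast$. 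Since $\m{Y}$ is a strictly increasing bijection from $[\bbp,\infty)$ onto $[\bulkedge,\infty)$ (recorded just after \eqref{eq:Yc}), there is exactly one real number $x^\ast:=\m{Y}^{-1}(\theta^\ast)$, and $x^\ast>\bbp\ge 0$ since $\theta^\ast>\bulkedge$. Consequently, writing $V=\{(x_1,\ldots,x_r)\in\R^r : x_1>x_2>\cdots>x_r>0\}$ for the parameter space of admissible signals, the set of ``bad'' signals is exactly
\[
B \;=\; \bigl\{\V{x}\in V : \theta^\ast\in\{y_{1,\infty},\ldots,y_{r,\infty}\}\bigr\} \;=\; \bigcup_{i=1}^{r}\bigl\{\V{x}\in V : x_i=x^\ast\bigr\}.
\]

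Finally I would finish by observing that $V$ is open in $\R^r$ and that each slice $\{\V{x}\in V : x_i=x^\ast\}$ is the intersection of $V$ with the affine hyperplane $\{x_i=x^\ast\}$, hence relatively closed in $V$ and nowhere dense (a proper affine subspace of $\R^r$ has empty interior). A finite union of relatively closed nowhere-dense sets is relatively closed and nowhere dense, so $B$ is closed in $V$ with empty interior, and its complement $V\setminus B$ --- which is precisely the set of $\V{x}$ for which $\optThresh(\FZ)\notin\{y_{1,\infty},\ldots,y_{r,\infty}\}$ --- is open and dense in $V$, as claimed.

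I do not anticipate a substantive obstacle here: the only step needing care is the reduction in the second paragraph, where the strict inequality $\theta^\ast>\bulkedge$ (supplied by the thickness assumption through Lemma~\ref{lem:main:F-props}) is used to discard coordinates with $x_i\le\bbp$, and the injectivity of $\m{Y}$ is used to see that $B$ is a union of \emph{finitely many} hyperplanes rather than something geometrically more complicated; everything after that is routine, and I expect this to constitute essentially the entire proof.
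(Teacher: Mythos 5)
Your proof is correct and takes essentially the same approach as the paper's: fix $\theta^\ast=\optThresh(\FZ)$ as a constant strictly exceeding $\bulkedge$, use the bijectivity of $\m{Y}$ to identify the single bad value $x^\ast=\m{Y}^{-1}(\theta^\ast)>\bbp$, and observe that the set of signals with all entries distinct from $x^\ast$ is open and dense. The paper's own argument is a four-sentence version of exactly this, so your write-up is simply a more fully spelled-out account of the same reasoning.
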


\begin{proof}
    Fix the noise bulk $\FZ$; then 
$\theta^*=T_\gamma(\FZ)$ is a constant
not varying as the underlying signal $\V{x}$ changes. Moreover,
it always strictly exceeds the bulk edge $\bulkEdge(\FZ)$. 
So $x^* = \m{Y}^{-1}(\theta^*; \FZ,\gamma)$ is a 
uniquely defined constant which exceeds $\BBP(\FZ,\gamma)$.
The set of vectors $\V{x}$ with all entries distinct from $x^*$ is open and dense. 
\end{proof}

\subsection{Stability of \NAME}

One wonders how fast $T_{p/n}(F_{n,k}^\star)$ converges to the limit $T_\gamma(\FZ)$. We show that for noise matrices with correlated columns, the model described in Section~\ref{sec:correlated-noise}, the typical deviations are of order $\m{O}(k/p)$. 
We start with a ``quantitative'' version of Lemma~\ref{lem:main:continuity}:

\begin{lemma}\label{lem:main:continuity-quantitive}
Adopt the setting of Lemma~\ref{lem:main:continuity}. 
Set
	\[
	\Delta_{1,n} = |\varphi(\optThresh(H);H)-\varphi(\optThresh(H);H_n)|,\quad \Delta_{2,n} = |\varphi'(\optThresh(H);H)-\varphi'(\optThresh(H);H_n)| \,,
	\]
	where $\varphi$ and $\varphi'$ are given in Eqs (\ref{eq:varphi}) and (\ref{eq:varphi_d}). Then 
	\[
	\left|\optThresh(H) - T_{p/n}(H_n)\right| = \m{O}\left( \Delta_{1,n} + \Delta_{2,n} + \left|\frac{p}{n} - \gamma\right| \right) \,.
	\]
\end{lemma}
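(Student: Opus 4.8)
The plan is to linearize the identity $\TCrit(\cdot;H)\equiv-4$ about the point $\theta^*:=\optThresh(H)$ and to track separately the three sources of error: a perturbation of the argument $y$, of the aspect ratio, and of the CDF. Throughout, write $\hat\theta_n:=T_{p/n}(H_n)$, and for $a\in(0,1]$ let $\Psi_a(y;H)$ denote the right-hand side of \eqref{eq:F} with $\gamma$ replaced by $a$ (so that $\TCrit$ and $\TCritPoverN$ are the cases $a=\gamma$ and $a=p/n$). By Lemma~\ref{lem:main:F-props}(2), $\theta^*>\bulkEdge(H)$ and $\TCrit(\theta^*;H)=-4$; by Lemma~\ref{lem:main:continuity}, $\hat\theta_n\to\theta^*$. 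Since $\bulkEdge(H_n)\to\bulkEdge(H)<\theta^*$, fix $\epsilon\in\bigl(0,\tfrac12(\theta^*-\bulkEdge(H))\bigr)$; then for all large $n$ the value $\hat\theta_n$ lies in the compact set $K:=[\theta^*-\epsilon,\theta^*+\epsilon]$ and $\bulkEdge(H_n)<\theta^*-\epsilon$. Because $y\mapsto\TCritPoverN(y;H_n)$ is continuous and increasing on $(\bulkEdge(H_n),\infty)$ with limit $-2>-4$ at infinity (Lemma~\ref{lem:main:F-props}(1)), the infimum defining $T_{p/n}(H_n)$ in \eqref{eq:optFunc} is attained, so $\TCritPoverN(\hat\theta_n;H_n)=-4$ for large $n$. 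Subtracting the two identities and inserting the intermediate terms gives, for large $n$, the exact decomposition
\begin{equation*}
0 \;=\; \underbrace{\TCritPoverN(\hat\theta_n;H_n)-\TCritPoverN(\theta^*;H_n)}_{A_n} \;+\; \underbrace{\TCritPoverN(\theta^*;H_n)-\TCrit(\theta^*;H_n)}_{B_n} \;+\; \underbrace{\TCrit(\theta^*;H_n)-\TCrit(\theta^*;H)}_{C_n}\,.
\end{equation*}
We will bound $|B_n|$ and $|C_n|$ by the stated error terms and convert $A_n$ into a lower bound for $|\hat\theta_n-\theta^*|$.

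For $C_n$: by \eqref{eq:varphi_tilde}--\eqref{eq:F}, $\TCrit(\theta^*;G)$ depends on a CDF $G$ only through the pair $\bigl(\varphi(\theta^*;G),\varphi'(\theta^*;G)\bigr)$, smoothly wherever $\varphi(\theta^*;G)>0$ --- which holds at $G=H$ since $\theta^*>\bulkEdge(H)$, hence on a neighborhood. From $H_n\dlim H$ and $\bulkEdge(H_n)\to\bulkEdge(H)$, the supports of $H$ and of the $H_n$ (for $n$ large) all lie in a fixed interval $[0,M]$ with $M<\theta^*-\epsilon$, on which the integrands defining $\varphi,\varphi',\varphi''$ are bounded and continuous in $z$, uniformly for $y\in K$; hence (via a cutoff, weak convergence) $\varphi,\varphi',\varphi''$ evaluated at $H_n$ converge to those at $H$ uniformly on $K$, so $\bigl(\varphi(\theta^*;H_n),\varphi'(\theta^*;H_n)\bigr)$ eventually enters the neighborhood above and local Lipschitzness gives $|C_n|=\m{O}(\Delta_{1,n}+\Delta_{2,n})$. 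For $B_n$: writing $\Psi_a(\theta^*;H_n)$ out as an explicit function of $a$ and differentiating, $\partial_a\Psi_a(\theta^*;H_n)$ is bounded uniformly in $n$ and in $a$ on the segment joining $p/n$ and $\gamma$ --- for large $n$ that segment sits in a compact subinterval of $(0,1]$, the denominators $a\,\varphi(\theta^*;H_n)+(1-a)/\theta^*$ stay bounded away from $0$, and $\varphi(\theta^*;H_n),\varphi'(\theta^*;H_n)$ stay bounded --- so the mean value theorem yields $|B_n|=\m{O}\bigl(|p/n-\gamma|\bigr)$.

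For $A_n$, the crux is a positive lower bound, uniform for large $n$, on $\partial_y\TCritPoverN(y;H_n)$ over $y\in K$. This follows from (i) $m:=\min_{y\in K}\partial_y\TCrit(y;H)>0$, together with (ii) $\TCritPoverN(\cdot;H_n)\to\TCrit(\cdot;H)$ in $C^1(K)$, which --- exactly as in the previous paragraph, now also invoking $\varphi''$ and $p/n\to\gamma$ --- reduces to uniform convergence of $\varphi,\varphi',\varphi''$ on $K$. For (i): substituting $s=2\log y$, for $y>\bulkEdge(G)$ one has the identity $y\,\varphi'(y;G)/\varphi(y;G)=1+2\tfrac{d}{ds}\log\Phi_G(e^s)$, where $\Phi_G(w):=\int(w-z^2)^{-1}dG(z)$, so each of the two summands of $\TCrit(y;H)=y\varphi'/\varphi+y\tilde\varphi'_\gamma/\tilde\varphi_\gamma$ --- the second being this same expression with $G=\tilde H_\gamma$, since $\tilde\varphi_\gamma(\cdot;H)=\varphi(\cdot;\tilde H_\gamma)$ --- has $y$-derivative equal to $(4/y)\tfrac{d^2}{ds^2}\log\Phi_G(e^s)$, which is strictly positive once $s\mapsto\Phi_G(e^s)$ is strictly log-convex; and the latter holds because each $s\mapsto(e^s-z^2)^{-1}$ is log-convex (strictly for $z\neq0$) while $H$ and $\tilde H_\gamma$ both put positive mass on $\{z\neq0\}$ (as $H\neq\delta_0$), so the mixture is strictly log-convex. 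Granting (i)--(ii), $\partial_y\TCritPoverN(y;H_n)\ge m/2$ on $K$ for all large $n$, so by the mean value theorem $|A_n|\ge(m/2)\,|\hat\theta_n-\theta^*|$; combining with $A_n=-(B_n+C_n)$ and the bounds above gives $|\hat\theta_n-\theta^*|=\m{O}\bigl(\Delta_{1,n}+\Delta_{2,n}+|p/n-\gamma|\bigr)$, as claimed. The one genuinely delicate point is (i): it is the quantitative strengthening of the monotonicity in Lemma~\ref{lem:main:F-props}(1), requiring non-vanishing of $\TCrit'$ on a neighborhood of the specific crossing point $\theta^*$ rather than mere monotonicity on an interval --- the log-convexity computation above supplies this cleanly, using only that $H$ is not a point mass at the origin.
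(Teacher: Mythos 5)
Your proof is correct and follows essentially the same route as the paper: write the $\theta$-difference as a $\Psi$-increment using that both $\theta^*$ and $\hat\theta_n$ solve $\Psi = -4$, lower-bound $\partial_y\Psi$ near $\theta^*$, and control the remaining $\Psi$-increments by $\Delta_{1,n}$, $\Delta_{2,n}$, and $|p/n-\gamma|$.

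One remark in your favor. The paper asserts the derivative bound $c_2 < \TCritBare'_{\gamma'}(y;G) < c_1$ uniformly over \emph{all} distributions $G$ supported on $[0,a]$, which as stated fails (take $G=\delta_0$: then $\Psi_{\gamma'}(\cdot;\delta_0)\equiv -2$ and $\Psi'\equiv 0$). Your version --- obtaining the positive lower bound via $C^1$-convergence of $\Psi_{p/n}(\cdot;H_n)$ to $\Psi_\gamma(\cdot;H)$ on a compact neighborhood of $\theta^*$, together with strict log-convexity of $s\mapsto\Phi_G(e^s)$ whenever $G$ carries mass off $\{0\}$ --- is the correct formulation of this step and is what the paper's reference to the Lemma~\ref{lem:crossing} computation actually delivers. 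The cost is that $C^1$-convergence requires tracking $\varphi''$ in addition to $\varphi,\varphi'$, whereas the paper's (intended) uniform-over-$G$ bound would avoid that; but both yield the lemma.
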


Lemma~\ref{lem:main:continuity-quantitive}, along with the Kolmogorov-Smirnov distance bound from Lemma~\ref{lem:main:approx-FZ} and the tightness result for linear spectral statistics from \cite{bai2004} (see Section~\ref{sec:correlated-noise}), gives the following:
\begin{prop}\label{prop:main:perturb}
    Suppose that $(Z_n)$ is a  sequence of noise matrices 
    with correlated columns, as described in Section~\ref{sec:correlated-noise}; and let $\FS$ denote the LECDF of eigenvalues of the cross-column covariances $S_n$. Assume, in addition, that $\optThresh(\FZ) > \overline{y}^{1/2} = (1+\sqrt{\gamma})\cdot \sqrt{\UpperEdgeS(\FS)}$.\footnote{This additional assumption is used due to a technical requirement in the results of \cite{bai2004}. We suspect that it can be removed.}. Suppose that $k\ge r$ with $k/p\to 0$. Then for any $\star \in \{0,w,i\}$,
    \[
    \left|\optThresh(\FZ) - T_{p/n}(F_{n,k}^\star)\right| = \m{O}_{\prob}\left( \frac{k+1}{p} \right) \,.
    \]
\end{prop}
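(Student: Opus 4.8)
The plan is to combine three ingredients in a chain: (i) the quantitative continuity estimate of Lemma~\ref{lem:main:continuity-quantitive}, which reduces the problem to controlling the two scalar perturbations $\Delta_{1,n} = |\varphi(\optThresh(\FZ);\FZ) - \varphi(\optThresh(\FZ);F_{n,k}^\star)|$ and $\Delta_{2,n} = |\varphi'(\optThresh(\FZ);\FZ) - \varphi'(\optThresh(\FZ);F_{n,k}^\star)|$, plus the deterministic gap $|p/n - \gamma| \le 1/n$; (ii) the Kolmogorov--Smirnov bound $\|F_{n,k}^\star - \FZn\|_{\mathrm{KS}} \le k/p$ from Lemma~\ref{lem:main:approx-FZ}(3), which lets me swap $F_{n,k}^\star$ for the \emph{genuine} empirical noise CDF $\FZn$ at a cost of $\m{O}(k/p)$; and (iii) the tightness of linear spectral statistics from \cite{bai2004} (property (4) in Section~\ref{sec:correlated-noise}), which controls $|\varphi(y_0;\FZ) - \varphi(y_0;\FZn)|$ and $|\varphi'(y_0;\FZ) - \varphi'(y_0;\FZn)|$ at the \emph{fixed} point $y_0 = \optThresh(\FZ)$, showing they are $\m{O}_\prob(1/p)$.

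First I would fix $y_0 = \optThresh(\FZ) > \bulkEdge(\FZ)$; this is a deterministic number, and by the extra hypothesis $y_0 > \overline{y}^{1/2}$ it lies strictly to the right of the interval $[\underline y^{1/2}, \overline y^{1/2}]$ that asymptotically contains all noise singular values. Then $z \mapsto y_0/(y_0^2 - z^2)$ and $z \mapsto (y_0^2 + z^2)/(y_0^2 - z^2)^2$ are functions of $z^2$ that are analytic on a neighborhood of $[\underline y, \overline y]$ (the denominator $y_0^2 - z^2$ stays bounded away from zero there), so they are exactly the type of test function $g$ to which property (4) of Section~\ref{sec:correlated-noise} applies. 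Hence $p \cdot |\varphi(y_0;\FZ) - \varphi(y_0;\FZn)|$ and $p \cdot |\varphi'(y_0;\FZ) - \varphi'(y_0;\FZn)|$ are tight, i.e. these differences are $\m{O}_\prob(1/p)$.

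Next I would handle the replacement of $\FZn$ by $F_{n,k}^\star$. Write $\varphi(y_0; F_{n,k}^\star) - \varphi(y_0;\FZn) = \int g_1(z)\,(dF_{n,k}^\star - d\FZn)(z)$ with $g_1(z) = y_0/(y_0^2 - z^2)$, and integrate by parts: this equals $-\int g_1'(z)\,(F_{n,k}^\star - \FZn)(z)\,dz$ over a bounded interval, so it is bounded by $\|F_{n,k}^\star - \FZn\|_{\mathrm{KS}} \cdot \|g_1'\|_{L^1}$. The total-variation norm of $g_1'$ over the relevant bounded range is a finite constant depending only on $y_0$ and the support edges (again using $y_0$ bounded away from the support), so this term is $\m{O}(k/p)$ deterministically; the same argument with $g_2(z) = (y_0^2+z^2)/(y_0^2-z^2)^2$ handles $\Delta_{2,n}$. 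One subtlety: $F_{n,k}^\star$ and $\FZn$ may have mass at different locations near the (random) bulk edge $y_{1,n}$, and $\FZn$ itself has its top masspoints converging to $\overline y^{1/2} < y_0$; so I should make the integration-by-parts rigorous on a fixed compact interval that eventually contains all relevant masspoints with overwhelming probability (using $z_{1,n} \aslim \bulkEdge(\FZ)$ and $y_{1,n} \aslim y_{1,\infty}$, both finite and $< \infty$), and absorb the negligible-probability complement.

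Combining: $\Delta_{1,n} + \Delta_{2,n} \le |\varphi(y_0;\FZn) - \varphi(y_0;\FZ)| + |\varphi(y_0;F_{n,k}^\star) - \varphi(y_0;\FZn)| + (\text{same for }\varphi') = \m{O}_\prob(1/p) + \m{O}(k/p) = \m{O}_\prob((k+1)/p)$, and adding $|p/n-\gamma| = \m{O}(1/n) = \m{O}(1/p)$ and feeding this into Lemma~\ref{lem:main:continuity-quantitive} yields $|\optThresh(\FZ) - T_{p/n}(F_{n,k}^\star)| = \m{O}_\prob((k+1)/p)$, as claimed. The main obstacle I anticipate is purely bookkeeping rather than conceptual: making sure the implied constants in the integration-by-parts step and in Lemma~\ref{lem:main:continuity-quantitive} are genuinely \emph{uniform} (they depend on $\FZ$, $\gamma$, $\V{x}$ but not on $n$), and carefully checking that the hypothesis $\optThresh(\FZ) > \overline y^{1/2}$ is exactly what is needed both to place the test functions in the analyticity domain of \cite{bai2004} and to keep $\|g_i'\|_{L^1}$ finite — i.e. verifying that $\optThresh(\FZ)$ really does sit strictly outside the closure of the noise bulk, which follows from Lemma~\ref{lem:main:F-props} since $\optThresh(\FZ) > \bulkEdge(\FZ) \ge \overline y^{1/2}$... wait, that inequality goes the wrong way, which is precisely why the extra assumption is imposed separately.
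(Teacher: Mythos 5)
Your proposal follows essentially the same route as the paper's proof: reduce via Lemma~\ref{lem:main:continuity-quantitive} to bounding $\Delta_{1,n},\Delta_{2,n}$, split each as $[\varphi(\cdot;\FZ)-\varphi(\cdot;\FZn)]+[\varphi(\cdot;\FZn)-\varphi(\cdot;F_{n,k}^\star)]$, control the first bracket by the linear-spectral-statistics tightness from \cite{bai2004} at the fixed point $\optThresh(\FZ)$ (giving $\m{O}_\prob(1/p)$), and control the second by integration by parts against the Kolmogorov--Smirnov bound $\|F_{n,k}^\star-\FZn\|_{\mathrm{KS}}\le k/p$ from Lemma~\ref{lem:main:approx-FZ} (giving $\m{O}(k/p)$). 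You also correctly identify, and self-correct on, the role of the extra hypothesis $\optThresh(\FZ)>\overline y^{1/2}$: since only $\bulkEdge(\FZ)\le\overline y^{1/2}$ is guaranteed, $\optThresh(\FZ)>\bulkEdge(\FZ)$ is not enough to place the test functions in the analyticity domain required by \cite{bai2004}, which is exactly why the assumption is imposed.
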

Lemma~\ref{lem:main:continuity-quantitive} and Proposition~\ref{prop:main:perturb} are proved in the supplementary material, Section~\ref{sec:proof:estimating}.

\section{Numerical experiments}
\label{sec:numerics}

The supplementary article 
contains comprehensive experiments conducted on a large variety of noise distributions. 
For space constraints, we include just a sample of these results - specifically, for white noise (Mar\v{c}enko-Pastur LECDF) with $\gamma = 0.5$. 
Simulation results and code reproducing all figures here and in the
supplementary article is 
available at 
\cite{SDR}. 
See Section \ref{app:numerics}  of the supplementary article
for full details on each experiment reported here.

In Figure~\ref{fig:A}, we plot the function $\thresh\mapsto \SEn[\V{x}|\thresh]$ for a single fixed problem instance. The vertical lines correspond to thresholds $\thresh$, taken to be either the true optimal threshold $\thresh=T_\gamma(\FZ)$, its estimated versions $\thresh=T_\gamma(F_{n,k}^\star)$, $\star\in \{0,w,i\}$, or the noise (asymptotic) bulk edge, $\thresh=\bulkEdge(\FZ)$, which is the ``natural'' implementation of Cattell's scree-plot heuristic in the spiked model. The error landscape $\SEn[\V{x}|\thresh]$ is seen to be a step function, and on this particular instance, all the proposed thresholding strategies fall inside the interval where it attains its global minimum; hence, they attain the oracle risk. In constrast, thresholding at the bulk edge results in a strictly suboptimal squared error.

Figures~\ref{fig:B} and~\ref{fig:C} compare the relative efficacy of the proposed threshold estimation strategies. In both experiments, thresholding at the exact optimal threshold $\thresh=T_\gamma(\FZ)$ (which is a priori unknown) yields the best results; among the proposed strategies, ``imputation'' $\thresh=T_\gamma(F_{n,k}^{i})$ appears to give the best results for finite problem dimensions. Figure~\ref{fig:D} demonstrates 
the superior finite-$n$ error of ``imputation'' in estimating the optimal threshold $T_\gamma(\FZ)$.

\begin{figure}
    \centering
    
\begin{subfigure}{0.48\textwidth}
    \includegraphics[width=\textwidth]{{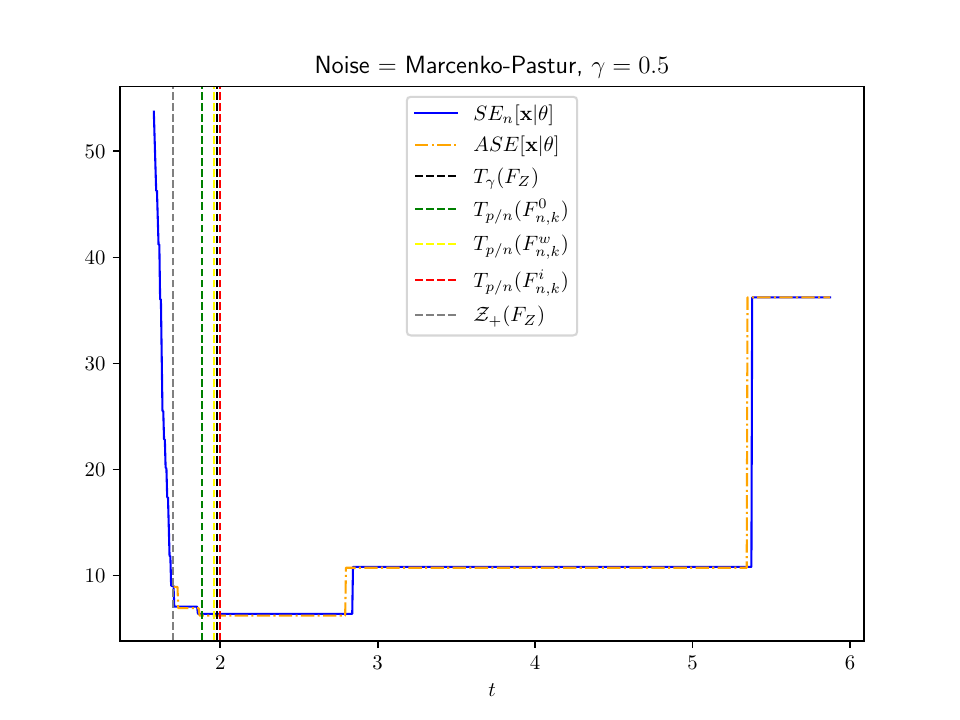}}
    \caption{\small
     A single problem instance, corresponding to the rank $r=5$ signal $\V{x}=(0.5,1.0,1.3,2.5,5.2)$. Shown are  the functions $\SEn[\V{x}|\theta]$ and $\ASE[\V{x}|\thresh]$ on top of each other.
     Here $\gamma=0.5$, $p=500$ and $n=p/\gamma$. We indicate the locations of
     $\bulkEdge$, $\optThresh(\FZ)$ and the estimates $T_{p/n}(F_{n,k}^\star)$ for $\star\in \{ 0,w,i\}$, with $k=4r=20$. 
    }
    \label{fig:A}
\end{subfigure}
~
~
\begin{subfigure}{0.48\textwidth}
    \includegraphics[width=\textwidth]{{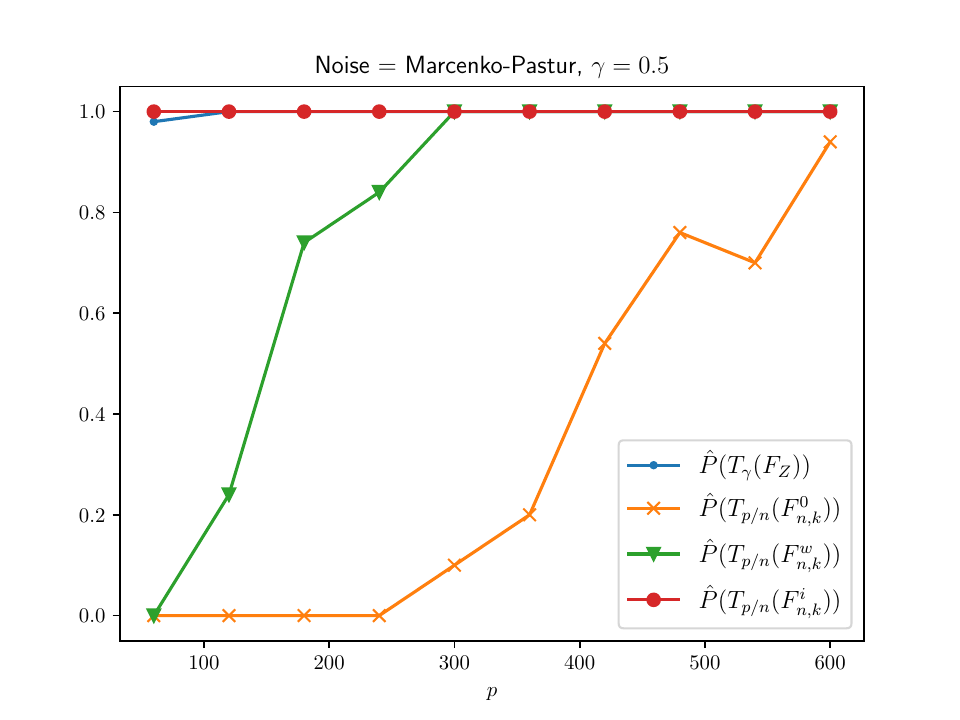}}
    \caption{\small For various choices of $p$, and $50$ denoising experiments, shown are  the fraction of experiments where each threshold $\in \left\{ T_{\gamma}(\FZ), T_{p/n}(F_{n,k}^0),T_{p/n}(F_{n,k}^w),T_{p/n}(F_{n,k}^i) \right\}$ attains oracle loss.
    Here, $\V{x}=(0.5,1.0, 1.3, 2.5, 5.2)$ so that $r=5$
    and  $k=4r=20$.}
    \label{fig:B}
\end{subfigure}


\centering

\begin{subfigure}{0.48\textwidth}
    \includegraphics[width=\textwidth]{{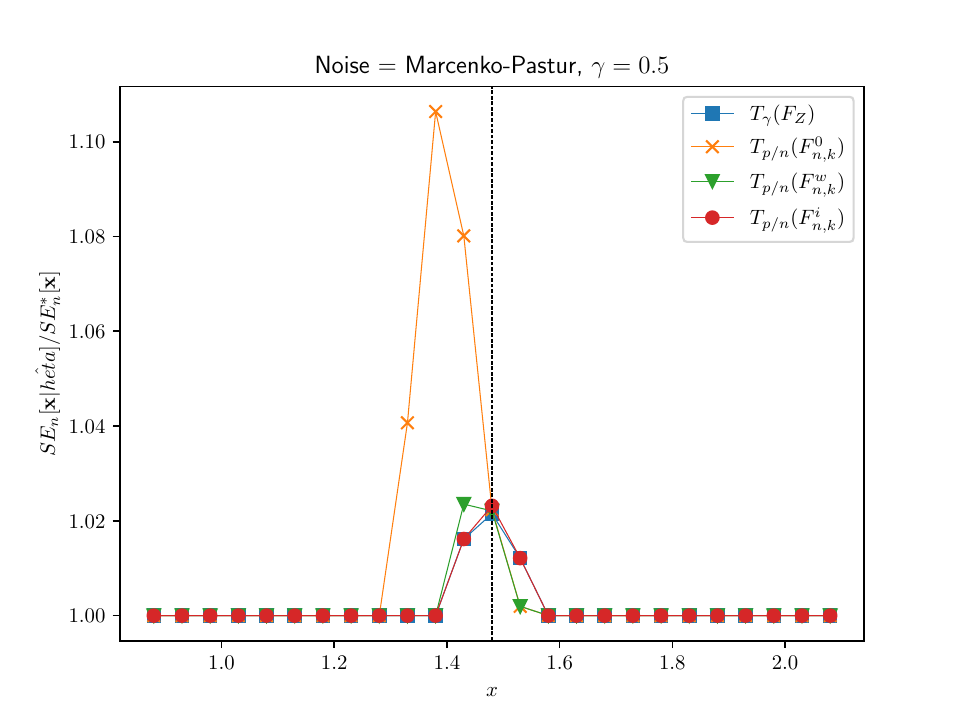}}
    \caption{\small Oracle loss $\SEn^*[\V{x}]$ compared with $\SEn[x|\hat{\theta}]$ for the choices $\hat{\thresh}\in \left\{ \optThresh(\FZ),T_{p/n}(F_{n,k}^0),T_{p/n}(F_{n,k}^w),T_{p/n}(F_{n,k}^i) \right\}$, for a  single spike.
    We let the spike intensity $x$ vary and plot $\SEn[\V{x}|\hat{t}]/\SEn^*[\V{x}]$ for each choice of estimator. 
 The ratios shown are averages across $20$ experiments.}
    \label{fig:C}
\end{subfigure}
~
~
\begin{subfigure}{0.48\textwidth}
    \includegraphics[width=\textwidth]{{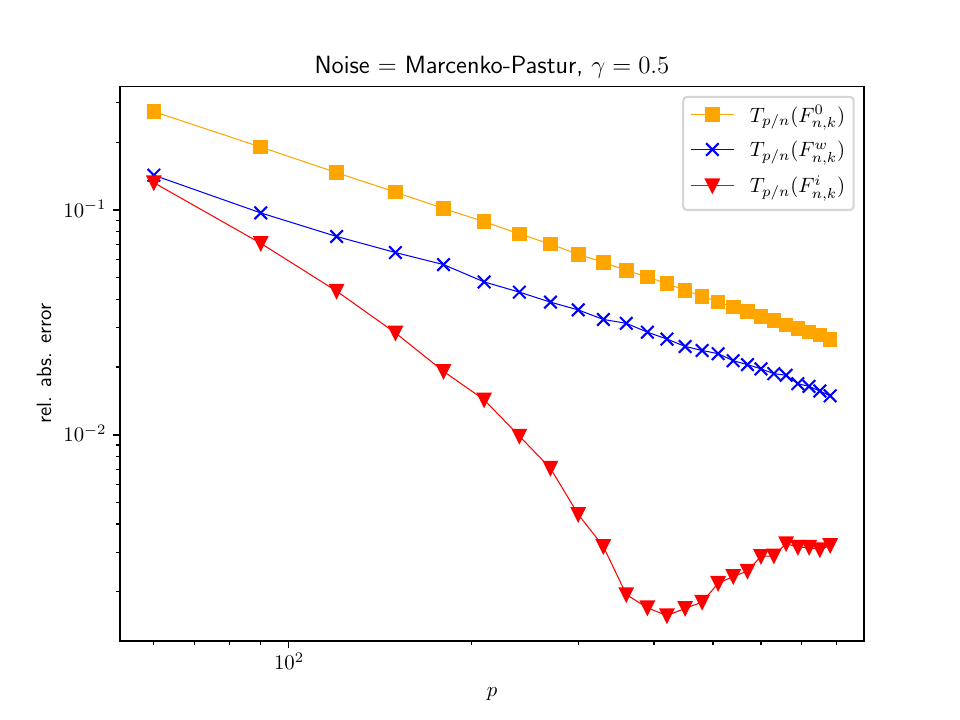}}
    \caption{\small Convergence rate of $T_{p/n}(F_{n,k}^\star)$ towards $\optThresh(\FZ)$.
    Here, $r=10$, $x=(1,\ldots,10)$, $k=20$.
    Shown is the relative absolute error $\left| T_{p/n}(F^\star_{n,k})-\optThresh(\FZ)\right|/\optThresh(\FZ)$, plotted in logarithmic scale, as $p$ increases and $n=p/\gamma$.
     Each point corresponds to the the average error across $50$ experiments.}
     \label{fig:D}
\end{subfigure}

    \caption{Monte-Carlo simulation results}
\end{figure}

\section{Proofs}
\label{sec:proofs}

\subsection{The asymptotic loss at a fixed threshold}
\label{sec:proofs:fixed-threshold}

To simplify notation, throughout this section $F_Z$ and $\gamma$ will be held fixed,
and left
implicit in the notation where possible.
In particular, 
we set $z_+ = \bulkEdge(F_Z)$, $x_+ = \BBP(F_Z,\gamma)$ throughout, and 
we suppress mention of $F_Z$ and $\gamma$ 
in entities like $\m{C}$, $\m{Y}$, $\m{D}$.

We start by investigating $\lim_{n\to\infty}\SEn[\V{x}|\thresh]$ for fixed $\thresh$. Note that when $\thresh<\bulkedge$, it is clear that $\lim_{n\to\infty} \SEn[\V{x}|\thresh]=\infty$; the reason being that, for small enough $\epsilon > 0$,
with probability $1$, $(1-\FZ(\thresh +\epsilon)) \cdot n  = \Omega(n)$ empirical singular values 
$y_{i,n}$ exceed the threshold $\thresh$, so that $\rank(\hat{X}_{\thresh}(Y_n))$ increases indefinitely. (This argument will be made more precise later.)

The following is an easy calculation:
\begin{lemma}\label{lem:as-limit}
	For any $\thresh> \bulkedge$, almost surely:
	\begin{enumerate}
		\item 
		\[
		\liminf_{n\to\infty} \SEn[\V{x}|\thresh] \ge \ASE^*[\V{x}] \,.
		\]
		\item If, in addition, $\thresh\notin \{y_{1,\infty},\ldots,y_{r,\infty}\}$, then
		\[
		\lim_{n\to\infty} \SEn[\V{x}|\thresh] = \ASE[\V{x}|\thresh] \,.
		\]
	\end{enumerate}
	The quantities $\ASE[\V{x}|\thresh]$ and $\ASE^*[\V{x}]$ appear in Eqs. (\ref{eq:ASEt}) and (\ref{eq:ASEopt}) respectively.
\end{lemma}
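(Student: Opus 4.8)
The plan is to decompose $\SEn[\V{x}|\thresh]$ into a sum of contributions from the $r$ signal components plus a contribution from the ``pure noise'' directions, and to evaluate each limit using the Benaych-Georges--Nadakuditi phenomenological limits recalled in Section~\ref{sec:setup}. First I would write, using the SVD $Y_n=\sum_i y_{i,n}\V{u}_{i,n}\V{v}_{i,n}^\T$ and the fact that $X_n$ has rank $r$,
\[
\SEn[\V{x}|\thresh] = \|X_n\|_F^2 + \sum_{i:\,y_{i,n}>\thresh} y_{i,n}^2 - 2\sum_{i:\,y_{i,n}>\thresh} y_{i,n}\,\langle \V{u}_{i,n}, X_n \V{v}_{i,n}\rangle \,.
\]
Here $\|X_n\|_F^2=\sum_{i=1}^r x_i^2$ is deterministic, and $\langle \V{u}_{i,n},X_n\V{v}_{i,n}\rangle = \sum_{j=1}^r x_j \langle \V{a}_{j,n},\V{u}_{i,n}\rangle\langle\V{b}_{j,n},\V{v}_{i,n}\rangle$. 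The key point is that for $\thresh>\bulkedge$ fixed and $n$ large, only finitely many indices $i$ can survive the threshold: by assumption (6) (no outliers), $y_{i,n}\aslim\bulkEdge(\FZ)=\bulkedge<\thresh$ for $i>r$ (via interlacing), so almost surely for large $n$ the surviving set is contained in $\{1,\dots,r\}$, and the whole sum reduces to finitely many terms, each of which has an almost-sure limit.

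Next I would pass to the limit term by term. If $\thresh\notin\{y_{1,\infty},\dots,y_{r,\infty}\}$, then for each $i\le r$ the event $\{y_{i,n}>\thresh\}$ stabilizes almost surely (it equals $\{y_{i,\infty}>\thresh\}$ eventually, since $y_{i,n}\aslim y_{i,\infty}$ and $\thresh$ avoids the limit value). For a surviving index $i$ with $x_i>\bbp$: $y_{i,n}^2\to\m{Y}(x_i)^2$; the cross term, using the decorrelation of non-corresponding dyads (item 3 of the RMT facts) and the limiting cosine $\m{C}(x_i)$ (item 4), gives $y_{i,n}\langle\V{u}_{i,n},X_n\V{v}_{i,n}\rangle\to \m{Y}(x_i)\cdot x_i\m{C}(x_i)$, so that component contributes $x_i^2 + \m{Y}(x_i)^2 - 2x_i\m{Y}(x_i)\m{C}(x_i) = R_1(x_i)$ to the limit. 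A dropped index ($y_{i,\infty}<\thresh$, which forces $y_{i,\infty}=\bulkedge$, i.e.\ $x_i\le\bbp$, or $x_i>\bbp$ with $\m{Y}(x_i)<\thresh$) contributes only the $x_i^2=R_0(x_i)$ term from $\|X_n\|_F^2$ and nothing else. This matches exactly the definition (\ref{eq:ASEt}) of $\ASE[\V{x}|\thresh]$: when $x_i\le\bbp$ the component is always dropped and $R_0(x_i)=R_1(x_i)=x_i^2$ anyway, so writing $R(x_i|\thresh)$ as $\Ind{y_{i,\infty}\le\thresh}R_0 + \Ind{y_{i,\infty}>\thresh}R_1$ is consistent. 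This proves part (2).

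For part (1), I cannot assume $\thresh$ avoids the $y_{i,\infty}$, so the surviving set need not stabilize; but the bound is only a $\liminf \ge$. The cleanest route is to note $\SEn[\V{x}|\thresh]\ge \SEn^*[\V{x}] = \min_{\thresh'\ge0}\SEn[\V{x}|\thresh']$ pointwise, so it suffices to show $\liminf_n \SEn^*[\V{x}] \ge \ASE^*[\V{x}]$; alternatively, and more directly, decompose as above and bound each signal component's contribution from below by $R^*(x_i)=\min\{R_0(x_i),R_1(x_i)\}$ regardless of whether it survives, which is legitimate since the cross-noise directions contribute $y_{i,n}^2 - 2y_{i,n}\langle\cdots\rangle \ge 0$ when one completes the square — more carefully, for each fixed $i$ the contribution is asymptotically either $R_0(x_i)$ or $R_1(x_i)$ up to $o(1)$, along any subsequence where the survival status of $i$ is constant, and both are $\ge R^*(x_i)$. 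A subsequence argument over the finitely many possible survival patterns then yields $\liminf_n \SEn[\V{x}|\thresh]\ge\sum_i R^*(x_i)=\ASE^*[\V{x}]$.

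The main obstacle is the bookkeeping of the cross term $2\sum_{i}y_{i,n}\langle\V{u}_{i,n},X_n\V{v}_{i,n}\rangle$: one must justify that the ``off-diagonal'' pieces $\langle\V{a}_{j,n},\V{u}_{i,n}\rangle\langle\V{b}_{j,n},\V{v}_{i,n}\rangle$ for $i\ne j$ vanish in the limit (item 3), and handle the case of repeated or nearly-repeated signal values — but since we assumed $x_1>\dots>x_r$ are distinct, the relevant limits $\m{C}(x_i)$ are well-defined and the decorrelation statements apply cleanly, so this is really just careful accounting rather than a genuine difficulty.
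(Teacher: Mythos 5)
Your proposal is substantively the same as the paper's proof: decompose the loss, use the no-outliers assumption to reduce, almost surely for large $n$, to finitely many signal terms, then pass to the limit via the Benaych-Georges--Nadakuditi facts (spike locations $y_{i,\infty}$, limiting cosines $\m{C}(x_i)$, and decorrelation of non-corresponding dyads), using $\Ind{y_{i,n}>\thresh}\aslim\Ind{y_{i,\infty}>\thresh}$ when $\thresh\ne y_{i,\infty}$. Your expansion $\|X_n\|_F^2+\sum_{i:y_{i,n}>\thresh}y_{i,n}^2-2\sum_{i:y_{i,n}>\thresh}y_{i,n}\langle\V{u}_{i,n},X_nV{v}_{i,n}\rangle$ is just a different way of writing the paper's double sum and is correct, and your bookkeeping of surviving versus dropped indices exactly reproduces the $R_0/R_1$ split in \eqref{eq:ASEt}.

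One caution on part~(1): the first route you sketch --- pass through $\SEn[\V{x}|\thresh]\ge\SEn^*[\V{x}]$ and then invoke $\liminf_n\SEn^*[\V{x}]\ge\ASE^*[\V{x}]$ --- would be circular, because that inequality is Part~1 of Theorem~\ref{thm:lim-oracle-risk}, whose proof in the paper relies (through Lemmas~\ref{lem:finite-rank} and \ref{lem:more-svs-dont-help}) on the very statement $\SEn[\V{x}|\optThresh(\FZ)]\aslim\ASE^*[\V{x}]$, i.e.\ on Lemma~\ref{lem:as-limit} part~(2). Your second route --- pass to subsequences on which the survival pattern of the finitely many indices $1,\dots,r$ is constant, note each such subsequential limit is a sum of $R_0(x_i)$'s and $R_1(x_i)$'s, and bound termwise by $R^*(x_i)$ --- is the correct, non-circular argument, and is in fact somewhat more explicit than what the paper records (the paper only states condition~(iii) with the caveat $\thresh\ne y_{i,\infty}$, which addresses part~(2) cleanly but leaves part~(1) implicit). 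Keep the subsequence version and drop the appeal to $\SEn^*$.
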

\begin{proof}
	Since $\thresh>\bulkedge$ and $y_{r+1,n}\aslim y_{r+1,\infty}=\bulkedge$, we see that with probability $1$, for large enough $n$, $\hat{X}_t(Y_n) = \sum_{i=1}^r y_{i,n}\Ind{y_{i,n}>\thresh}\cdot \V{u}_{i,n}\V{v}_{i,n}^\T$. Thus, for large enough $n$,
	\begin{align*}
		\SEn[\V{x}|\thresh] 
		&= \left\| \sum_{i=1}^r x_i\cdot \V{a}_{i,n}\V{b}_{i,n}^\T - \sum_{i=1}^r y_{i,n}\Ind{y_{i,n}>\thresh}\cdot \V{u}_{i,n}\V{v}_{i,n}^\T  \right\|_F^2 \\
		&= \sum_{i=1}^2 (x_i^2+y_{i,n}^2\Ind{y_{i,n}>\thresh}) +  \sum_{i=1}^r\sum_{j=1}^r x_iy_{i,n}\Ind{y_{i,n}>\thresh}\cdot \langle \V{a}_{i,n},\V{u}_{j,n}\rangle\langle \V{b}_{i,n},\V{v}_{j,n}\rangle\,.
	\end{align*}
	The lemma follows by recalling  (i) that $y_{i,n}\aslim y_{i,\infty}$ for all $i=1,\ldots,r$, where $y_{i,\infty}=\m{Y}(x_i)$ if $x_i>\bbp$ and $y_{i,\infty}=\bulkedge<\thresh$ whenever $x_i\le \bbp$, (ii) that   
	\[
	\langle \V{a}_{i,n},\V{u}_{j,n}\rangle\langle \V{b}_{i,n},\V{v}_{j,n}\rangle \to \begin{cases}
	\m{C}(x_i)\quad&\textrm{when }i= j \textrm{ and } x_i> \bbp  \\
	0\quad&\textrm{otherwise}
	\end{cases} \,\,
	\]
	and (iii) that $\Ind{y_{i,n}>\thresh}\aslim \Ind{y_{i,\infty}>\thresh}$ whenever $\thresh\ne y_{i,\infty}$. 
\end{proof}

Our goal for the moment is to characterize the minimum of $\ASE[\V{x}|\thresh]$ with respect to thresholds $\thresh$ strictly above the noise bulk edge, $\thresh>\bulkedge$. This will give us the optimal \emph{fixed} threshold, in the sense of minimal asymptotic loss (though, at this point, we cannot exclude the possiblity that thresholding precisely at $\thresh=\bulkedge$ might achieve better asymptotic risk).

Recall, by Eqs. (\ref{eq:ASEt}) and (\ref{eq:ASEopt}), that the asymptotic loss decouples across the signal spikes as 
\[
\ASE[\V{x}|\thresh] = \sum_{i=1}^r R(x_i|\thresh)\,,\quad \ASE^*[\V{x}] = \sum_{i=1}^r R^*(x_i)\,.
\]
Assuming that $\thresh>\bulkedge$, we have $R(x|\thresh)=R^*(x)=x^2$ when {$x\le \bbp$}, while for {$x>\bbp$},
\[
R(x|\thresh)=\Ind{\m{Y}(x)\le \thresh}\cdot R_0(x) + \Ind{\m{Y}(x)> \thresh}\cdot R_1(x)\,,\quad R^*(x) = \min\{R_0(x),R_1(x)\} \,,
\]
with 
\[
	R_0(x)=x^2,\quad R_1(x)=x^2 + \m{Y}(x)^2 -2x\m{Y}(x)\m{C}(x)\,.
\]
If we were able to find $\thresh>\bulkedge$ such that $R(x|\thresh)=R^*(x)$ for all $x>\bbp$, then, clearly, it achieves minimal asymptotic loss. To do that, it is convenient to introduce a re-parameterization $y=\m{Y}(x)$, where recall that $\m{Y}(\cdot)$ is an increasing bijection, mapping $(\bbp,\infty)$ to $(\bulkedge,\infty)$. Using Eqs. (\ref{eq:Yc}) and (\ref{eq:Cc}), assuming $x>\bbp$, we get
\[
x^2 = \left( \m{D}(y) \right)^{-1}\,,\quad \m{C}(x) = -2\frac{\left(\m{D}(y)\right)^{3/2}}{\m{D}'(y)} \,,
\]
so that 
\[
R_1(x) - R_0(x) = y^2 - 2xy\m{C}(x) = y^2 + 4y\cdot \frac{\m{D}(y)}{\m{D}'_\gamma(y)} = y^2\left( 1 + \frac{4}{\TCritGamma(y)} \right)\,,
\]
where 
\[
\TCritGamma(y) = y\cdot \frac{\m{D}'(y)}{\m{D}(y)}
\]
is as defined in Eq. (\ref{eq:F}). Since $\TCritGamma(\cdot)$ is negative ($\m{D}$ is positive and decreasing), we conclude that 
\begin{equation}\label{eq:RstarRt}
    R^*(x) = \Ind{\TCritGamma(y) \le -4}\cdot R_0(x) + \Ind{\TCritGamma(y)>-4}\cdot R_1(x) \,.
\end{equation}

The next lemma establishes some essential properties of $\TCritGamma(y)$:

\begin{lemma}\label{lem:crossing}
Let $H$ be a compactly supported CDF, with $\bulkEdge(H)>0$. Let $\gamma\in (0,1]$, and let $\TCritGamma(y;H)$ be defined as in Eq. (\ref{eq:F}). Then
\begin{enumerate}
    \item The function $y\mapsto\TCrit(y;H)$ is strictly increasing on $y\in (\bulkEdge(H),\infty)$, with $\lim_{y\to\infty}\TCritGamma(y;H)=-2$. 
	\item Assume that
	\[
	\lim_{y\to\bulkEdge(\FZ)} \int (y-z)^{-2} dH(z) = \infty \,.
	\]
	(This is Assumption~\ref{assum:dense} for $H=\FZ$). Then $\lim_{y\to\bulkEdge(H)}\TCritGamma(y;H)=-\infty$, and there is a unique point $y^* \in (\bulkEdge(\FZ),\infty)$ such that $\TCritGamma(y;H)=-4$.
\end{enumerate}
\end{lemma}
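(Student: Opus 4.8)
Both parts become transparent under the substitution $s=\log y$. Write $z_+=\bulkEdge(H)$; as noted earlier, $\varphi(\cdot;H)$, $\tilde{\varphi}_\gamma(\cdot;H)$ and hence $\m{D}_\gamma(\cdot;H)$ are smooth and strictly positive on $(z_+,\infty)$, so
\[
\TCrit(e^s;H)=e^s\cdot\frac{\m{D}'_\gamma(e^s;H)}{\m{D}_\gamma(e^s;H)}=\frac{d}{ds}\log \m{D}_\gamma(e^s;H)\,.
\]
Thus $y\mapsto\TCrit(y;H)$ is (strictly) increasing on $(z_+,\infty)$ iff $s\mapsto\log\m{D}_\gamma(e^s;H)$ is (strictly) convex, i.e. iff $\m{D}_\gamma(e^s;H)$ is (strictly) log-convex in $s$. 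Since $\m{D}_\gamma=\varphi\cdot\tilde{\varphi}_\gamma$ and log-convexity is stable under products, it suffices to prove this for each factor. I would write $\varphi(e^s;H)=\int g_{z^2}(s)\,dH(z)$ with $g_a(s)=e^s/(e^{2s}-a)$, and use the elementary identity $(\log g_a)''(s)=4ae^{2s}/(e^{2s}-a)^2\ge 0$, which is strictly positive whenever $a>0$. Because log-convexity is stable under mixtures (Hölder), $\varphi(e^s;H)$ is log-convex, and \emph{strictly} so since $z_+>0$ forces $H$ to place positive mass on $\{z>0\}$, where $g_{z^2}$ is strictly log-convex. The same applies to $\tilde{\varphi}_\gamma(e^s;H)=\varphi(e^s;\tilde{H}_\gamma)$ for the companion CDF $\tilde{H}_\gamma=\gamma H+(1-\gamma)\delta_0$ (which has $\bulkEdge(\tilde{H}_\gamma)=z_+$), so $\m{D}_\gamma(e^s;H)$ is strictly log-convex and part 1's monotonicity follows. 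For the limit at infinity I would simply use $\varphi(y;H)=y^{-1}(1+O(y^{-2}))$ and $\tilde{\varphi}_\gamma(y;H)=y^{-1}(1+O(y^{-2}))$ (legitimate since $H$ has compact support, hence finite moments), so $\m{D}_\gamma(y;H)=y^{-2}(1+O(y^{-2}))$, $\log\m{D}_\gamma(e^s;H)=-2s+O(e^{-2s})$, and its derivative tends to $-2$.

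For part 2, the first step is $\varphi'(y;H)\to-\infty$ as $y\searrow z_+$. Since $z\le z_+<y$ on $\mathrm{supp}(H)$, we have $\frac{y^2+z^2}{(y^2-z^2)^2}=\frac{y^2+z^2}{(y-z)^2(y+z)^2}\ge\frac{1}{4(y-z)^2}$, hence $-\varphi'(y;H)\ge\tfrac14\int(y-z)^{-2}dH(z)\to\infty$ by the thickness hypothesis. To upgrade this to $\varphi'(y;H)/\varphi(y;H)\to-\infty$, I would split on the behaviour of $\varphi(y;H)$ at the edge: if $\varphi(y;H)$ remains bounded as $y\searrow z_+$ the claim is immediate; if $\varphi(y;H)\to\infty$ I would use the Jensen bound $-\varphi'(y;H)=\int\frac{y^2+z^2}{(y^2-z^2)^2}dH\ge\int\big(\frac{y}{y^2-z^2}\big)^2 dH\ge\varphi(y;H)^2$, giving $\varphi'(y;H)/\varphi(y;H)\le-\varphi(y;H)\to-\infty$. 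Since $\tilde{H}_\gamma$ also satisfies the thickness hypothesis (the atom at $0$ adds only a bounded term to $\int(y-z)^{-2}d\tilde{H}_\gamma$), the same argument yields $\tilde{\varphi}'_\gamma(y;H)/\tilde{\varphi}_\gamma(y;H)\to-\infty$; adding the two terms and multiplying by $y\to z_+>0$ gives $\TCrit(y;H)\to-\infty$. Finally, part 1 makes $\TCrit(\cdot;H)$ continuous and strictly increasing on $(z_+,\infty)$ with one-sided limits $-\infty$ and $-2$, and since $-4\in(-\infty,-2)$, the intermediate value theorem together with strict monotonicity produces a unique $y^*\in(z_+,\infty)$ with $\TCrit(y^*;H)=-4$.

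I expect the main obstacle to be the strict log-convexity argument in part 1: one must spot the $\log y$ change of variables that turns $\TCrit$ into the derivative of a convex function, carry out the (routine but fiddly) computation $(\log g_a)''\ge 0$, and correctly invoke closure of \emph{strict} log-convexity under mixtures and products. The other point requiring a genuine idea rather than a routine estimate is the sub-case of part 2 where $\varphi(y;H)$ itself diverges at the bulk edge (the non-BBP regime), which is exactly where the inequality $-\varphi'\ge\varphi^2$ is needed; everything else reduces to bookkeeping with the definitions of $\varphi$, $\tilde{\varphi}_\gamma$ and $\m{D}_\gamma$.
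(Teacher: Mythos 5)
Your proof is correct and follows the paper's approach exactly: the substitution $s=\log y$ turning $\TCritGamma$ into the log-derivative of $\m{D}_\gamma(e^s;H)$, reduction to log-convexity of $\varphi(e^s;H)$ via the integrand decomposition $\psi_z(s)=e^s/(e^{2s}-z^2)$, the asymptotic expansion $\varphi(y;H)=y^{-1}(1+O(y^{-2}))$ for the limit at infinity, and the intermediate value theorem for part~2. You are in fact somewhat more careful than the paper at two points: you explicitly justify \emph{strictness} of monotonicity by noting that $\bulkEdge(H)>0$ forces positive $H$-mass away from the log-affine $z=0$ integrand, and you fill a gap the paper glosses over in passing from $-\varphi'\to\infty$ to $\varphi'/\varphi\to-\infty$ by supplying the Cauchy--Schwarz/Jensen bound $-\varphi'\ge\varphi^2$.
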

The proof of Lemma~\ref{lem:crossing} appears in 
Section \ref{sec:proof-lem:crossing}
of the supplementary article
. An illustration of this Lemma and its consequences appears in Figure
\ref{fig:figure9} below.

\paragraph{Proof of Lemma~\ref{lem:main:F-props}} 
The lemma follows as a straightforward corollary of Lemma~\ref{lem:crossing}. Lemma~\ref{lem:crossing} implies that there is a unique number $\optThresh(\FZ)>\bulkEdge(\FZ)$ such that $\TCritGamma(\optThresh(\FZ);\FZ)=-4$.  Since $y\mapsto\TCrit(y;\FZ)$ is increasing, plugging into Eq. (\ref{eq:RstarRt}),
\begin{equation}
    R^*(x) = R(x|\optThresh(\FZ)) = \Ind{y \le \optThresh(\FZ)}\cdot R_0(x) + \Ind{y>\optThresh(\FZ)}\cdot R_1(x) \,,
\end{equation}
where $x>\BBP$ and $y=\m{Y}(x)$. We conclude that $\ASE^*[\V{x}]=\ASE[\V{x}|\optThresh(\FZ)]$. This is the minimum of $\ASE[\V{x}|\thresh]$ over all $\thresh\ge 0$ since, clearly, $\ASE[\V{x}|\thresh]\ge \ASE^*[\V{x}]$ by definition. Moreover, for any $\thresh\ne \optThresh(\FZ)$, we can find some $y>\bulkEdge(\FZ)$ such that either $\thresh < y < \optThresh(\FZ)$ or $\optThresh(\FZ)<y<\thresh$. Taking $x=\m{Y}^{-1}(x)$, we find that $R(x|\thresh)>R(x|\optThresh(\FZ))=R^*(x)$, since there is a {\bf unique} crossing point $x>\BBP$ with $R_0(x)=R_1(x)$ (because $y\mapsto\TCrit(y;\FZ)$ is strictly increasing). Thus, we can construct a signal $\V{x}$ for which $\ASE[\V{x}|\thresh]>\ASE^*[\V{x}]$, and therefore $\optThresh(\FZ)$ is the unique threshold which minimizes $\ASE[\V{x}|\thresh]$ universally for all $\V{x}$.  

\paragraph{Proof of Lemma~\ref{lem:main:opt-interval}}
Part (1) of Lemma~\ref{lem:main:opt-interval} follows from Lemma~\ref{lem:main:F-props}, along with the observation that if $\m{Y}(x)=\optThresh(\FZ)$, then $R_0(x)=R_1(x)=R^*(x)$; this means that regardless of whether we threshold slightly above or below $\m{Y}(x)$, we get the same asymptotic loss. Part (2) follows by the same argument as in the proof of Lemma~\ref{lem:main:F-props}, in the paragraph above. Finally, part (3) follows right from the definition of $\optIntervL$ and $\optIntervU$. 

\begin{figure}
    \centering
    \includegraphics[width=0.45\textwidth]{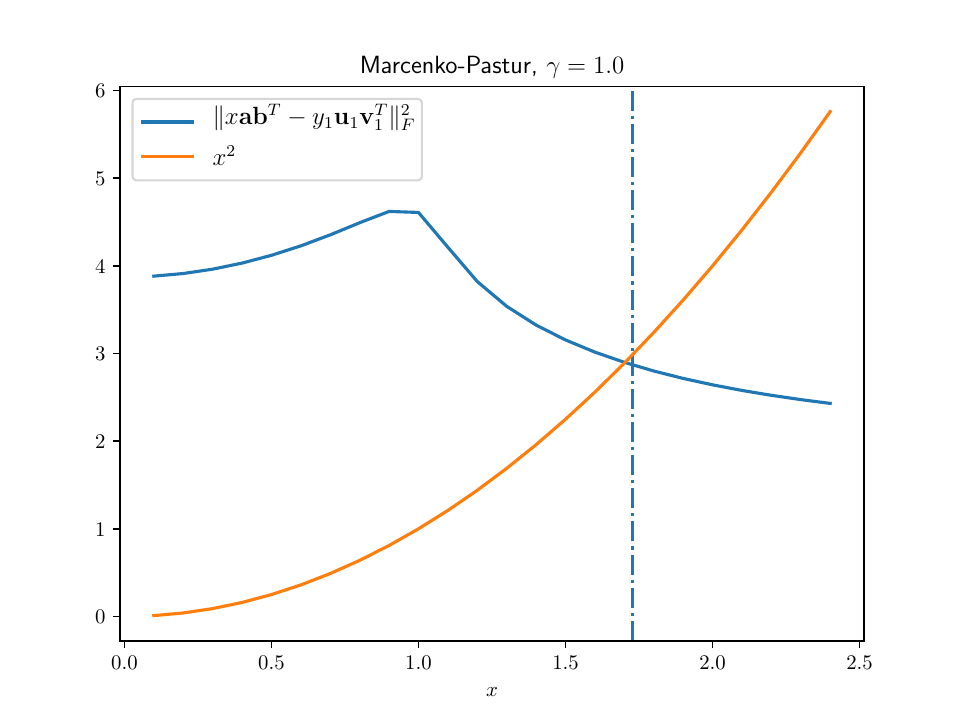}
    \includegraphics[width=0.45\textwidth]{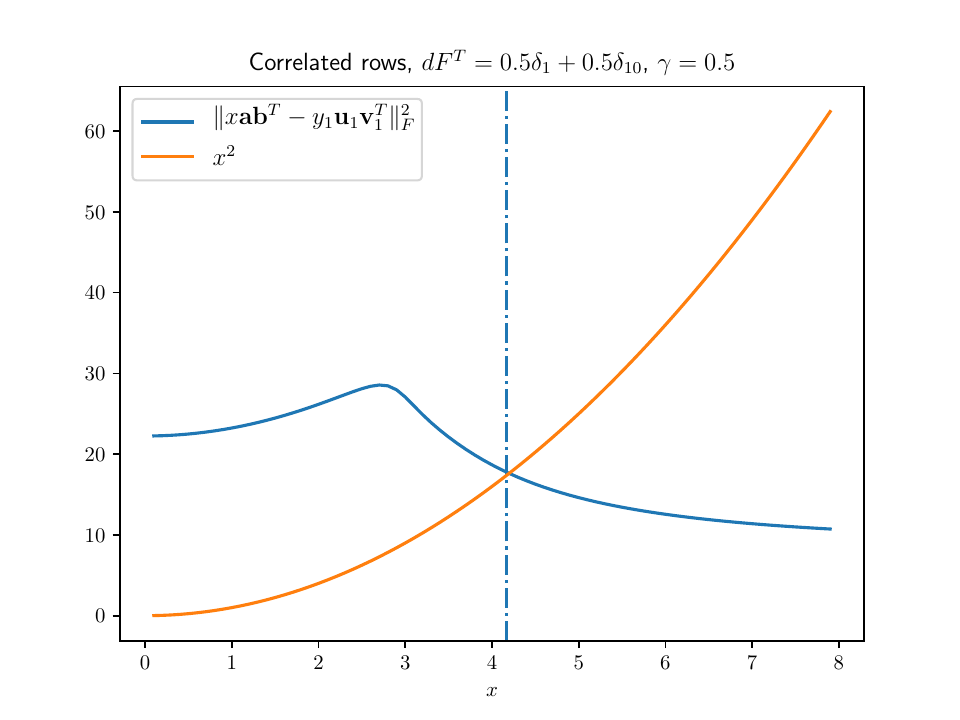}
    \includegraphics[width=0.45\textwidth]{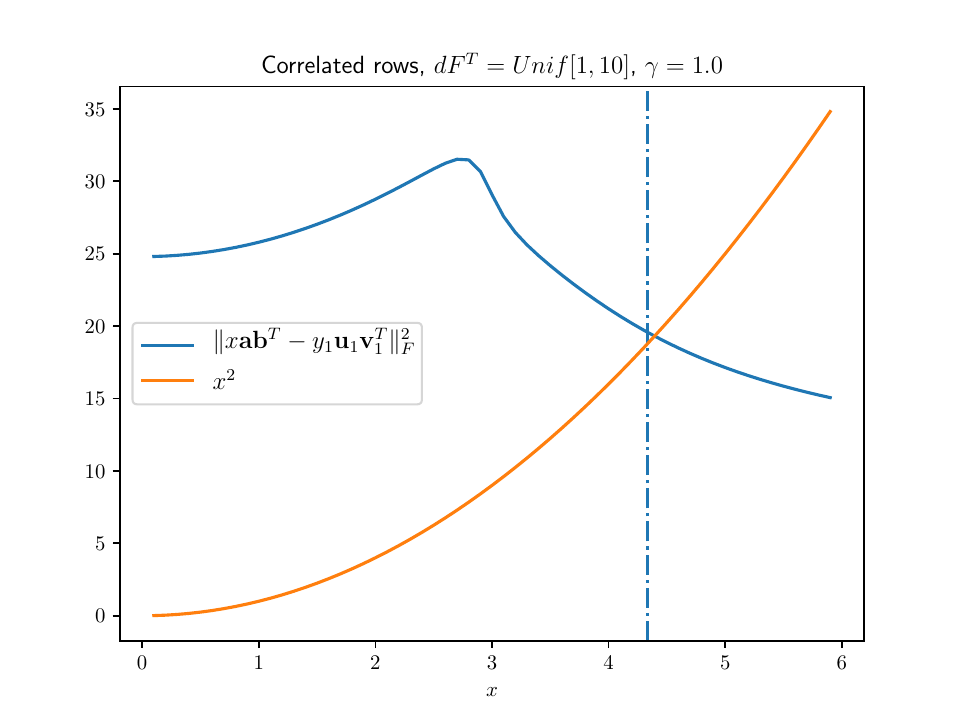}
    \includegraphics[width=0.45\textwidth]{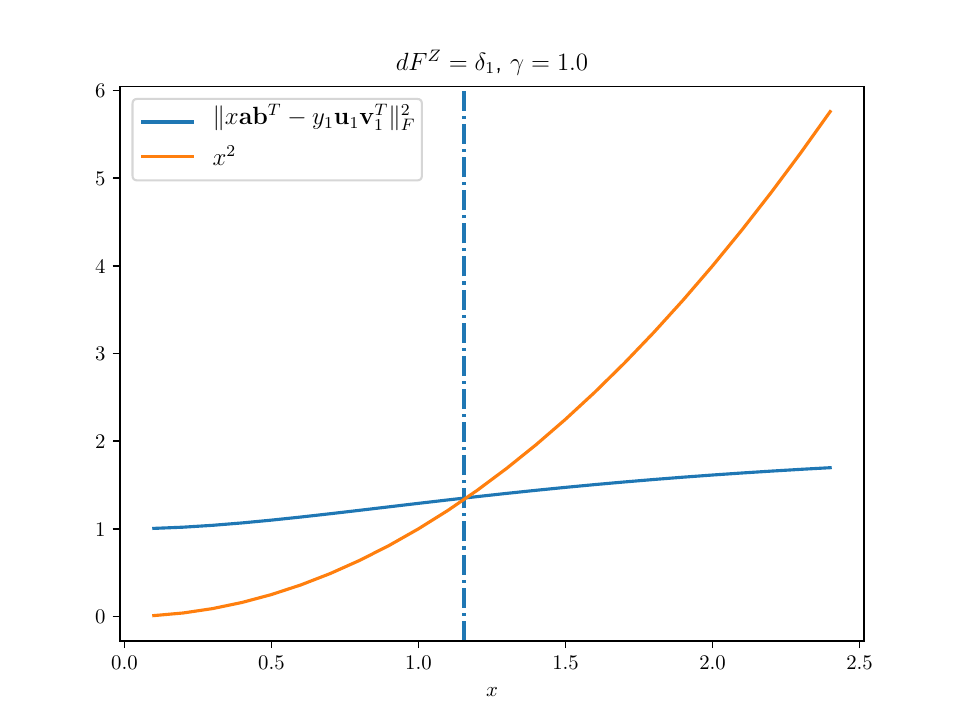}
    
    \caption{A numerical illustration of Lemma~\ref{lem:crossing} and its consequences. Assuming a rank-$1$ signal $X=x\cdot \V{a}_{1,n}\V{b}_{1,n}^\T$, set $R_0(x)=\|X\|_F^2=x^2$ and $R_1(x)=\lim_{n\to\infty}\|X-y_{1,n}\V{a}_{1,n}\V{b}_{1,n}^\T\|_F^2$. The point $x^*= \m{Y}^{-1}\left( \optThresh(\FZ) \right)$ is the unique crossing point $R_0(x^*)=R_1(x^*)$. When $x<x^*$, the principal components of $Y$ are ``too noisy'', so that estimating $\hat{X}=0$ gives better squared error; when $x>x^*$, the situation reverses. At each plot, $R_0(x)$ and $R_1(x)$ are plotted as $x$ varies, for finite $n$, fixed signal components $\V{a}_{1,n}\V{b}_{1,n}^\T$ and a single instance of $Z_n$. The dashed vertical line is an estimate of $x^*$, obtained by applying the functional $T_{p/n}(\cdot)$ on $\FZn$, as well as computing the inverse map $\m{Y}^{-1}(T_{p/n}(\FZn))$ numerically from $\FZn$. In all cases, $p=500$ and $n=p/\gamma$. 
    From left to right, top to bottom: (i) Mar\v{c}enko-Pastur law with shape $\gamma=1$; (ii) Noise matrix with correlated columns, with $\FS=\frac12\delta_1+\frac12\delta_{10}$ and $\gamma=0.5$; (iii) Likewise, with $\FS=\mathrm{Unif}[1,10]$; (iv) $\FZ=\delta_1$ and $\gamma=1$ (specifically, $Z_n=I$). 
    }
    \label{fig:figure9}
\end{figure}

\subsection{Achieving oracle loss}
\label{sec:oracle}

We move on to study the oracle loss $\SEn^*[\V{x}]$. This random variable depends on both 
$X_n$ and the noise $Z_n$.
Denote
\begin{equation}
	\hat{X}_{[k]} = \sum_{i=1}^k y_{n,i}\V{u}_{i,n}\V{v}_{i,n}^\T \,,\quad k=0,\ldots,p\,.
\end{equation}
That is, $\hat{X}_{[k]}$ is obtained from $Y$ by keeping only the top $k=0,\ldots,p$ singular values (equivalently, hard thresholding at $t=y_{k+1,n}$, in case one has $y_{k+1,n}<y_{k,n}$). Any hard thresholding estimator $\hat{X}_{\thresh}$ obviously corresponds to some $\hat{X}_{[k]}$ (however if there are multiplicities, possibly not every $\hat{X}_{[k]}$ is representable by some threshold $\thresh$);
thus,
\[
\SEn^*[\V{x}] \ge \min_{0\le k \le p} \left\|X-\hat{X}_{[k]}\right\|_F^2 \,. 
\]

We first show that keeping too many singular values is consistently sub-optimal:
\begin{lemma}\label{lem:finite-rank}
	Set $M =  r + 1 + \left\lceil \frac{\ASE^*[\V{x}]}{\bulkedge^2} \right\rceil$. Then
	\[
	\prob \left\{  \exists N \textrm{ s.t. }\forall n\ge N\,:\,\SEn^*[\V{x}] < \min_{k\ge M} \|X-\hat{X}_{[k]}\|_F^2 \right\} = 1\,.
	\]
\end{lemma}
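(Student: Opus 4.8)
The plan is to show that, almost surely, for all large $n$ and all $k\ge M$ simultaneously, $\|X_n-\hat{X}_{[k]}\|_F^2 > \SEn^*[\V{x}]$; taking the minimum over $k\ge M$ then yields the statement. The starting point is the exact orthogonal decomposition obtained by setting $a_{i,n}:=\V{u}_{i,n}^\T X_n\V{v}_{i,n}$ and using that, since $\rank(X_n)=r$, each dyad $\V{u}_{i,n}\V{v}_{i,n}^\T$ with $i>r$ is Frobenius-orthogonal to $\hat{X}_{[r]}$: for every $k\ge r$,
\begin{equation*}
\|X_n-\hat{X}_{[k]}\|_F^2 \;=\; \|X_n-\hat{X}_{[r]}\|_F^2 \;+\; \sum_{i=r+1}^{k}\bigl(y_{i,n}^2-2y_{i,n}a_{i,n}\bigr)\,.
\end{equation*}
Because $\SEn^*[\V{x}]=\min_{0\le j\le p}\|X_n-\hat{X}_{[j]}\|_F^2\le\|X_n-\hat{X}_{[r]}\|_F^2$, it suffices to produce a constant $c=c(\V{x};\FZ,\gamma)>0$ so that, for all large $n$ almost surely, the increment sum $\sum_{i=r+1}^{k}(y_{i,n}^2-2y_{i,n}a_{i,n})$ exceeds $c$ simultaneously for every $k\ge M$.

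I would then control the increment sum by splitting it at the index $M$. For the finitely many ``head'' terms $r<i\le M$: $y_{i,n}\aslim\bulkedge$ holds by interlacing together with the noise assumptions, and $a_{i,n}\aslim0$ (a consequence of the estimate $a_{i,n}^2\le E_n\aslim0$ below, where $E_n:=\sum_{i>r}a_{i,n}^2$); hence $\sum_{i=r+1}^{M}(y_{i,n}^2-2y_{i,n}a_{i,n})\aslim(M-r)\bulkedge^2>0$. For the ``tail'' $i>M$, completing the square gives $y_{i,n}^2-2y_{i,n}a_{i,n}=(y_{i,n}-a_{i,n})^2-a_{i,n}^2\ge-a_{i,n}^2$, so for every $k>M$,
\begin{equation*}
\sum_{i=r+1}^{k}\bigl(y_{i,n}^2-2y_{i,n}a_{i,n}\bigr)\;\ge\;\sum_{i=r+1}^{M}\bigl(y_{i,n}^2-2y_{i,n}a_{i,n}\bigr)\;-\;E_n\,,
\end{equation*}
a lower bound uniform in $k\ge M$. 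If $E_n\aslim0$, this right-hand side tends almost surely to $(M-r)\bulkedge^2>0$, so it stays above $c:=\tfrac12(M-r)\bulkedge^2$ for all large $n$; intersecting the relevant almost-sure events then finishes the argument.

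The step I expect to be the main obstacle is the estimate $E_n=\sum_{i>r}a_{i,n}^2\aslim0$ --- that in the limit the total ``signal leakage'' spread across \emph{all} the non-leading SVD modes of $Y_n$ is negligible. Pointwise smallness of $a_{i,n}$ for each fixed $i>r$ is not hard, but the sum runs over $p-r\to\infty$ terms and needs a uniform estimate. My approach: write $P=\sum_{\ell=1}^{r}\V{a}_{\ell,n}\V{a}_{\ell,n}^\T$ and $Q=\sum_{\ell=1}^{r}\V{b}_{\ell,n}\V{b}_{\ell,n}^\T$ for the orthogonal projections onto the column and row spaces of $X_n$; then $X_n=PX_nQ$, so $|a_{i,n}|=|(P\V{u}_{i,n})^\T X_n(Q\V{v}_{i,n})|\le x_1\|P\V{u}_{i,n}\|\,\|Q\V{v}_{i,n}\|$ and hence
\begin{equation*}
E_n\;\le\;x_1^2\Bigl(\sup_{i>r}\|P\V{u}_{i,n}\|^2\Bigr)\sum_{i>r}\|Q\V{v}_{i,n}\|^2\;\le\;x_1^2\,r\,\sup_{i>r}\|P\V{u}_{i,n}\|^2\,,
\end{equation*}
using $\sum_{i=1}^{p}\|Q\V{v}_{i,n}\|^2\le\mathrm{tr}(Q)=r$ (since $\sum_i\V{v}_{i,n}\V{v}_{i,n}^\T\preceq I_p$). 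It then remains to prove $\sup_{i>r}\|P\V{u}_{i,n}\|^2\aslim0$: that the non-leading left singular vectors of $Y_n$ are, uniformly in their index, asymptotically orthogonal to the fixed $r$-dimensional signal column space. This is a delocalization-type statement; I would obtain it from the $O(n)\times O(p)$-invariance built into the model --- which lets one take the signal singular vectors to be the first $r$ standard basis vectors and reduces the claim to uniform smallness of the first $r$ coordinates of the non-leading singular vectors of $X_n+Z_n$ --- combined with a union bound over the $p$ indices, or else cite it from the random-matrix literature on eigenvector delocalization for the relevant ensembles. I note in passing that the precise value $M=r+1+\lceil\ASE^*[\V{x}]/\bulkedge\rceil$ plays no essential role here: any fixed $M>r$ works once $E_n\aslim0$ is available.
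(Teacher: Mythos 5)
Your decomposition $\|X_n-\hat{X}_{[k]}\|_F^2 = \|X_n-\hat{X}_{[r]}\|_F^2 + \sum_{i=r+1}^k(y_{i,n}^2 - 2y_{i,n}a_{i,n})$ is correct, and together with the exact inequality $\SEn^*[\V{x}]\le\|X_n-\hat{X}_{[r]}\|_F^2$ it reduces the claim to showing the increment sum is eventually bounded below by a positive constant, uniformly in $k\ge M$. But the entire argument hangs on the estimate $E_n=\sum_{i>r}a_{i,n}^2\aslim0$, which you rightly flag as the main obstacle, and which is left as a sketch. This is a genuine gap, and in the generality of this paper it is not an easy one to close. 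Your reduction $E_n\le x_1^2 r\sup_{i>r}\|P\V{u}_{i,n}\|^2$ requires a \emph{uniform} delocalization of all $p-r$ non-leading left singular vectors of $Y_n$ against the signal column space. Proposition~\ref{prop:correlations} in the paper supplies this only pointwise, for each fixed index $j$, and even that pointwise statement needs a nontrivial arrowhead-matrix argument; upgrading it to a supremum over all indices is a qualitatively harder problem. A union bound is not immediate because, even after using invariance to fix the signal frame, $\V{u}_{j,n}$ depends on the signal part of $Y_n$ and so is not simply a coordinate of an isotropic random vector; and the eigenvector-delocalization theorems in the random-matrix literature are proved for specific i.i.d.-entry ensembles, whereas the paper's assumptions on $Z_n$ concern only its limiting singular-value distribution and do not by themselves guarantee delocalized singular vectors.

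The paper's own proof takes a different and much shorter route that avoids the cross terms $a_{i,n}$ altogether: since $\rank X\le r$, Eckart--Young gives, deterministically for every $k\ge M$,
\[
\|X-\hat{X}_{[k]}\|_F^2 \;\ge\; \min_{\mathrm{rank}(B)\le r}\|B-\hat{X}_{[k]}\|_F^2 \;=\; \sum_{i=r+1}^k y_{i,n}^2 \;\ge\; \sum_{i=r+1}^M y_{i,n}^2 ,
\]
and then uses only the pointwise convergences $y_{i,n}\aslim\bulkedge$ for the finitely many indices $r<i\le M$ together with $\SEn^*[\V{x}]\aslim\ASE^*[\V{x}]$. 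No singular-vector information is needed. The one cost is that $M$ must be large enough that $(M-r)\bulkedge^2$ exceeds $\ASE^*[\V{x}]$, which is what the stated $M$ is for; you correctly observe that your route, if completed, would dispense with that and allow any fixed $M>r$. But until $E_n\aslim0$ is actually proved the proposal does not establish the lemma, whereas the paper's Eckart--Young shortcut is both simpler and complete.
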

The proof of Lemma~\ref{lem:finite-rank} is the deferred to the supplementary article, Section~\ref{sec:proof:lem:finite-rank}.
Lemma~\ref{lem:finite-rank} tells us that to study the oracle loss $\SEn^*[\V{x}]$ as $n\to\infty$, we only need, essentially, to study the risk of a \emph{fixed} collection of estimators, the number of whom does not depend on $n$; specifically, $X_{[k]}$ for $0\le k < M$. 
 obtain formulas for $\lim_{n\to\infty}\| X_n-\hat{X}_{[k]} \|_F^2$, we need to compute the limiting correlations between the underlying signal dyads, $\V{a}_{1,n}\V{b}_{1,n}^\T,\ldots,\V{a}_{r,n}\V{b}_{r,n}^\T$ and the 
corresponding empirical dyads $\V{u}_{i,n}\V{v}_{i,n}^\T$, for all $1\le i < M$.  
For empirical spikes up to $i=r$, these limiting correlations are computed in \cite{benaych2012singular} (recall Eq. (\ref{rotation:eq})).  

The next result shows that, as one would expect, the $j$-th singular vectors of $Y$, for any bounded $j\ge r+1$, are asymptotically uncorrelated with the signal singular vectors:

\begin{prop}\label{prop:correlations}
	For any $1\le i \le r$ and \emph{fixed} $j \ne i$ (not necessarily $j\le r$), one has
	\[
	\langle \V{a}_{n,i}\,,\,\V{u}_{n,j}\rangle \cdot
	\langle \V{b}_{n,i}\,,\,\V{v}_{n,j}\rangle \aslim 0 \,.
	\]
\end{prop}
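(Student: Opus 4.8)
The plan is to split according to whether $j\le r$. When $j\le r$ (and $j\ne i$), the claim is exactly the ``no cross-correlation of non-corresponding principal subspaces'' fact recorded from \cite{benaych2012singular} in Section~\ref{sec:setup}, so nothing is left to do. Assume henceforth $j>r$, so that (as recalled in Section~\ref{sec:setup}) $y_{j,n}\aslim z_+$ with $z_+:=\bulkEdge(\FZ)$. Set $U_n:=\mathrm{span}\{\V{a}_{1,n},\dots,\V{a}_{r,n}\}\subseteq\R^n$ and let $\Pi_n$ denote orthogonal projection onto $U_n$. Since $\V{a}_{i,n}\in U_n$ we have $|\langle\V{a}_{i,n},\V{u}_{j,n}\rangle|\le\|\Pi_n\V{u}_{j,n}\|$, and $|\langle\V{b}_{i,n},\V{v}_{j,n}\rangle|\le 1$; hence it suffices to prove
\[
\|\Pi_n\V{u}_{j,n}\|\;\aslim\;0\qquad\text{for every fixed }j>r.
\]

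I would obtain this from the limiting behaviour of the \emph{signal--overlap measure}
\[
\nu_n\;:=\;\sum_{l=1}^p\|\Pi_n\V{u}_{l,n}\|^2\,\delta_{y_{l,n}},
\]
a nonnegative measure of total mass $\mathrm{tr}\!\big(\Pi_n\sum_{l=1}^p\V{u}_{l,n}\V{u}_{l,n}^\T\big)\le r$, supported --- eventually, almost surely --- in a common compact interval since $\|Y_n\|=y_{1,n}$ is a.s.\ bounded. Two ingredients are needed: (i) $\nu_n$ converges weakly, almost surely, to a deterministic limit $\nu$; and (ii) $\nu(\{z_+\})=0$. Granting these, fix $j>r$: for any $\delta>0$ one has $y_{j,n}\in[z_+-\delta,z_++\delta]$ eventually, so
\[
\|\Pi_n\V{u}_{j,n}\|^2\;\le\;\nu_n\big(\{y_{j,n}\}\big)\;\le\;\nu_n\big([z_+-\delta,z_++\delta]\big),
\]
and by the Portmanteau theorem (the measures lie on a common compact set and their total masses converge), $\limsup_n\nu_n([z_+-\delta,z_++\delta])\le\nu([z_+-\delta,z_++\delta])$; letting $\delta\downarrow0$ yields $\limsup_n\|\Pi_n\V{u}_{j,n}\|^2\le\nu(\{z_+\})=0$, as wanted. (The symmetric statement for $\V{v}_{j,n}$ against the $\V{b}$-frame is proved identically, though only one side is used.)

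It remains to establish (i) and (ii), which is where the random-matrix analysis of Benaych-Georges and Nadakuditi \cite{benaych2012singular} enters. For (i), write $Y_nY_n^\T=(X_n+Z_n)(X_n+Z_n)^\T$ and expand the Stieltjes transform $s_n(z)=\mathrm{tr}\big(\Pi_n(zI-Y_nY_n^\T)^{-1}\big)$, $z\in\C\setminus\R$, around $(zI-Z_nZ_n^\T)^{-1}$ via the Woodbury identity; this expresses $s_n(z)$ as a fixed rational function of $\frac{1}{n}\mathrm{tr}(zI-Z_nZ_n^\T)^{-1}$ and of finitely many bilinear forms built from $(zI-Z_nZ_n^\T)^{-1}$, $Z_n$ and the frame vectors $\V{a}_{i,n},\V{b}_{i,n}$; all of these converge a.s.\ to deterministic limits --- the bilinear forms by isotropic concentration, the frames being uniformly random and independent of $Z_n$ --- which identifies $\nu$. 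For (ii), the description of the limiting overlap measure in \cite{benaych2012singular} says that $\nu$ is carried by the finitely many outlier locations created by the rank-$r$ perturbation (each lying strictly outside $[\m{Z}_-(\FZ),z_+]$) together with an absolutely continuous part on $[\m{Z}_-(\FZ),z_+]$ that places no mass at the endpoints; in particular $\nu(\{z_+\})=0$. I expect (ii) to be the delicate step: it is a statement about the \emph{weighted} measure $\nu$, not about $\FZ$ --- indeed $\FZ$ itself can have an atom at $z_+$ (e.g.\ $\FZ=\delta_1$), in which case it is precisely the perturbation that pushes the signal directions' spectral mass off the bulk edge --- so one must track this carefully through the resolvent expansion above, or extract it cleanly from the corresponding edge analysis in \cite{benaych2012singular}. (If that per-index bulk-overlap statement is already available in \cite{benaych2012singular} in usable form, the proposition is instead immediate once $y_{j,n}\aslim z_+$ is noted.)
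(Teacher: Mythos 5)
Your strategy is genuinely different from the paper's. The paper restricts its own proof to $r=1$, conjugates $Y_n^\T Y_n$ into an arrowhead matrix adapted to $\V{b}_{1,n}$, and uses the closed-form arrowhead-eigenvector formula to bound $|\langle\V{b}_{1,n},\V{v}_{j,n}\rangle|^2$ above by $\bigl(1+\sum_i w_{i,n}^2/(y_{j,n}^2-\mu_{i,n})^2\bigr)^{-1}$; the sum is then shown to diverge almost surely via the strong law of large numbers together with, crucially, the bulk-edge thickness Assumption~\ref{assum:dense}. You instead reduce to showing $\|\Pi_n\V{u}_{j,n}\|\aslim 0$, package the signal overlaps into a measure $\nu_n$, and would conclude from (i) weak a.s.\ convergence $\nu_n\to\nu$ and (ii) $\nu(\{z_+\})=0$. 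The reduction and the Portmanteau step are sound, and the measure-theoretic framing would handle general $r$ directly, which is a nice gain over the paper's $r=1$ restriction.

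There is, however, a genuine gap. Neither (i) nor (ii) is established: (i) is only gestured at through a Woodbury expansion, and you yourself flag (ii) as ``the delicate step'' to be ``extracted cleanly'' from \cite{benaych2012singular}. That is precisely where the content of the proposition lives, and your sketch never connects it to Assumption~\ref{assum:dense}, namely $\lim_{y\to z_+}\int(y-z)^{-2}d\FZ(z)=\infty$. That hypothesis is exactly what the paper's argument hinges on --- it is what forces the arrowhead sum to blow up --- and some such condition is needed for the statement to hold. Moreover, (ii) is strictly \emph{stronger} than the proposition: if $\nu$ did carry an atom at $z_+$, that mass could still be spread across $\Theta(n)$ near-edge empirical eigenvectors each with $o(1)$ overlap, in which case the per-fixed-$j$ statement would remain true while your sufficient condition fails. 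So it is not clear that (ii) even follows from the paper's hypotheses, and proving it would require an edge analysis of the overlap measure at least as delicate as the paper's direct arrowhead computation. As written, this is an interesting alternative strategy but not a proof.
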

The proof of Proposititon~\ref{prop:correlations} is deferred to
Section~\ref{sec:proof-prop:correlations} of the supplementary article
.
Note that Proposition~\ref{prop:correlations} implies that for any \emph{fixed} $M$ (meaning $M$ cannot depend on $n$), the event 
\[
\left\{ \forall 1\le i \le r,\,1\le j \le M,\,j\ne i,\quad :\quad \langle \V{a}_{n,i}\,,\,\V{u}_{n,j}\rangle \cdot
	\langle \V{b}_{n,i}\,,\,\V{v}_{n,j}\rangle \longrightarrow 0 \right\}
\]
holds with probability $1$.
The following Lemma is an immediate corollary:

\begin{lemma}\label{lem:more-svs-dont-help}
	For any $k\ge r$, 
	\[
	\left\|X-\widehat{X}_{[k]}\right\|_F^2 \aslim \sum_{i=1}^r \left[ x_i^2 + y_{i,\infty}^2 - 2x_i \cdot y_{i,\infty}\cdot \m{C}(x_i) \right] + (k-r)\bulkedge^2 \,,
	\]
	where, by way of notation, we use $\m{C}(x_i)=0$ for $x_i\le \bbp$.
	In particular,
	\[
	\prob \left\{ \exists N\textrm{ s.t. }\forall n\ge N\,:\,\SEn^*[\V{x}] < \min_{k\ge r+1} \|X-\hat{X}_{[k]}\|_F^2 \right\} = 1\,.
	\]
\end{lemma}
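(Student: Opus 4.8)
The plan is to expand $\|X_n-\hat X_{[k]}\|_F^2$ into a sum of inner products, pass to the limit term by term using the known edge/correlation asymptotics together with Proposition~\ref{prop:correlations}, and then combine the resulting formula with Lemma~\ref{lem:finite-rank} to extract the ``in particular'' conclusion.

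Fix $k\ge r$. Expanding the squared norm,
\[
\|X_n-\hat X_{[k]}\|_F^2 = \sum_{i=1}^r x_i^2 \;-\; 2\sum_{i=1}^r\sum_{j=1}^k x_i\,y_{j,n}\,\langle\V{a}_{i,n},\V{u}_{j,n}\rangle\langle\V{b}_{i,n},\V{v}_{j,n}\rangle \;+\; \sum_{j=1}^k y_{j,n}^2 \,.
\]
The first sum is deterministic. In the last sum, for each fixed $j\le r$ one has $y_{j,n}\aslim y_{j,\infty}$, while for each fixed $r<j\le k$ the interlacing inequalities give $y_{j,n}\aslim\bulkedge$; hence $\sum_{j=1}^k y_{j,n}^2\aslim\sum_{i=1}^r y_{i,\infty}^2+(k-r)\bulkedge$. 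In the double sum, every off-diagonal term ($i\ne j$) tends to $0$ almost surely by Proposition~\ref{prop:correlations}, and the diagonal terms $i=j$ — all of which are present because $k\ge r$ — satisfy $x_i\,y_{i,n}\,\langle\V{a}_{i,n},\V{u}_{i,n}\rangle\langle\V{b}_{i,n},\V{v}_{i,n}\rangle\aslim x_i\,y_{i,\infty}\,\m{C}(x_i)$ by \eqref{rotation:eq}, under the convention $\m{C}(x_i)=0$ for $x_i\le\bbp$ (in that case $y_{i,\infty}=\bulkedge$ and the empirical cosine vanishes, so the limit really is $0$). As there are only finitely many terms, the double sum converges almost surely to $\sum_{i=1}^r x_i y_{i,\infty}\m{C}(x_i)$, and assembling the three pieces yields the asserted limit.

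For the ``in particular'' statement, set $V=\sum_{i=1}^r\big[x_i^2+y_{i,\infty}^2-2x_i y_{i,\infty}\m{C}(x_i)\big]$, which is the $k=r$ instance of the limit just established. Since $\SEn^*[\V{x}]=\min_{0\le k\le p}\|X_n-\hat X_{[k]}\|_F^2$, in particular $\SEn^*[\V{x}]\le\|X_n-\hat X_{[r]}\|_F^2\aslim V$. For each fixed $k\ge r+1$ the limit formula gives $\|X_n-\hat X_{[k]}\|_F^2\aslim V+(k-r)\bulkedge\ge V+\bulkedge$, and $\bulkedge>0$ by assumption. Let $M$ be the constant furnished by Lemma~\ref{lem:finite-rank}, so that almost surely, eventually, $\SEn^*[\V{x}]<\min_{k\ge M}\|X_n-\hat X_{[k]}\|_F^2$. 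It then remains only to handle the finitely many indices $k\in\{r+1,\dots,M-1\}$: for each of them, almost surely and eventually $\|X_n-\hat X_{[k]}\|_F^2>V+\tfrac12\bulkedge>\|X_n-\hat X_{[r]}\|_F^2\ge\SEn^*[\V{x}]$. Intersecting these finitely many almost-sure events with the event from Lemma~\ref{lem:finite-rank} gives the claim.

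I do not expect a genuine obstacle here: all of the probabilistic content is packed into Proposition~\ref{prop:correlations} (that for every fixed $j$ the $j$-th empirical singular vectors decorrelate from the non-matching signal directions) and into the already-proven Lemma~\ref{lem:finite-rank}; everything else is an expansion of a squared norm plus a union over finitely many almost-sure events. The one point requiring a little care is that the ``in particular'' assertion ranges over all $k\ge r+1$, i.e.\ over infinitely many estimators, which is exactly why Lemma~\ref{lem:finite-rank} must be invoked first to cut the range of $k$ down to a fixed finite set before passing to the limit.
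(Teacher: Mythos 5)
Your proof is correct and follows essentially the same route as the paper: expand the squared Frobenius norm, pass to the limit term-by-term using $y_{j,n}\aslim y_{j,\infty}$, equation~\eqref{rotation:eq} for the diagonal cross terms, and Proposition~\ref{prop:correlations} for the off-diagonal ones, exactly as the paper's one-line reference to the calculation of Lemma~\ref{lem:as-limit}. You are more explicit than the paper on one point worth being explicit about: the ``in particular'' claim ranges over infinitely many $k$, and one cannot simply pass to the limit for each $k$ separately since the ``eventually'' threshold may grow with $k$; your use of Lemma~\ref{lem:finite-rank} to cut the range down to finitely many indices before taking limits is exactly the needed step, which the paper's terse proof leaves implicit (it compares $\SEn^*$ to $\ASE^*[\V{x}] \le V$ but does not spell out the reduction). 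One small remark: since $y_{j,n}\aslim \bulkedge$ for fixed $j>r$, the limit of $\sum_{j=r+1}^k y_{j,n}^2$ is $(k-r)\bulkedge^2$, not $(k-r)\bulkedge$; this slip is already present in the lemma statement and in the paper's proof of Lemma~\ref{lem:finite-rank}, and you have faithfully reproduced it, but since $\bulkedge>0$ it does not affect the strict-inequality conclusion.
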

\begin{proof}
	The calculation is straightforward, as in the proof of Lemma~\ref{lem:as-limit}. For the last part, simply recall that 
	\[
	\SEn[\V{x}|\optThresh(\FZ)] \aslim \ASE^*[\V{x}] \le \sum_{i=1}^r \left[ x_i^2 + y_{i,\infty}^2 - 2x_i \cdot y_{i,\infty}\cdot \m{C}(x_i) \right] \,.
	\]
\end{proof}

We are ready to prove Theorems~\ref{thm:lim-oracle-risk} and \ref{thm:oracle-risk-attained}.

\paragraph{Proof of Theorem~\ref{thm:lim-oracle-risk}}
By Lemmas~\ref{lem:finite-rank} and \ref{lem:more-svs-dont-help}, almost surely, there exists $N$ such that $\forall n \ge N$, 
	\[
	\SEn^*[\V{x}] \ge \min_{0\le k\le r} \|X-\hat{X}_{[k]}\|_F^2 \,.
	\]
	Part (1) then follows from the observation that $\min_{0\le k\le r} \|X-\hat{X}_{[k]}\|_F^2 \aslim \ASE^*[\V{x}]$, as can be deduced from the calculations of Section~\ref{sec:proofs:fixed-threshold}, together with $\SEn[\V{x}|\optThresh(\FZ)] \aslim \ASE^*[\V{x}]$.
	We now prove (2). Let's assume, for ease of notation, that $T_\gamma(\FZ) \notin \{ y_{1,\infty},\ldots,y_{r,\infty} \}$. In that case, the asymptotic optimal interval is just the interval between two consecutive spikes, say, 
	\[
	\optIntervL = y_{k^*+1,\infty}\,,\quad \optIntervU = y_{k^*,\infty} \,,
	\]
	where $y_{0,\infty}=\infty$. Note that if $T_\gamma(\FZ)=y_{k,\infty}$ for some $k\ge 1$, then $\optIntervL = y_{k+1,\infty}\,,\quad \optIntervU = y_{k-1,\infty}$. With probability one, for large enough $n$, $X_{\theta_n}=\hat{X}_{[k^*]}$. Now, recall that $\left\| X_n- \hat{X}_{[k^*]}\right\|_F^2 \aslim \ASE^*[\V{x}]$.

\paragraph{Proof of Theorem~\ref{thm:oracle-risk-attained}} 
Let $k^*$ be as in the proof of Theorem~\ref{thm:lim-oracle-risk}. We know, from Lemmas~\ref{lem:finite-rank}, \ref{lem:more-svs-dont-help} and the definition of the asymptotic optimal interval, that $\|X-\hat{X}_{[k^*]}\|_F^2\aslim \ASE^*[\V{x}]$ and that almost surely, $\liminf_{n\to\infty} \min_{k\ne k^*}\|X-\hat{X}_{[k]}\|_F^2 > \ASE^*[\V{x}]$ (here is where we assume that there is no $y_{i,\infty}$ that equals $\optThresh(\FZ)$). Thus,
	\[
	\prob\left\{ \exists N \textrm{ s.t. } \forall n\ge N\,:\, \SEn^*[\V{x}]=\|X-\hat{X}_{[k^*]}\|_F^2 < \min_{k\ne k^*} \|X-\hat{X}_{[k]}\|_F^2 \right\} = 1\,.
	\] 
	The proof follows by noting that: (i) If $\theta\in (\optIntervL,\optIntervU)$, then with probability $1$, for all large enough $n$, $\hat{X}_{\theta_n} = \hat{X}_{[k^*]}$; (ii) If $\theta \notin [\optIntervL,\optIntervU]$ then with probability $1$, for large enough $n$, $\hat{X}_{\theta_n} \ne \hat{X}_{[k^*]}$.

\begin{acks}[Acknowledgments]
  We are grateful to the anonymous reviewers for their thoughtful comments, which have helped improve this manuscript considerably.

  \end{acks}

\begin{funding}
  DD was supported in part by NSF DMS 1407813, 1418362, and 1811614. 
This work was made possible by United States – Israel Binational Science Foundation (BSF) Grant 2016201 ``Frontiers of Matrix Recovery''.
ER was affiliated with the School of Computer Science and Engineering, the Hebrew University of Jerusalem, and supported in part by Israel Science Foundation grant no. 1523/16 and an Einstein-Kaye Fellowship from the Hebrew University of Jerusalem.
  \end{funding}

\bibliographystyle{imsart-number} 
\bibliography{ref}       


\makeatletter
\newcommand*{\storecounter}[2]{%
  \edef\@currentlabel{\csname the#1\endcsname}
  \label{#2}
}
\makeatother

\storecounter{thm}{extthm}
\storecounter{cor}{extcor}
\storecounter{defn}{extdefn}
\storecounter{lemma}{extlemma}
\storecounter{prop}{extprop}
\storecounter{figure}{extfigure}

\newpage

\appendix

\begin{frontmatter}
	
	\title{Supplementary Article to \SIname }
	\runtitle{\SIshortname (Supplementary Article)}

\end{frontmatter}



\newcommand*{\getcounter}[2]{%
  \setcounterref{#1}{#2}
}

\getcounter{thm}{extthm}
\getcounter{cor}{extcor}
\getcounter{defn}{extdefn}
\getcounter{lemma}{extlemma}
\getcounter{prop}{extprop}
\getcounter{figure}{extfigure}

\section{Proof of Lemma~\ref{lem:crossing}}
\label{sec:proof-lem:crossing}

Define the probability distribution $d\tilde{H}=\gamma dH + (1-\gamma)\delta_0$, so that, by definition, $\tilde{\varphi}_\gamma(y;H)=\varphi(y;\tilde{H})$.
Since
\[
\TCritBare(y;H) = y\cdot \frac{\m{D}'(y;H)}{\m{D}(y;H)} = y \cdot \left( \frac{\varphi'(y;H)}{\varphi(y;H)} + \frac{\varphi'(y;\tilde{H})}{\varphi(y;\tilde{H})} \right)\,,
\]
to show that $\TCritBare$ is increasing, it suffices to show that $y \mapsto y \cdot \frac{\varphi'(y;H)}{\varphi(y;H)}$ is increasing for any CDF $H$ and $y>\bulkEdge(H)$.  

Introduce a change of variables $w=\log(y)$ and set $\psi(w)=\phi(e^w;H)$. We have 
\begin{align*}
	y \cdot \frac{\varphi'(y;H)}{\varphi(y;H)}
	= y \cdot \frac{d}{dy}\left( \log \varphi(y;H) \right) = y \cdot \frac{d}{dw}\left( \log \varphi(e^w;H) \right) \cdot \frac{dw}{dy} = \frac{d}{dw} \left(\log \psi(w) \right) \,.
\end{align*}
Since $w$ is strictly increasing in $y$, it remains to show that $w \mapsto \left(\log \psi(w)\right)'$ is increasing, equivalently, that $w\mapsto \psi(w)$ is log-convex. Write 
\[
\psi(w) = \int \psi_{z}(w) dH(z)\,,\quad\textrm{ where }\quad \psi_z(w) = \frac{e^w}{e^{2w}-z^2} \,.
\]
Since a convex combination of log-convex functions is log-convex,
it suffices to verify that each $\psi_z(w)$ is log-convex, whenever $y^2=e^{2w} > z^2$. A straightforward calculation gives:
\begin{align*}
	\left( \log \psi_z(w) \right)' &= 1 - \frac{2e^{2w}}{e^{2w}-z^2} = -\frac{z^2+e^{2w}}{e^{2w}-z^2} = -1 - \frac{2z^2}{e^{2w}-z^2}\,,
\end{align*}
which is negative and clearly increasing in $w$. Thus, $\psi_z(w)$ is log-convex, and so we conclude that $y\mapsto \m{D}(y;H)$ is strictly increasing. For the limit as $y\to\infty$, write
\[
\varphi(y;H)=\int \frac{y}{y^2-z^2}dH(z) = \frac{1}{y} + o\left( \frac{1}{y^2}\right)\,,\quad \varphi(y;H)=-\int \frac{y^2+z^2}{(y^2-z^2)^2}dH(z) = -\frac{1}{y^2} + o\left( \frac{1}{y^3}\right)\,,
\]
as $y\to\infty$. Thus, $\TCritGamma(y;H)=-2 + o(1)$ as $y\to\infty$. 

For part (2), observe that the additional assumption on $H$ implies that $\frac{\varphi'(y;H)}{\varphi(y;H)}\to -\infty$ as $y\to\bulkEdge(H)$ from the right, hence $\TCritBare(y)\to-\infty$. Now, since $\TCritBare(y)$ is continuous on $y\in (\bulkEdge(H),\infty)$, it must attain $\TCritBare(y^*)=-4$ for some $y^*$.  This $y^*$ must be unique since, as we have proved in (1), $\TCritBare(y;H)$ is strictly increasing.

\section{Proof of Lemma~\ref{lem:finite-rank}}
\label{sec:proof:lem:finite-rank}
		
	Recall that for any matrix $A\in \R^{n\times p}$ with SVD $A=\sum_{i=1}^p \sigma_i \V{u}_i\V{v}_i^\T$, its best rank-$r$ approximation with respect to Frobenius norm is obtained by taking its $r$ leading principal components. Since $X$ has rank $r$, for any $k\ge M$,
	\[
	\|X-\hat{X}_{[k]}\|_F^2 \ge \min_{\mathrm{rank}(B)=r}\|B-\hat{X}_{[k]}\|_F^2 = \sum_{i=r+1}^k y_{i,n}^2 \ge \sum_{i=r+1}^M y_{i,n}^2 \,.
	\]
	Recall that any fixed $i\ge r+1$ satisfies $y_{i,n}\aslim y_{i,\infty}=\bulkedge$. Since $M$ is constant, and satisfies $M > r + \ASE^*[\V{x}]/\bulkedge^2$, we obtain that 
	\[
	\sum_{i=r+1}^M y_{i,n}^2 \aslim (M-r)\bulkedge^2 > \ASE^*[\V{x}] \,.
	\]
	Since $\SEn^*[\V{x}]\le \SEn[\V{x}|\optThresh(\FZ)]\aslim \ASE^*[\V{x}]$, we conclude that almost surely, for all large enough $n$, $\SEn^*[\V{x}] < \min_{k\ge M}\|X-\hat{X}_{[k]}\|_F^2$. 

\section{Proof of Proposition~\ref{prop:correlations}}
\label{sec:proof-prop:correlations}

This proposition asserts 
the asymptotic de-cross-correlation 
between principal ($j \leq r$) population singular vectors
and non-principal ($j > r$) empirical (sample) singular vectors. 
We assume distinct population
principal singular values.
Results on the asymptotic de-cross-correlation between 
``off diagonal'' combinations of 
population principal and empirical principal  singular vectors
have already been discussed in the main text near (\ref{decoupling:eq}), where they 
follow for example \cite{benaych2012singular}, and also in several
earlier works on the spiked covariance model. 
In contrast, in Proposition~\ref{prop:correlations} only one of the two
vectors being compared is principal, and the other is sub-principal.

Our argument relies on the ``arrowhead representation'' of
the spiked covariance eigenproblem; see (\ref{eq:SI:arrowhead}) below.
We learned of this representation from \cite{nadler2008finite}.

Recall that our data matrix is $Y_n = \sum_{\ell=1}^r x_\ell \V{a}_{\ell,n}\V{b}_{\ell,n}^\T + Z_n$. Our goal is to show that for every fixed $1\le i \le r$ and $j\ne i$, one has 
\[
\langle \V{a}_{n,i}\,,\,\V{u}_{n,j}\rangle \cdot
\langle \V{b}_{n,i}\,,\,\V{v}_{n,j}\rangle \aslim 0 \,,
\]
with $\V{u}_{n,j},\V{v}_{n,j}$ being, respectively, the left and right $j$-th singular vectors of $Y_n$. For notational convenience, let us assume, without loss of generality, that $i=1$. We will show that $\langle \V{b}_{n,i}\,,\,\V{v}_{n,j}\rangle \aslim 0$, which (since the inner products are bounded) clearly suffices. Recall also that $\V{v}_{n,j}$ is the $j$-th eigenvector of the $p$-by-$p$ matrix $Y_n^\T Y_n$.
Note that we may also assume without loss of generality that the eigenvalues of $Y_n^\T Y_n$ (equivalently, the $p$ singular values of $Z_n$) are all distinct.\footnote{
Otherwise, one could add to $Y_n$ an orthogonally-invariant but very weak independent perturbation $W_n$ (e.g., an i.i.d. Gaussian matrix); doing so will only infinitesimally change the corresponding singular value correlations, and in the limit $\|W_n\|\to 0$ the resulting SVD of $Y_n+W_n$ will produce singular values with the same distribution at those of $Y_n$ (we defined that $\V{v}_{i,n}$/$\V{u}_{i,n}$-s that correspond to multidimensional singular spaces are uniformly random on these subspaces). 
}
We may further assume throughout the proof that the distribution of $Z_n$ is orthogonally invariant (both from the left and right); this is because $(\V{a}_{n,1},\ldots,\V{a}_{n,r})$ and $(\V{b}_{n,1},\ldots,\V{b}_{n,r})$ have an orthogonally-invariant distribution and are independent of $Z_n$ (and, of course, the inner products we would like to compute are invariant to a global orthogonal transformation applied to both the population spikes and $Y_n$).

Let $P_n=I-\V{b}_{1,n}\V{b}_{1,n}^\T$ be the projection onto the orthogonal complement of $\V{b}_{1,n}$, and set 
\[
\tilde{Z}_n = \sum_{\ell=2}^r x_\ell  \V{a}_{\ell,n}\V{b}_{\ell,n}^\T + Z_n,
\]
so that $Y_n = x_1 \V{a}_{1,n}\V{b}_{1,n}^\T + \tilde{Z}_n$. Let $\V{q}_{2,n},\ldots,\V{q}_{p,n}$ be an orthonormal basis of $\mathrm{Range}(P_n)$, that diagonalizes the linear operator $\left. P_n \tilde{Z}_n^\T \tilde{Z}_n P_n \right|_{\mathrm{Range}(P_n)}$ (that is, the restriction of the matrix $P_n \tilde{Z}_n^\T \tilde{Z}_n P_n$ onto the linear subspace $\mathrm{Range}(P_n)$). 
Importantly, observe that the vectors $\V{q}_{2,n},\ldots,\V{q}_{p,n}$ do not depend on the population left singular vectors $\V{a}_{1,n},\ldots,\V{a}_{r,n}$; moreover, they remain unchanged when $\tilde{Z}_n$ is multiplied from the left by any orthogonal matrix.
Let $\mu_{2,n},\ldots,\mu_{p,n}$ be the corresponding eigenvalues, that is, 
\[
(P_n \tilde{Z}_n^\T \tilde{Z}_n P_n) \V{q_{\ell,n}} = \mu_{\ell,n}\V{q}_{\ell,n},\quad 2\le \ell\le p\,.
\]
Denote the $p$-by-$p$ orthogonal matrix $\m{U}=[\V{b}_{1,n},\V{q}_{2,n},\ldots,\V{q}_{p,n}]$, whose columns consists of the aforementioned orthonormal basis. 
The change of basis $\mathcal{U}$ has been explicitly chosen
so that, in this basis, $Y_n^\T Y_n$ has the very particular form
of a so-called {\it arrowhead matrix}, which have known useful
exact closed-form expressions for eigenvalues and eigenvectors.
That is, upon conjugation by $\m{U}$, 
\begin{equation}\label{eq:SI:arrowhead}
	\m{U} (Y_n^\T Y_n)\m{U}^\T = \begin{bmatrix}
		\alpha_n \quad& \V{w}_n^\T \\
		\V{w}_n \quad& \mathrm{diag}(\mu_{2,n},\ldots,\mu_{p,n}) 
	\end{bmatrix},
\end{equation}
where
\begin{equation}
		\alpha_n = \V{b}_{1,n}^\T (Y_n^\T Y_n) \V{b}_{1,n},
\end{equation}
and $\V{w}_n = (w_{2,n},\ldots,w_{p,n})$ is a $(p-1)$-dimensional column vector:
\begin{equation}\label{eq:SI:weights}
	w_{\ell,n} = \V{b}_{1,n}^\T(Y_n^\T Y_n) \V{q}_{\ell,n} \,, \quad 2\le \ell \le p\,.
\end{equation}
Importantly, the bottom-right $(p-1)$-by-$(p-1)$ minor of (\ref{eq:SI:arrowhead}) is diagonal. 
It is known \cite{nadler2008finite} that the eigenvectors of a matrix of the form (\ref{eq:SI:arrowhead}), denoted $\V{p}_{1,n},\ldots,\V{p}_{p,n}$ are, up to normalization:
\begin{equation}
	\label{eq:SI:eigenvector}
	\V{p}_{\ell,n} = \left( 1, \frac{w_{2,n}}{\lambda_\ell - \mu_{2,n}}, \ldots, \frac{w_{n,n}}{\lambda_\ell - \mu_{n,n}} \right) \,,\quad 1\le \ell \le p\,,
\end{equation}
where $\lambda_\ell$ is the corresponding eigenvalue. Recalling Eq. (\ref{eq:SI:arrowhead}), the eigenvalues and eigenvectors of the arrowhead matrix are related to those of $Y_n^\T Y_n$:
\begin{equation}
	\lambda_\ell = y_{\ell,n}^2,\quad \V{v}_{\ell,n}=\pm \m{U} \V{p}_{\ell,n}/\|\m{U} \V{p}_{\ell,n}\|\,.
\end{equation}
In particular, combining with Eq. (\ref{eq:SI:eigenvector}),
\begin{equation}\label{eq:SI:2}
	\left| \langle \V{b}_{1,n}, \V{v}_{j,n} \rangle \right| = \left| (\m{U}^\T \V{v}_{j,n})_1 \right| = \frac{|(\V{p}_{j,n})_1|}{\|\V{p}_{j,n}\|} = \left( 1 + \sum_{\ell=2}^{p} \frac{w_{\ell,p}^2}{(y_{j,n}^2 - \mu_{\ell,n})^2}\right)^{-1/2} \,.
\end{equation}
The RHS of (\ref{eq:SI:2}) has an exact closed form that 
allows the argument for Proposition \ref{prop:correlations} to be completed by Lemma \ref{lem:SI:long1} below. \qed 

\begin{lemma}\label{lem:SI:long1}
	One has
	\begin{equation}\label{eq:lem:SI:long1}
	\sum_{\ell=2}^{p} \frac{w_{\ell,p}^2}{(y_{j,n}^2 - \mu_{\ell,n})^2} \aslim \infty \,.
	\end{equation}
\end{lemma}


Towards the proof of Lemma~\ref{lem:SI:long1}, we start with a simpler claim:

\begin{lemma}
	\label{lem:SI:long2}
	One has 
	\begin{equation}\label{eq:lem:SI:long2}
		\frac{1}{p-1}\sum_{\ell=2}^{p} \frac{\mu_{\ell,n}}{(y_{j,n}^2 - \mu_{\ell,n})^2} \aslim \infty \,.
	\end{equation}
\end{lemma}
\begin{proof}
	(Of Lemma~\ref{lem:SI:long2}.)
	It is suggestive to write the LHS of (\ref{eq:lem:SI:long2}) as 
	\begin{equation}
		\frac{1}{p-1}\sum_{\ell=2}^{p} \frac{\mu_{\ell,n}}{(y_{j,n}^2 - \mu_{\ell,n})^2} = \int \frac{\mu}{(y_{j,n}^2-\mu)^2} dF_{P_n \tilde{Z}_n^\T \tilde{Z}_n P_n}(\mu)\,,
	\end{equation}
where $dF_{P_n \tilde{Z}_n^\T \tilde{Z}_n P_n}(\mu)=\frac{1}{p-1} \sum \frac{1}{p-1} \delta(\mu-\mu_{\ell,n})$ is the empirical eigenvalue distribution (counting measure) of $P_n \tilde{Z}_n^\T \tilde{Z}_n P_n$. 

We know by \cite{benaych2012singular} that $y_{j,n}^2 \aslim \bulkEdge(F_{Z^T Z})$, where $F_{Z^\T Z}$ denotes the limiting eigenvalue distribution\footnote{Recall: $F_Z$ is the limiting distribution of \emph{singular values} of $Z_n$; they are related to the eigenvalues of $Z_n^\T Z_n$ by $\lambda_{\ell}(Z_n^\T Z_n)=\sigma_\ell^2(Z_n)$.} of $Z_n^\T Z_n$. Moreover, recall that by assumption~\ref{assum:dense}, 
\[
\lim_{t\to \bulkEdge(F_{Z^\T Z})} \int \frac{\mu}{(t-\mu)^{2}} dF_{Z^\T Z}(\mu)=\infty \,.
\]
Consequently, the proof of Lemma~\ref{lem:SI:long2} is concluded once one shows that $F_{P_n \tilde{Z}_n^\T \tilde{Z}_n P_n} \dlim F_{Z^\T Z}$ almost surely. To see this, it suffices to note that:
\begin{enumerate}
	\item $P_n \tilde{Z}_n^\T \tilde{Z}_n P_n$ is a $(p-1)$-by-$(p-1)$ minor of $\tilde{Z}_n^\T \tilde{Z}_n$, and therefore by eigenvalue interlacing, they have the same limiting eigenvalue distribution.
	\item $\tilde{Z}_n = Z_n + \sum_{\ell=2}^r x_\ell \V{a}_{\ell,n}\V{b}_{\ell,n}^\T$ is a finite-rank additive perturbation of $Z_n$; hence by singular value interlacing, it has the same limiting singular value distribution as $Z_n$ (with at most $r-1$ outlying singular values); thus, the limiting eigenvalue distribution of $\tilde{Z}_n^\T \tilde{Z}_n$ is $F_{Z^\T Z}$.
\end{enumerate}

\end{proof}

Equipped with Lemma~\ref{lem:SI:long2}, we now prove Lemma~\ref{lem:SI:long1}. 

\begin{proof}
	(Of Lemma~\ref{lem:SI:long1}
)
	Let us analyze the weights $w_{\ell,n}^2$ in (\ref{eq:lem:SI:long1}). Recalling (\ref{eq:SI:weights}),
	\begin{align*}
		w_{\ell,n} 
		&= \V{b}_{1,n}^\T (\tilde{Z}_n^\T  + x_1 \V{b}_{1,n}\V{a}_{1,n}^\T)( \tilde{Z}_n + x_1 \V{a}_{1,n}\V{b}_{1,n}^\T ) \V{q}_{\ell,n} \\
		&= (\V{b}_{1,n}^\T\tilde{Z}_n^\T  + x_1 \V{a}_{1,n}^\T)\tilde{Z}_n \V{q}_{\ell,n}\\
		&= \V{b}_{1,n}^\T (\tilde{Z}_n^\T\tilde{Z}_n) \V{q}_{\ell,n} + x_1 \V{a}_{1,n}^\T \tilde{Z}_n \V{q}_{\ell,n} \,,
	\end{align*}
	where we used $\V{b}_{1,n} \perp \V{q}_{\ell,n}$ (by construction). 
	Note that we may assume with loss of generality that $\V{b}_{1,n}^\T (\tilde{Z}_n^\T\tilde{Z}_n) \V{q}_{\ell,n} \ge 0$ (since we may a priori replace any $\V{q}$ by $-\V{q}$), 
	and therefore,
	\begin{equation}\label{eq:SI:w}
		w_{\ell,n}^2 \ge x_1^2 (\V{a}_{1,n}^\T \tilde{Z}_n \V{q}_{\ell,n})^2 \Ind{\V{a}_{1,n}^\T \tilde{Z}_n \V{q}_{\ell,n} \ge 0}.
	\end{equation}

    Recall that by their definition, the vectors $\{\tilde{Z}_n\V{q}_{\ell,n}\}_{2\le \ell \le p}$ are orthogonal to one another, with $\|\tilde{Z}_n\V{q}_{\ell,n}\|^2=\mu_{\ell,n}$. 
    Let us, for the moment, make the simplifying assumption $r=1$.
    In that case, $\tilde{Z}_n=Z_n$, hence $\{\tilde{Z}_n\V{q}_{\ell,n}\}_{2\le \ell \le p}$ are independent of $\V{a}_{1,n}$.  Since $\V{a}_{1,n}$ is uniformly distributed on the $n$-dimensional unit sphere, we may write $\V{a}_{1,n}=\V{g}/\|\V{g}\|$ for $\V{g}\sim \m{N}(0,I_n)$. Then $\{\V{g}^\T \tilde{Z}_n \V{q}_{\ell,n}\}_{2\le \ell \le p}$ are independent (univariate) Gaussians, being the projections of a Gaussian vector onto orthogonal directions. 
	To wit, the following equality in distribution holds:
    \[
	\left( \V{a}_{1,n}^\T \tilde{Z}_n \V{q}_{2,n},\ldots,\V{a}_{1,n}^\T \tilde{Z}_n \V{q}_{p,n} \right) \overset{d}{=} \left( \frac{\mu_{2,n}^{1/2}}{\|\V{g}\|}g_2,\ldots, \frac{\mu_{p,n}^{1/2}}{\|\V{g}\|}g_p \right).
	\]
	Using (\ref{eq:SI:w}), $n^{-1}\|\V{g}\|^2\aslim 1$, and the strong law of large numbers,
	\begin{align*}
		\sum_{\ell=2}^{p} \frac{w_{\ell,p}^2}{(y_{j,n}^2 - \mu_{\ell,n})^2} 
		&\ge \frac{x_1^2}{\|\V{g}\|^2}\sum_{\ell=2}^p 
		\frac{\mu_{\ell,n}}{(y_{j,n}^2 - \mu_{\ell,n})^2} g_\ell^2 \Ind{g_\ell\ge 0} \\
		&\approx x_1^2\cdot \frac{p-1}{n} \cdot \frac{1}{p-1}\sum_{\ell=2}^{p} \frac{\mu_{\ell,n}}{(y_{j,n}^2 - \mu_{\ell,n})^2} \cdot \E[g_\ell^2 \Ind{g_\ell\ge 0}] \aslim \infty\,,
	\end{align*}
	where the last limit follows from Lemma~\ref{lem:SI:long2}. This proves Lemma~\ref{lem:SI:long1} assuming $r=1$.

Let us now adapt the above argument for any fixed $r\ge 1$. The difficulty in doing so lies with the fact that $\V{a}_{1,n}$ is no longer independent of $\{\tilde{Z}_n\V{q}_{\ell,n}\}_{2\le \ell \le p}$, since $\tilde{Z}_n = \sum_{\ell=2}^r x_\ell  \V{a}_{\ell,n}\V{b}_{\ell,n}^\T + Z_n$, and $\V{a}_{1,n}$ is orthogonal to $\V{a}_{2,n},\ldots,\V{a}_{r,n}$. The key fact is that this dependence is rather weak, as we will show. 

Let us condition on $\V{a}_{2,n},\ldots,\V{a}_{r,n}$, and let $\m{P}_A$ be the projection onto their orthogonal complement. Then we can write the conditional distribution of $\V{a}_{1,n}$ as $\V{a}_{1,n}\overset{d}{=} \m{P}_A(\V{g})/\|\m{P}_A(\V{g})\|$, where $\V{g}\sim\m{N}(0,I_n)$. Since $\m{P}_A$ projects onto a subspace of dimension $n-r$, with $r$ constant, $\|\m{P}_A(\V{g})\|^2/n\aslim 1$. Denote $\V{\zeta}_{\ell,n} = \tilde{Z}_n\V{q}_{\ell,n}/\mu_{\ell,n}^{1/2}$, $2\le \ell \le p$, which are $p-1$ orthonormal vectors. We have, as was before,
\begin{align*}
    \sum_{\ell=2}^{p} \frac{w_{\ell,p}^2}{(y_{j,n}^2 - \mu_{\ell,n})^2} 
		&\gtrsim 
        x_1^2\cdot \frac{p-1}{n} \cdot \frac{1}{p-1}\sum_{\ell=2}^{p} \frac{\mu_{\ell,n}}{(y_{j,n}^2 - \mu_{\ell,n})^2} \cdot (\V{\zeta}_{\ell,n}^\T \m{P}_A\V{g})^2\Ind{\V{\zeta}_{\ell,n}^\T \m{P}_A\V{g}\ge 0}\,.
\end{align*}

Since we assumed that the distribution of $\tilde{Z}_n$ is orthogonally invariant from the left (see beginning of this section), and since the eigenvectors $\V{q}_{2,n},\ldots,\V{q}_{p,n}$ are not dependent on such orthogonal left multiplication, the random signs $\{\mathrm{sign}(\V{\zeta}_{\ell,n}^\T \m{P}_A\V{g})\}_{2\le \ell \le p}$ are i.i.d. $\mathrm{Ber}(1/2)$ and independent of the modulii $(\V{\zeta}_{\ell,n}^\T \m{P}_A\V{g})^2$. Consequently,  in light of Lemma~\ref{lem:SI:long2}, it suffices to prove that for any $\varepsilon>0$, there is constant $C_\varepsilon$ such that 
\[
\frac1p\sum_{\ell=2}^{\lfloor\varepsilon p\rfloor} (\V{\zeta}_{\ell,n}^\T \m{P}_A\V{g})^2 \ge C_\varepsilon    
\]
holds asymptotically almost surely as $p\to\infty$. The above sum has a simple geometric interpretation: one first projects $\V{g}$ onto the complement of $\V{a}_{2,n},\ldots,\V{a}_{r,n}$, and then projects the result onto the span of of $ \V{\zeta}_{2,n},\ldots,\V{\zeta}_{\lfloor\varepsilon p\rfloor,n}$; the sum is the remaining energy (squared $L_2$ norm). Let $\m{W}=\mathrm{span}\left(\V{a}_{2,n},\ldots,\V{a}_{r,n}), \{ \V{\zeta}_{2,n},\ldots,\V{\zeta}_{\lfloor\varepsilon p\rfloor,n} \}^\perp \right) \subseteq \R^p$, so that $\mathrm{dim}(\m{W})\le r + p - \lfloor \varepsilon p \rfloor + 1$. Certainly, $\frac1p\sum_{\ell=2}^{\lfloor\varepsilon p\rfloor} (\V{\zeta}_{\ell,n}^\T \m{P}_A\V{g})^2 \ge \frac1p \|\m{P}_{\m{W}^\perp} \V{g}\|^2$, where $\m{P}_{\m{W}^\perp}$ is the projection onto the complement of $\m{W}$. Since $r$ is constant, $\mathrm{dim}(\m{W}^\perp) \ge p(\varepsilon-o(1))$, and consequently, $\frac1p \|\m{P}_{\m{W}^\perp} \V{g}\|^2 \gtrsim \varepsilon$ almost surely as $p\to\infty$. Thus, the proof is concluded.

\end{proof}

\section{Estimating $\optThresh(\FZ)$: Auxiliary Lemmas}
\label{sec:proof:estimating}

\subsection{Proof of Lemma~\ref{lem:main:continuity}}
Let $\epsilon>0$ be small. By assumption, $\optThresh(H)>\bulkEdge(H)$, and it is the unique number satisfying $\TCritGamma(\optThresh(H);H)=-4$. Let $y_1=\optThresh(H)-\epsilon/2$, $y_2=\optThresh(H)+\epsilon/2$, where $\epsilon$ was chosen small enough so that $\bulkEdge(H)<y_1<y_2$. Note that $\TCritGamma(y_1;H)<-4<\TCritGamma(y_2;H)$, as $\Psi(\cdot;H)$ is increasing. Since $H_n\dlim H$ (and $|p/n-\gamma|\le 1/n \to 0$) we find that for all large enough $n$, $\TCritPoverN(y_1;H_n)<-4<\TCritPoverN(y_2;H_n)$, since $\TCritPoverN(y_1;H_n)\to \TCritPoverN(y_1;H)$ and $\TCritPoverN(y_1;H_n)\to \TCritPoverN(y_2;H)$. Since also $\bulkEdge(H_n) \to \bulkEdge(H)<y_1$, we deduce that for large enough $n$, $y_1<T_{p/n}(H_n)<y_2$, that is, $|T_{p/n}(H_n)-\optThresh(H)|<\epsilon$ .

\subsection{Proof of Lemma~\ref{lem:main:approx-FZ}} 
Part (1) will follow from (3), since convergence in KS distance implies weak convergence, and $\FZn\dlim \FZ$ almost surely. For part (3),
denote by $z_{i,n}$, $i=1,\ldots,p$, the singular values of $Z_n$. By Weyl's interlacing inequality, since $Y_n=X_n+Z_n$ and $\mathrm{rank}(X)=r\le k$,
\[
z_{i,n} \le y_{i,n} \le z_{i-k,n}\,,\quad\textrm{ for }i=k+1,\ldots,p \,.
\]
Fix some $y$, and let $j$ be the smallest index $j\ge k+1$ such that $y_{j,n}\le y$ (set $j=0$ if no such index exists). The interlacing inequality states that $z_{j,n}\le y$ as well. If $j=k+1$, then at worst the interval contains all of $z_{k,n},\ldots,z_{1,n}$, and none of the additional ``pseudo-singular values'' we have introduced. Hence, in that case, $|\FZn(y)-F_{n,k}^\star(y)|\le k/p$. Now, suppose that $j>k+1$. Since $y_{j+1,n}>y$, the interlacing inequality gives $z_{j+1-k,n}\ge y_{j+1,n}>y$, hence in addition to $z_{p,n},\ldots,z_{j,n}$, the interval $(-\infty,y]$ contains at most $k$ additional singular values of $Z_n$, specifically $z_{j+1,n},\ldots,z_{j-k}$. In the worst case, we have added no ``pseudo-singular values'' with $\le y$, hence again $|\FZn(y)-F_{n,k}^\star(y)|\le k/p$. Lastly, to prove (2), note that for any $\epsilon>0$, $\FZ\left( \bulkEdge(\FZ)-\epsilon  \right)<1$, which means that almost surely, for large enough $n$, there are $\Omega(n)$ singular values of $Z_n$ bigger than $\bulkEdge(\FZ)-\epsilon$. By interlacing, for any $m_n=o(n)$ with $r<m_n$, $y_{m_n,n}\ge z_{m_n,n}$, and $z_{m_n,n}$ must eventually be among those singular values bigger than $\bulkEdge(\FZ)-\epsilon$. But this is true for any $\epsilon>0$, meaning that $y_{m_n}\aslim \bulkEdge(\FZ)$ whenever $r<m_n=o(n)$. 

\subsection{Proof of Lemma~\ref{lem:main:continuity-quantitive}}
Fix any $a$ satisfying $\bulkEdge(H)<a<\optThresh(H)$. Observe that one can find a neighborhood $\m{N}$ of $\optThresh(H)$ and $c_1,c_2>0$ such that $c_2<\TCritBare_{\gamma'}'(y;G)<c_1$ for all $y\in\m{N}$ and any distribution $G$ supported on $[0,a]$ and $\gamma'$ (to see this, simply follow calculations in the proof of Lemma~\ref{lem:crossing}). 
Now, since $T_{p/n}(H_n) \to \optThresh(H)$, we see that $T_{p/n}(H_n)\in \m{N}$ for all large enough $n$. Consequently, by the mean value theorem,
\[
c_2 (T_{p/n}(H_n) - \optThresh(H)) < \TCritPoverN(T_{p/n}(H_n);H_n) - \TCritPoverN(\optThresh(H);H_n) \le c_1(T_{p/n}(H_n) - \optThresh(H)) \,.
\]
Using $\TCritGamma(\optThresh(H);H)=\TCritPoverN(T_{p/n}(H_n);H_n)=-4$, and the fact that $\TCritBare$ is increasing, we conclude that 
\begin{align*}
	|T_{p/n}(H_n)-\optThresh(H)| 
	&\le  \max(1/c_1,1/c_2)\cdot \left| \TCritPoverN(T_{p/n}(H_n);H_n) - \TCritPoverN(\optThresh(H);H_n) \right| \\
	&= \max(1/c_1,1/c_2)\cdot \left| \TCritGamma(T_{\gamma}(H);H) - \TCritPoverN(\optThresh(H);H_n) \right| \,.
\end{align*}
The right-hand-side is now $\m{O}(\Delta_{1,n} + \Delta_{2,n} + \left|  p/n-\gamma\right|)$. 

\subsection{Proof of Proposition~\ref{prop:main:perturb}}
By Lemma~\ref{lem:main:continuity-quantitive}, we need to show that 
\[
|\varphi(\optThresh(\FZ);\FZ)-\varphi(\optThresh(\FZ);F_{n,k}^\star)|,\, |\varphi'(\optThresh(\FZ);\FZ)-\varphi'(\optThresh(H);F_{n,k}^\star)| = \m{O}_{\prob}(k/p) 
\]
(by definition, $|\gamma_n-\gamma|\le 1/n$). Write
\begin{eqnarray*}
	\varphi(\optThresh(\FZ);\FZ)-\varphi(\optThresh(\FZ);F_{n,k}^\star) &=& \left[ \varphi(\optThresh(\FZ);\FZ)-\varphi(\optThresh(\FZ);\FZn)\right]\\ &+& \left[\varphi(\optThresh(\FZ);\FZn) - \varphi(\optThresh(\FZ);F_{n,k}^\star)\right] \,,
\end{eqnarray*}
we bound each bracket separately (the argument when $\varphi$ is replaced by $\varphi'$ is the same). The expression $\varphi(\optThresh(\FZ);\FZ)=\int \frac{\optThresh(\FZ)}{\optThresh(\FZ)^2-z^2}d\FZ(z)$ is a linear spectral statistic, and satisfies the requirement of \cite{bai2004}, by assumption - see Section~\ref{sec:correlated-noise}.  Hence, 
\[
\left| \varphi(\optThresh(\FZ);\FZ)-\varphi(\optThresh(\FZ);\FZn)\right| = \m{O}_{\prob}(1/p) \,.
\]
For the second term, recall that if $F_1$ and $F_2$ are CDFs supported on the interval $I$ and $g:I\to \R$ is bounded and continuously differentiable, then
\[
\left|  \int g(t)( dF_1(t)-dF_2(t) ) \right| \le \left(\|g\|_{L^\infty(I)} + \|g'\|_{L^1(I)}\right)\cdot \|F_1-F_2\|_{\mathrm{KS}} \,. 
\]
Since both $\bulkEdge(\FZn),\bulkEdge(F_{n,K}^\star)\aslim \bulkEdge(\FZ)$, by Lemma~\ref{lem:main:approx-FZ}, almost surely, 
\[\left|\varphi(\optThresh(\FZ);\FZn) - \varphi(\optThresh(\FZ);F_{n,k}^\star)\right|=\m{O}(k/p)\,.\]

\section{Additional numerical experiments}
\label{app:numerics}

In this section we provide extensive numerical experiments validating different aspects of our results under various noise distributions.

\subsection{Noise distributions}

We have conducted experiments using the following noise distributions:

\begin{itemize}
    \item {\bf Mar\v{c}enko-Pastur}: the matrix $Z$ is an i.i.d Gaussian matrix, so that $\FZ$ is a Mar\v{c}enko-Pastur with shape parameter $\gamma$.
    \item {\bf Chi10}: A noise matrix with correlated columns, such that $W$ is i.i.d Gaussian and $\FS$ is the law of a $\chi$-squared random variable with $10$ degrees of freedoms, normalized to have variance $1$: $T=\frac{1}{10}\sum_{i=1}^{10} g_i$ where $g_1,\ldots,g_{10}\sim \m{N}(0,1)$.
    \item {\bf Mix2}: A noise matrix with correlated columns, such that $W$ is i.i.d Gaussian, and $\FS$ is an equal mixture of two atoms: $d\FS = \frac12 \delta_1 + \frac12 \delta_{10}$.
    \item {\bf Unif[1,10]}: A noise matrix with correlated columns, such that $W$ is i.i.d Gaussian, and $\FS$ is the uniform distribution on $[1,10]$.
    \item {\bf Fisher3n}: $Z$ has the form $Z=W_1S_2^{-1/2}$ where $W_1\in \R^{n\times p}$ is i.i.d Gaussian $\m{N}(0,1/n)$, and $S_2=W_2^\T W_2$ where $W_2 \in \R^{3p\times p}$ is an i.i.d Gaussian matrix with entries $\m{N}(0,1/(3n))$. Matrices of this form have been studied in the literature under the name F-matrices (also F-ratios, Fisher matrices). Their limiting singular value distribution is given by Wachter's law \cite{wachter1980limiting}, see also \cite{yin1983limiting,silverstein1985limiting,bai2010spectral}.
    \item {\bf PaddedIdentity:} All the singular values of $Z$ are $1$, that is, $d\FZ=\delta_1$. Specifically, $Z$ is the matrix $Z=[\mathbf{I}_{p\times p},\mathbf{0}_{p\times (n-p)}]^\T $, that is, a $p$-by-$p$ identity matrix, padded by zeros.
\end{itemize}

In the table below, one can find some useful quantities corresponding to the distributions above (with select choices of $\gamma$): the bulk edge $\bulkEdge(\FZ)$, the location of the BBP PT $\BBP$, optimal threshold $\optThresh(\FZ)$ and $x^*=\m{Y}^{-1}\left(\optThresh(\FZ)\right)$. All these quantities are estimated by sampling a large noise matrix $Z$ (specifically, we take $p=3000$ and $n=p/\gamma$) and estimating all the necessary functionals by their plugin estimates, putting $\FZn$ in place of $\FZ$, and rounding all numbers to $2$ digits after the decimal point. Estimating $\BBP$ in this manner is especially problematic, since for any counting measure $H$, in our case $H=\FZn$, $\BBP(H)=\lim_{y\to\bulkEdge(H)}\left( D_{p/n}(y;H) \right)^{-1/2} = 0$, since $\m{D}(\cdot;H)$ has $1/y$ singularity near $\bulkEdge(H)$. For our purposes, we use the heuristic $\BBP \approx \left( D_{p/n}\left(\bulkEdge(\FZn)+0.01;\FZn\right) \right)^{-1/2}$, which may be somewhat off from the true PT location.\footnote{For instance, when the noise is Mar\v{c}enko-Pastur, exact expressions are available, see e.g \cite{Donoho2013b}, and $\BBP=\gamma^{1/4}$. This is quite off from the expression in the table! } In the case of {\bf PaddedIdentity}, it is obvious that $\BBP=0$, and this is what we give below.

\begin{center}
 \begin{tabular}{||c c c c c c||} 
 \hline
 {\bf Distribution:} & $\gamma$ & $\bulkEdge\left(\FZ\right)$ & $\BBP(\FZ)$ &  $T_\gamma\left(\FZ\right)$ & $\m{Y}^{-1}\left(T_\gamma\left(\FZ\right)\right)$ \\
 \hline\hline
{\bf Mar\v{c}enko-Pastur} & 0.5 & 1.7 & 0.91 & 1.98 & 1.48 \\
 & 1.0 & 2.0 & 1.07 & 2.31 & 1.73\\
 \hline
{\bf Chi10} & 0.5 & 2.11 & 1.59 & 2.17 & 1.7\\
 & 1.0 & 2.26 & 1.5 & 2.46 & 1.89\\
\hline
{\bf Fisher3n} & 0.5 & 1.99 & 1.19 & 2.23 & 1.7\\
 & 1.0 & 2.28 & 1.36 & 2.57 & 1.95\\
\hline
{\bf Mix2} & 0.5 & 4.76 & 2.73 & 5.34 & 4.16\\
 & 1.0 & 5.44 & 3.13 & 6.08 & 4.73\\
 \hline
{\bf Unif[1,10]} & 0.5 & 4.4 & 2.48 & 4.96 & 3.79\\
 & 1.0 & 5.04 & 2.72 & 5.7 & 4.36\\
\hline
{\bf PaddedIdentity} & 0.5 & 1.0 & 0.0 & 1.62 & 1.11\\
 & 1.0 & 1.0 & 0.0 & 1.73 & 1.15 \\
 \hline
\end{tabular}
\end{center}

\subsection{Description of experiments}

For each noise distribution we conduct the following experiments:

\begin{itemize}
    \item {\bf Hist}: 
    We give a histogram singular values of a large noise matrix, in the case where the description of $\FZ$ is not trivial. We do this to get a some sense for how the noise bulk looks like. Specifically, the noise matrix we sample has dimensions $p=3000$ and $n=p/\gamma$.   

    \item {\bf R0-vs-R1}: 
    We compare the functions $R0(x)$ and $R1(x)$ from Lemma~\ref{lem:crossing}, and demonstrate that $x^*=\m{Y}^{-1}\left( T_\gamma(\FZ) \right)$ is their unique crossing point $x>\BBP$; this is done in the following way: we consider a rank $1$ signal $X=x\V{a}\V{b}^\T$ and $Y=X+Z$, where the signal directions $\V{a},\V{b}$ and noise matrix $Z$ are sampled once, and we let the intensity $x$ vary. The dimensions used are $p=500$ and $n=p/\gamma$. We plot the quantities $R_0(x)=\|X\|_F^2=x^2$ (the error of the estimator $\hat{X}=0$) and $\widehat{R}_1(x)=\|X-y_1\V{u}\V{v}^\T\|_F^2$, the error obtained by for estimating $X$ using the principal component of $Y$. The theory states that $\widehat{R}_1(x)$ converges to converges to $R_1(x)$, and we also plot this, as well as show that $x^*$ is the unique intersection point of $R_0(x)$ and $R_1(x)$. Thresholding at the intersection of $\overline{R}_1(x)$ and $R_0(x)$ is optimal (for this problem instance), and we see that indeed this intersection point is quite close to $x^*$.
    
    
    \item 
    {\bf SE-vs-ASE}: 
    Our theory states that as $n,p$ grow, the random function $\thresh\mapsto \SEn[\V{x}|\thresh]$ tends to the deterministic function $\thresh\mapsto \ASE[\V{x}|\thresh]$, when $\thresh>\bulkEdge(\FZ)$. We illustrate this phenomenon. We consider a single problem instance, corresponding to the rank $r=5$ signal $\V{x}=(0.5,1.0,1.3,2.5,5.2)$, and plot the function $\SEn[\V{x}|\thresh]$ and $\ASE[\V{x}|\thresh]$ on top of each other (we plot $\SEn[\V{x}|\thresh]$ for very few thresholds $\thresh\le y_{r+1,n}$; as the rank of $X_t$ grows, the MSE blows up quickly). We use dimensions $p=500$ and $n=p/\gamma$. We indicate the locations of $\bulkedge$, $\optThresh(\FZ)$ and the estimates $T_{p/n}(F_{n,k}^\star)$ for $\star\in \{ 0,w,i\}$, where we use $k=4r=20$. We see that for a ``typical'' problem instance, $\ASE[\V{x}|\thresh]$ is indeed a good proxy for $\SEn[\V{x}|\thresh]$.

    \item {\bf OracleAttainment}: 
    We test Theorems~\ref{thm:oracle-risk-attained} and \ref{thm:adaptive-guarantee}, whereby the probability that thresholding at $T_{p/n}(F_{n,k}^\star)$ attains oracle loss with probability tending to $1$ as $n,p\to \infty$. For various choices of $p$, we run $T=50$ denoising experiments, and report the fraction of experiments where each threshold $\in \left\{ T_{\gamma}(\FZ), T_{p/n}(F_{n,k}^0),T_{p/n}(F_{n,k}^w),T_{p/n}(F_{n,k}^i) \right\}$ attains oracle loss ($T_\gamma(\FZ)$ is computed as described in experiment {\bf R0-vs-R1}). In all experiments, we use the signal $\V{x}=(0.5,1.0, 1.3, 2.5, 5.2)$ which has rank $r=5$ (the same signal as in experiment {\bf SE-vs-ASE}), and the upper bound $k=4r=20$. Note that we have chosen the spikes $x_i$ to be all far from $x^*=\m{Y}^{-1}(\optThresh(\FZ))$, in accordance with the condition in Theorem~\ref{thm:oracle-risk-attained}.
    
    \item {\bf Regret}:
    We compare the oracle loss $\SEn^*[\V{x}]$ with $\SEn[x|\hat{\theta}]$ for the choices 
    \[\hat{\thresh}\in \left\{ \optThresh(\FZ),T_{p/n}(F_{n,k}^0),T_{p/n}(F_{n,k}^w),T_{p/n}(F_{n,k}^i) \right\}\,,\]
     in the single-spiked setup, as described in experiment {\bf R0-vs-R1} above. We let the spike intensity $x$ vary and plot $\SEn[\V{x}|\hat{\thresh}]/\SEn^*[\V{x}]$ for each choice of estimator. We expect the choice of estimator to especially matter when $x$ is close to $x^*=\m{Y}^{-1}(T_\gamma(\FZ))$ (indicated in the plots by a dashed vertial line), and this can indeed be seen in the plots. The ratios we report are averages across $T=20$ experiments.
    
    \item {\bf ConvergenceRate}: 
    We study the rate of convergence of $T_{p/n}(F_{n,k}^\star)$ towards $\optThresh(\FZ)$. To that end, we consider a rank $r=10$ signal $x=(1,\ldots,10)$, set $k=20$ and plots the relative absolute error $\left| T_{p/n}(F^\star_{n,k})-\optThresh(\FZ)\right|/\optThresh(\FZ)$ as $p$ varies and $n=p/\gamma$. We plot the error in a logarithmic scale, to get a sense of its polynomial rate of decay in $p$. Note that by Proposition~\ref{prop:main:perturb}, we expect the slope in most cases to be, roughly, $\lesssim 1$. We find that almost always, $T_{p/n}(F_{n,k}^i)$ (imputation) approximates $\optThresh(\FZ)$ much better than $T_{p/n}(F_{n,k}^w)$ (winsorization) or $T_{p/n}(F_{n,k}^0)$ (transport to zero). All points on the plots are generated by averaging the error of $T=50$ experiments.
    
\end{itemize}   


\subsection{Distribution: Mar\v{c}enko-Pastur, $\gamma = 1.0$}

\begin{figure}[H]
    \centering
    \includegraphics[width=0.75\textwidth]{{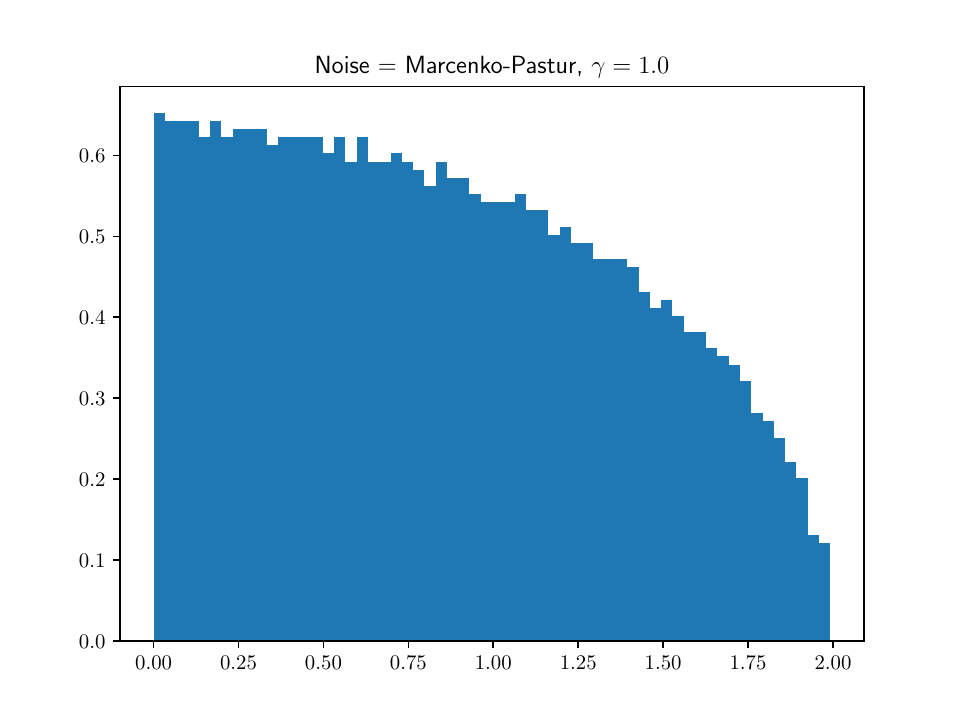}}
    \caption{Experiment: {\bf Hist}}
\end{figure}

\begin{figure}[H]
    \centering
    \includegraphics[width=0.75\textwidth]{{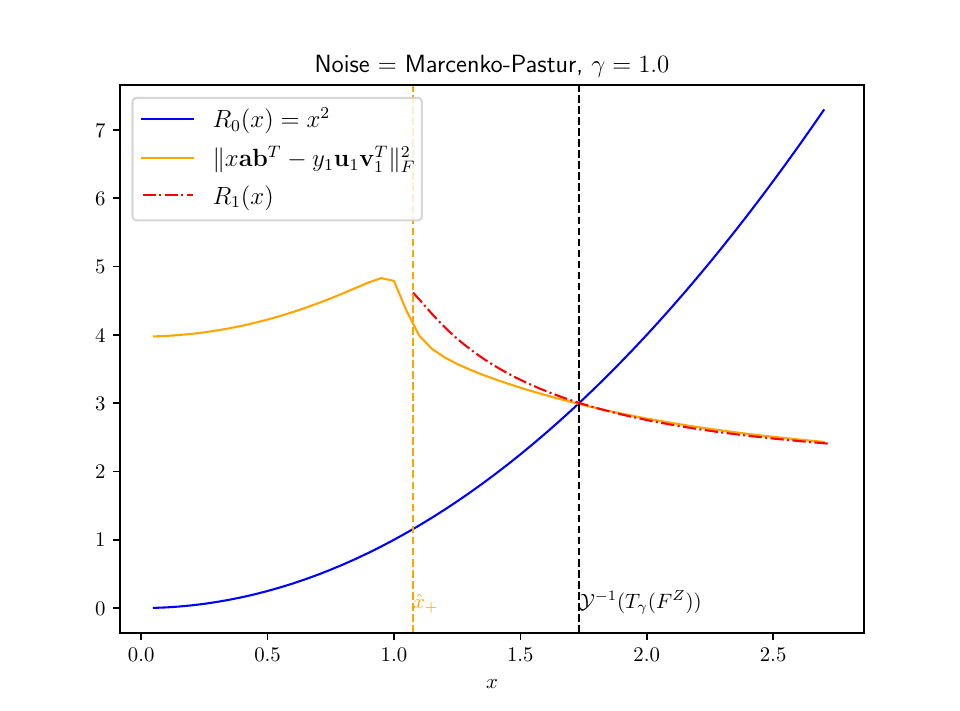}}
    \caption{Experiment: {\bf R0-vs-R1}}
\end{figure}

\begin{figure}[H]
    \centering
    \includegraphics[width=0.75\textwidth]{{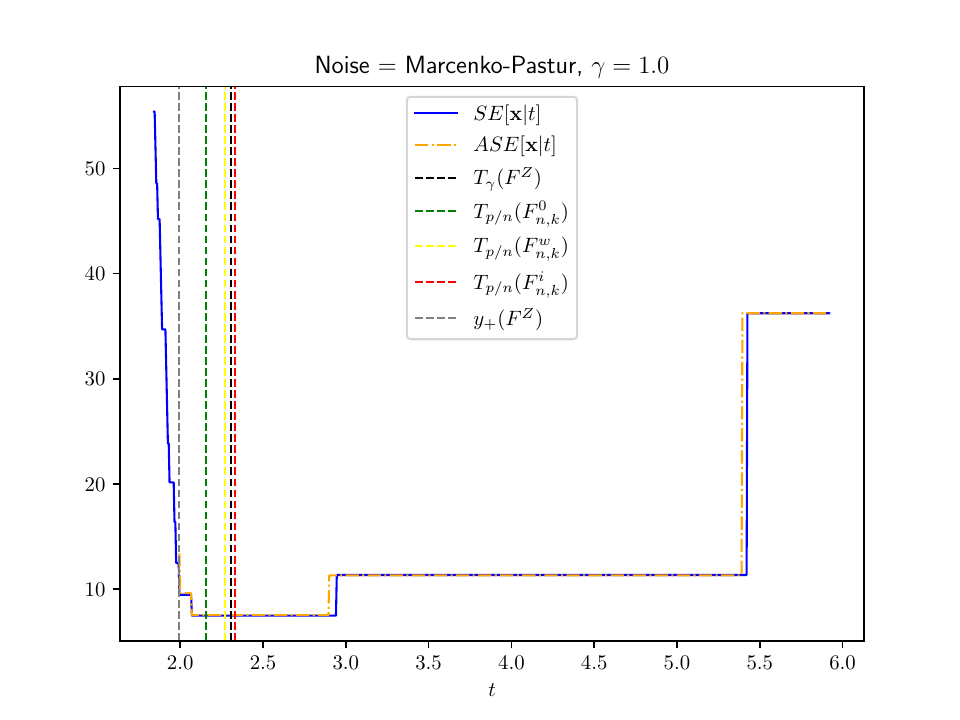}}
    \caption{Experiment: {\bf SE-vs-ASE}}
\end{figure}

\begin{figure}[H]
    \centering
    \includegraphics[width=0.75\textwidth]{{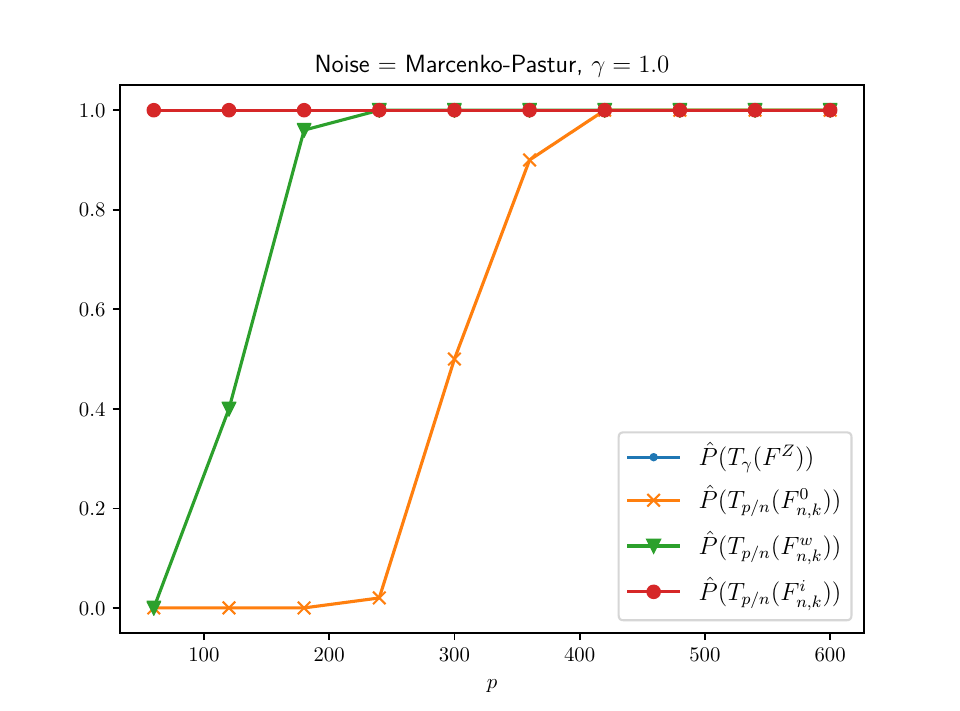}}
    \caption{Experiment: {\bf OracleAttainment}}
\end{figure}

\begin{figure}[H]
    \centering
    \includegraphics[width=0.75\textwidth]{{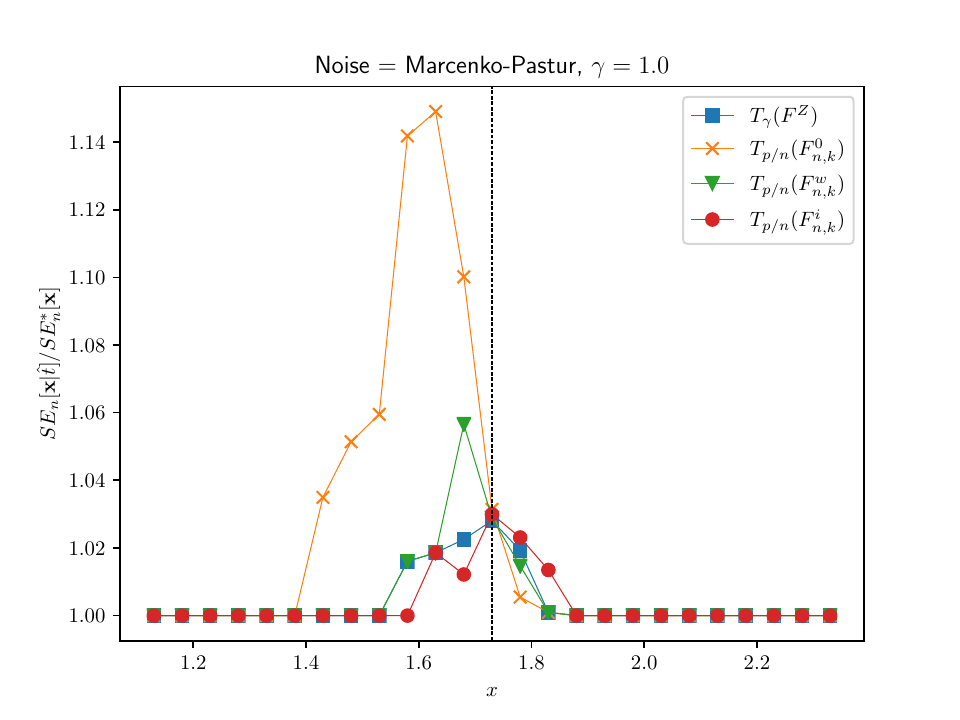}}
    \caption{Experiment: {\bf Regret}}
\end{figure}

\begin{figure}[H]
    \centering
    \includegraphics[width=0.75\textwidth]{{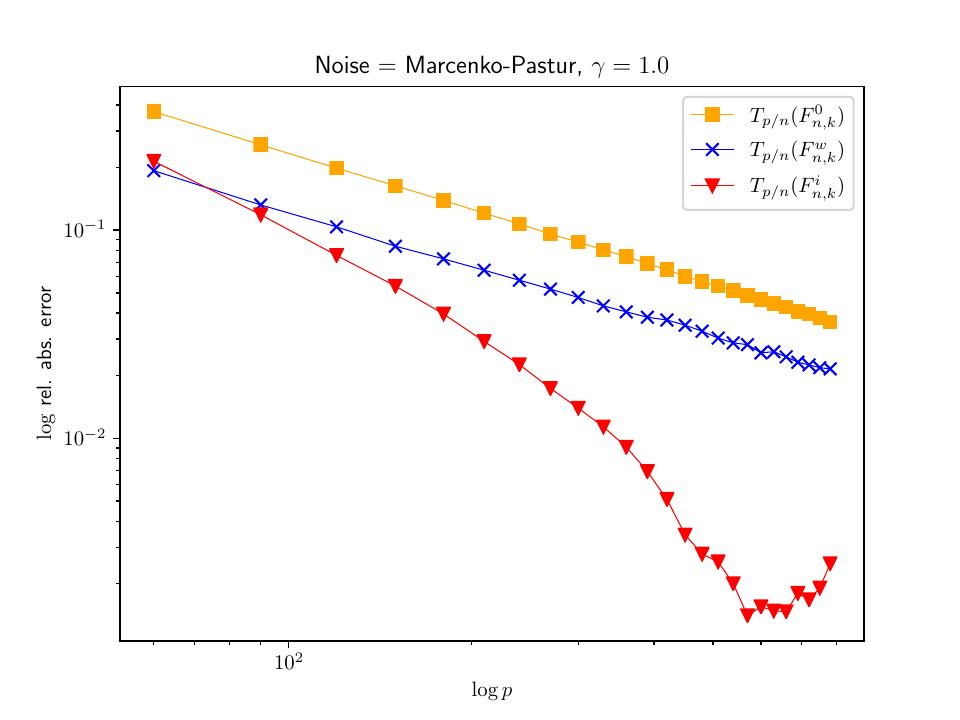}}
    \caption{Experiment: {\bf ConvergenceRate}}
\end{figure}

\subsection{Distribution: Chi10, $\gamma = 0.5$}

\begin{figure}[H]
    \centering
    \includegraphics[width=0.75\textwidth]{{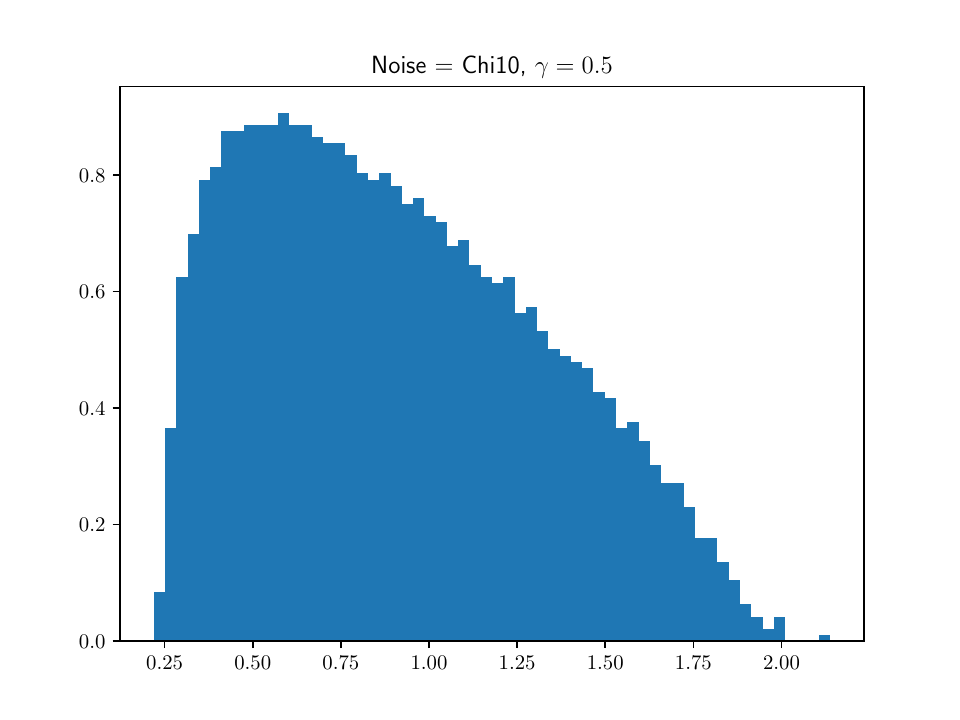}}
    \caption{Experiment: {\bf Hist}}
\end{figure}

\begin{figure}[H]
    \centering
    \includegraphics[width=0.75\textwidth]{{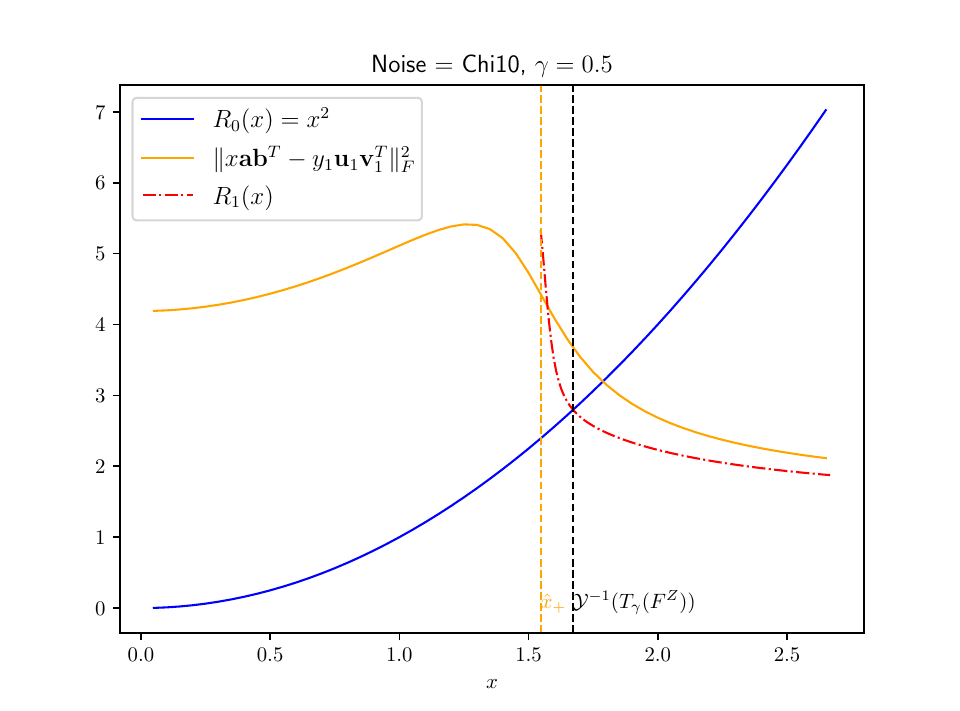}}
    \caption{Experiment: {\bf R0-vs-R1}}
\end{figure}

\begin{figure}[H]
    \centering
    \includegraphics[width=0.75\textwidth]{{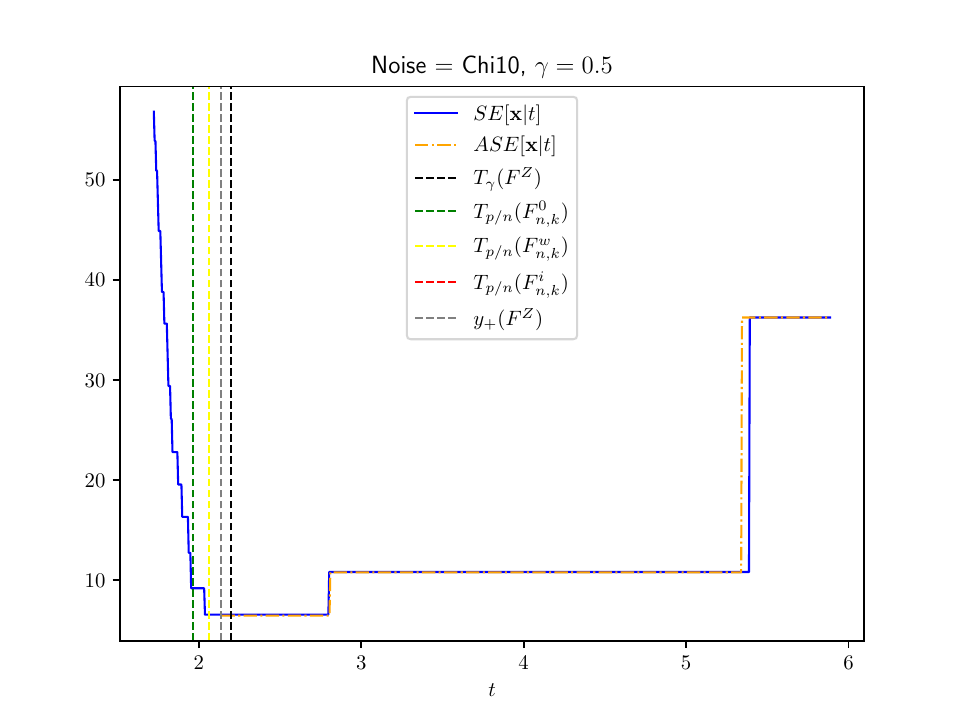}}
    \caption{Experiment: {\bf SE-vs-ASE}}
\end{figure}

\begin{figure}[H]
    \centering
    \includegraphics[width=0.75\textwidth]{{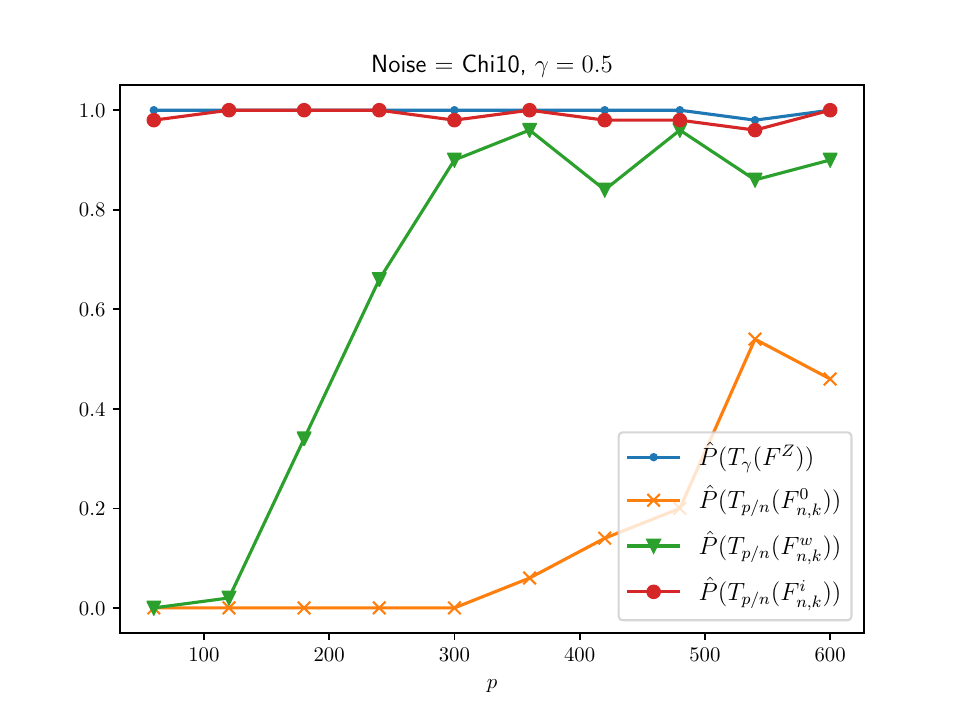}}
    \caption{Experiment: {\bf OracleAttainment}}
\end{figure}

\begin{figure}[H]
    \centering
    \includegraphics[width=0.75\textwidth]{{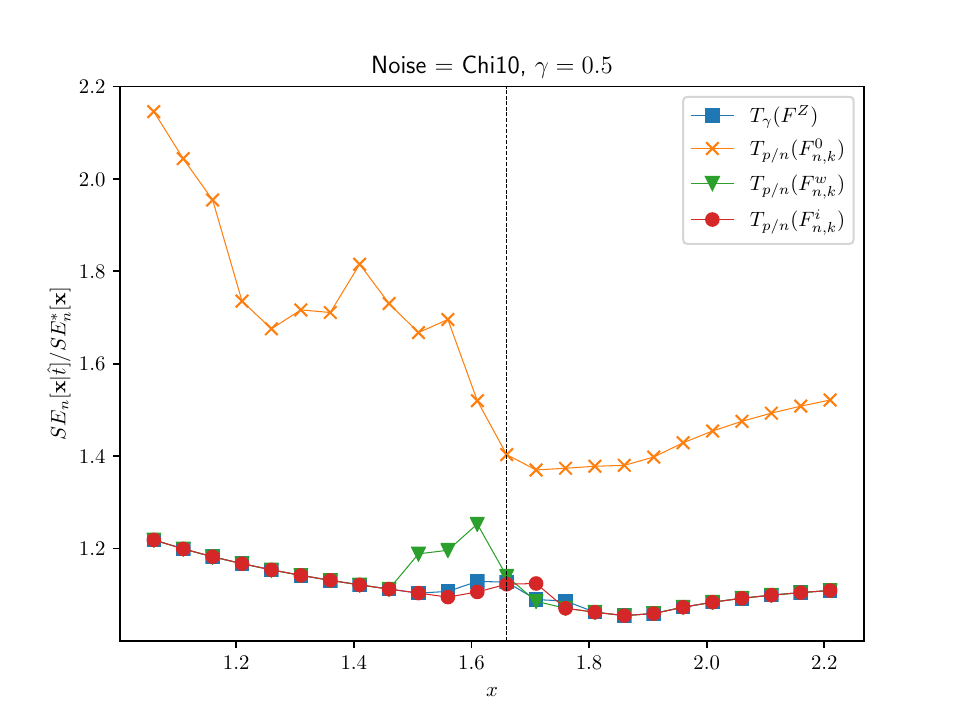}}
    \caption{Experiment: {\bf Regret}}
\end{figure}

\begin{figure}[H]
    \centering
    \includegraphics[width=0.75\textwidth]{{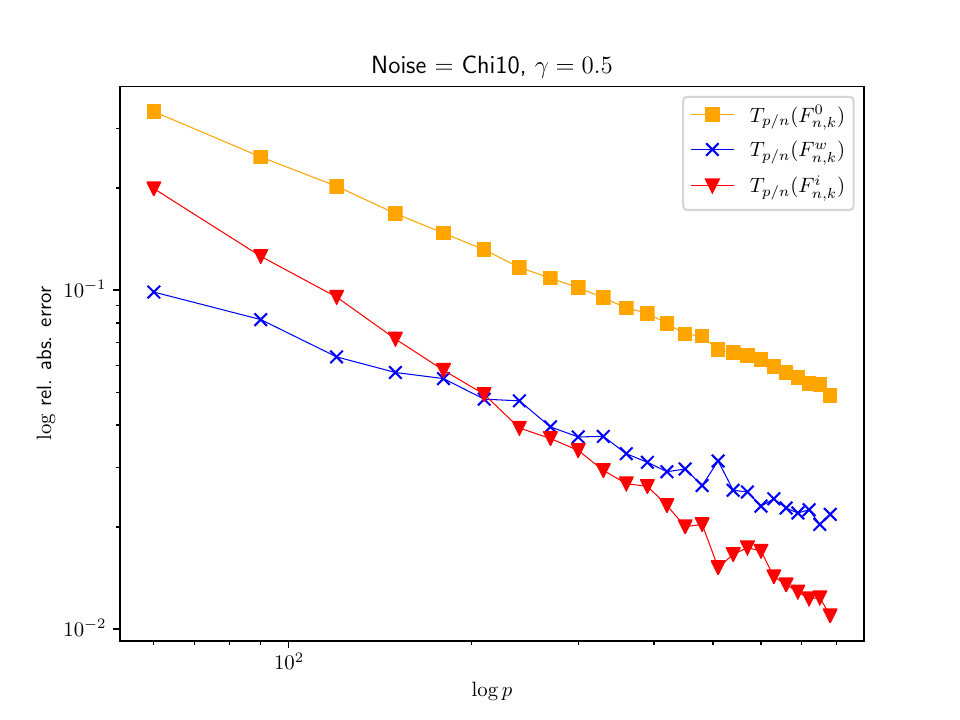}}
    \caption{Experiment: {\bf ConvergenceRate}}
\end{figure}

\subsection{Distribution: Chi10, $\gamma = 1.0$}

\begin{figure}[H]
    \centering
    \includegraphics[width=0.75\textwidth]{{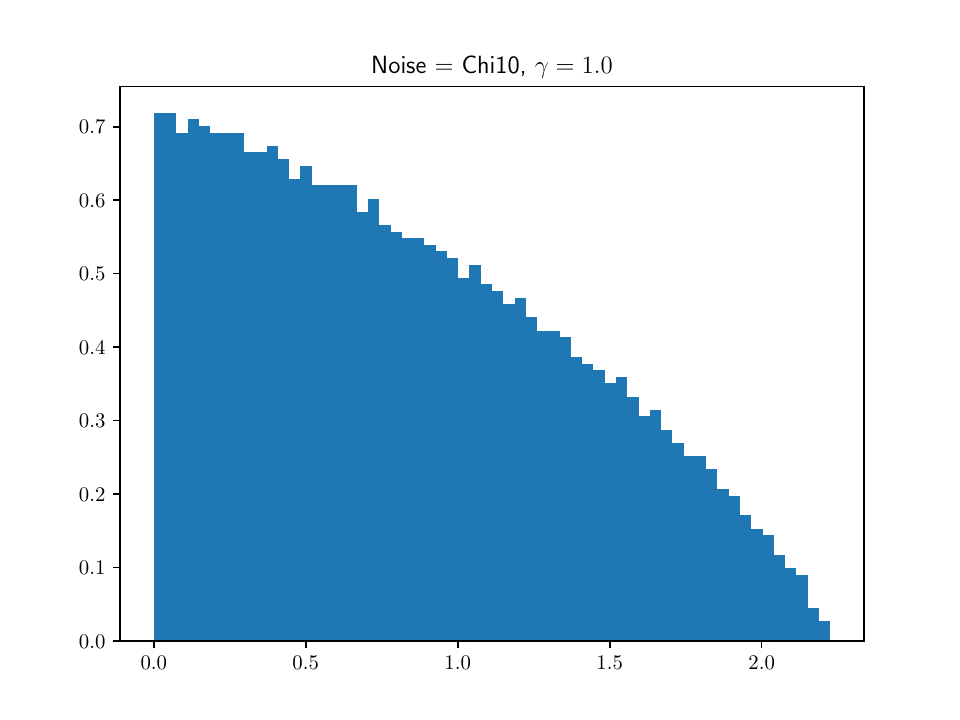}}
    \caption{Experiment: {\bf Hist}}
\end{figure}

\begin{figure}[H]
    \centering
    \includegraphics[width=0.75\textwidth]{{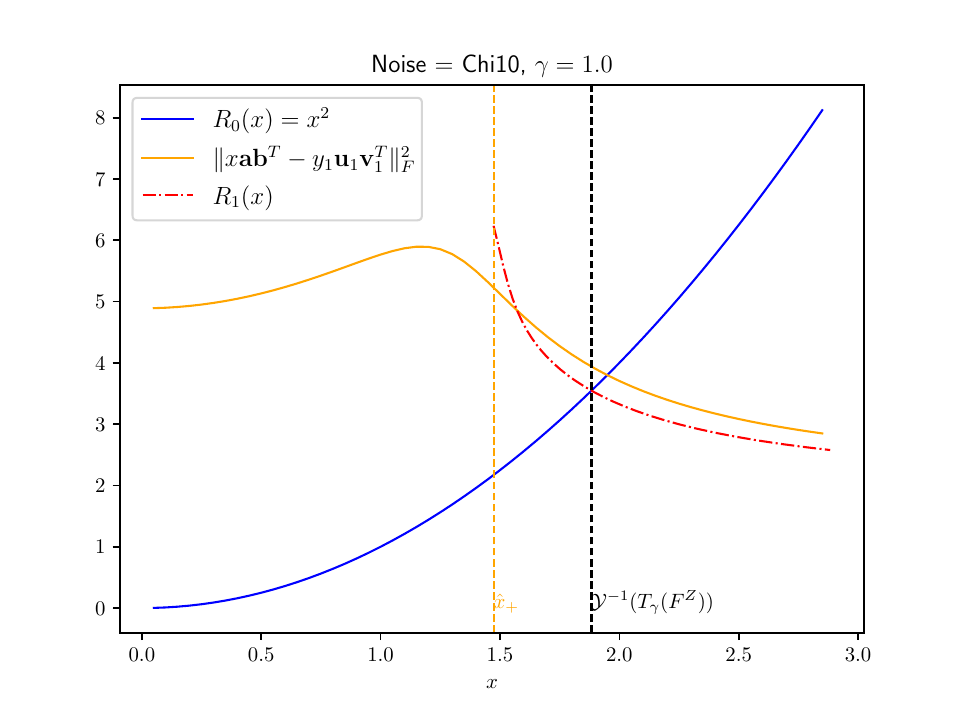}}
    \caption{Experiment: {\bf R0-vs-R1}}
\end{figure}

\begin{figure}[H]
    \centering
    \includegraphics[width=0.75\textwidth]{{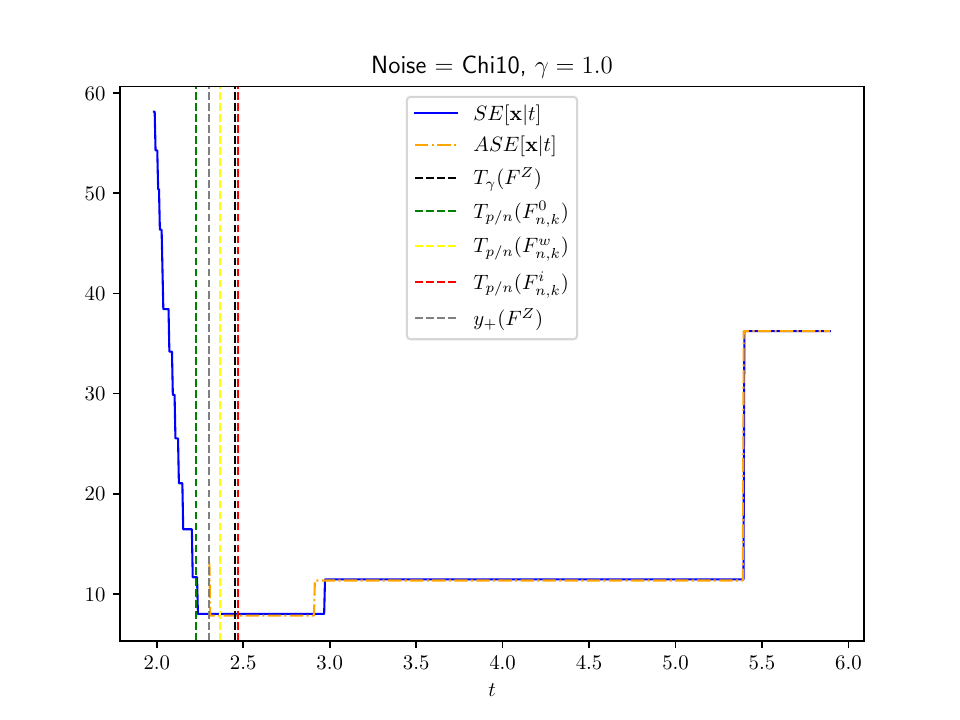}}
    \caption{Experiment: {\bf SE-vs-ASE}}
\end{figure}

\begin{figure}[H]
    \centering
    \includegraphics[width=0.75\textwidth]{{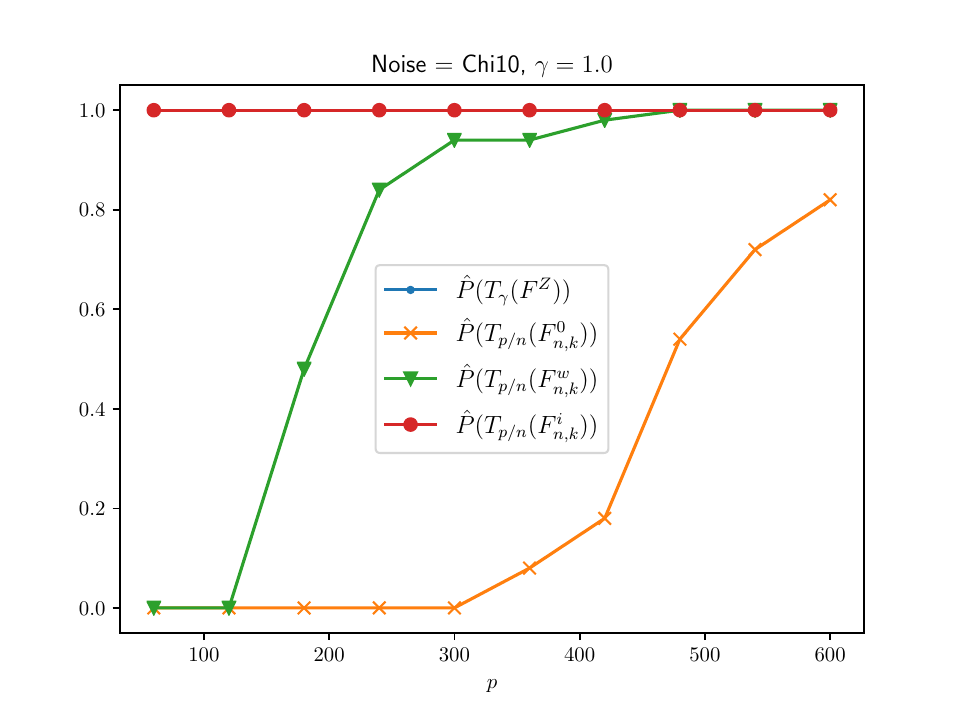}}
    \caption{Experiment: {\bf OracleAttainment}}
\end{figure}

\begin{figure}[H]
    \centering
    \includegraphics[width=0.75\textwidth]{{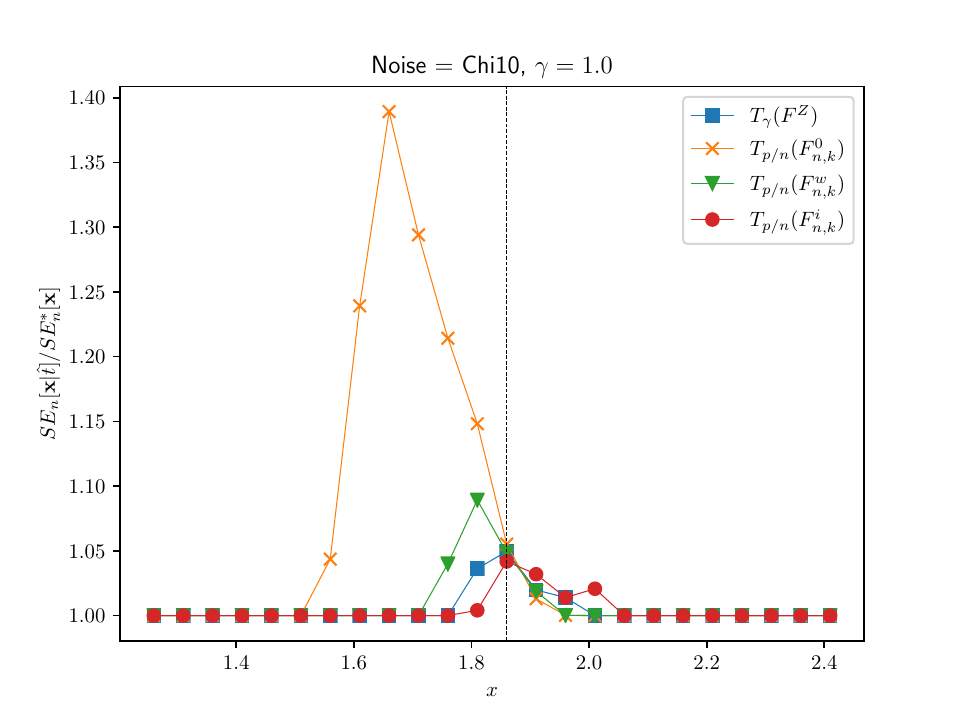}}
    \caption{Experiment: {\bf Regret}}
\end{figure}

\begin{figure}[H]
    \centering
    \includegraphics[width=0.75\textwidth]{{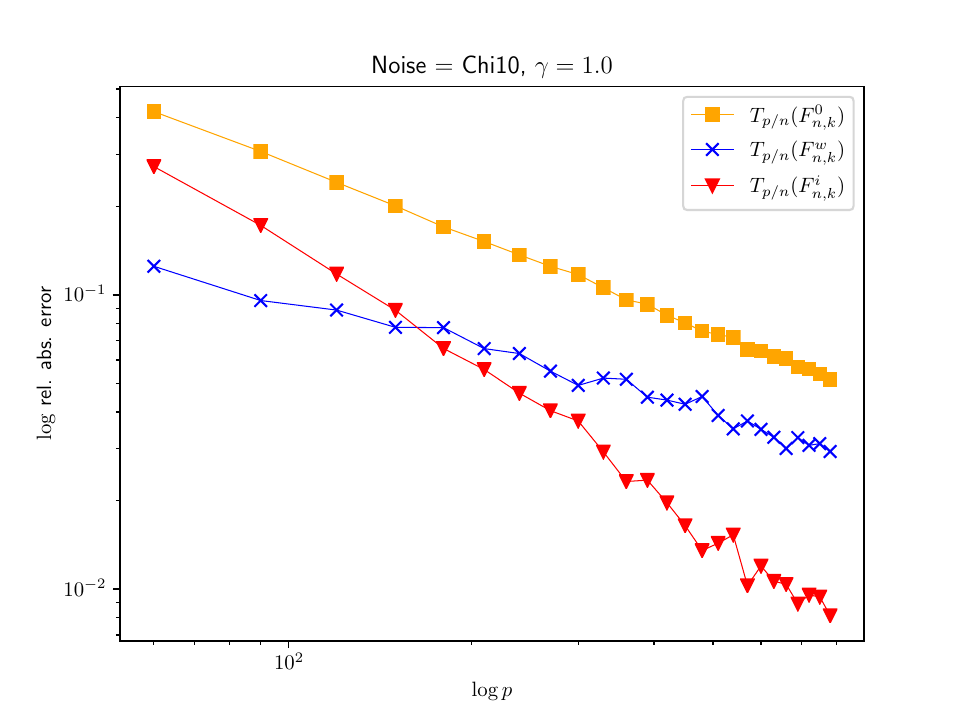}}
    \caption{Experiment: {\bf ConvergenceRate}}
\end{figure}

\subsection{Distribution: Fisher3n, $\gamma = 0.5$}

\begin{figure}[H]
    \centering
    \includegraphics[width=0.75\textwidth]{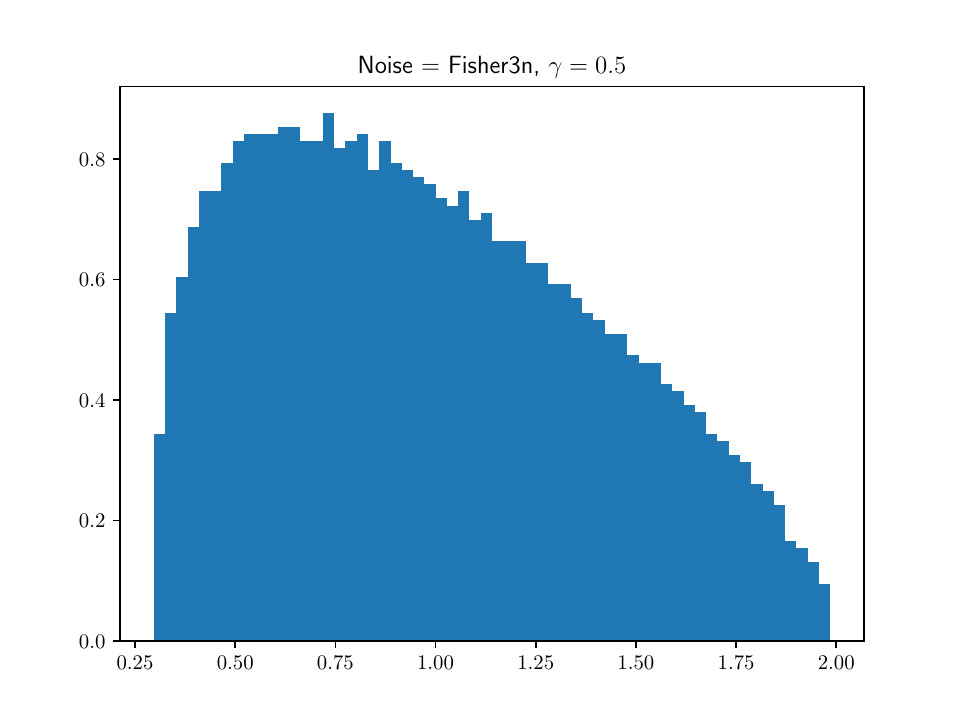}
    \caption{Experiment: {\bf Hist}}
\end{figure}

\begin{figure}[H]
    \centering
    \includegraphics[width=0.75\textwidth]{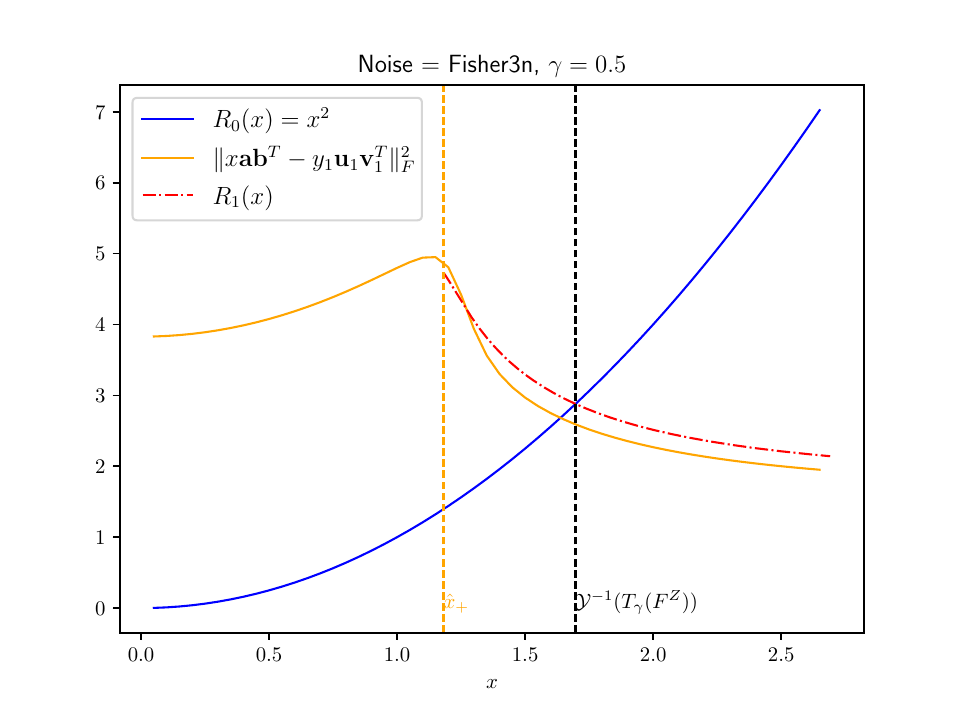}
    \caption{Experiment: {\bf R0-vs-R1}}
\end{figure}

\begin{figure}[H]
    \centering
    \includegraphics[width=0.75\textwidth]{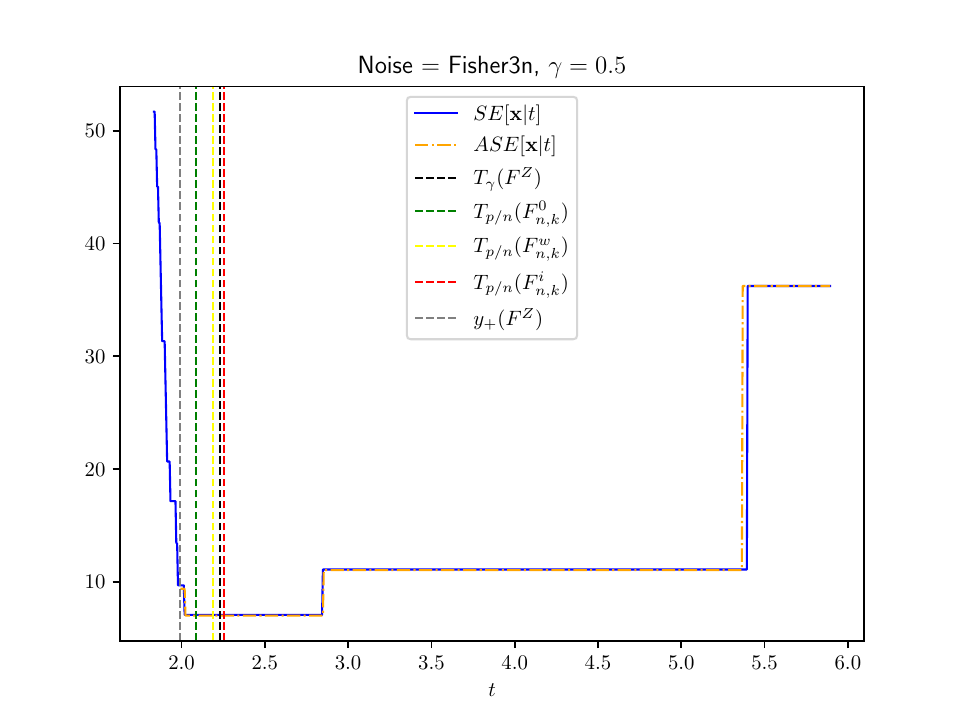}
    \caption{Experiment: {\bf SE-vs-ASE}}
\end{figure}

\begin{figure}[H]
    \centering
    \includegraphics[width=0.75\textwidth]{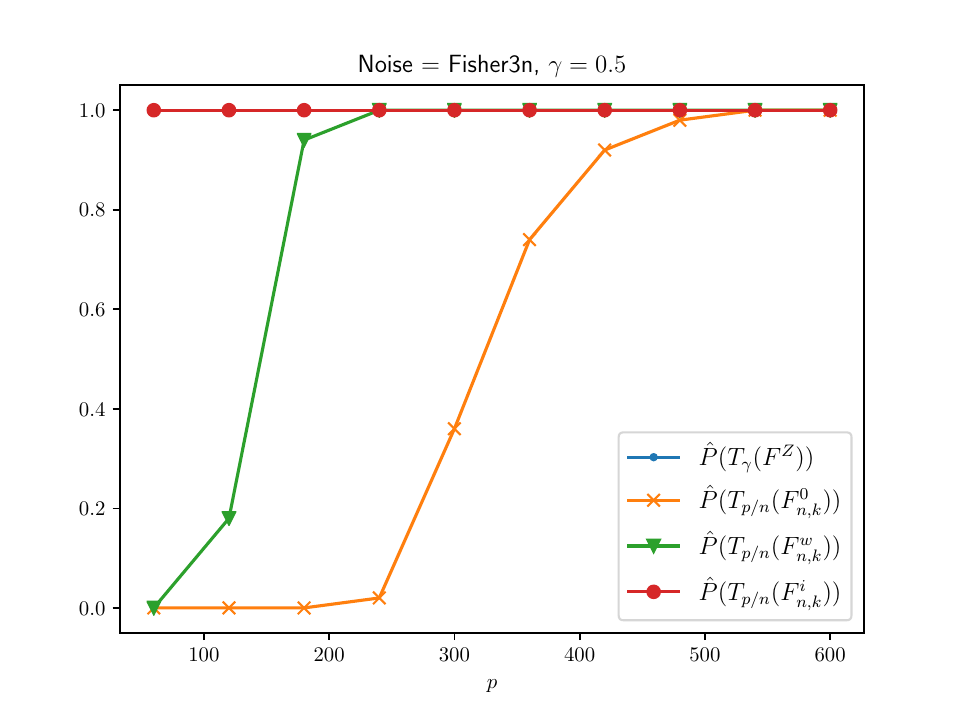}
    \caption{Experiment: {\bf OracleAttainment}}
\end{figure}

\begin{figure}[H]
    \centering
    \includegraphics[width=0.75\textwidth]{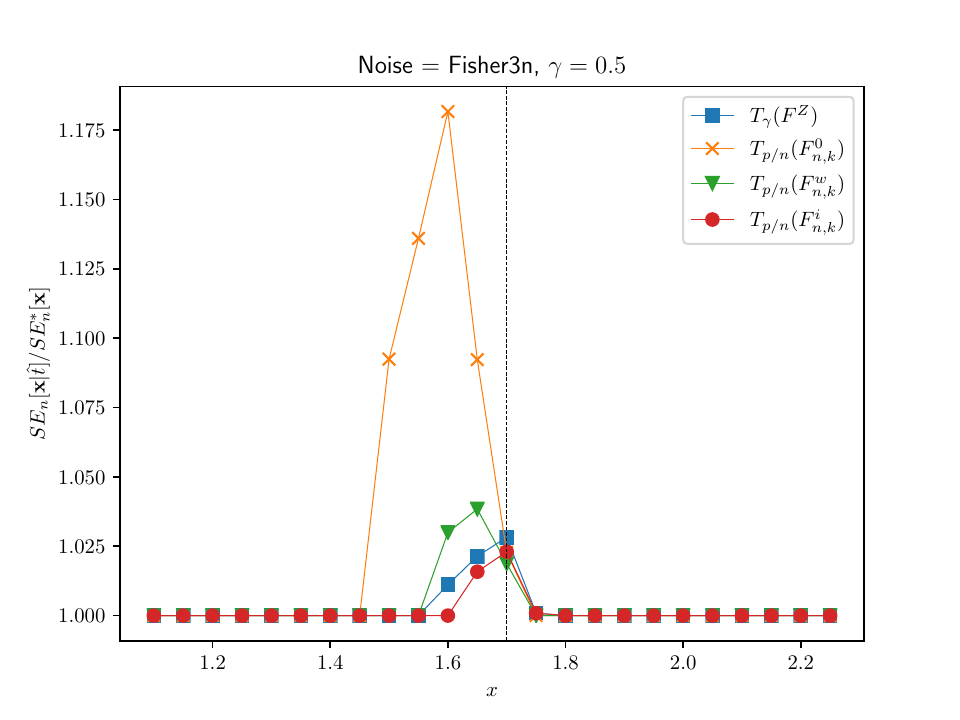}
    \caption{Experiment: {\bf Regret}}
\end{figure}

\begin{figure}[H]
    \centering
    \includegraphics[width=0.75\textwidth]{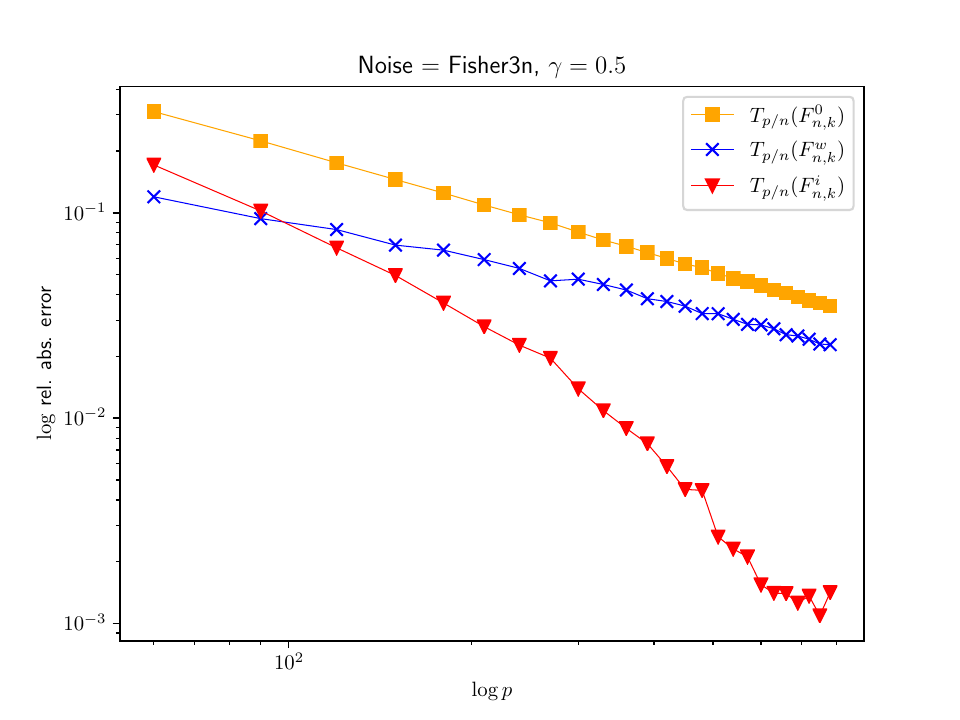}
    \caption{Experiment: {\bf ConvergenceRate}}
\end{figure}

\subsection{Distribution: Fisher3n, $\gamma = 1.0$}

\begin{figure}[H]
    \centering
    \includegraphics[width=0.75\textwidth]{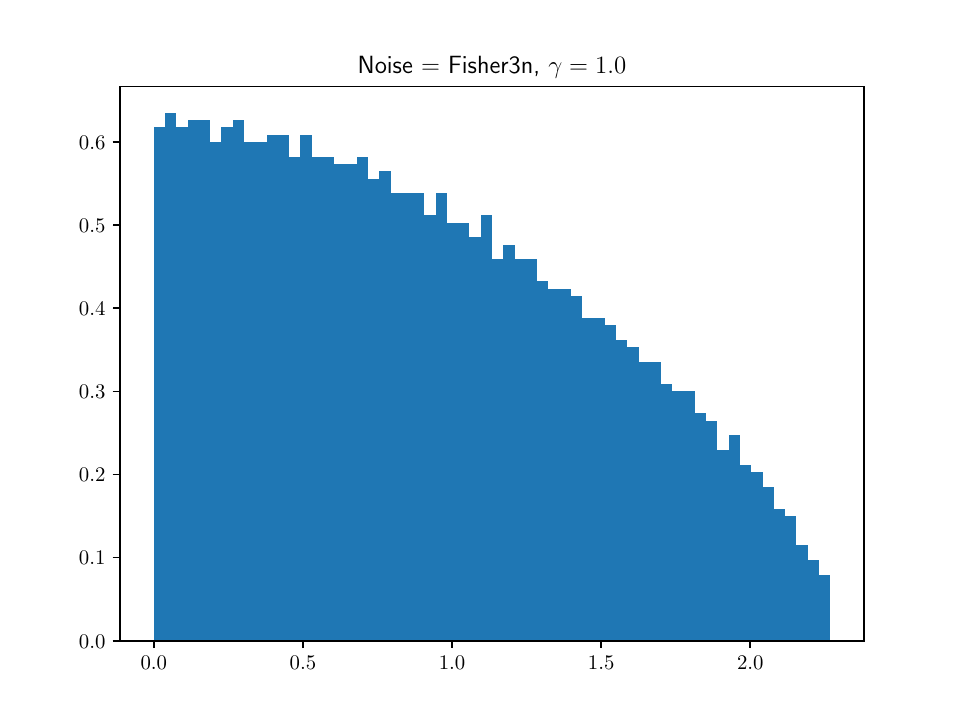}
    \caption{Experiment: {\bf Hist}}
\end{figure}

\begin{figure}[H]
    \centering
    \includegraphics[width=0.75\textwidth]{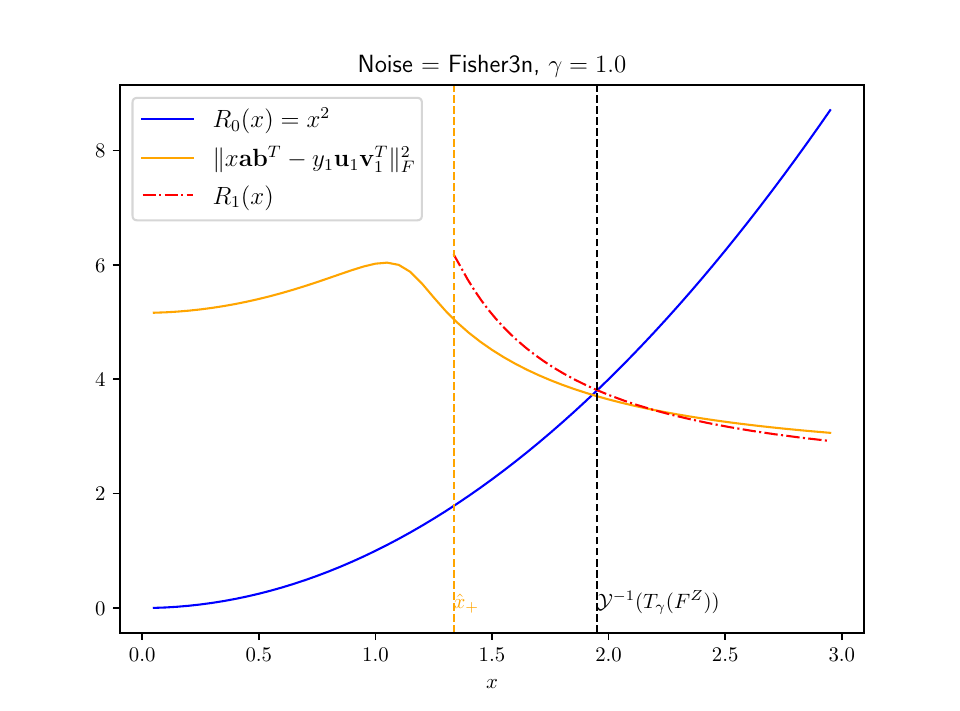}
    \caption{Experiment: {\bf R0-vs-R1}}
\end{figure}

\begin{figure}[H]
    \centering
    \includegraphics[width=0.75\textwidth]{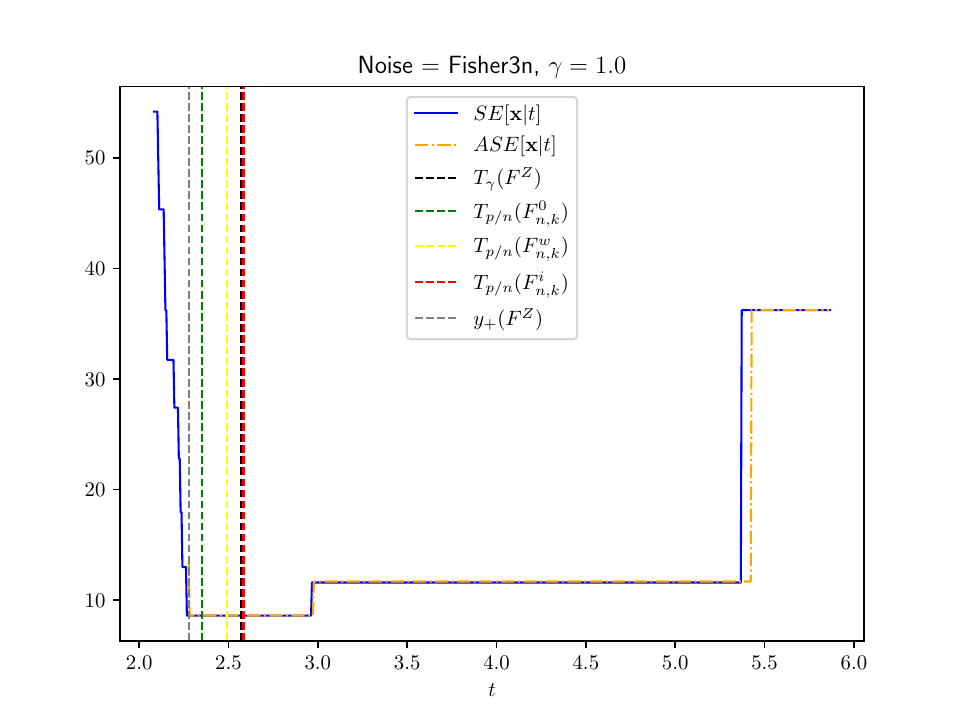}
    \caption{Experiment: {\bf SE-vs-ASE}}
\end{figure}

\begin{figure}[H]
    \centering
    \includegraphics[width=0.75\textwidth]{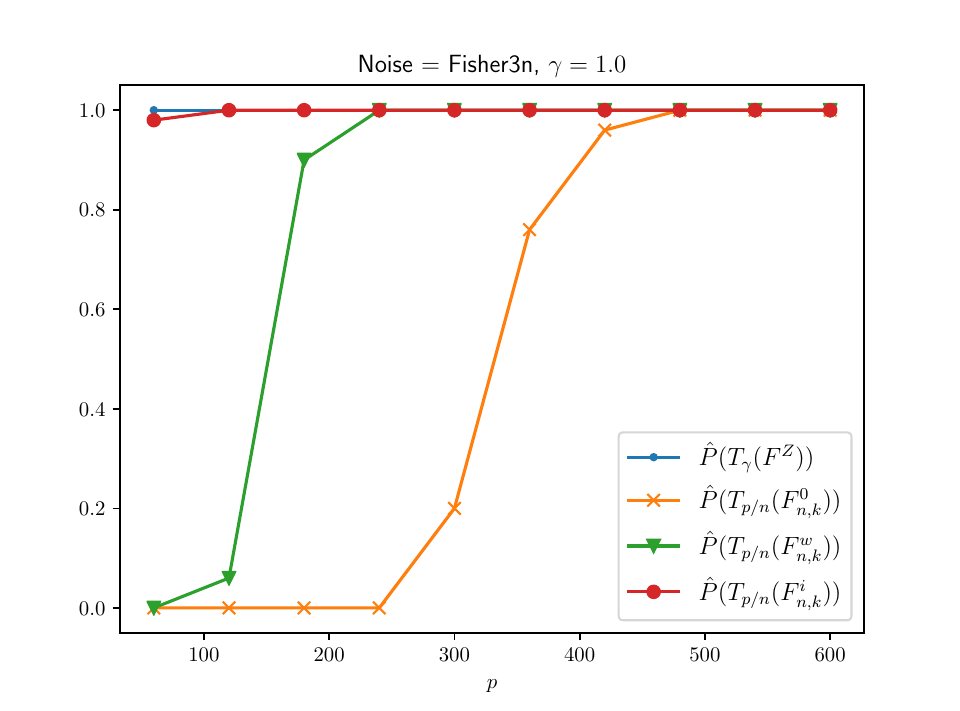}
    \caption{Experiment: {\bf OracleAttainment}}
\end{figure}

\begin{figure}[H]
    \centering
    \includegraphics[width=0.75\textwidth]{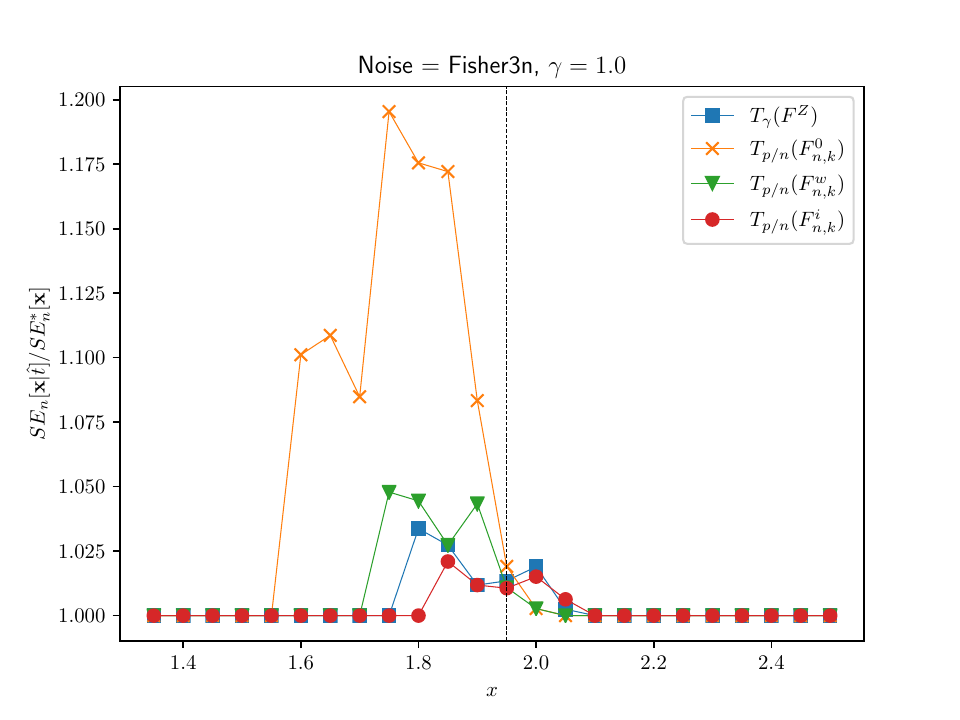}
    \caption{Experiment: {\bf Regret}}
\end{figure}

\begin{figure}[H]
    \centering
    \includegraphics[width=0.75\textwidth]{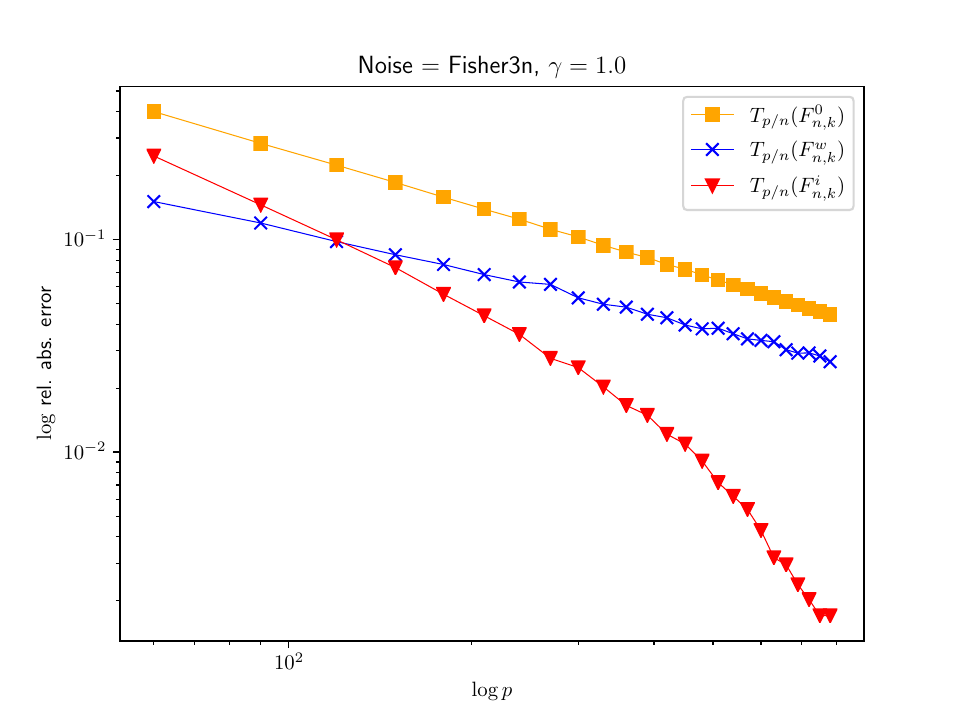}
    \caption{Experiment: {\bf ConvergenceRate}}
\end{figure}

\subsection{Distribution: Mix2, $\gamma = 0.5$}

\begin{figure}[H]
    \centering
    \includegraphics[width=0.75\textwidth]{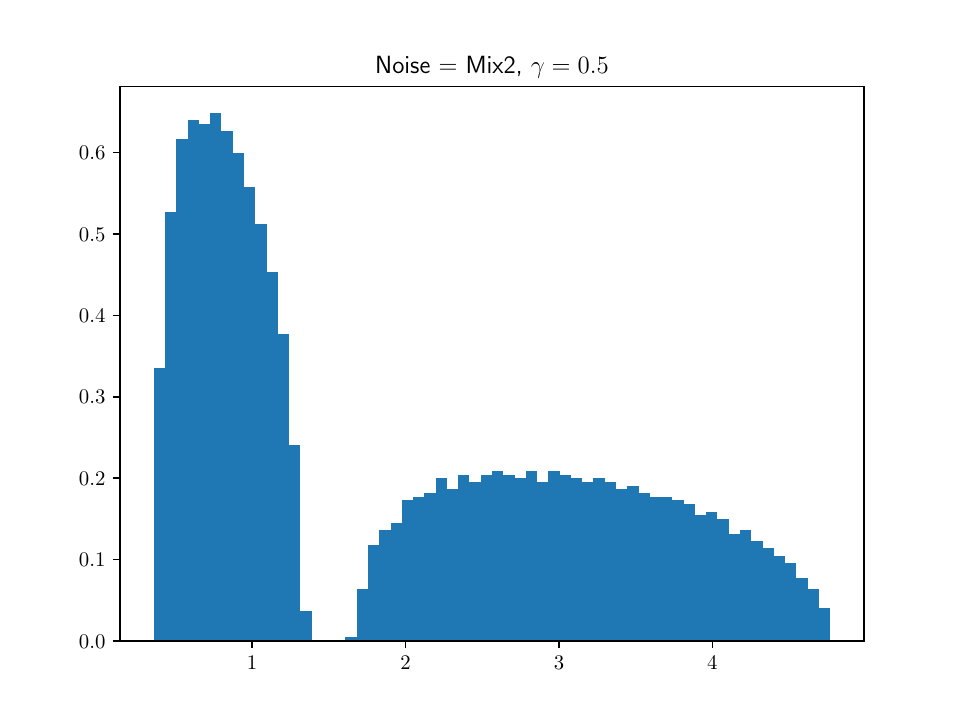}
    \caption{Experiment: {\bf Hist}}
\end{figure}

\begin{figure}[H]
    \centering
    \includegraphics[width=0.75\textwidth]{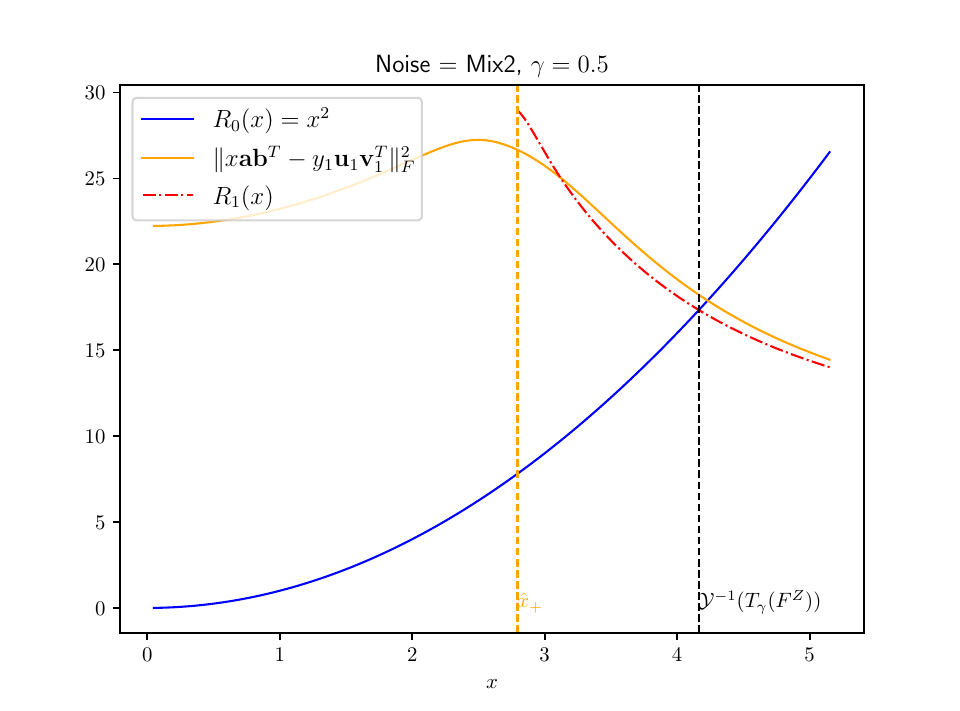}
    \caption{Experiment: {\bf R0-vs-R1}}
\end{figure}

\begin{figure}[H]
    \centering
    \includegraphics[width=0.75\textwidth]{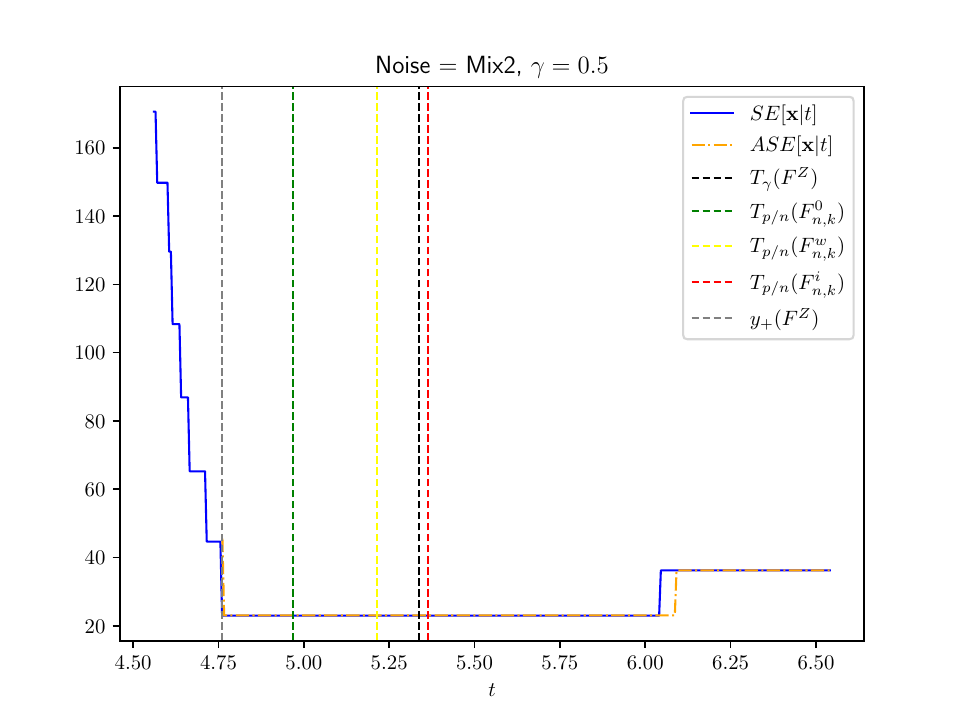}
    \caption{Experiment: {\bf SE-vs-ASE}}
\end{figure}

\begin{figure}[H]
    \centering
    \includegraphics[width=0.75\textwidth]{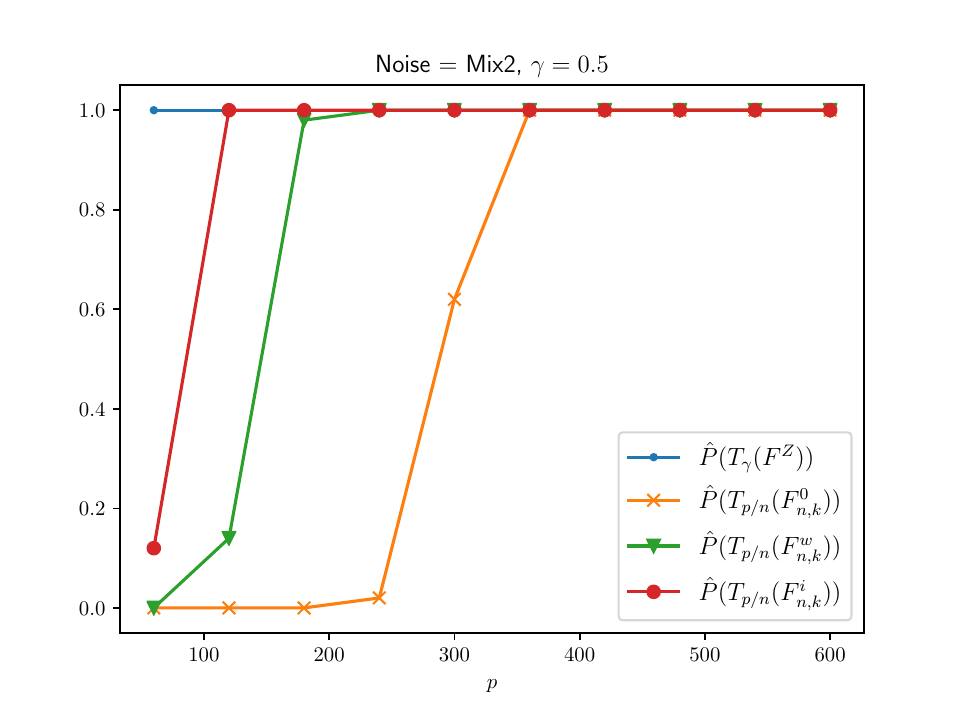}
    \caption{Experiment: {\bf OracleAttainment}}
\end{figure}

\begin{figure}[H]
    \centering
    \includegraphics[width=0.75\textwidth]{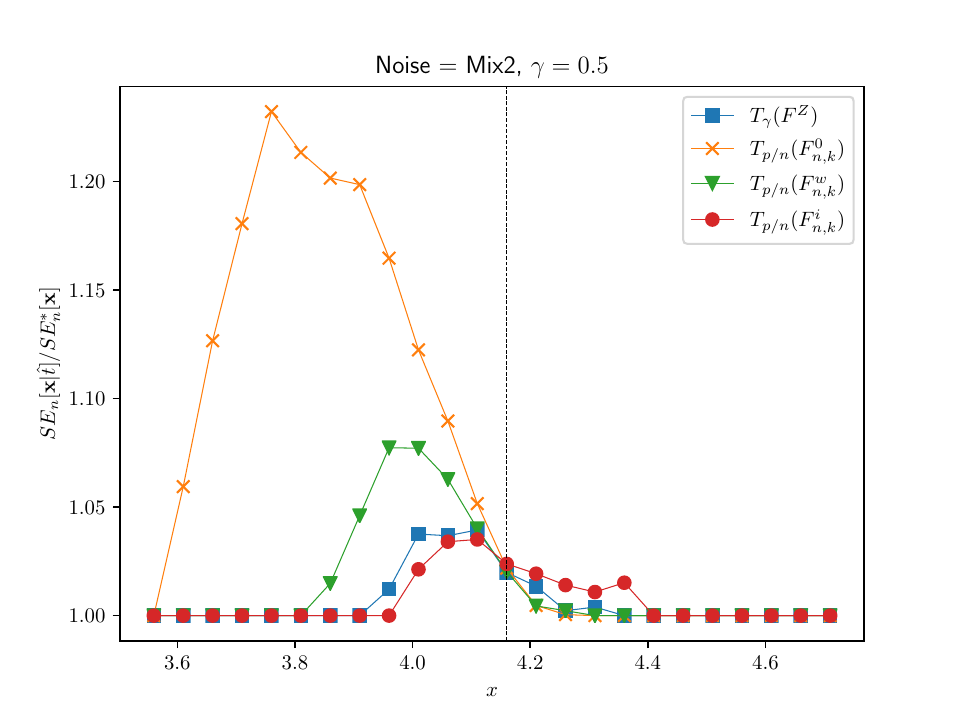}
    \caption{Experiment: {\bf Regret}}
\end{figure}

\begin{figure}[H]
    \centering
    \includegraphics[width=0.75\textwidth]{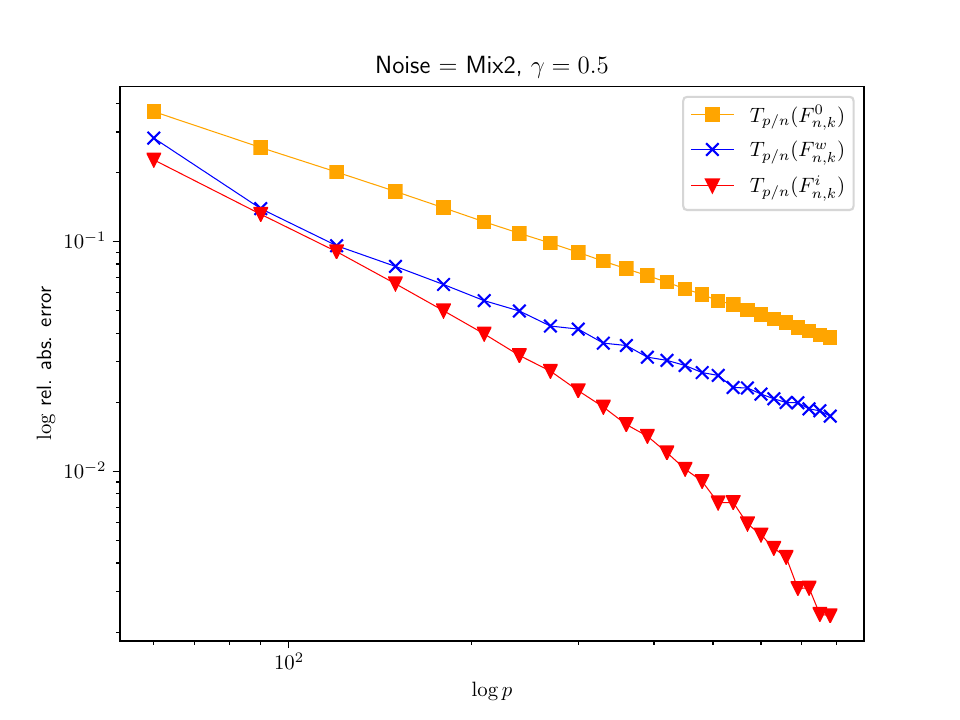}
    \caption{Experiment: {\bf ConvergenceRate}}
\end{figure}

\subsection{Distribution: Mix2, $\gamma = 1.0$}

\begin{figure}[H]
    \centering
    \includegraphics[width=0.75\textwidth]{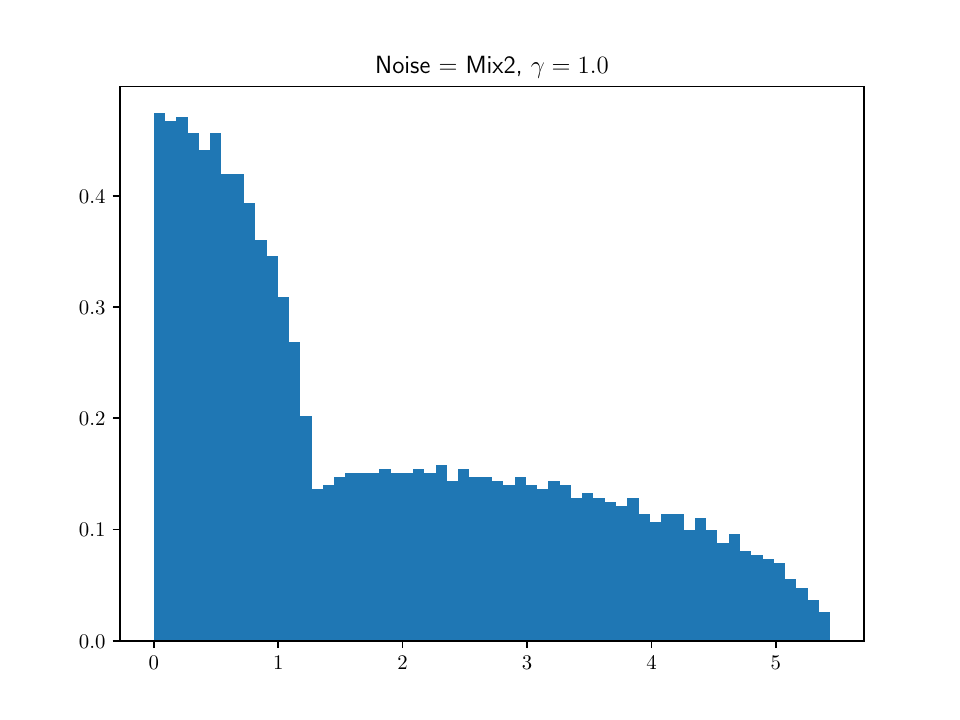}
    \caption{Experiment: {\bf Hist}}
\end{figure}

\begin{figure}[H]
    \centering
    \includegraphics[width=0.75\textwidth]{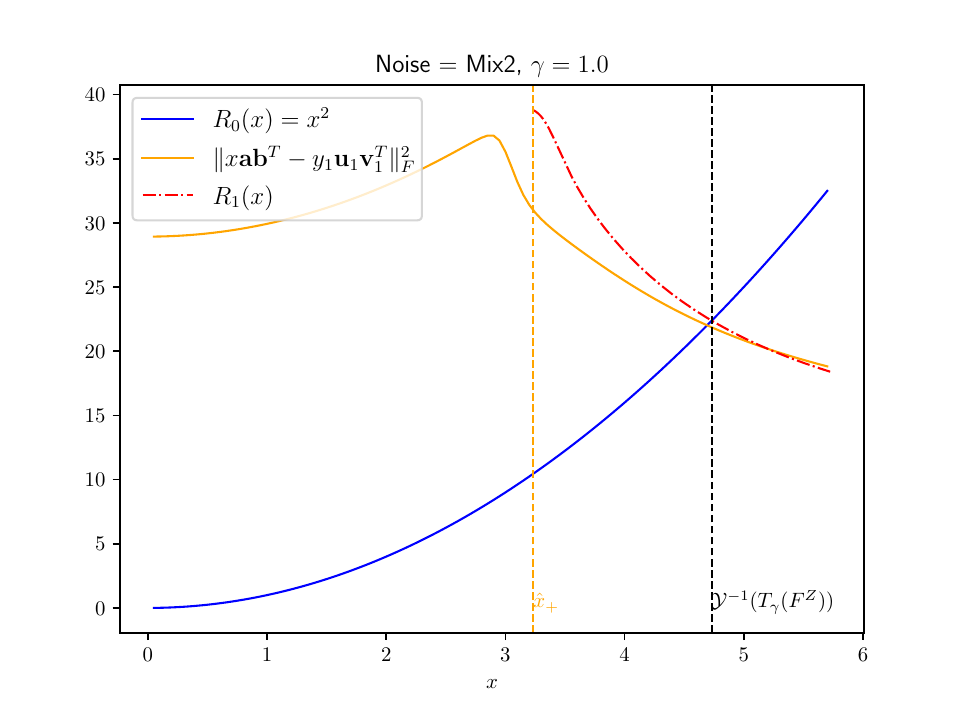}
    \caption{Experiment: {\bf R0-vs-R1}}
\end{figure}

\begin{figure}[H]
    \centering
    \includegraphics[width=0.75\textwidth]{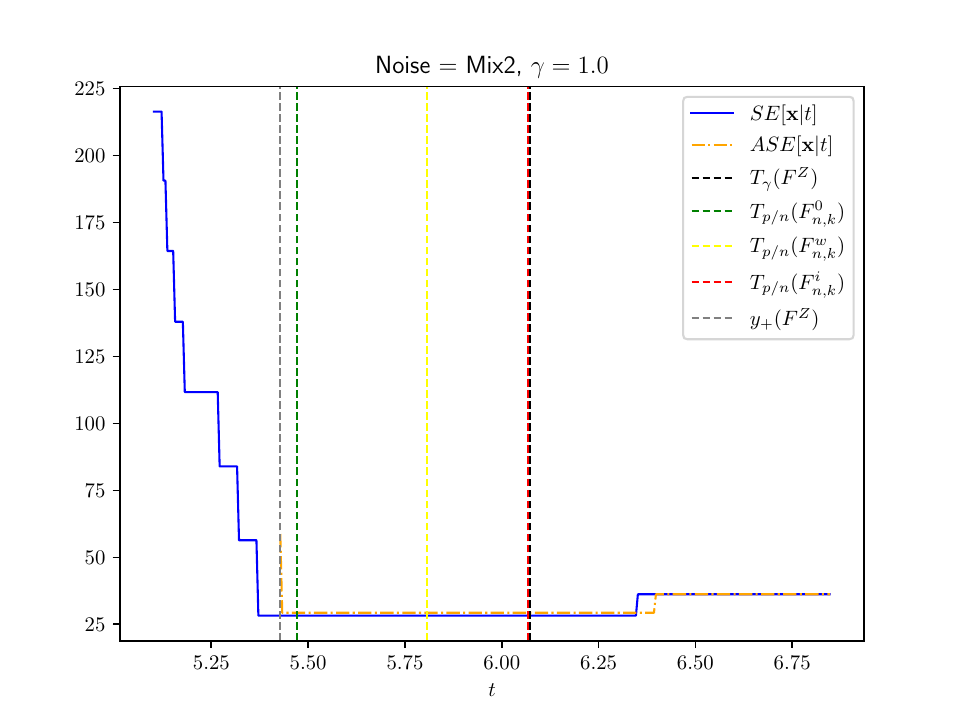}
    \caption{Experiment: {\bf SE-vs-ASE}}
\end{figure}

\begin{figure}[H]
    \centering
    \includegraphics[width=0.75\textwidth]{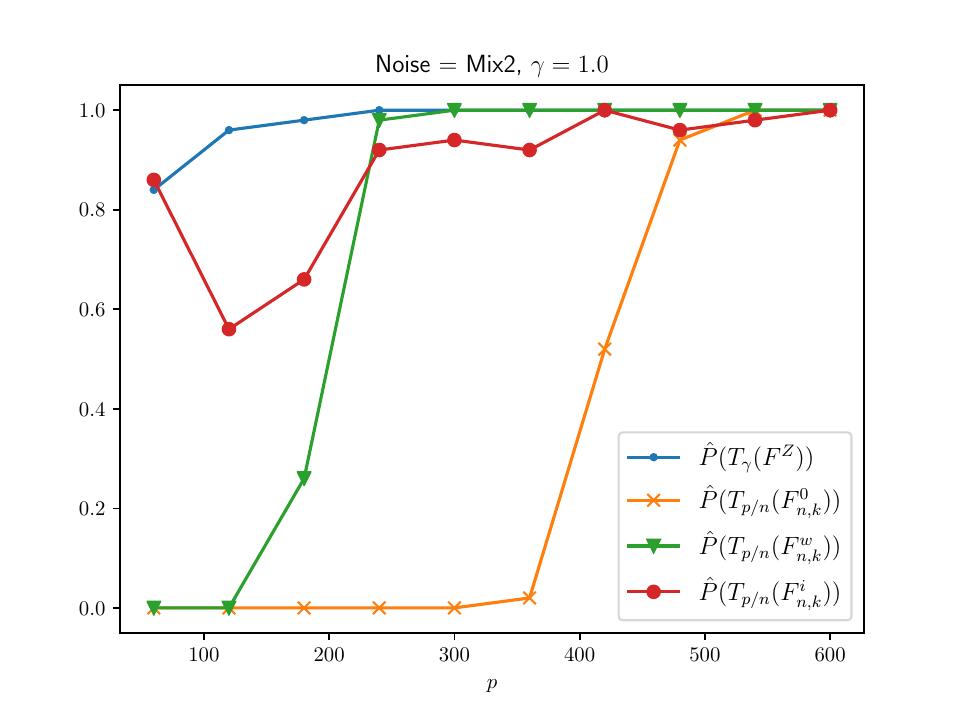}
    \caption{Experiment: {\bf OracleAttainment}}
\end{figure}

\begin{figure}[H]
    \centering
    \includegraphics[width=0.75\textwidth]{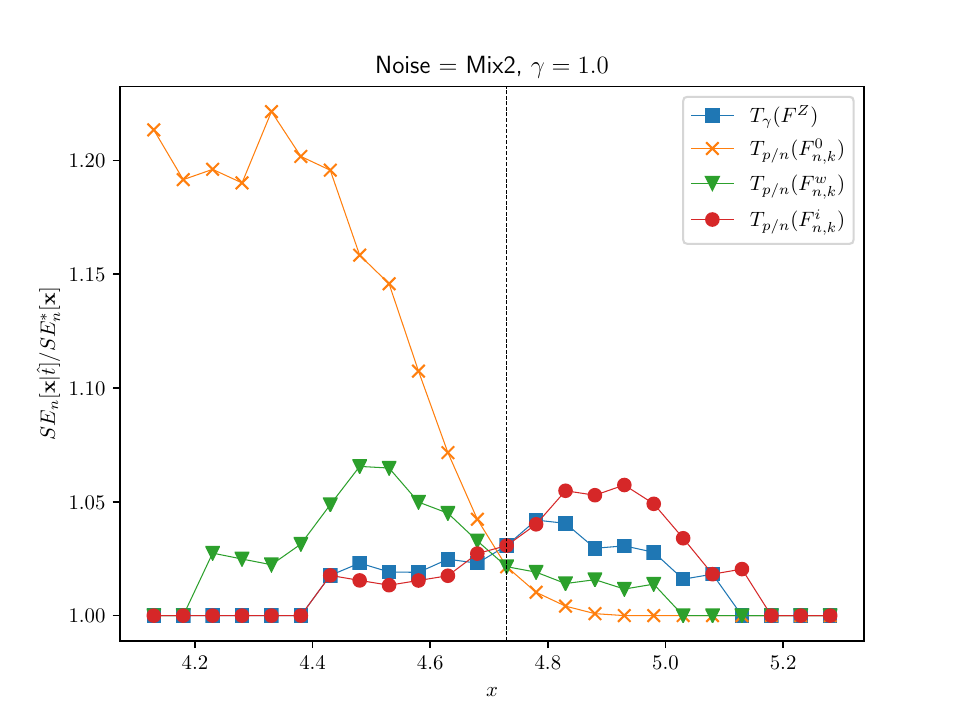}
    \caption{Experiment: {\bf Regret}}
\end{figure}

\begin{figure}[H]
    \centering
    \includegraphics[width=0.75\textwidth]{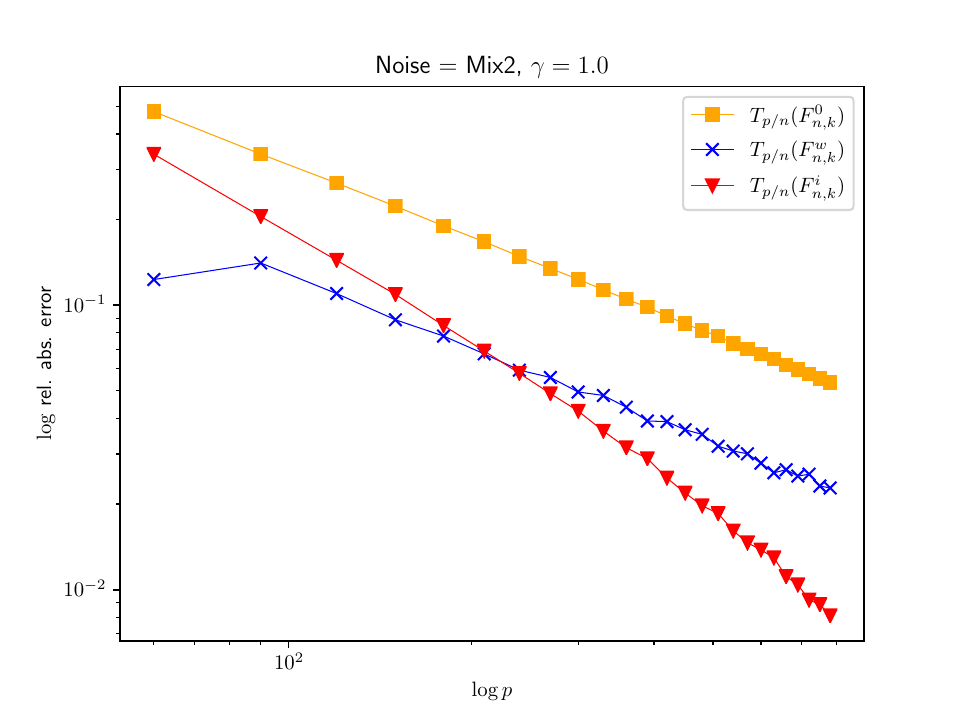}
    \caption{Experiment: {\bf ConvergenceRate}}
\end{figure}

\subsection{Distribution: Unif[1,10], $\gamma = 0.5$}

\begin{figure}[H]
    \centering
    \includegraphics[width=0.75\textwidth]{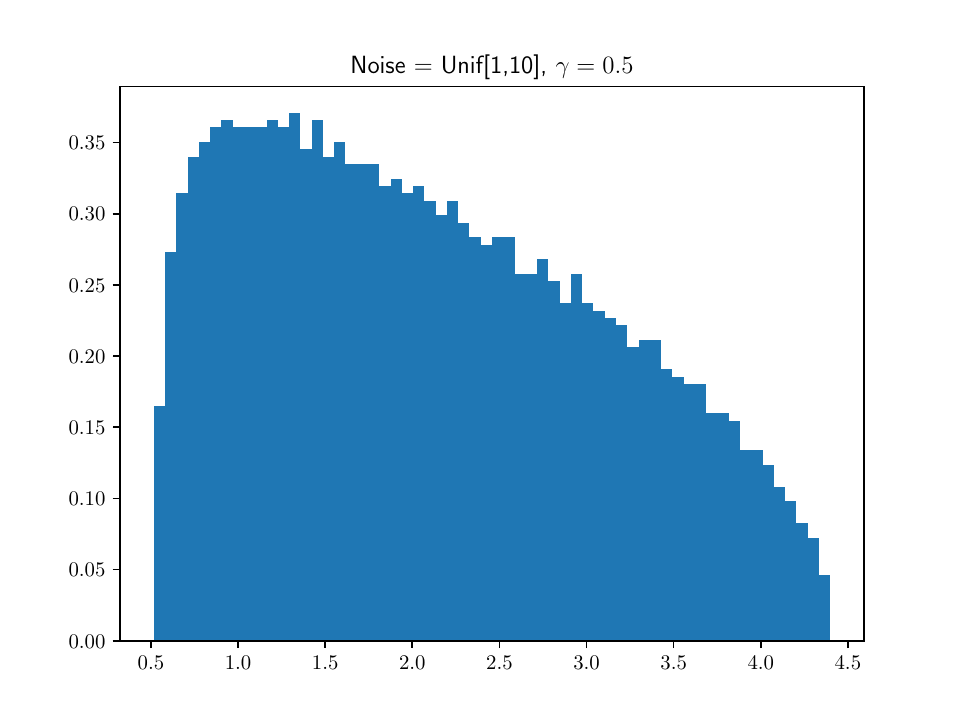}
    \caption{Experiment: {\bf Hist}}
\end{figure}

\begin{figure}[H]
    \centering
    \includegraphics[width=0.75\textwidth]{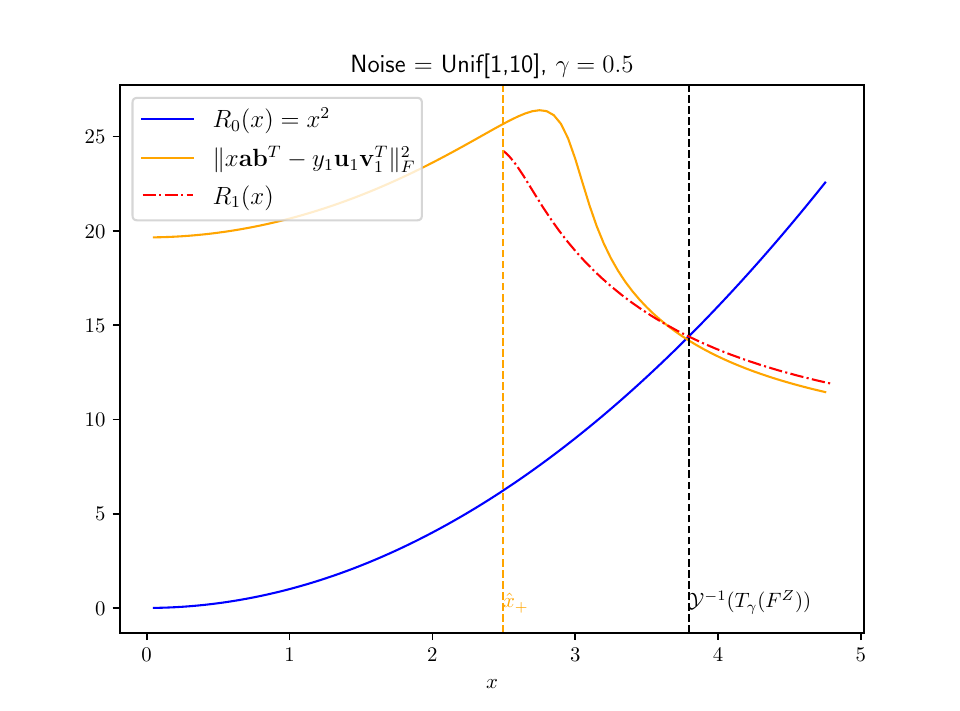}
    \caption{Experiment: {\bf R0-vs-R1}}
\end{figure}

\begin{figure}[H]
    \centering
    \includegraphics[width=0.75\textwidth]{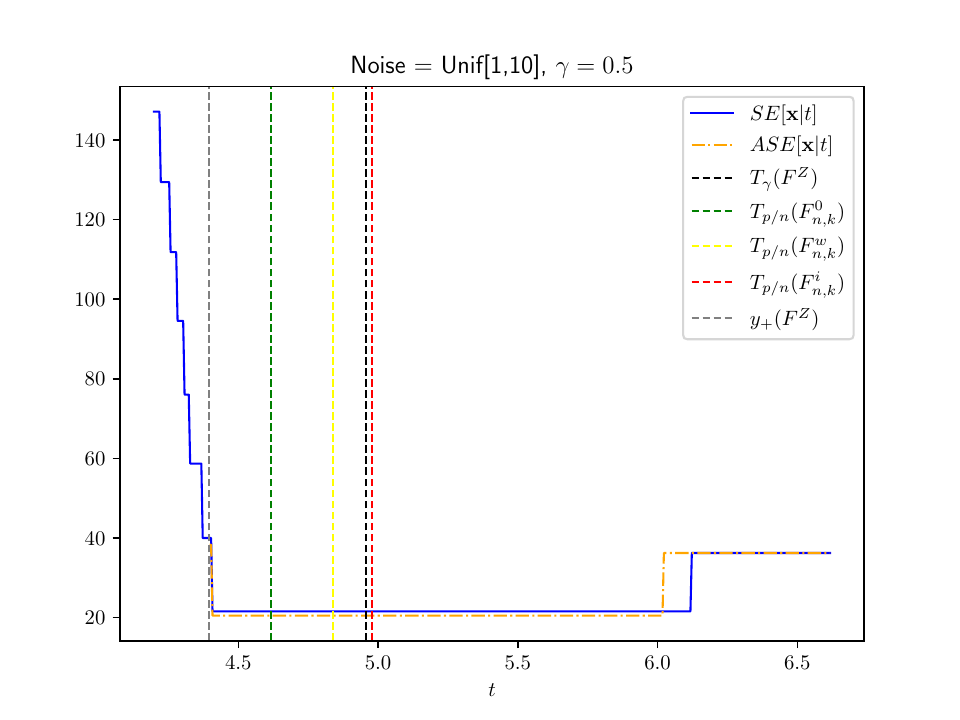}
    \caption{Experiment: {\bf SE-vs-ASE}}
\end{figure}

\begin{figure}[H]
    \centering
    \includegraphics[width=0.75\textwidth]{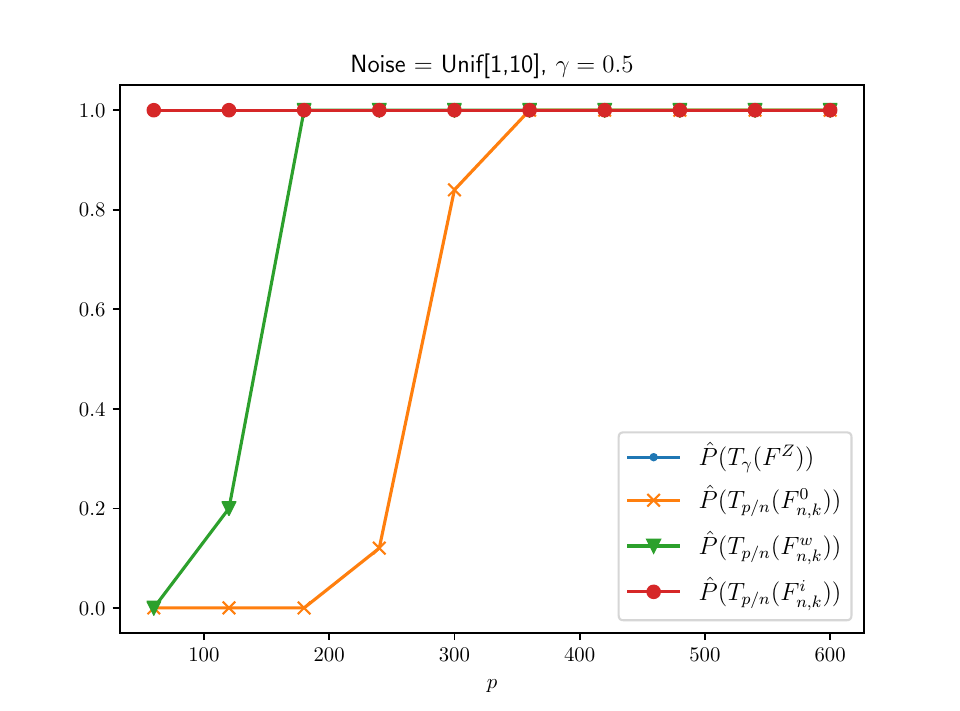}
    \caption{Experiment: {\bf OracleAttainment}}
\end{figure}

\begin{figure}[H]
    \centering
    \includegraphics[width=0.75\textwidth]{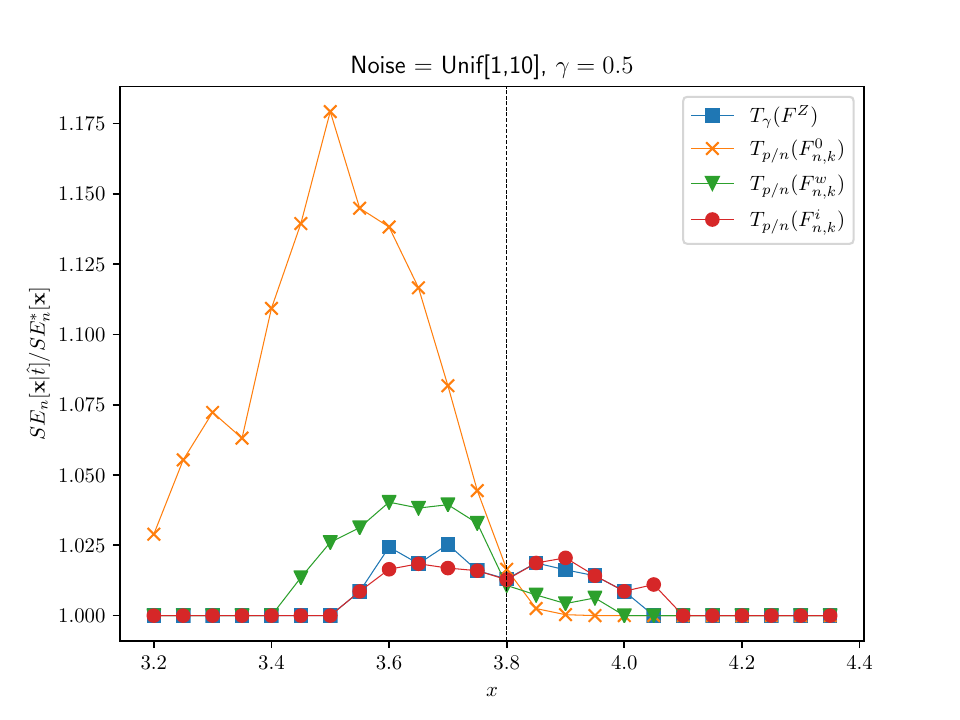}
    \caption{Experiment: {\bf Regret}}
\end{figure}

\begin{figure}[H]
    \centering
    \includegraphics[width=0.75\textwidth]{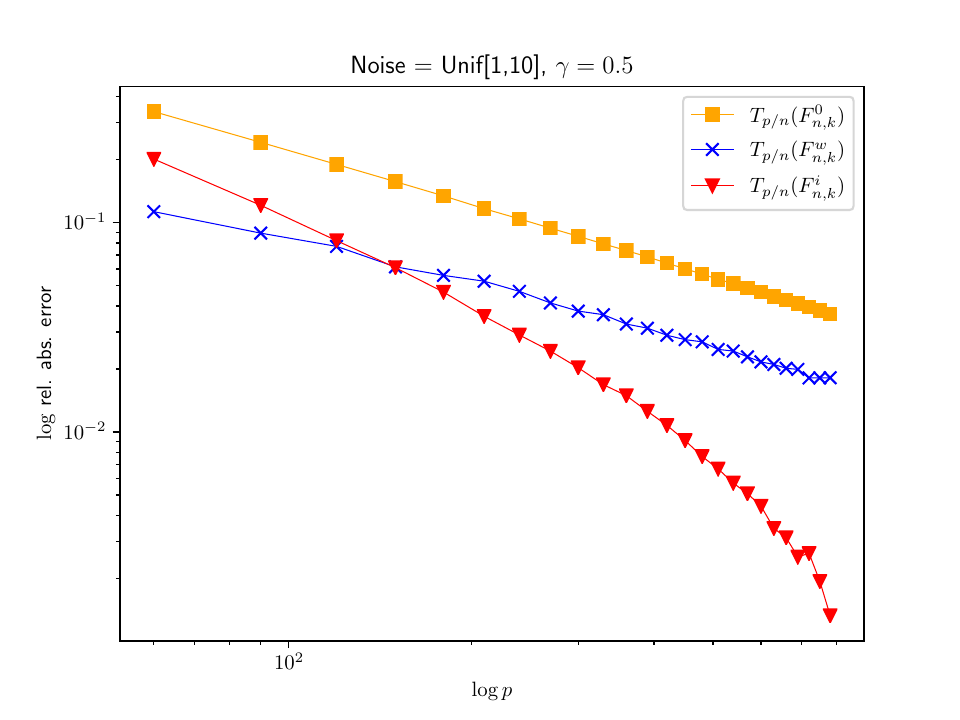}
    \caption{Experiment: {\bf ConvergenceRate}}
\end{figure}

\subsection{Distribution: Unif[1,10], $\gamma = 1.0$}

\begin{figure}[H]
    \centering
    \includegraphics[width=0.75\textwidth]{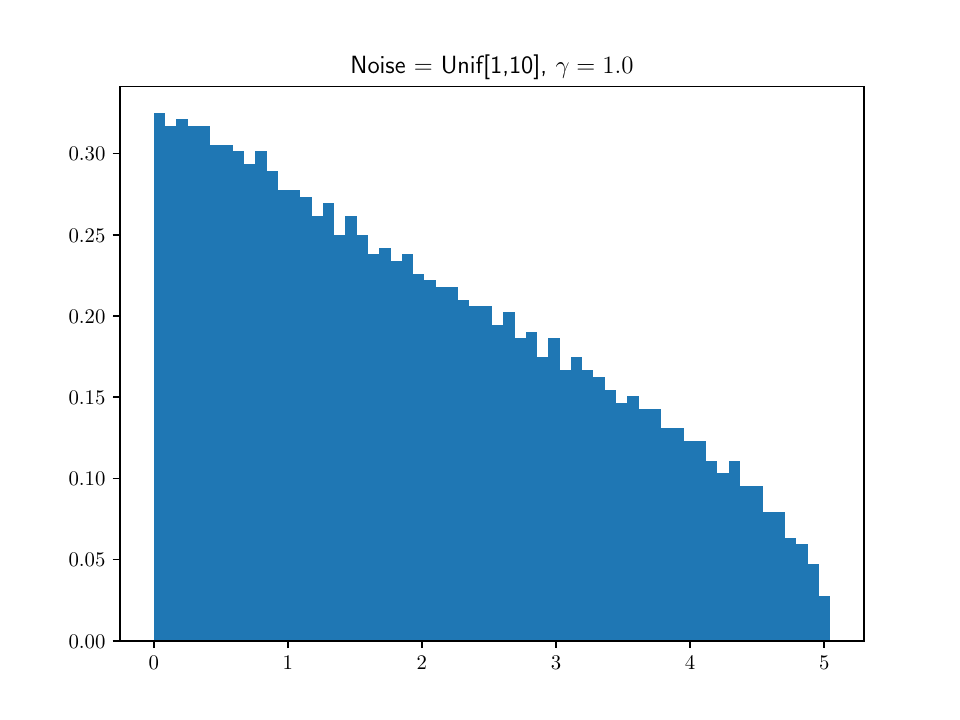}
    \caption{Experiment: {\bf Hist}}
\end{figure}

\begin{figure}[H]
    \centering
    \includegraphics[width=0.75\textwidth]{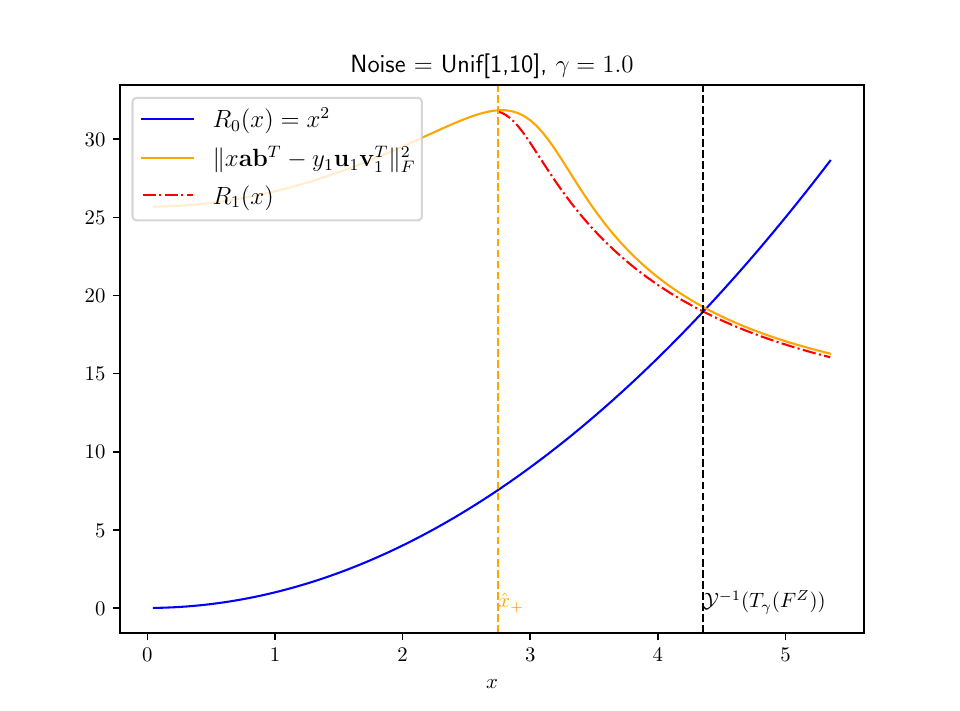}
    \caption{Experiment: {\bf R0-vs-R1}}
\end{figure}

\begin{figure}[H]
    \centering
    \includegraphics[width=0.75\textwidth]{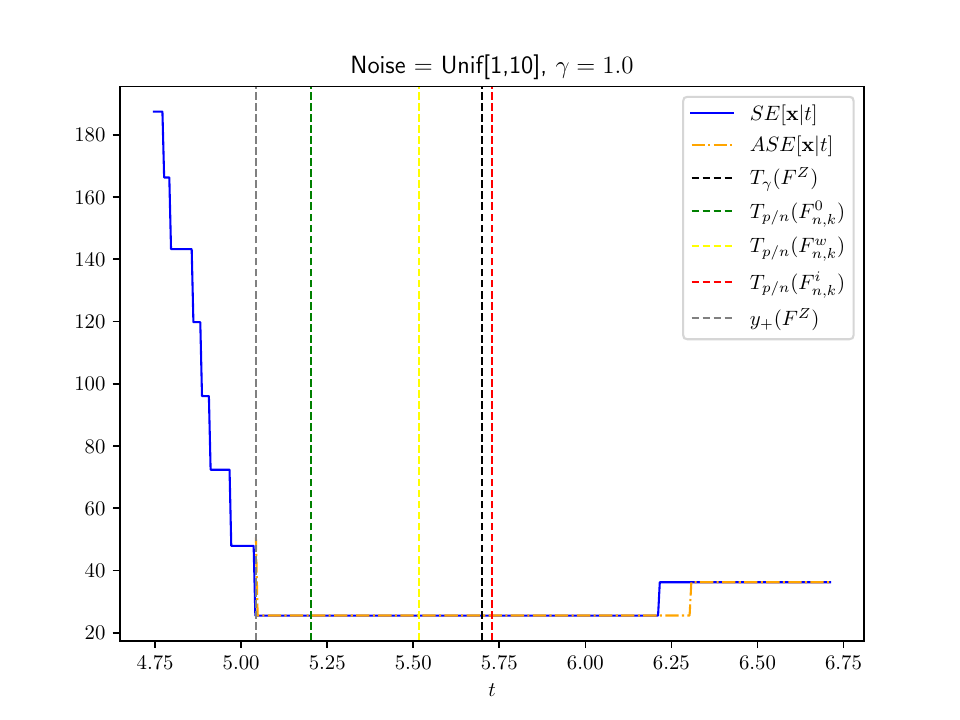}
    \caption{Experiment: {\bf SE-vs-ASE}}
\end{figure}

\begin{figure}[H]
    \centering
    \includegraphics[width=0.75\textwidth]{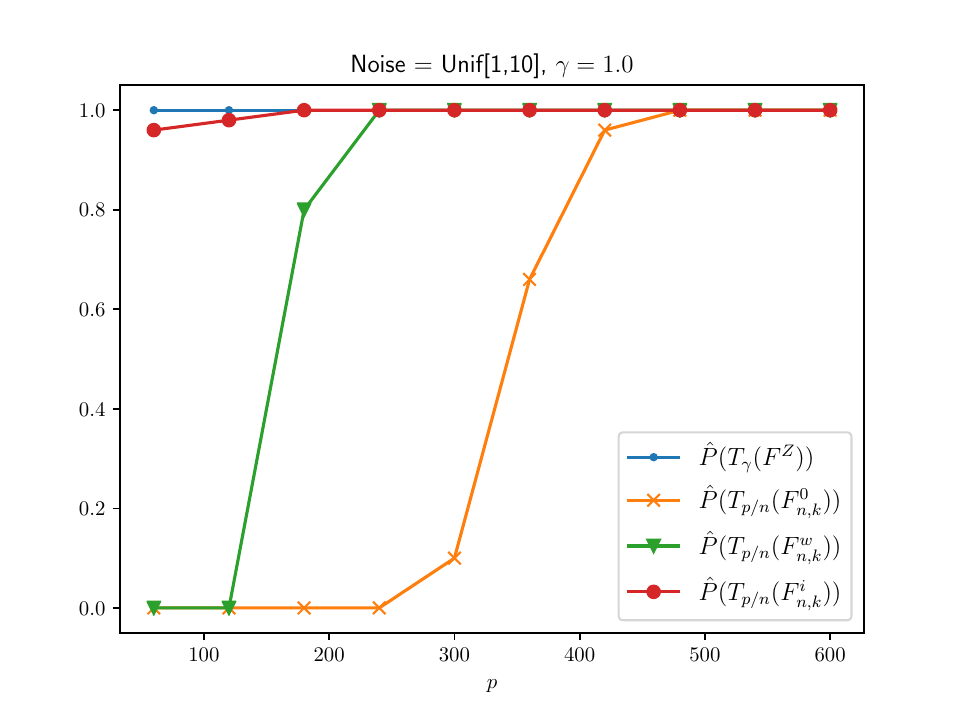}
    \caption{Experiment: {\bf OracleAttainment}}
\end{figure}

\begin{figure}[H]
    \centering
    \includegraphics[width=0.75\textwidth]{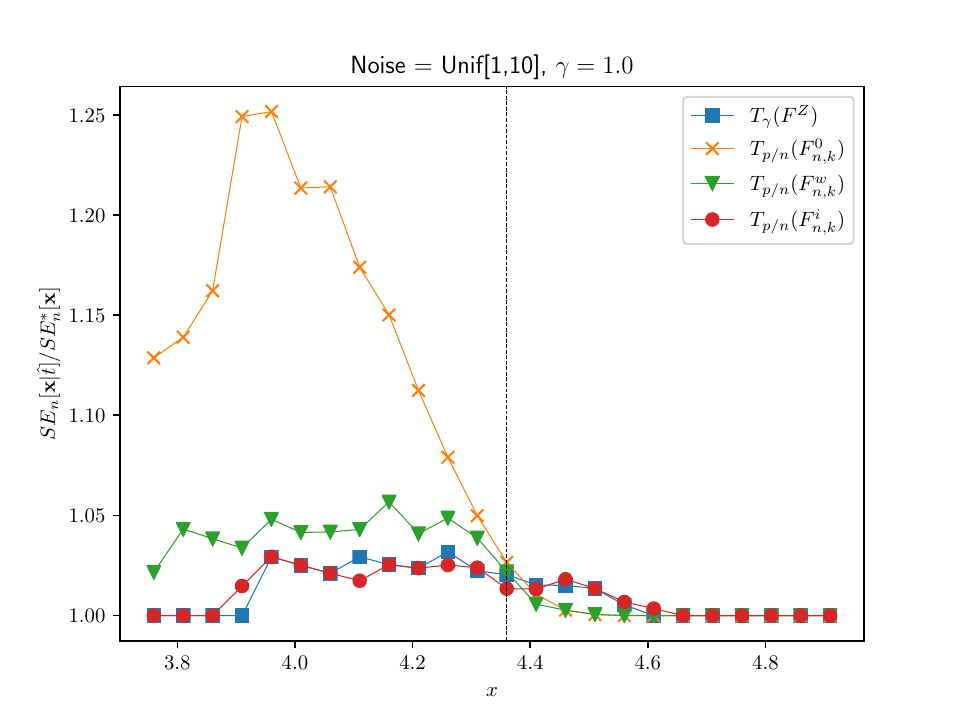}
    \caption{Experiment: {\bf Regret}}
\end{figure}

\begin{figure}[H]
    \centering
    \includegraphics[width=0.75\textwidth]{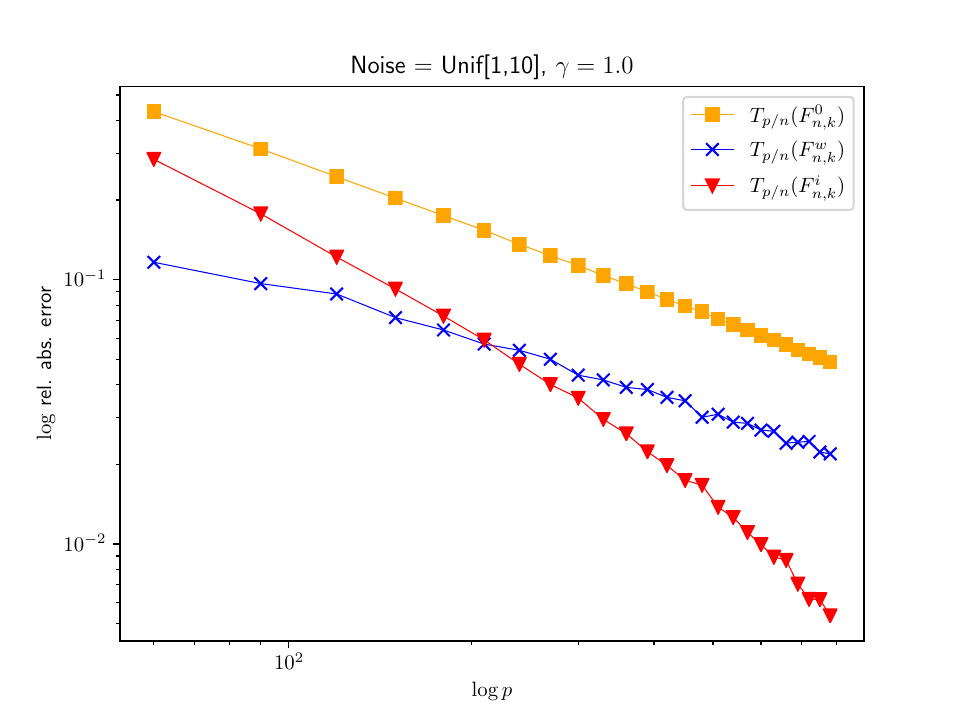}
    \caption{Experiment: {\bf ConvergenceRate}}
\end{figure}

\subsection{Distribution: PaddedIdentity, $\gamma = 0.5$}

\begin{figure}[H]
    \centering
    \includegraphics[width=0.75\textwidth]{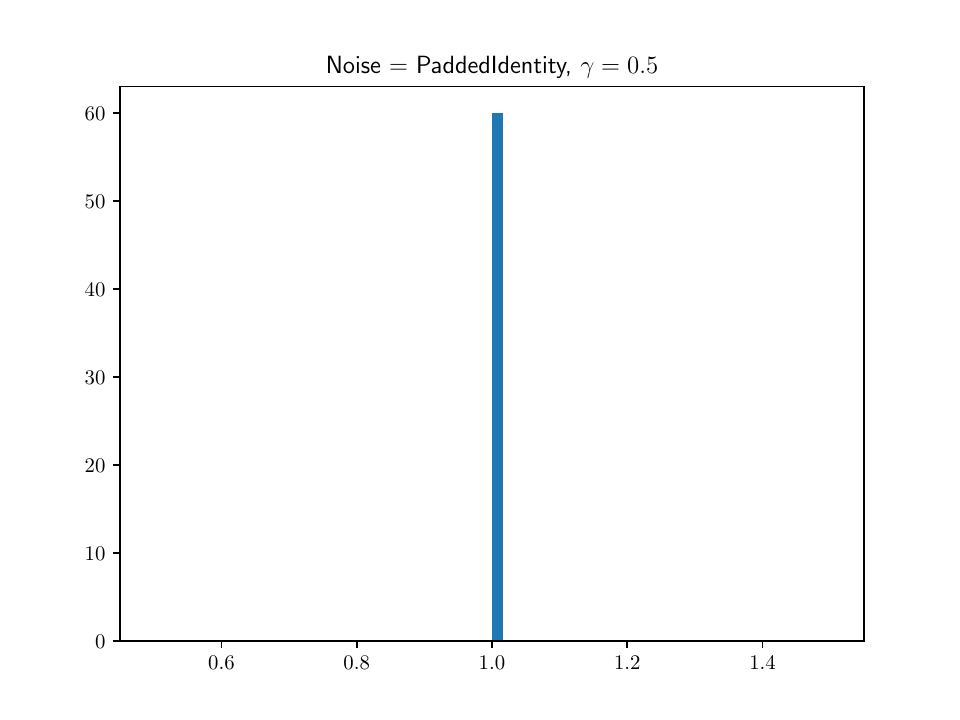}
    \caption{Experiment: {\bf Hist}}
\end{figure}

\begin{figure}[H]
    \centering
    \includegraphics[width=0.75\textwidth]{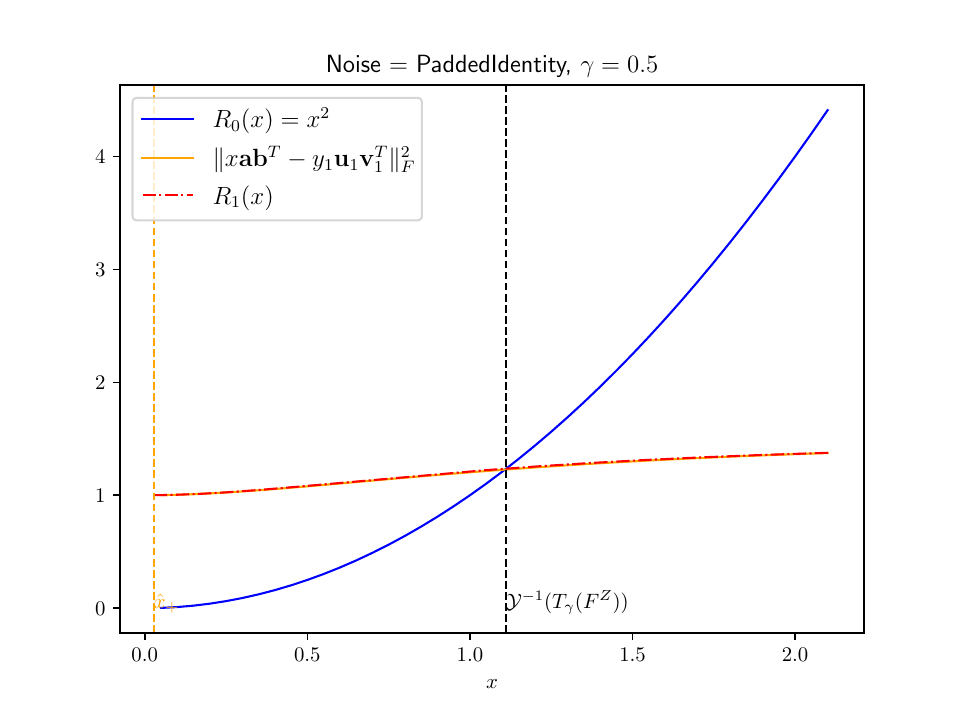}
    \caption{Experiment: {\bf R0-vs-R1}}
\end{figure}

\begin{figure}[H]
    \centering
    \includegraphics[width=0.75\textwidth]{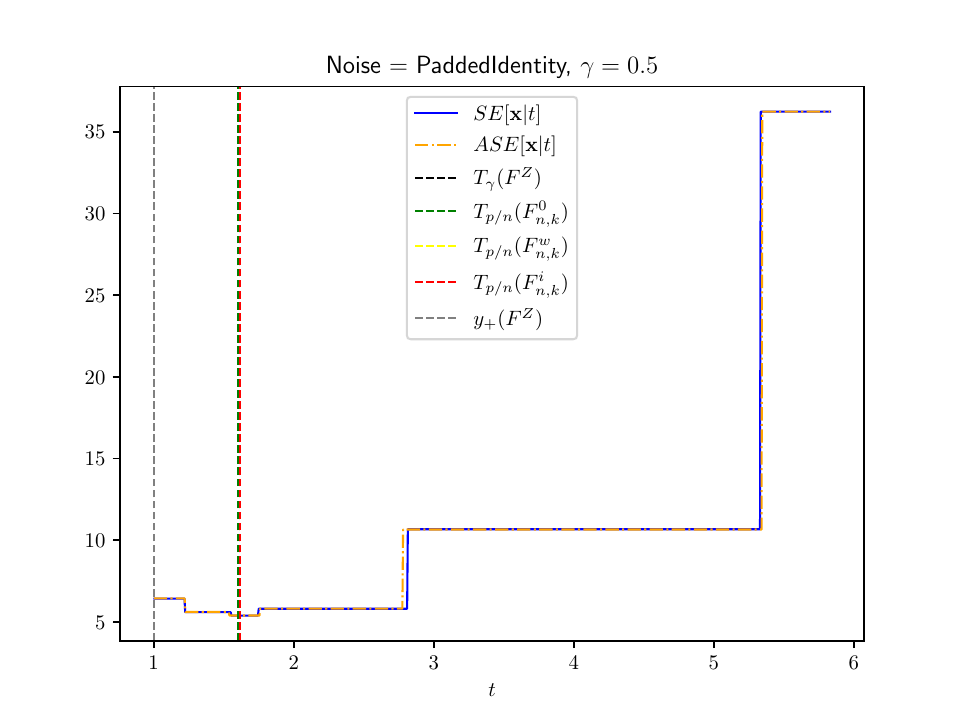}
    \caption{Experiment: {\bf SE-vs-ASE}}
\end{figure}

\begin{figure}[H]
    \centering
    \includegraphics[width=0.75\textwidth]{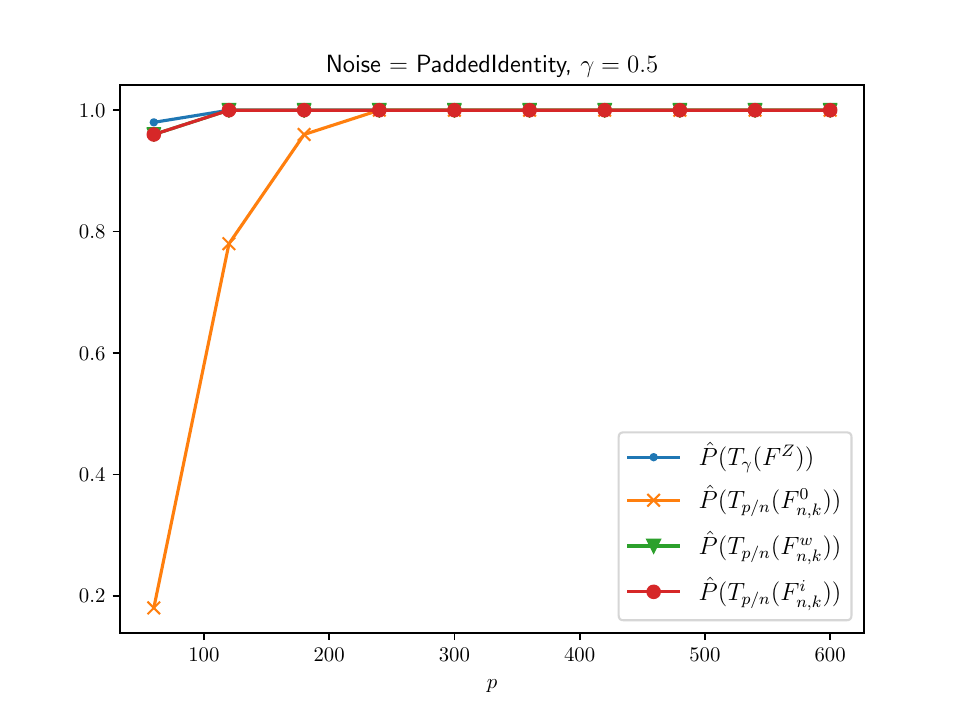}
    \caption{Experiment: {\bf OracleAttainment}}
\end{figure}

\begin{figure}[H]
    \centering
    \includegraphics[width=0.75\textwidth]{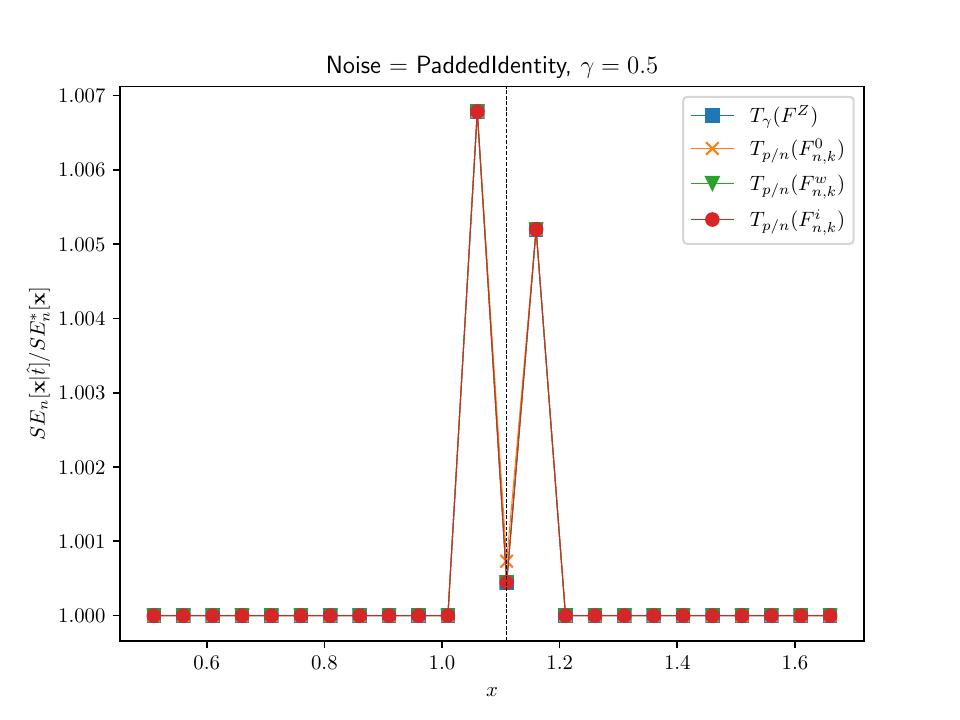}
    \caption{Experiment: {\bf Regret}}
\end{figure}

\begin{figure}[H]
    \centering
    \includegraphics[width=0.75\textwidth]{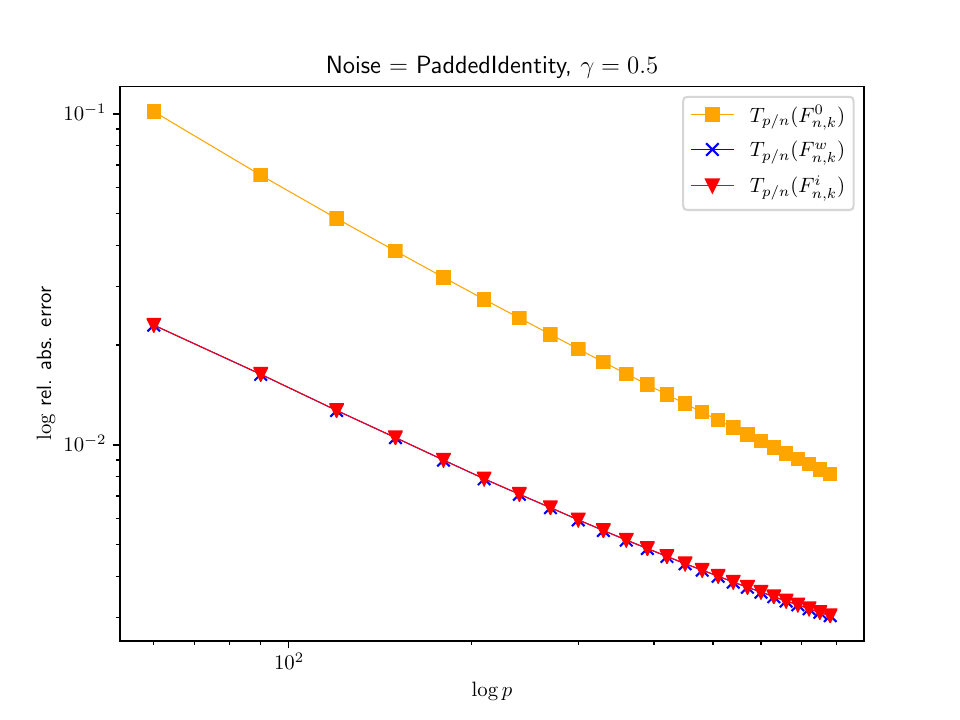}
    \caption{Experiment: {\bf ConvergenceRate}}
\end{figure}

\subsection{Distribution: PaddedIdentity, $\gamma = 1.0$}

\begin{figure}[H]
    \centering
    \includegraphics[width=0.75\textwidth]{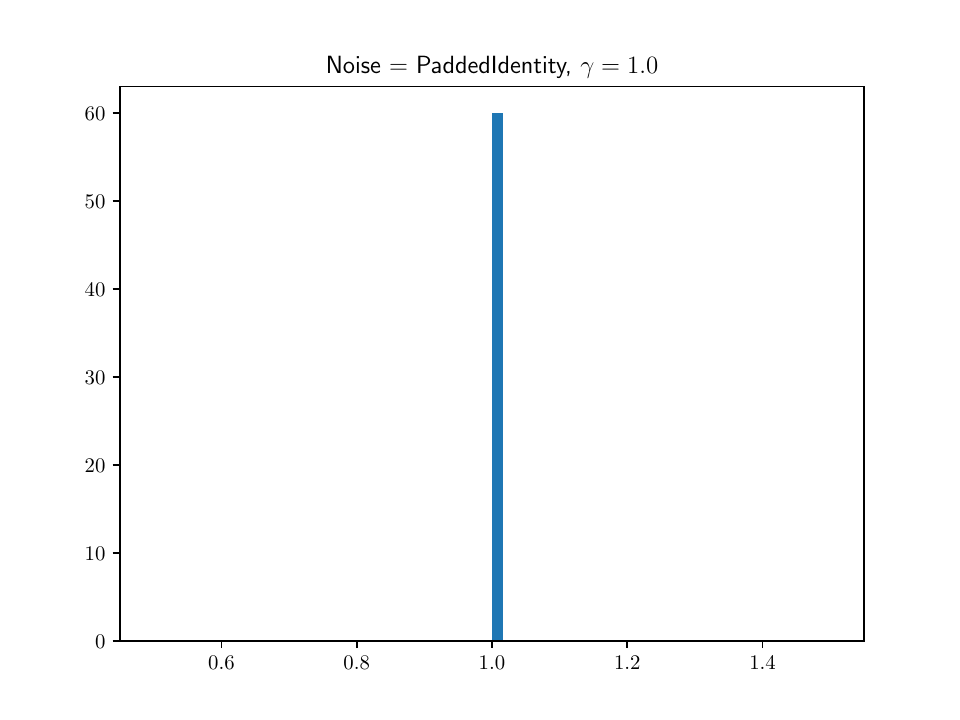}
    \caption{Experiment: {\bf Hist}}
\end{figure}

\begin{figure}[H]
    \centering
    \includegraphics[width=0.75\textwidth]{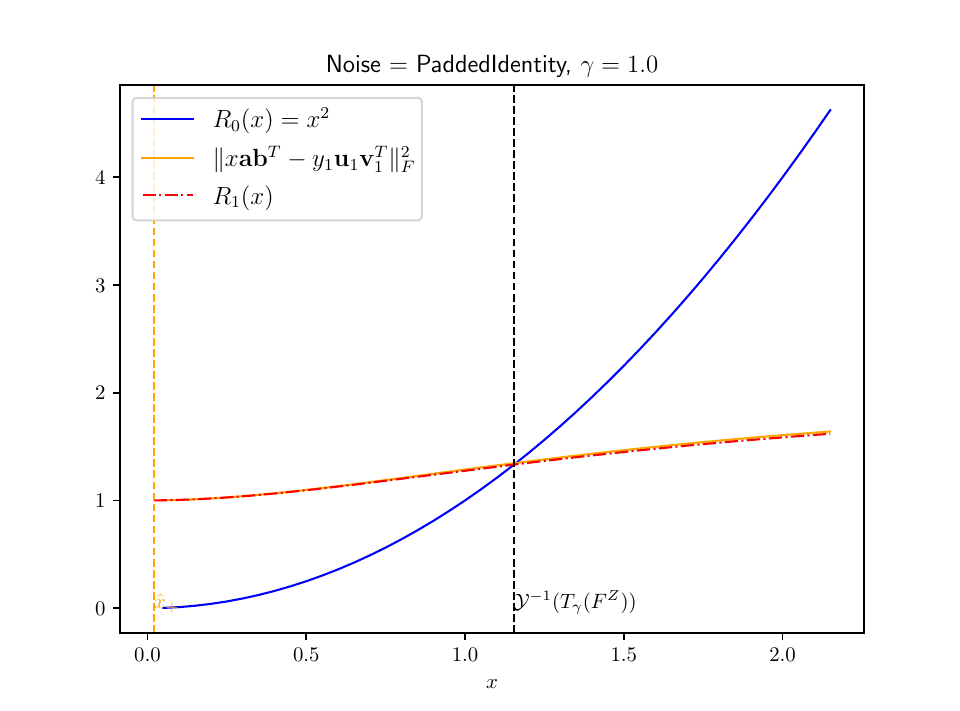}
    \caption{Experiment: {\bf R0-vs-R1}}
\end{figure}

\begin{figure}[H]
    \centering
    \includegraphics[width=0.75\textwidth]{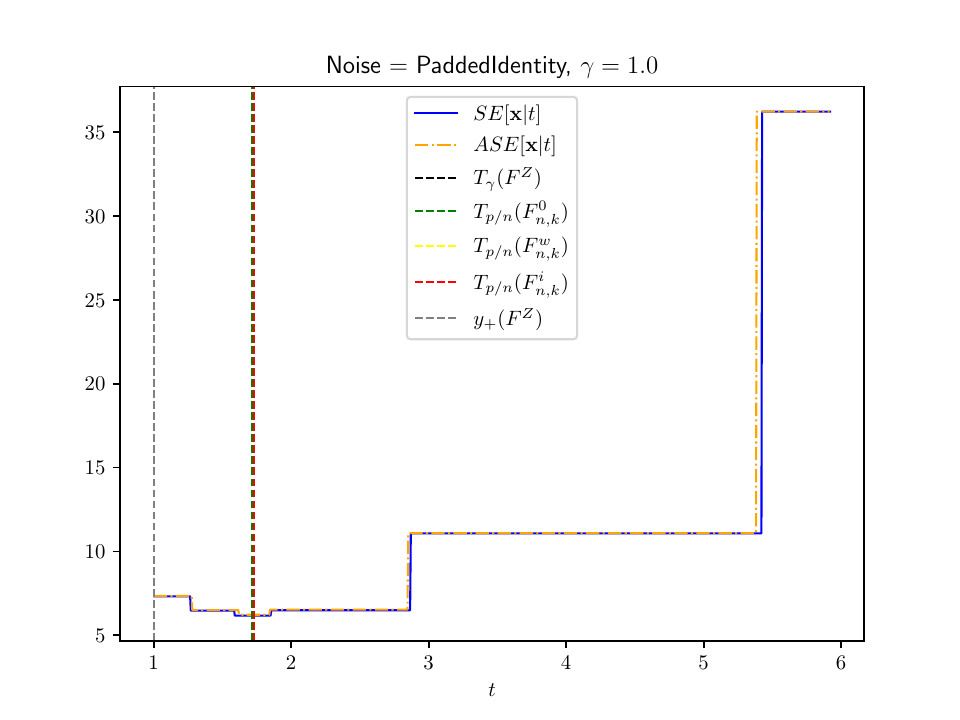}
    \caption{Experiment: {\bf SE-vs-ASE}}
\end{figure}

\begin{figure}[H]
    \centering
    \includegraphics[width=0.75\textwidth]{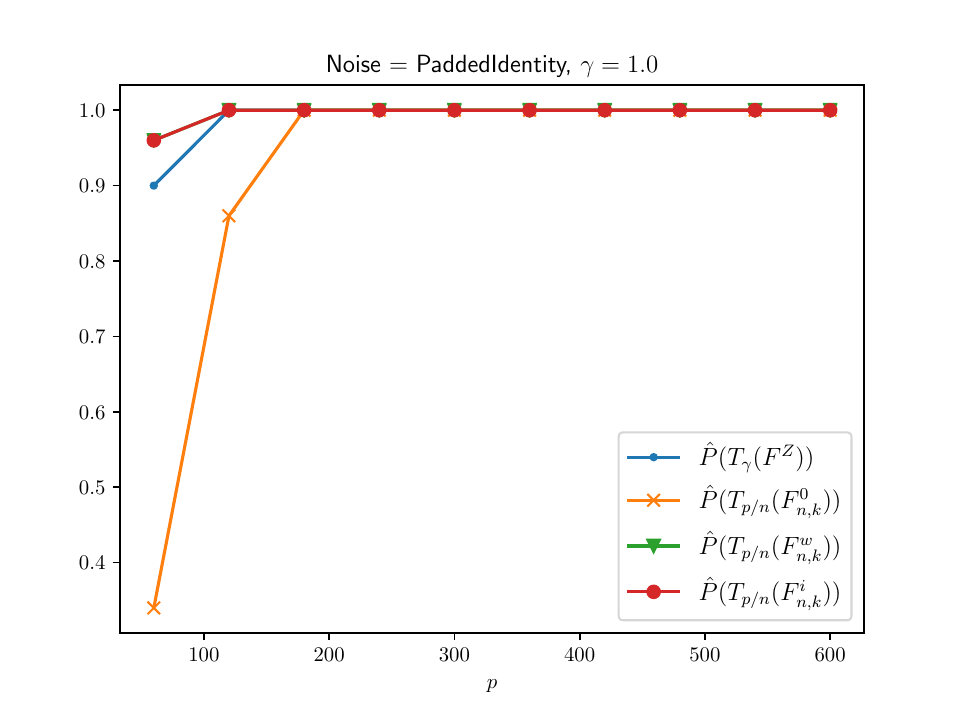}
    \caption{Experiment: {\bf OracleAttainment}}
\end{figure}

\begin{figure}[H]
    \centering
    \includegraphics[width=0.75\textwidth]{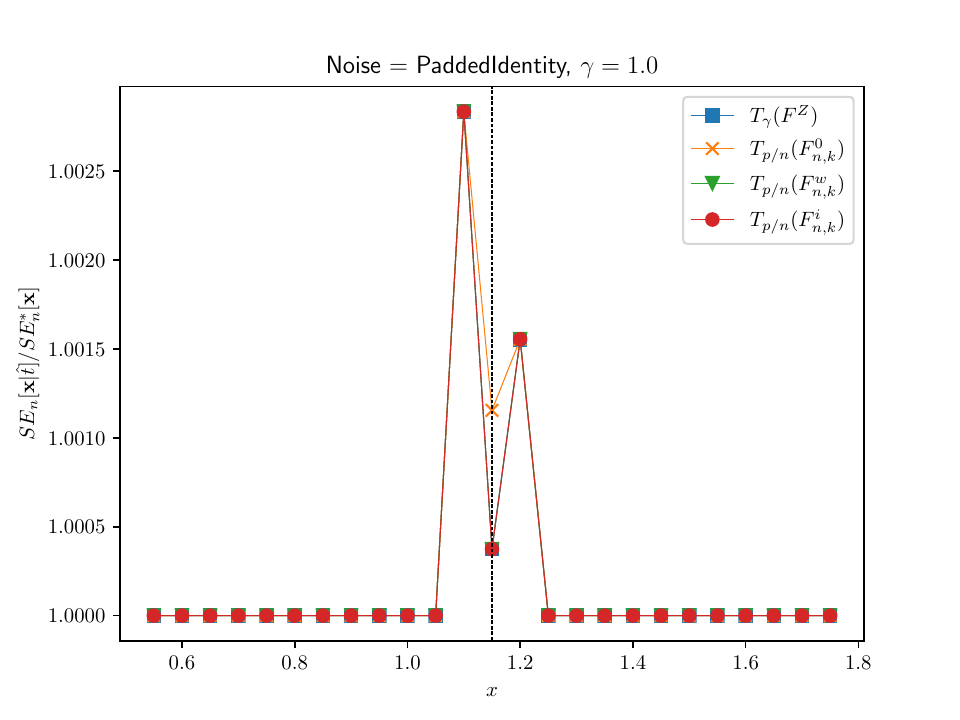}
    \caption{Experiment: {\bf Regret}}
\end{figure}

\begin{figure}[H]
    \centering
    \includegraphics[width=0.75\textwidth]{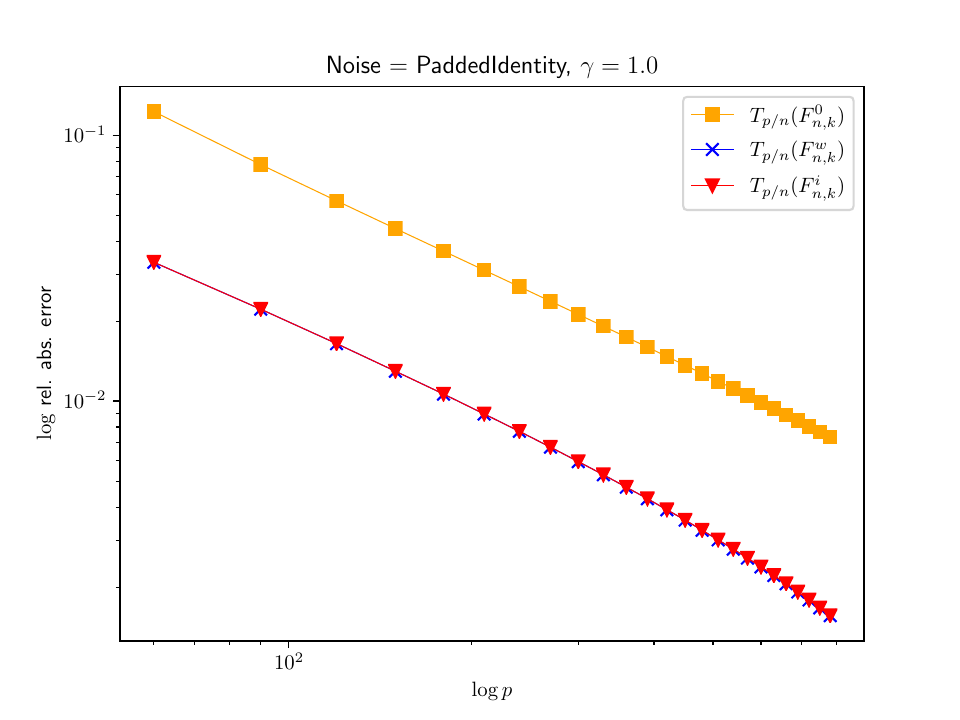}
    \caption{Experiment: {\bf ConvergenceRate}}
\end{figure}

\end{document}